\newtheorem{thm}{Theorem}[section]
\newtheorem{cor}[thm]{Corollary}
\newtheorem{lem}[thm]{Lemma}
\newtheorem{pro}[thm]{Proposition}
\theoremstyle{definition}
\numberwithin{equation}{section}
\newcommand{\X}{\bold{X}}
\newcommand{\ex}{\Bbb E}
\newcommand{\da}{\mathcal{D}}
\newcommand{\pr}{\text{Prob}}
\newcommand{\sums}{\sideset{}{^\flat}\sum}
\newcommand{\sumh}{\sideset{}{^h}\sum}
\begin{document}

\baselineskip=17pt

\title[Distribution of large values of Zeta and $L$-functions] {On the distribution of extreme values of zeta and $L$-functions in the strip $\frac{1}{2}<\sigma<1$}

\author[Youness Lamzouri]{Youness Lamzouri}

\address{School of Mathematics, Institute for Advanced Study,
1 Einstein Drive,
Princeton, NJ, 08540
USA}

\email{lamzouri@ias.edu}

\date{}

\begin{abstract}

We study the distribution of large (and small) values of several families of
 $L$-functions on a line $\text{Re(s)}=\sigma$ where $1/2<\sigma<1$. We consider the Riemann
 zeta function $\zeta(s)$ in the $t$-aspect, Dirichlet $L$-functions in the $q$-aspect, and
 $L$-functions attached to primitive holomorphic cusp forms of weight $2$ in the level aspect.
 For each family we show that the $L$-values can be very well modeled by an adequate random Euler product,
uniformly in a wide range. We also prove new $\Omega$-results for quadratic Dirichlet $L$-functions
(predicted to be best possible by the probabilistic model) conditionally on GRH, and other results related to large moments of $\zeta(\sigma+it)$.
\end{abstract}

\subjclass[2000]{11M06, 11F66, 60F10.}

\thanks{The author is supported by a postdoctoral fellowship from the Natural
Sciences and Engineering Research Council of Canada and by the Institute
for Advanced Study and the National Science Foundation under agreement No. DMS-0635607.}

\maketitle

\section{Introduction and statement of results}

The analytic theory of $L$-functions has become a central part of modern number theory due to its diverse connections
 to several arithmetic, algebraic and geometric objects. The simplest example of an $L$-function is the Riemann zeta
function $\zeta(s)$ which  plays a fundamental role in the distribution of prime numbers. The study of the distribution
of values of $L$-functions has begun with the work of Bohr in the early twentieth century who established, using his theory
of almost periodic functions, that $\zeta(s)$ takes any non-zero complex value $c$ infinitely often in any strip $1<\text{Re}(s)<1+\epsilon$.
 Later in [1], Bohr refined his ideas by using probabilistic methods and, together with Jessen, showed that $\log \zeta(\sigma+it)$ has a
 continuous limiting distribution on the complex plane for any $\sigma>1/2$.

Let $1/2<\sigma<1$. The Riemann Hypothesis RH implies that for any $t\geq 3$ we have (see [27])
\begin{equation}
\log \zeta(\sigma+it)\ll
(\log t)^{2-2\sigma}/\log\log t.
\end{equation}
On the other hand,  Montgomery [19] showed that for $T$ large, we have
\begin{equation}
 \max_{t\in [T,2T]}\log |\zeta(\sigma+it)|\geq c (\log T)^{1-\sigma}(\log\log T)^{-\sigma},
\end{equation}
where $c=(\sigma-1/2)^{\frac12}/20$ unconditionally and $c=1/20$ on the assumption of RH. Moreover, based on a probabilistic
 argument, he conjectured that this result is likely to be best possible, more precisely that the true order of magnitude of
$\max_{t\in [T,2T]}\log|\zeta(\sigma+it)|$ corresponds to the $\Omega$-result (1.2) rather than the $O$-result (1.1). An important
motivation of our work is to investigate this question, and indeed the uniformity of our results supports Montgomery's conjecture. Define
$$ \Phi_T(\sigma,\tau):=\frac{1}{T}\text{meas}\{t\in [T,2T]: \log|\zeta(\sigma+it)|\geq \tau\}.$$

On the critical line $\sigma=1/2$, a wonderful result of Selberg (see [23] and [24]) states that as $t$ varies in $[T, 2T]$ the
distribution of $\log|\zeta(1/2+it)|$ is approximately
Gaussian with mean $0$ and variance $\frac1
2 \log \log T$. More precisely for any $\lambda\in \Bbb{R}$ we have
$$ \Phi_T\left(1/2,\lambda\sqrt{\frac1
2 \log \log T}\right)= \frac{1}{\sqrt{2\pi}}\int_{\lambda}^{\infty}e^{-x^2/2}dx+o(1).$$
Assuming RH, Soundararajan [26] has recently proved non-trivial upper bounds for $\Phi_T(1/2,\tau)$ in the range
 $\tau/\log\log T\to \infty$, which allows him to deduce upper bounds for the moments of $\zeta(1/2+it)$, nearly
of the conjectured order of magnitude.

On the edge of the critical strip (that is the line $\sigma=1$) the situation is more understood due to the facts
 that $\zeta(s)$ has an Euler product, and that its moments can be computed. In this case the RH implies that
$\log|\zeta(1+it)|\leq \log_3 t+\gamma+\log 2+o(1)$ (here and throughout
$\log_jx$ is the $j$-th iterate of the
natural logarithm). On the other direction the $\Omega$-result of Littlewood implies that
$\max_{[T,2T]}\log|\zeta(1+it)|\geq \log_3T+\gamma+o(1)$. In [9],  Granville and  Soundararajan studied
the behavior of the tail $\Phi_T(1,\tau)$, showing that uniformly for $\tau\leq \log_3T+\gamma-\epsilon$ we have
\begin{equation}
 \Phi_T(1,\tau)=\exp\left(-\frac{\exp(e^{\tau-\gamma}-a_0)}{e^{\tau-\gamma}}(1+o(1))\right),
\end{equation}
where $a_0$ is an explicit constant which is related to the probabilistic random Euler product they used to
 model the values $\zeta(1+it)$.

For $1/2<\sigma<1$, a consequence of Bohr and Jessen's work is that for $\tau\in \Bbb{R}$ we have that
$$\lim_{T\to\infty}\Phi_T(\sigma,\tau)=f(\sigma,\tau),$$ where $f(\sigma,\tau)$ is the tail of a continuous
distribution. Moreover it follows from the work of Montgomery and Odlyzko [20] that there exist $b_1,b_2>0$
 such that for  $\tau$
large
\begin{equation}
 \exp\left(-b_1\tau^{\frac{1}{1-\sigma}}(\log\tau)^{\frac{\sigma}{1-\sigma}}\right)\leq f(\sigma,\tau)
 \leq \exp\left(-b_2\tau^{\frac{1}{1-\sigma}}(\log\tau)^{\frac{\sigma}{1-\sigma}}\right).
\end{equation}

Our Theorem 1.1 estimates the function $\Phi_T(\sigma,\tau)$ uniformly for $\tau$ in a slightly smaller range than
the conjectured one, namely for $\tau\leq c_1(\sigma)(\log T)^{1-\sigma}/\log_2 T$ (for some suitably small
constant $c_1(\sigma)>0$), and shows that it decays precisely as in (1.4) in this range. Furthermore if this
result were to persist to the end of the viable range then this would imply Montgomery's conjecture.
The method is essentially an extension of the ideas of Granville and Soundararajan from [8] and [9]. Indeed
as in [8], [9], [14] and [15], the main idea is to compare the distribution of values of $\zeta(\sigma+it)$
with an adequate probabilistic model, defined as the random Euler product
$\zeta(\sigma,\X):=\prod_{p}\left(1-X(p)/p^{\sigma}\right)^{-1}$ (which converges a.s  if $\sigma>\frac12$)
where $\{X(p)\}_p$ are independent random variables uniformly distributed on the unit circle.

 Before describing our results let us first define some notation.  Let $X$ be a bounded real valued random
variable with $\ex(X)=0$ (here and throughout $\ex(\cdot)$ denotes the expectation). Then for any  $1/2<\sigma<1$
 we define
\begin{equation}
G_X(\sigma):= \int_{0}^{\infty}\frac{\log \ex(e^{uX})}{u^{\frac{1}{\sigma}+1}}du, \text{ and }
A_X(\sigma):=\left(\frac{\sigma^{2\sigma}}{(1-\sigma)^{2\sigma-1}G_X(\sigma)^{\sigma}}\right)^{\frac{1}{1-\sigma}},
\end{equation}
 (note that $G_X(\sigma)$ is absolutely convergent by Lemma 3.1 below). Moreover for $y,\tau\geq 3$
let
\begin{equation}
 r(y,\tau):= \left(\frac{\tau}{y^{1-\sigma}(\log y)^{-1}}\right)^{(\sigma-\frac{1}{2})/(1-\sigma)}.
\end{equation}
Then we prove
\begin{thm} Let $1/2<\sigma<1$, and $T$ be large. Then there exists $c_1(\sigma)>0$ such that uniformly
 in the range $1\ll \tau\leq c_1(\sigma)(\log T)^{1-\sigma}/\log_2T$ we have
$$ \Phi_{T}(\sigma,\tau)=\exp\left(-A_1(\sigma)\tau^{\frac{1}{(1-\sigma)}}(\log\tau)^{\frac{\sigma}{(1-\sigma)}}
\left(1+O\left(\frac{1}{\sqrt{\log\tau}}+r(\log T,\tau)\right)\right)\right),
$$
where $A_1(\sigma)=A_X(\sigma)$ with $X$ being a random variable uniformly distributed on the unit circle.
In this case we should note that  $\ex(e^{uX})=I_0(u):=\sum_{n=0}^{\infty}
(u/2)^{2n}/n!^2$ is the modified Bessel function of order $0$.
\end{thm}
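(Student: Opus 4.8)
The plan is to model $\log|\zeta(\sigma+it)|$, for $t$ ranging over $[T,2T]$, by the random Euler product $\log|\zeta(\sigma,\X)|=\sum_p\text{Re}\bigl(-\log(1-X(p)p^{-\sigma})\bigr)$, and to make this comparison quantitative deep in the tail of the distribution. The first step I would take is a reduction to a Dirichlet polynomial: fix $y$ a suitable (small) power of $\log T$ and show that the error $E_y(t):=\log|\zeta(\sigma+it)|-\text{Re}\sum_{p\le y}p^{-\sigma-it}$ has small $2k$-th moments over $[T,2T]$ for every integer $k$ up to a fixed power $k_0$ of $\log T$ (prime powers contribute only $O(1)$ since $\sum_p\sum_{j\ge2}p^{-j\sigma}\ll\sum_p p^{-2\sigma}<\infty$ for $\sigma>1/2$). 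This is the one substantive analytic input; it rests on classical mean-value and zero-density estimates for $\zeta$, is more robust than a pointwise approximation, and --- crucially --- can be made to hold uniformly for $\sigma\in(1/2,1)$, in particular for $\sigma$ close to $1/2$, where a pointwise approximation outside a small exceptional set is unavailable. Granting it, both the upper and lower bounds for $\Phi_T(\sigma,\tau)$ reduce, at the cost of errors of shape $A^{-2k_0}\cdot(2k_0\text{-th moment})$ that are negligible once $A$ is a large enough constant, to the distribution function $\Psi_T(v):=\frac1T\,\text{meas}\{t\in[T,2T]:\text{Re}\sum_{p\le y}p^{-\sigma-it}\ge v\}$ of the Dirichlet polynomial.

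Since $\text{Re}\sum_{p\le y}p^{-\sigma-it}$ is a genuine trigonometric polynomial, its exponential moments are clean. For $s>0$, expanding $\exp\bigl(2s\,\text{Re}\sum_{p\le y}p^{-\sigma-it}\bigr)$ and integrating term by term, the diagonal yields $\prod_{p\le y}I_0(2s/p^{\sigma})$ and the off-diagonal is negligible precisely when the $y$-smooth integers that occur stay below $T^{1/2}$, i.e. when $s^{1/\sigma}\ll\log T$ --- which is exactly the hypothesis $\tau\le c_1(\sigma)(\log T)^{1-\sigma}/\log_2T$. A prime-number-theorem computation, via the substitution $u=2s/p^{\sigma}$ in the integral defining $G_X(\sigma)$ in (1.5), then gives the two-sided asymptotic
$$\frac1T\int_T^{2T}\exp\Bigl(2s\,\text{Re}\sum_{p\le y}p^{-\sigma-it}\Bigr)dt=\exp\Bigl(G_X(\sigma)\,\frac{(2s)^{1/\sigma}}{\log2s}\bigl(1+O(1/\log s)+O(r(\log T,\tau))\bigr)\Bigr),$$
with $X=\cos\theta$, the truncation at $y$ being the source of the term $r(\log T,\tau)$; the lower bound here comes directly from the diagonal.

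The upper bound for $\Psi_T(\tau)$ then follows from Chernoff, $\Psi_T(\tau)\ll e^{-2s\tau}\cdot\frac1T\int_T^{2T}\exp\bigl(2s\,\text{Re}\sum_{p\le y}p^{-\sigma-it}\bigr)dt$, upon optimizing $e^{-2s\tau}\exp\bigl(G_X(\sigma)(2s)^{1/\sigma}/\log2s\bigr)$ over $s$: the optimum sits at $s\asymp(\tau\log\tau)^{\sigma/(1-\sigma)}$, the exponent there equals $-(1-\sigma)\cdot2s\tau(1+o(1))$, and a careful accounting of constants --- the $O(1/\sqrt{\log\tau})$ being the discrepancy between $\log s$ and $\log\tau$ in the saddle equation --- turns this into $\Psi_T(\tau)\le\exp\bigl(-A_X(\sigma)\tau^{1/(1-\sigma)}(\log\tau)^{\sigma/(1-\sigma)}(1-o(1))\bigr)$ with $A_X(\sigma)$ as in (1.5). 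For the matching lower bound I would combine the two-sided Laplace asymptotic, this upper bound applied at all levels near the saddle, and the trivial vanishing $\Psi_T(v)=0$ for $v>\sum_{p\le y}p^{-\sigma}$: writing $\frac1T\int\exp(2s\,\text{Re}\sum p^{-\sigma-it})dt=2s\int e^{2sv}\Psi_T(v)\,dv$ and using that past the saddle $e^{2sv}\Psi_T(v)$ decays rapidly, its integral is essentially its maximum $\asymp\tfrac1{2s}\exp(G_X(\sigma)(2s)^{1/\sigma}/\log2s)$, which forces $\Psi_T(\tau)\ge\exp\bigl(-A_X(\sigma)\tau^{1/(1-\sigma)}(\log\tau)^{\sigma/(1-\sigma)}(1+o(1))\bigr)$ in a window around the saddle level. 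That the two constants agree is built in: $A_X(\sigma)$ is the Legendre transform of $G_X(\sigma)$ adapted to this scaling, so the saddle point of the Laplace transform reproduces the rate coming from the $\Omega$-side. Undoing the first-step reduction transfers this to $\Phi_T(\sigma,\tau)$ --- the $O(1)$ discrepancy in the threshold and the reduction errors change $-\log\Phi_T$ only by a relative $O(1/\tau)$, which is absorbed into $O(1/\sqrt{\log\tau})$ --- and this is the theorem.

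The main obstacle is the tension between sharpness and uniformity. Getting the correct shape $\tau^{1/(1-\sigma)}(\log\tau)^{\sigma/(1-\sigma)}$ of the exponent is comparatively soft --- it is already in the Montgomery--Odlyzko-type bounds (1.4) --- but pinning the constant to $A_X(\sigma)$ on both sides, with the stated error term, forces the Laplace transform, the asymptotics of $\sum_{p\le y}\log I_0(2s/p^{\sigma})$, and the saddle-point analysis all to be carried out with explicit error control, and this is exactly what confines $\tau$ to the region where the off-diagonal is controlled and $r(\log T,\tau)=o(1)$. The second delicate point is the first-step mean-value estimate: it must be valid to moment order a genuine power of $\log T$ and down to $\sigma$ arbitrarily close to $1/2$ --- precisely the range where the available zero-density estimates are weakest.
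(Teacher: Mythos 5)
Your proposal is correct in broad strokes and follows essentially the same path as the paper: reduce $\log|\zeta(\sigma+it)|$ to a Dirichlet polynomial over primes up to a small power of $\log T$; compute the two-sided Laplace transform of that polynomial by orthogonality (diagonal $\prod_{p\le y}I_0(2s/p^\sigma)$, off-diagonal negligible when the $y$-smooth integers involved stay below $\sqrt T$, i.e.\ $s^{1/\sigma}\ll\log T$); convert the Laplace transform to a two-sided tail estimate via the saddle point; and transfer back. The paper packages this as Proposition~4.1 (moment matching between $\zeta(\sigma+it;y)$ and the random model $L(\sigma,\X_1;y)$), Proposition~3.2 plus the saddle-point argument of Theorem~1.9 (tail of the random model), and then a transfer step.

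One point in your first step is misstated and would create trouble if implemented literally. You propose to bound $\int_T^{2T}|E_y(t)|^{2k}\,dt$ directly for $k$ up to a power of $\log T$ and assert that this is ``more robust than a pointwise approximation'' and that such an approximation ``is unavailable'' for $\sigma$ near $1/2$. Neither claim is right. The paper's transfer step (equations (4.4)--(4.9)) \emph{does} rest on a pointwise approximation: Lemma~2.2 combined with the zero-density bound $N(\sigma_0,T)\ll T^{3/2-\sigma_0}(\log T)^5$ gives $\log\zeta(\sigma+it)=\sum_{n\le z}\Lambda(n)/(n^{\sigma+it}\log n)+O((\log T)^{-1/4})$ with $z=(\log T)^{3/(\sigma-1/2)}$, for all $t\in[T,2T]$ outside a set $\mathcal A_0(T)$ of measure $\ll T^{1-(\sigma-1/2)/4}$ --- available for every fixed $\sigma>1/2$, which is all the theorem requires. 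It is precisely a direct $2k$-th moment bound for $E_y$ over \emph{all} of $[T,2T]$ that is awkward: $\log|\zeta(\sigma+it)|$ has logarithmic singularities at any zeros near $\text{Re}(s)=\sigma$, so on the exceptional set $\mathcal A_0$ you have no useful pointwise control of $E_y$, and any bound on $\int_{\mathcal A_0}|E_y|^{2k}$ would itself demand a zero-density/Jensen-type input at least as heavy as the pointwise approach. What you can and should do --- and what the paper does --- is control $\mathcal A_0$ purely by its \emph{measure} and apply the moment bound (Lemma~4.2) only to the trigonometric tail $\sum_{y\le p\le z}p^{-\sigma-it}$ on the complement; the sandwich $\Phi_T(\tau(1+\delta);y)+O(\text{meas}\,\mathcal A/T)\le\Phi_T(\tau)\le\Phi_T(\tau(1-\delta);y)+O(\text{meas}\,\mathcal A/T)$ then never requires moments of $\log\zeta$ over the exceptional set. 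If you reorganize your step~1 along those lines (split off $\mathcal A_0$ first, then apply moment bounds only to the prime tail), the remainder of your argument --- the Laplace computation, the Chernoff upper bound, the lower bound from the two-sided Laplace asymptotic and the trivial cutoff $v>\sum_{p\le y}p^{-\sigma}$, and the identification of the constant as the Legendre-transform quantity $A_X(\sigma)$ --- matches the paper.
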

{\it Remark 1.} This result is also proved for the distribution of large values of $\arg\zeta(\sigma+it)$ in
the same range of Theorem 1.1. Moreover, the same asymptotic does also hold for the left tail of the distribution
 of $\log|\zeta(\sigma+it)|$, (and also that of $\arg \zeta(\sigma+it)$) which is defined as the normalized
 measure of points $t\in [T,2T]$ such that $\log|\zeta(\sigma+it)|\leq -\tau$, in the same range of Theorem 1.1.

In general, in order to understand large values of $L$-functions it is often useful to consider high moments.
For $z$ a complex number, we have that $\zeta(s)^z=\sum_{n=1}^{\infty}d_z(n)/n^s$ for Re$(s)>1$, where $d_z(n)$
is  the ``$z$-th divisor function'', defined as the  multiplicative function such that $d_z(p^a)=\Gamma(z+a)/\Gamma(z)a!$,
for any prime $p$ and any integer $a\geq 0$.  Our knowledge of the $2k$-moments of $\zeta(\sigma+it)$  for $1/2<\sigma<1$
is very incomplete, and we only have asymptotic formulas in a certain restricted range of $k$. Indeed we know that for
any $\sigma>1/2$ there is a real number $\kappa(\sigma)$ such that for any positive integer $k\leq \kappa(\sigma)$ we have that
\begin{equation}
\frac{1}{T}\int_{T}^{2T}|\zeta(\sigma+it)|^{2k}dt= (1+o(1))\sum_{n=1}^{\infty}\frac{d_k^2(n)}{n^{2\sigma}}.
\end{equation}
In fact we know that $\kappa(\sigma)\geq 1/(1-\sigma)$ for $1/2<\sigma<1$ (see Theorem 7.7 of [27]) and that
$\kappa(\sigma)=\infty$ for $\sigma\geq 1$. Moreover, it is conjectured that $\kappa(\sigma)=\infty$ for all $\sigma>1/2$.
 This assumption is equivalent to the Lindel\"of hypothesis for $\zeta(s)$ (see Theorem 13.2 of [27]).
On the line $\sigma=1$, Granville and Soundararajan (see Theorem 2 of [14]) proved unconditionally that
(1.7) holds uniformly in the range $k\ll (\log T)/(\log_2 T)^2$ (which we have slightly improved to
$k\ll \log T/(\log_2T\log_3 T)$ in [16]), and an analogous argument to Theorem 1.3 below, implies
that the asymptotic formula (1.7) does not hold for $k\geq C\log T\log_2T$, if $C$ is suitably large.
For $1/2<\sigma<1$ no uniform version of (1.7) is known  even on the Lindel\"of hypothesis, and one wonders
if a stronger assumption, namely the RH, would imply (1.7) uniformly in some range $k\leq F_{\sigma}(T)$ where
 $F_{\sigma}(T)\to \infty$ at $T\to\infty$. The answer is definitely yes and even more! In fact assuming RH
we can also   handle complex moments, allow $\sigma$ to be close to $1/2$ and get an explicit error term in (1.7).

\begin{thm} Assume the Riemann hypothesis. Then there exist positive constants $K$ and $b(K)$,
such that uniformly for $1/2+K/\log_2T\leq \sigma\leq 1-\epsilon $ we have
$$ \frac{1}{T}\int_{T}^{2T}\zeta(\sigma+it)^{z_1}\overline{\zeta(\sigma+it)}^{z_2}dt= \sum_{n=1}^{\infty}
\frac{d_{z_1}(n)d_{z_2}(n)}{n^{2\sigma}}+O\left(\exp\left(-\frac{\log T}{50\log_2 T}\right)\right),$$ for
all complex numbers $z_1, z_2$ with $|z_i|\leq b(K)(\log T)^{2\sigma-1}$.
\end{thm}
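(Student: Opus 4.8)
The plan is to prove the asymptotic formula by expanding the Dirichlet series, moving the contour, and using the explicit formula under RH to control the resulting sums over prime powers. First I would write $\zeta(\sigma+it)^{z_1}\overline{\zeta(\sigma+it)}^{z_2}$ via the identity $\zeta(s)^{z}=\sum_{n}d_z(n)n^{-s}$, which under RH converges for $\mathrm{Re}(s)>1/2$ after suitable truncation, but to make this rigorous at $\sigma<1$ the key input is a conditional estimate of the form $\log\zeta(\sigma+it)=\sum_{p\le y}p^{-\sigma-it}+(\text{small})$ valid for $t$ outside an exceptional set of small measure, with $y$ a power of $\log T$; this is the standard GRH-based approximation (going back to Soundararajan's work on moments and adapted to the strip). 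The diagonal main term $\sum_n d_{z_1}(n)d_{z_2}(n)n^{-2\sigma}$ converges absolutely since $\sigma$ is bounded away from $1/2$ by $K/\log_2 T$ and $|z_i|\le b(K)(\log T)^{2\sigma-1}$, so the generalized divisor functions have controlled size; verifying this convergence and the size of the tail beyond the truncation point is routine once $K$ is chosen large enough relative to the implied constants.

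Next I would set up the computation of $\frac1T\int_T^{2T}\big(\sum_{n\le N}d_{z_1}(n)n^{-\sigma-it}\big)\big(\sum_{m\le N}d_{z_2}(m)m^{-\sigma+it}\big)\,dt$ by opening the product and integrating term by term: the diagonal terms $n=m$ contribute $\sum_{n\le N}d_{z_1}(n)d_{z_2}(n)n^{-2\sigma}$ and the off-diagonal terms $n\ne m$ contribute $\ll \sum_{n\ne m}|d_{z_1}(n)d_{z_2}(m)|(nm)^{-\sigma}/|\log(n/m)|$, which is negligible provided $N$ is a small enough power of $T$ (say $N\le T^{1/10}$). The delicate point is that to replace $\zeta(\sigma+it)^{z_1}$ by its Dirichlet polynomial we cannot simply truncate the series directly for $\sigma<1$; instead I would use Perron-type or Mellin-transform smoothing together with the RH bound $\log\zeta(\sigma+it)\ll (\log t)^{2-2\sigma}/\log_2 t$ from (1.1) to bound $|\zeta(\sigma+it)|^{|z_1|+|z_2|}$ on the exceptional set where the short-sum approximation fails, showing that this exceptional set has measure $\ll T\exp(-c\log T/\log_2 T)$ and that on it the integrand is at most $\exp(|z_i|(\log T)^{2-2\sigma}/\log_2 T)$, which after multiplying gives something absorbed into the stated error term once $b(K)$ is taken small enough (this forces the upper bound $\sigma\le 1-\epsilon$ so that $(\log T)^{2\sigma-1}\cdot(\log T)^{2-2\sigma}=(\log T)$ stays tame, and pins down the constant $50$ and the shape $\exp(-\log T/(50\log_2 T))$).

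The main obstacle, and where most of the work goes, is precisely this control of the exceptional set and the choice of parameters: one must balance the truncation length $N$, the length $y$ of the prime sum in the conditional approximation, the lower threshold $K/\log_2 T$ on $\sigma-1/2$, and the size constraint $b(K)(\log T)^{2\sigma-1}$ on the $z_i$, so that (i) the Dirichlet-polynomial approximation to $\zeta(\sigma+it)^{z_i}$ holds off a set of measure $o(T\exp(-\log T/(50\log_2 T)))$, (ii) the contribution of that set to the moment is likewise negligible, (iii) the off-diagonal sum is negligible, and (iv) the tail of the diagonal series beyond $N$ is negligible. Each of these is individually a routine estimate, but they impose competing conditions, and extracting a consistent choice — in particular showing the constant $K$ can be taken \emph{independent of} $\epsilon$ while $b$ depends on $K$ — is the technical heart of the argument. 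I would organize the proof so that the RH inputs (the pointwise bound (1.1) and the short-sum approximation for $\log\zeta$) are quoted as black boxes, reducing everything to elementary estimates on divisor sums and on $\int_T^{2T}(n/m)^{it}dt$.
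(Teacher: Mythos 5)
Your proposal contains the right ingredients (Mellin-transform smoothing, the RH bound $\log\zeta(\sigma+it)\ll(\log t)^{2-2\sigma}/\log_2 t$, the $|z|\cdot(\log T)^{2-2\sigma}/\log_2 T\approx\log T/\log_2 T$ balancing act that forces $|z_i|\leq b(K)(\log T)^{2\sigma-1}$), and your off-diagonal and tail estimates are in the right spirit. But the way you assemble them is structurally off. You keep appealing to an ``exceptional set where the short-sum approximation fails,'' with measure $\ll T\exp(-c\log T/\log_2 T)$, and propose to bound the integrand on that set using (1.1). Under RH there is no such exceptional set, and introducing one is not a harmless simplification -- it reflects a genuine gap, because no mechanism you cite actually produces an exceptional set of that tiny size.

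Here is the issue concretely. If you try to get a short-sum approximation to $\log\zeta(\sigma+it)$ and then exponentiate, you run into two problems. First, under RH the approximation of $\log\zeta$ from Lemma 2.2 holds for \emph{all} $t$ (no exceptional set), but its error is only a small power of $\log T$ -- say $O((\log T)^{-B})$ for bounded $B$ -- and after multiplying by $|z|\asymp(\log T)^{2\sigma-1}$ the resulting error in $\zeta^z$ is $\exp(O((\log T)^{2\sigma-1-B}))$, which is not $1+o(1)$ for $\sigma$ close to $1$. Second, if you instead drop RH and use zero-density to build an exceptional set (as in the proof of Theorem 1.1), that set has measure of size a fixed power of $T$, far larger than $T\exp(-c\log T/\log_2 T)$, and cannot be absorbed into the stated error after multiplying by the pointwise bound $\exp(|z|(\log T)^{2-2\sigma}/\log_2 T)$.

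The paper avoids both problems by never approximating $\log\zeta$ at all. Instead (Proposition 7.1) it approximates $\zeta(\sigma+it)^z$ \emph{directly} via the Mellin inversion
$$\frac{1}{2\pi i}\int_{(c)}\zeta(s+\sigma+it)^{z}\,\Gamma(s)\,X^{s}\,ds \;=\;\sum_{n\geq 1}\frac{d_z(n)}{n^{\sigma+it}}\,e^{-n/X},$$
with $X$ chosen as a small power of $|t|$, shifts the contour to $\mathrm{Re}(s)=-\eta$ with $\eta=1/\log_2 t$, and -- because RH guarantees no poles other than $s=0$ -- picks up $\zeta(\sigma+it)^z$ as the residue. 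The RH bound (1.1) is then used to control $\zeta(s+\sigma+it)^z$ \emph{on the shifted contour}, not on any exceptional set, and the factor $X^{-\eta}=\exp(-\tfrac{1}{8}\log t/\log_2 t)$ from the Gamma-weighted Mellin integral is what supplies the exponentially small error, with $b(K)$ chosen small enough that the $\exp(c(A)|z|(\log t)^{2-2\sigma+2\eta}/\log_2 t)$ factor does not overwhelm it. This yields a \emph{pointwise} approximation of $\zeta(\sigma+it)^z$ by the smoothed Dirichlet polynomial with error $\exp(-\log t/(20\log_2 t))$, valid for every $t$ in $[T,2T]$. After that the diagonal/off-diagonal computation you sketch goes through (using (2.3), Lemma 2.1), giving the constant $50$ with no exceptional set to worry about. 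The fix to your proposal, then, is to replace the ``short-sum approximation to $\log\zeta$ off an exceptional set'' by a genuine contour-shifted smoothed Perron formula for $\zeta^z$ itself, and to let RH enter only through (a) the absence of poles, and (b) the bound (1.1) on the shifted line.
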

As a consequence of this result we know that for any $1/2<\sigma<1$ the asymptotic formula (1.7) holds for all
 integers $k\ll (\log T)^{2\sigma-1}$ assuming RH, and one wonders if it still holds for even bigger values of $k$.
 First, using an idea of Farmer, Gonek and Hughes [5], we prove in Theorem 1.3 below that (1.7) does not hold in the
range $k\geq (B(\sigma)+\epsilon)(\log T\log_2T)^{\sigma}$, for a certain positive constant $B(\sigma)$. Moreover,
 given $0<\delta\leq \sigma$, we show in Theorem 1.4 that the validity of the asymptotic formula (1.7) in the range
$k\ll (\log T)^{\delta}$ is essentially equivalent to the fact that $\max_{t\in [T,2T]}\log |\zeta(\sigma+it)|\ll (\log T)^{1-\delta}(\log_2 T)^{O(1)}$.
 Finally in Theorem 1.5, we use a recent method of Rudnick and Soundararajan [21], to show that the lower bound for the moments in (1.7),
 holds in the range $k\ll(\log T)^{\sigma}$. We should note that these results are unconditional. For $T$ large and $1/2<\sigma<1$,
define $L_T(\sigma):=\max_{t\in [T,2T]}\log |\zeta(\sigma+it)|$, and $G_1(\sigma):=\int_{0}^{\infty}\log I_0(u)u^{-1-\frac{1}{\sigma}}du.$
\begin{thm} Let $\epsilon>0$ be small. Then the asymptotic formula (1.7) does not hold for any real number
 $k$ in the range
$$ k\geq \frac12\left((B(\sigma)+\epsilon)\log T\log_2 T\right)^{\sigma},$$ where
$B(\sigma):=\sigma^{(1-2\sigma)/(1-\sigma)}/(G_1(\sigma)(1-\sigma)).$
\end{thm}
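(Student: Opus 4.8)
The plan is to argue by contradiction. Fix a real number $k$ with $2k\ge\bigl((B(\sigma)+\epsilon)\log T\log_2T\bigr)^{\sigma}$ and suppose (1.7) holds for this $k$; I will show this forces $|\zeta(\sigma+it)|$ to be too large on $[T,2T]$. Write $L=\log T$ and $L_2=\log_2T$. The first task is to evaluate, uniformly for $k$ this large, the size of the conjectured main term $D_k:=\sum_{n\ge1}d_k(n)^2/n^{2\sigma}$.

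By multiplicativity $D_k=\prod_p F_k(p^{-2\sigma})$ with $F_k(x)=\sum_{a\ge0}\binom{k+a-1}{a}^2x^a$. The ratio of consecutive terms of $F_k(p^{-2\sigma})$ crosses $1$ near $a\asymp k/p^{\sigma}$, so the factor is governed by that range; using $\binom{k+a-1}{a}=(k^a/a!)\bigl(1+O(a^2/k)\bigr)$ there one gets
\[
\log F_k(p^{-2\sigma})=(1+o(1))\,\log I_0\!\Bigl(\tfrac{2k}{p^{\sigma}}\Bigr)
\]
uniformly, the very small primes (where $k/p^{\sigma}$ is of order $k$) and those with $k/p^{\sigma}\ll1$ being treated separately and contributing a lower‑order error. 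Summing over $p$ by partial summation, $\log D_k=(1+o(1))\int_{2}^{\infty}\log I_0(2k/t^{\sigma})\,dt/\log t$, and the substitution $u=2k/t^{\sigma}$ gives
\[
\log D_k=(1+o(1))\,\frac{G_1(\sigma)\,(2k)^{1/\sigma}}{\log(2k)},
\]
with $G_1(\sigma)=\int_0^{\infty}\log I_0(u)\,u^{-1-1/\sigma}\,du$ (convergent since $\log I_0(u)\sim u^2/4$ at $0$, $\sim u$ at $\infty$, and $1<1/\sigma<2$). Inserting $2k=((B(\sigma)+\epsilon)LL_2)^{\sigma}$, so $(2k)^{1/\sigma}=(B(\sigma)+\epsilon)LL_2$ and $\log(2k)=\sigma L_2(1+o(1))$, and using $G_1(\sigma)B(\sigma)/\sigma=\sigma^{-\sigma/(1-\sigma)}/(1-\sigma)=:c_0(\sigma)$, one obtains $\log D_k=(1+o(1))\bigl(c_0(\sigma)+\tfrac{G_1(\sigma)}{\sigma}\epsilon\bigr)L$.

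Now comes the Farmer–Gonek–Hughes idea. Since $\max_{t\in[T,2T]}|\zeta(\sigma+it)|^{2k}\ge\frac1T\int_T^{2T}|\zeta(\sigma+it)|^{2k}\,dt$, the assumption (1.7) gives
\[
\max_{t\in[T,2T]}\log|\zeta(\sigma+it)|\ \ge\ \frac{1-o(1)}{2k}\log D_k\ =\ (1-o(1))\,\frac{G_1(\sigma)(2k)^{1/\sigma-1}}{\log(2k)},
\]
which at $2k=((B(\sigma)+\epsilon)LL_2)^{\sigma}$ equals $(1-o(1))\,\tfrac{G_1(\sigma)(B(\sigma)+\epsilon)^{1-\sigma}}{\sigma}\,L^{1-\sigma}L_2^{-\sigma}$. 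A direct computation with the definition (1.5) of $A_1(\sigma)=A_X(\sigma)$ shows $\tfrac{G_1(\sigma)B(\sigma)^{1-\sigma}}{\sigma}=A_1(\sigma)^{-(1-\sigma)}(1-\sigma)^{-\sigma}=:\mathcal C^{*}(\sigma)$, the constant governing the extremal order of $\log|\zeta(\sigma+it)|$ predicted by the random Euler product model (equivalently $c_0(\sigma)=\mathcal C^{*}(\sigma)B(\sigma)^{\sigma}$); since $(B(\sigma)+\epsilon)^{1-\sigma}>B(\sigma)^{1-\sigma}$ for $\epsilon>0$, the displayed lower bound strictly exceeds $\mathcal C^{*}(\sigma)L^{1-\sigma}L_2^{-\sigma}$. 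This is incompatible with the size of $\max_{t\in[T,2T]}\log|\zeta(\sigma+it)|$ permitted by the distributional results of this paper (the estimates for $\Phi_T(\sigma,\cdot)$ underlying Theorem 1.1, together with the moment/maximum dictionary made precise in Theorem 1.4), so (1.7) cannot hold for any $k$ in the stated range.

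The heart of the matter—and the step I expect to be the main obstacle—is the uniform asymptotic for $\log D_k$ with the exact constant $G_1(\sigma)$, since it is this that fixes the precise value of $B(\sigma)$: one must control the Euler factors at the extreme small primes, where $k/p^{\sigma}$ is of order $k$ and the naive $I_0$‑approximation is not immediate, and match the partial‑summation integral cleanly to $G_1(\sigma)$. Once $\log D_k$ is pinned down, the Farmer–Gonek–Hughes comparison and the identity $c_0(\sigma)=\mathcal C^{*}(\sigma)B(\sigma)^{\sigma}$ are routine algebra, and the $+\epsilon$ is precisely the slack that turns the marginal equality at $B(\sigma)$ into a strict contradiction.
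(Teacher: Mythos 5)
Your lower bound on $m_T:=\max_{t\in[T,2T]}|\zeta(\sigma+it)|$ is exactly the paper's: assuming (1.7) at $k$, the trivial inequality $m_T^{2k}\ge\frac1T\int_T^{2T}|\zeta(\sigma+it)|^{2k}\,dt$ plus Corollary~3.4 gives $\log m_T\ge(1-o(1))G_1(\sigma)(2k)^{1/\sigma-1}/\log k$, which at $2k=((B(\sigma)+\epsilon)\log T\log_2T)^{\sigma}$ evaluates to $(1-o(1))\frac{G_1(\sigma)(B(\sigma)+\epsilon)^{1-\sigma}}{\sigma}\,(\log T)^{1-\sigma}(\log_2T)^{-\sigma}$. (Your re-derivation of the asymptotic for $D_k$ is also fine, though the paper already supplies it as Corollary~3.4 via Proposition~3.2.)

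The gap is the other half of the comparison. You assert that the lower bound ``is incompatible with the size of $\max\log|\zeta(\sigma+it)|$ permitted by the distributional results of this paper,'' citing Theorem~1.1 and Theorem~1.4. Neither of these delivers what you need, namely
$$
\log m_T\ \le\ \bigl(C(\sigma)+o(1)\bigr)\,(\log T)^{1-\sigma}(\log_2T)^{-\sigma},
\qquad C(\sigma)=\frac{G_1(\sigma)B(\sigma)^{1-\sigma}}{\sigma}.
$$
Theorem~1.1 is proved only for $\tau\le c_1(\sigma)(\log T)^{1-\sigma}/\log_2T$, which is strictly \emph{below} the scale $(\log T)^{1-\sigma}(\log_2T)^{-\sigma}$ at which your lower bound lives, so it says nothing about whether $m_T$ can be that large; extrapolating the tail formula past the proved range is precisely Montgomery's conjecture, not a theorem. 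Theorem~1.4 with $\delta=\sigma$ gives only $L_T(\sigma)\ll(\log T)^{1-\sigma}$, without the crucial $(\log_2T)^{-\sigma}$ saving; your lower bound is $o\bigl((\log T)^{1-\sigma}\bigr)$, so there is no contradiction. The upper bound with the right constant is not a priori: the paper extracts it from (1.7) itself, by the other direction of the Farmer--Gonek--Hughes argument. One shows (via Cauchy's formula and (1.7) at small $k$, or a Phragm\'en--Lindel\"of step as in Lemma~7.2) that $|\zeta'(\sigma+it)|\ll_\delta t^{\delta}$, hence $|\zeta(\sigma+it)|\ge m_T/2$ on an interval of length $T^{-\delta'}$ around the maximizing $t_0$; then $T^{-(1+\delta')}(m_T/2)^{2l}\ll\frac1T\int_T^{2T}|\zeta(\sigma+it)|^{2l}\,dt = (1+o(1))D_l$ under (1.7), giving
$$
\log m_T\ \le\ \Bigl(\tfrac{1}{2b}+\tfrac{G_1(\sigma)(2b)^{1/\sigma-1}}{\sigma}+O(\delta')\Bigr)(\log T)^{1-\sigma}(\log_2T)^{-\sigma}
$$
for $l=b(\log T\log_2T)^{\sigma}$, and the minimum over $b$ is precisely $C(\sigma)$. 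Only then does the algebra $g(c)>C(\sigma)\iff 2c>B(\sigma)^{\sigma}$ produce the contradiction once $c>\tfrac12 B(\sigma)^{\sigma}$. Without this second, $l$-optimized application of FGH, your argument proves a one-sided inequality that is not in conflict with anything.
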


\begin{thm} Let $1/2<\sigma<1$ and $0<\delta\leq \sigma$. If (1.7) holds for all positive integers $k\leq (\log T)^{\delta}$
then $L_T(\sigma)\ll (\log T)^{1-\delta}.$ Conversely if $L_T(\beta)\ll (\log T)^{1-\delta}$, for all $\beta \geq \sigma-1/\log_2 T$,
then the asymptotic formula (1.7) holds for all positive integers $k\leq c(\log T)^{\delta}/\log_2 T$ where $c>0$ is a suitably small constant.
\end{thm}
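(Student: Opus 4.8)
The plan is to prove the two implications separately. In both, the bridge between the $2k$-th moment and $L_T(\sigma)=\max_{t\in[T,2T]}\log|\zeta(\sigma+it)|$ is elementary: $|\zeta(\sigma+it)|^{2k}$ is a non-negative weight whose mean over $[T,2T]$ is forced to be $(1+o(1))M_k$, with $M_k:=\sum_{n\ge1}d_k(n)^2n^{-2\sigma}$, exactly when (1.7) holds, while $\log M_k\asymp_\sigma k^{1/\sigma}/\log k$, which for $k\le(\log T)^\delta$ and $\delta\le\sigma$ is $\ll\log T$.

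\emph{First implication.} Suppose (1.7) holds for all positive integers $k\le(\log T)^\delta$, and write $V:=L_T(\sigma)=\log|\zeta(\sigma+it_0)|$ for a suitable $t_0\in[T,2T]$ (we may assume $V\ge3$). Since $\log|\zeta(\sigma+it_0)|$ is large, $s_0=\sigma+it_0$ is far from the zeros of $\zeta$ in the relevant sense, and a standard analysis of $\zeta'/\zeta(\sigma+iu)=\sum_{|\gamma-u|\le1}(\sigma+iu-\rho)^{-1}+O(\log T)$ near $s_0$ (as in the treatment of (1.2)) shows that $\log|\zeta(\sigma+it)|\ge V/2$ throughout an interval $|t-t_0|\le h$ with $h\gg(\log T)^{-O(1)}$. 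Hence $\tfrac1T\int_T^{2T}|\zeta(\sigma+it)|^{2k}\,dt\ge\tfrac{2h}{T}e^{kV}\gg(\log T)^{-O(1)}T^{-1}e^{kV}$ for every positive integer $k$; taking $k=\lfloor(\log T)^\delta\rfloor$ and invoking (1.7) gives $e^{kV}\ll(\log T)^{O(1)}TM_k$, whence $kV\le\log T+\log M_k+O(\log_2 T)\ll\log T$ and $V\ll\log T/k\ll(\log T)^{1-\delta}$.

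\emph{Second implication.} Assume now $L_T(\beta)\ll(\log T)^{1-\delta}$ for every $\beta\ge\sigma-1/\log_2 T$, fix a positive integer $k\le c(\log T)^\delta/\log_2 T$ with $c$ a small absolute constant, put $Y=T^\theta$ with $\theta>0$ a small constant, and let $\phi$ be a fixed smooth function that is $1$ on $[0,1]$ and supported in $[0,2]$. I would approximate $\zeta(\sigma+it)^k$ by the genuine Dirichlet polynomial $\mathcal D(s):=\sum_n d_k(n)\phi(n/Y)n^{-s}$, and prove that $\tfrac1T\int_T^{2T}|\mathcal D(\sigma+it)|^2\,dt=(1+o(1))M_k$ and $\tfrac1T\int_T^{2T}|\zeta(\sigma+it)^k-\mathcal D(\sigma+it)|^2\,dt=o(M_k)$; applying the triangle inequality for $\big(\tfrac1T\int_T^{2T}|\cdot|^2\big)^{1/2}$ in both directions then yields $\tfrac1T\int_T^{2T}|\zeta(\sigma+it)|^{2k}\,dt=(1+o(1))M_k=(1+o(1))\sum_{n\ge1}d_k(n)^2n^{-2\sigma}$, i.e.\ (1.7); note this recovers the lower bound as well, so Theorem~1.5 is not needed. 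The first estimate follows by opening the square and using $\tfrac1T\int_T^{2T}(m/n)^{it}\,dt=\mathbf 1_{m=n}+O\big(1/(T|\log(m/n)|)\big)$: the diagonal equals $\sum_n d_k(n)^2\phi(n/Y)^2n^{-2\sigma}=(1+o(1))M_k$, the tail $\sum_{n>Y}d_k(n)^2n^{-2\sigma}$ being $o(M_k)$ throughout our range of $k$ (this is where the extra factor $\log_2 T$ is used when $\delta$ is close to $\sigma$), while the off-diagonal is $\ll Y^2M_k/T=o(M_k)$ since $\theta<\tfrac12$. For the $L^2$-approximation, split $[T,2T]=\mathcal G\cup\mathcal B$, where $\mathcal B$ is the $T^{\epsilon_0}$-neighbourhood (in the $t$-variable) of the imaginary parts of the zeros $\rho$ of $\zeta$ with $\mathrm{Re}(\rho)>\sigma-1/\log_2 T$ and $\mathrm{Im}(\rho)\in[T/2,3T]$; by a zero-density estimate, available precisely because $\sigma>\tfrac12$, there are $\ll T^{\Theta(\sigma)}$ such zeros with $\Theta(\sigma)<1$, so $\mathrm{meas}(\mathcal B)\ll T^{\Theta(\sigma)+\epsilon_0}=o(T)$ for $\epsilon_0$ small. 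On $\mathcal G$ a smoothed Perron formula and a shift of the contour to the line $\mathrm{Re}(w)=-1/\log_2 T$ (legitimate on $\mathcal G$, where no zero is crossed, the residue at $w=0$ producing $\zeta(\sigma+it)^k$) give
\[
\zeta(\sigma+it)^k-\mathcal D(\sigma+it)\ll Y^{-1/\log_2 T}(\log T)^{O(1)}\exp\!\Big(k\max_{u\in[T/2,3T]}\log\big|\zeta\big(\sigma-\tfrac1{\log_2 T}+iu\big)\big|\Big),
\]
which by hypothesis and $k(\log T)^{1-\delta}\ll\log T/\log_2 T$ is $\ll\exp\!\big((O(c)-\theta)\log T/\log_2 T\big)=o(1)$ once $\theta$ exceeds the implied $O(c)$; on $\mathcal B$ one uses only the crude bounds $|\zeta(\sigma+it)|^{2k}\le e^{2kL_T(\sigma)}=T^{o(1)}$ and $|\mathcal D(\sigma+it)|\le Y^{1+o(1)}$, so $\mathcal B$ contributes $\ll T^{\Theta(\sigma)+\epsilon_0-1}\big(T^{o(1)}+T^{2\theta+o(1)}\big)=o(1)$ provided $\theta<(1-\Theta(\sigma))/2$. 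Thus both estimates hold once $\theta$ lies in the nonempty range $O(c)<\theta<\min\big(\tfrac12,(1-\Theta(\sigma))/2\big)$, which forces $c$ to be suitably small.

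\emph{Main obstacle.} The crux is the $L^2$-approximation in the second implication. Shifting the Perron contour across the critical strip past the possible zeros of $\zeta$ couples three competing requirements: the shift can reach no further than $\mathrm{Re}(s)=\sigma-1/\log_2 T$, the edge of where the hypothesis on $L_T$ is available, so the gain $Y^{-1/\log_2 T}$ is only barely enough and forces $Y$ to be a small positive power of $T$; yet $Y<T^{1/2}$ is needed to kill the off-diagonal in $\tfrac1T\int|\mathcal D|^2$; and the set of $t$ near zeros must have measure $o(T)$, with the trivial bound $e^{2kL_T(\sigma)}=T^{o(1)}$ on it not overwhelming the zero-density saving. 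Meeting all three at once is precisely why $k$ is restricted to $k\le c(\log T)^\delta/\log_2 T$ with $c$ small and why $\delta\le\sigma$ is required (the latter also keeping the mass of $d_k(n)^2n^{-2\sigma}$ below $n=T^\theta$). The propagation estimate in the first implication is a secondary but genuine difficulty: many zeros clustering near the point realising $L_T(\sigma)$ could a priori shorten the interval on which $|\zeta|$ is large, and checking that the crude lower bound on its length survives requires a careful (if standard) local study of $\zeta'/\zeta$.
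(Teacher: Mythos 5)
Your overall strategy is the right one and matches the paper's, but there is a genuine gap in the first implication that you yourself flag as a ``genuine difficulty,'' and an unnecessary complication in the second that points to a misconception; let me address each.

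\textbf{First implication.} The propagation estimate ``$\log|\zeta(\sigma+it)|\geq V/2$ on $|t-t_0|\leq h$ with $h\gg(\log T)^{-O(1)}$'' via a local study of $\zeta'/\zeta$ is unproved, and also unnecessary: a polynomial loss in $T$ is harmless here. The paper's Lemma~7.2 instead bounds $|\zeta'(\sigma+it)|\ll t^{1/6}$ \emph{unconditionally}, by applying Cauchy's integral formula on a circle of radius $\sigma/2-1/4$ and inserting the classical convexity bound $|\zeta(z)|\ll t^{1/6}$ for $\mathrm{Re}(z)\geq 1/2$. This gives $|\zeta(\sigma+it)-\zeta(\sigma+it_0)|\ll 1$, hence $|\zeta(\sigma+it)|\geq m_T/2$, on the interval $|t-t_0|<T^{-1/6}$ -- a much smaller window than yours, but plenty: one still gets $m_T^{2k}\ll T^{O(1)}\sum_n d_k(n)^2 n^{-2\sigma}$, so (using Corollary~3.4) $\log m_T\ll \log T/k + k^{1/\sigma-1}/\log k \ll (\log T)^{1-\delta}$ for $k=\lfloor(\log T)^\delta\rfloor$ and $\delta\leq\sigma$. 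You should replace your $\zeta'/\zeta$ analysis with this crude $\zeta'$ bound; it sidesteps the zero-clustering issue entirely and makes the argument complete.

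\textbf{Second implication.} The decomposition $[T,2T]=\mathcal G\cup\mathcal B$, the zero-density input, and the bounds on $\mathcal B$ are all superfluous, and the phrase ``legitimate on $\mathcal G$, where no zero is crossed'' suggests a misunderstanding. Since $k$ is a \emph{positive integer}, $w\mapsto\zeta(w+\sigma+it)^k$ is holomorphic except at the single pole $w=1-\sigma-it$, whose imaginary part has size $\asymp T$ and hence lies nowhere near the rectangular contour. Crossing a zero $\rho$ of $\zeta$ costs nothing: the integrand merely vanishes to order $\geq k$ there, and Cauchy's theorem applies. So the shift to $\mathrm{Re}(w)=-1/\log_2 T$ is valid for \emph{every} $t\in[T,2T]$, yielding a pointwise approximation of $\zeta(\sigma+it)^k$ by a short Dirichlet polynomial. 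This is precisely the paper's observation: Proposition~7.1 uses RH only (i) to avoid crossing the branch points of $\zeta^z$ when $z\notin\mathbb{N}$, which is vacuous for $z=k\in\mathbb{N}$, and (ii) to bound $\log\zeta$ on the shifted line, a role now played by the hypothesis $L_T(\beta)\ll(\log T)^{1-\delta}$ for $\beta\geq\sigma-1/\log_2 T$. After that, the mean-square computation follows Theorem~1.2 verbatim. Your $L^2$-approximation scheme does reach the same conclusion, but it routes through a zero-density estimate and a set $\mathcal B$ that simply never need to appear.
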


\begin{thm} Let $0<\alpha<1$ be a real number. Then there exists $c>0$ such that uniformly for any
$1/2+1/(\log T)^{\alpha}<\sigma<1-\epsilon$ and all positive integers $k\leq c((2\sigma-1)\log T)^{\sigma}$, we have
$$\frac{1}{T}\int_{T}^{2T}|\zeta(\sigma+it)|^{2k}dt\geq \sum_{n=1}^{\infty}\frac{d_k^2(n)}{n^{2\sigma}}
\left(1+O\left(\exp\left(-\frac{\log T}{10k\log_2 T}\right)\right)\right).$$
\end{thm}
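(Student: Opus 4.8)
The plan is to adapt the lower-bound method of Rudnick and Soundararajan to the unitary family $\{\zeta(\sigma+it):t\in[T,2T]\}$. Fix a length $y=T^{\vartheta}$, with $\vartheta=\vartheta(\sigma,k)$ a small parameter (of size roughly $1/k$, to be tuned), and put $A(t):=\sum_{n\le y}n^{-\sigma-it}$, a short Dirichlet polynomial modelling $\zeta(\sigma+it)$; let $\mathcal N:=\frac1T\int_T^{2T}|\zeta(\sigma+it)|^{2}|A(t)|^{2k-2}\,dt$ and $\mathcal D:=\frac1T\int_T^{2T}|A(t)|^{2k}\,dt$. Applying Hölder's inequality on $[T,2T]$ (with the probability measure $\frac1T\,dt$) to the functions $|\zeta(\sigma+it)|^{2}$ and $|A(t)|^{2k-2}$ with conjugate exponents $k$ and $k/(k-1)$ gives $\mathcal N\le\big(\frac1T\int_T^{2T}|\zeta(\sigma+it)|^{2k}\,dt\big)^{1/k}\mathcal D^{(k-1)/k}$, hence
$$\frac1T\int_T^{2T}|\zeta(\sigma+it)|^{2k}\,dt\ \ge\ \frac{\mathcal N^{\,k}}{\mathcal D^{\,k-1}}.$$
Everything then reduces to showing that $\mathcal N$ and $\mathcal D$ each equal $\sum_{n\ge1}d_k(n)^2 n^{-2\sigma}$ up to a factor $1+O\big(\exp(-\log T/(12k\log_2 T))\big)$: raising to the powers $k$ and $k-1$ and dividing loses at most a further factor $1+O\big(k\exp(-\log T/(12k\log_2 T))\big)$, which is absorbed into $1+O\big(\exp(-\log T/(10k\log_2 T))\big)$ in our range, since $k\log k\ll\log T/\log_2 T$ there.

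The denominator is a pure Dirichlet-polynomial mean value. Writing $A(t)^{k}=\sum_{n\le y^{k}}d_k^{(y)}(n)\,n^{-\sigma-it}$, where $d_k^{(y)}(n)$ counts the factorisations $n=n_1\cdots n_k$ with every $n_i\le y$ (so $d_k^{(y)}(n)=d_k(n)$ for $n\le y$ and $d_k^{(y)}(n)\le d_k(n)$ always), the Montgomery--Vaughan mean value theorem gives $\mathcal D=\sum_{n\le y^{k}}d_k^{(y)}(n)^2 n^{-2\sigma}+O\big(\frac1T\sum_{n\le y^{k}}d_k^{(y)}(n)^2 n^{1-2\sigma}\big)$. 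Since $\sigma>1/2$ the series $\sum_n d_k(n)^2 n^{-2\sigma}$ converges, so the main term lies between $\sum_{n\le y}d_k(n)^2 n^{-2\sigma}$ and $\sum_n d_k(n)^2 n^{-2\sigma}$, while the off-diagonal is $\ll T^{-\delta}$ for some $\delta=\delta(\sigma,\vartheta)>0$ once $y^k$ is a sufficiently small power of $T$. For the numerator, $\mathcal N=\frac1T\int_T^{2T}\zeta(\sigma+it)\,\overline{\zeta(\sigma+it)}\,|A(t)|^{2k-2}\,dt$ is a twisted second moment: replacing $\zeta(\sigma+it)$ by $\sum_{n\le 2T}n^{-\sigma-it}+O(T^{-\sigma})$ on $[T,2T]$, the contribution of the $O(T^{-\sigma})$ term is bounded by Cauchy--Schwarz using the classical convexity bound $\zeta(\sigma+it)\ll t^{(1-\sigma)/2+\varepsilon}$ (legitimate since $\sigma>1/2$) and the mean-square bound $\frac1T\int_T^{2T}|\zeta(\sigma+it)|^2\,dt\ll1$, so it is a small power of $T$; what remains is the mean value of the Dirichlet polynomial $\big(\sum_{m\le 2T}m^{-\sigma-it}\big)A(t)^{k-1}$, whose diagonal is $\sum_n b(n)^2 n^{-2\sigma}$ with $b(n)=(\mathbf 1 * d_{k-1}^{(y)})(n)$, hence equal to $d_k(n)$ for $n\le y$ (because $\mathbf 1 * d_{k-1}=d_k$) and $\le d_k(n)$ in general. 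So $\mathcal N$ is once more $\sum_n d_k(n)^2 n^{-2\sigma}$ minus the $n>y$ terms, where the truncations in $b$ become active, minus a negligible off-diagonal.

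The step I expect to be the main obstacle is the simultaneous calibration of $y$. On the one hand, $\sum_n d_k(n)^2 n^{-2\sigma}$, read off its Euler product, has its mass concentrated on integers $n$ of size $\exp\big(c(\sigma)k^{1/\sigma}\big)$ with $c(\sigma)\asymp \sigma/\big((1-\sigma)(2\sigma-1)\big)$, so to recover all but a factor $1+O(\exp(-\log T/(12k\log_2 T)))$ of the full sum one needs $\log y$ somewhat larger than $c(\sigma)k^{1/\sigma}$; moreover one must verify that for $y<n\le y^{k}$ the constrained coefficients $d_k^{(y)}(n)$ and $b(n)$ still account for all but a negligible part of $d_k(n)$ — a careful but essentially routine analysis of these constrained-divisor sums, combined with a sharp bound for the tail $\sum_{n>y}d_k(n)^2 n^{-2\sigma}$. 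On the other hand $y$ cannot be too large, or the Montgomery--Vaughan off-diagonal in $\mathcal D$, the analogous off-diagonal in the twisted second moment, and the error in approximating $\zeta$ cease to save a power of $T$; this forces roughly $\log y\ll \log T/k$. Matching these two constraints is exactly what pins down the admissible range $k\le c\big((2\sigma-1)\log T\big)^{\sigma}$ and, after the bookkeeping above, produces the error term $\exp(-\log T/(10k\log_2 T))$. All inputs are unconditional: the only facts used about $\zeta$ beyond its Dirichlet series are the convexity and second-moment bounds on $\mathrm{Re}(s)=\sigma>1/2$.
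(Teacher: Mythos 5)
The overall architecture you propose — a Hölder-inequality lower bound comparing $\int|\zeta|^{2k}$ against moments of a short Dirichlet polynomial, together with a careful analysis of the restricted divisor sums $\sum d_k^{(y)}(n)^2 n^{-2\sigma}$ — is indeed the Rudnick–Soundararajan framework that the paper uses, and your back-end discussion of the two competing constraints on $y$ correctly explains where the threshold $k\ll((2\sigma-1)\log T)^{\sigma}$ comes from.

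However, there is a genuine gap in your numerator. You set $\mathcal N=\frac1T\int_T^{2T}|\zeta(\sigma+it)|^{2}|A(t)|^{2k-2}\,dt$, a \emph{twisted second moment} of $\zeta$, and claim that after replacing $\zeta(\sigma+it)$ by $\sum_{m\le 2T}m^{-\sigma-it}+O(T^{-\sigma})$ the remaining off-diagonal is ``negligible.'' It is not. The product $\bigl(\sum_{m\le 2T}m^{-\sigma-it}\bigr)A(t)^{k-1}$ is a Dirichlet polynomial of length $\approx 2T\,y^{k-1}\gg T$, and the Montgomery--Vaughan mean value theorem gives an error of size $\frac1T\sum_{n\le 2Ty^{k-1}}b(n)^2n^{1-2\sigma}$; since $\sum_n d_k(n)^2n^{1-2\sigma}$ diverges for $\sigma<1$, this error is governed by the upper endpoint of the sum and is roughly $T^{1-2\sigma+\varepsilon}y^{(k-1)(2-2\sigma+\varepsilon)}$, which is a \emph{positive} power of $T$ (hence swamps the main term, which is $O(1)$) once $\sigma$ is not close to $1$ — and is catastrophic for $\sigma$ near $1/2$, exactly the regime Theorem 1.5 must cover. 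Evaluating a twisted second moment of $\zeta$ on $\mathrm{Re}(s)=\sigma$ with a twist of length up to $T^{1/6}$ is itself a nontrivial problem requiring either the approximate functional equation (hence dual off-diagonals) or shifted-convolution input, none of which appears in your sketch.

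The paper sidesteps this entirely by using a \emph{twisted first moment}: it sets $S_1:=\frac1T\int_T^{2T}\zeta(\sigma+it)\,\mathcal D(t)^{k-1}\overline{\mathcal D(t)}^{k}\,dt$ with $\mathcal D(t)=\sum_{n\le x}n^{-\sigma-it}$, and applies Hölder in the form $\frac1T\int|\zeta|^{2k}\ge|S_1|^{2k}/S_2^{2k-1}$ where $S_2=\frac1T\int|\mathcal D|^{2k}$. Because $\zeta$ appears only to the first power, after writing $\zeta=\sum_{n\le T}n^{-\sigma-it}+O(T^{-\sigma})$ the diagonal equation $an=b$ (with $a\le x^{k-1}$, $b\le x^{k}\le T^{1/6}$) automatically forces $n\le T^{1/6}$, and the off-diagonal terms split cleanly into $n\le 2T^{1/6}$ (where $|\log(b/(an))|\gg T^{-1/3}$) and $n>2T^{1/6}$ (where $|\log(b/(an))|\gg1$); both contributions are power-saving with no reference to any bound for $\zeta$ beyond the partial-sum approximation. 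That structural advantage — one copy of $\zeta$, not two — is precisely what your version throws away, and it is what makes the ``routine'' off-diagonal estimate actually routine. If you replace your $\mathcal N$ with the paper's $S_1$ (keeping everything else, including your reduction from the constrained to the full divisor sum, which matches the spirit of the paper's Proposition 7.4 based on the random model $\zeta(\sigma,\X)$ and Minkowski's inequality), the argument goes through.
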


{\it Remark 2.}  If (1.7) holds in the range  $k\leq \frac12\left((B(\sigma)+o(1))\log T\log_2 T\right)^{\sigma}$
then the proof of Theorem 1.3 gives that $L_T(\sigma)= (C(\sigma)+o(1))(\log T)^{1-\sigma}(\log_2 T)^{-\sigma}$,
where $C(\sigma):=G_1(\sigma)^{\sigma}\sigma^{-2\sigma}(1-\sigma)^{\sigma-1}$. Moreover, if this is the case then
the lower bound in Theorem 1.5 does not hold in the range $k\geq c(\log T\log_2T)^{\sigma}$ for any $c>\frac12(B(\sigma))^{\sigma}$.

Concerning other families of $L$-functions, P.D.T.A Elliott [4] has established the analogue of Bohr and Jessen's
result for the family of quadratic Dirichlet $L$-functions, at a fixed point $s$, with $1/2<\text{Re}(s)\leq 1$.
Furthermore he showed that the limiting distribution function for these values is smooth, and obtained a formula
for its characteristic function. In [8], Granville and Soundararajan studied the distribution of extreme values of
this family at $s=1$ and proved that the tail of the distribution has a similar asymptotic to $\Phi_T(1,\tau)$ (see (1.3))
but with a different constant $a_1$. Inside the critical strip, our method can be generalized to provide estimates for
the distribution of large values of families of $L$-functions, at a fixed point  $1/2<\sigma<1$ (analogous results are
also proved for the distribution of small values). As a first example we show that the corresponding result for the values
 $\log|L(\sigma,\chi)|$ (and $\arg L(\sigma,\chi)$) as $\chi$ varies over non-principal characters modulo a large prime
$q$, holds almost verbatim, just changing $T$ to $q$ in Theorem 1.1 (see Theorem 4.5 in section 4).

 Furthermore, let $\Phi_{x}^{\text{quad}}(\sigma,\tau)$ be the proportion of fundamental discriminants $d$ such that
 $|d|\leq x$ and  $\log L(\sigma,\chi_d)>\tau.$ That is
$$ \Phi_{x}^{\text{quad}}(\sigma,\tau):=\left(\sums_{|d|\leq x}1\right)^{-1}
\sums_{\substack{|d|\leq x\\ \log |L(\sigma,\chi_d)|>\tau}} 1,$$
where $\sums$ indicates that the sum is over fundamental discriminants. Exploiting ideas of Granville and Soundararajan [8]
  and appealing to a remarkable result of Graham and Ringrose [6] on bounds for character sums to smooth moduli,
we increase the range of uniformity where $\Phi_{x}^{\text{quad}}(\sigma,\tau)$ can be estimated from a range
$\tau \ll (\log x)^{1-\sigma}/\log_2 x$ (the analogue of Theorem 1.1) to a range $\tau\ll(\log x\log_4x)^{1-\sigma}/\log_2 x$.
 We should note that this improvement is of some interest since we believe that the maximum of the values $\log L(\sigma,\chi_d)$
 over fundamental discriminants $d$ with $|d|\leq x$ is $\asymp (\log x)^{1-\sigma}/(\log_2 x)^{\sigma}.$

\begin{thm}Let $1/2<\sigma<1$ and $x$ be large. Then there exists $c_2(\sigma)>0$ such that uniformly
in the range $1\ll \tau\leq c_2(\sigma)(\log x\log_4x)^{1-\sigma}/\log_2 x$ we have
$$
\Phi_{x}^{\text{quad}}(\sigma,\tau)=\exp\left(-A_2(\sigma)\tau^{\frac{1}{(1-\sigma)}}(\log\tau)^{\frac{\sigma}{(1-\sigma)}}
\left(1+O\left(\frac{1}{\sqrt{\log\tau}}+r(y,\tau)\right)\right)\right),
$$
where $y= \log x\sqrt{\log_3 x}$, and $A_2(\sigma)=A_X(\sigma)$ (see (1.5)) with $X$ being a random variable taking the
values $1$ and $-1$ with equal probability $1/2$. In this case we should note that  $\ex(e^{tX})=\cosh(t)$.
\end{thm}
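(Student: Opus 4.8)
The plan is to run the argument behind Theorem~1.1 with the family $\{\zeta(\sigma+it)\}_{t\in[T,2T]}$ replaced by the family of Kronecker symbols $\{\chi_d\}$, with $d$ ranging over fundamental discriminants $|d|\le x$, and to extract the extra factor $\log_4 x$ in the admissible range of $\tau$ from the Graham--Ringrose bounds [6] for character sums to smooth moduli. First I would reduce to a short prime sum: off an exceptional set of fundamental discriminants $|d|\le x$ of negligible size --- controlled by a mean-value (zero-density type) estimate for the family, exactly as in the proof of Theorem~4.5 --- one has
$$\log|L(\sigma,\chi_d)|=\sum_{p\le y}\frac{\chi_d(p)}{p^{\sigma}}+O(1),\qquad y:=\log x\,\sqrt{\log_3 x},$$
the higher prime-power terms of $\log|L(\sigma,\chi_d)|$ contributing only $O(1)$ since $\sigma$ is fixed in $(1/2,1)$, and no real part being needed because $\chi_d(p)$ is already real. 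Thus it suffices to study $P_d:=\sum_{p\le y}\chi_d(p)p^{-\sigma}$, whose natural probabilistic model is $\sum_p X(p)p^{-\sigma}$ with the $X(p)$ independent, each taking the values $\pm1$ with probability $1/2$, so that $\ex(e^{uX})=\cosh(u)$ and the associated random Euler product is $L(\sigma,\X)=\prod_p\big(1-X(p)/p^{\sigma}\big)^{-1}$.

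Next I would compute moments. For a positive integer $k$,
$$\sums_{|d|\le x}P_d^{\,k}=\sum_{p_1,\dots,p_k\le y}\frac{1}{(p_1\cdots p_k)^{\sigma}}\sums_{|d|\le x}\chi_d(p_1\cdots p_k),$$
and, writing $n=p_1\cdots p_k$ (a $y$-smooth integer with at most $k$ prime factors), the inner sum equals, by quadratic reciprocity and up to a bounded factor plus negligible lower-order terms, an incomplete character sum of length $\asymp x$ to a modulus dividing $8$ times the squarefree kernel of $n$. When $n$ is a perfect square this contributes the main term $\asymp\sums_{|d|\le x}1$; otherwise the modulus is $y$-smooth, and Graham--Ringrose yields cancellation --- of shape $x\exp(-c\log x/\log y)$ throughout their admissible range --- which beats the P\'olya--Vinogradov bound $\sqrt{n}\log n$ once $y$ grows faster than any fixed power of $\log x$. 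Summing over $n$ shows that for all $k$ up to a threshold $k_0$, whose size is governed by the Graham--Ringrose range and exceeds what P\'olya--Vinogradov alone would allow, the family moments match the model moments,
$$\sums_{|d|\le x}P_d^{\,k}=\Big(\sums_{|d|\le x}1\Big)\,\ex\Big(\Big(\sum_{p\le y}X(p)p^{-\sigma}\Big)^{\!k}\Big)\big(1+o(1)\big),$$
and it is this enlarged $k_0$ that pushes the range of validity of the final formula from $\tau\ll(\log x)^{1-\sigma}/\log_2 x$ (the analogue of Theorem~1.1) up to $\tau\le c_2(\sigma)(\log x\log_4 x)^{1-\sigma}/\log_2 x$.

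With the moments in hand I would carry out the large-deviation analysis of the model. By Lemma~3.1 and the machinery underlying Theorem~1.1, applied to $L(\sigma,\X)$ with $X=\pm1$, the model tail is
$$\pr\big(\log|L(\sigma,\X)|>\tau\big)=\exp\Big(-A_X(\sigma)\,\tau^{1/(1-\sigma)}(\log\tau)^{\sigma/(1-\sigma)}\big(1+o(1)\big)\Big),$$
where $A_X(\sigma)$ is built from $G_X(\sigma)=\int_0^{\infty}\log\cosh(u)\,u^{-1/\sigma-1}\,du$ as in (1.5); here the saddle point corresponds to aligning the $X(p)$ for $p$ up to a cutoff $z$ with $\sum_{p\le z}p^{-\sigma}\asymp\tau$, i.e.\ $z\asymp(\tau\log\tau)^{1/(1-\sigma)}$, which forces one to control the $k$-th moment for $k$ comparable to $\pi(z)$. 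The upper bound for $\Phi_x^{\text{quad}}(\sigma,\tau)$ then follows from Markov's inequality applied to a suitable exponential moment $\sums_{|d|\le x}e^{\lambda P_d}$ (evaluated via the moment matching above), $\lambda$ chosen near the saddle point --- so one needs $\pi(z)\le k_0$, and this is exactly where the range of $\tau$ enters. The lower bound follows either from a Beurling--Selberg majorant/minorant argument or by exhibiting the expected proportion $\asymp\prod_{p\le z}(\cdots)$ of fundamental discriminants $d$ for which $\chi_d(p)$ has the favourable sign for every $p\le z$ --- the near-independence of $\big(\chi_d(p)\big)_{p\le z}$ being, again, supplied by the character-sum input. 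Matching the two bounds and tracking constants yields $A_2(\sigma)=A_X(\sigma)$ with $X=\pm1$.

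The main obstacle, and the technical heart of the theorem, is the simultaneous optimization of the two parameters: Graham--Ringrose saves in character sums to a $y$-smooth modulus only up to moduli of size roughly $\exp\big((\log x)^{O(1)}\big)$, with the saving degrading as the modulus grows, whereas the integers $n=p_1\cdots p_k$ entering the $k$-th moment range up to $y^k$ and $k$ itself must be taken comparable to $\pi(z)$ with $z\asymp(\tau\log\tau)^{1/(1-\sigma)}$. Requiring $y^k$ to stay within the Graham--Ringrose range while $y$ is taken as large as $\log x\sqrt{\log_3 x}$ is what pins down the endpoint $\tau\asymp(\log x\log_4 x)^{1-\sigma}/\log_2 x$, and obtaining the error term $r(y,\tau)$ of the stated quality requires carefully weighing the off-diagonal (non-square $n$) contribution against the exponentially small main term. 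Subsidiary but non-trivial points --- the mean-value estimate giving the Dirichlet-polynomial approximation off a set of the right (tiny) size, the inclusion--exclusion over square factors needed to restrict to fundamental discriminants, the ramified primes $p\mid d$ at which $\chi_d(p)=0$, and the higher prime-power terms --- are handled as in [8] and in the proof of Theorem~1.1.
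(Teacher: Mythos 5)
Your overall strategy (approximate by a short Euler product, match moments to a random model via Graham--Ringrose, then apply the saddle-point analysis from Theorem 1.9) is the same as the paper's, and the reduction to $\pm 1$ via the (ULD) hypothesis is sound: even though the paper works with the more faithful model $X_2(p)$ taking $\pm 1$ with probability $p/(2(p+1))$ and $0$ with probability $1/(p+1)$ (needed to match the diagonal terms $\sums_{|d|\le x}\chi_d(n^2)$ exactly), the limiting variable $Z$ is your $\pm 1$ variable, so $A_2(\sigma)$ comes out with $\cosh$ either way.

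However, your explanation of the $\log_4 x$ gain --- the actual content of this theorem --- is off target, and this is not a cosmetic issue. You locate the constraint in ``requiring $y^k$ to stay within the Graham--Ringrose range,'' but the paper's Proposition 5.2 shows that the GR input gives moments of $L(\sigma,\chi_d;y)^r$ (matching the model) uniformly in the much larger range $|r|y^{1-\sigma}\ll\log x\log_2 y$, which after the saddle-point relation corresponds to $\tau\ll(\log x\log_3 x)^{1-\sigma}/\log_2 x$, strictly larger than the claimed $(\log x\log_4 x)^{1-\sigma}/\log_2 x$. The bottleneck is not the moment computation. It is the intermediate tail $\sum_{y\le p\le z}\chi_d(p)/p^{\sigma}$ with $z\asymp(\log x)^{3/(\sigma-1/2)}$, which sits between the zero-density approximation (reaching down to primes $\le z$) and the short Euler product (up to $y$); this tail is controlled by a plain P\'olya--Vinogradov moment bound (the paper's Lemma 5.3), whose admissible range $k\le\log x/(6\log z)$ is what caps the final $\tau$-range. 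The gain over the analogue of Theorem 1.1 comes from taking $y=\log x\log_3 x$ (allowed because GR handles the off-diagonal terms at that larger $y$) and correspondingly taking $\epsilon=r(\log x\sqrt{\log_3 x},\tau)$; this produces an extra factor $(\log_3 x)^{\sigma-1/2}$ in the ratio $\epsilon^2\tau^2 y^{2\sigma-1}\log y/k$, and upon raising to the $k$-th power with $k\asymp\log x/\log_2 x$ the exceptional-set measure becomes $\exp(-b\log x\log_4 x/\log_2 x)$, which is $o(\Phi_x(2\tau;y))$ precisely for $\tau\le c_2(\sigma)(\log x\log_4 x)^{1-\sigma}/\log_2 x$. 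Your write-up never isolates this intermediate tail and the associated large-sieve step, so the source of the $\log_4 x$ is invisible; and your stated choice $y=\log x\sqrt{\log_3 x}$ mixes up the Euler-product cutoff (which should be $\log x\log_3 x$) with the parameter appearing inside $\epsilon$.

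Two secondary points. First, your proposed lower bound by ``exhibiting the expected proportion $\asymp\prod_{p\le z}(\cdots)$ of $d$ with favourable signs'' only yields a constant of order $\log 2$ rather than the saddle-point constant $A_2(\sigma)$; the paper obtains both bounds simultaneously from the Laplace transform (Theorem 1.9), bracketing $\Phi_x(\tau;y)$ between $\Phi_x(\tau(1\pm\epsilon/2\sigma);y)$ without any sign-fixing argument, and you should do the same. Second, you compute moments of the partial sum $P_d^k$ rather than of the Euler product $L(\sigma,\chi_d;y)^r$; the latter is what matches the model expectation exactly (via $\sum_{n\in S(y)}d_r(n^2)n^{-2\sigma}\prod_{p|n}p/(p+1)$) and avoids carrying $O(1)$ discrepancies through the saddle-point argument.
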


Let $q$ be a large prime and denote by $S_2^p(q)$ the set of arithmetically normalized primitive holomorphic cusp
forms of weight $2$ and level $q$. Then every $f\in S_2^p(q)$ has a Fourier expansion
$f(z)=\sum_{n=1}^{\infty} \lambda_f(n)\sqrt{n}e^{2\pi in z}, \text{ for } \text{Im}(z)>0.$ The $L$-function attached
 to $f$ is defined  for Re$(s)>1$ by $L(s,f)=\sum_{n=1}^{\infty}\lambda_f(n)n^{-s}$.
In [2],  Cogdell and Michel obtained asymptotic formulas for complex moments of this family at $s=1$;
and Liu, Royer and Wu [17] proved that the tail of the distribution of the values $\log L(1,f)$ has the same shape as (1.3).
Combining our method with a zero density result of Kowalski and Michel [13], we get the analogue of
 Theorem 1.1 for this family. In view of the Petersson trace formula, it is arguably more natural to consider
the weighted arithmetic distribution function
$$ \Phi_{q}^{\text{aut}}(\sigma,\tau):=\left(\sum_{f\in S_2^p(q)}\omega_f\right)^{-1}
\sum_{\substack{f\in S_2^p(q)\\ \log L(\sigma,f)>\tau}} \omega_f,$$
where $ \omega_f:=1/(4\pi \langle f,f\rangle)$ is the usual harmonic weight, and $\langle f,g\rangle$ is
the Petersson inner product on the space $\Gamma_0(q)\backslash\Bbb{H}$. We prove
\begin{thm}
Let $1/2<\sigma<1$, and $q$ be a large prime. Then there exists $c_3(\sigma)>0$ such that uniformly in the
 range $1\ll \tau\leq c_3(\sigma)(\log q)^{1-\sigma}/\log_2 q$ we have
$$ \Phi_{q}^{\text{aut}}(\sigma,\tau)=\exp\left(-A_3(\sigma)\tau^{\frac{1}{1-\sigma}}(\log\tau)^{\frac{\sigma}{1-\sigma}}\left(1+O\left(\frac{1}{
\sqrt{\log\tau}}+ r(\log q,\tau)\right)\right)\right),
$$
where $A_3(\sigma)=A_X(\sigma)$ (see (1.5)) with $X=2\cos\theta$ and $\theta$ being a random variable distributed
on $[0,\pi]$ according to the Sato-Tate measure $\frac{2}{\pi}\sin^2t dt$.
\end{thm}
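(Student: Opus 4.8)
\smallskip
\noindent\emph{Proof proposal for Theorem 1.9.} The plan is to run the proof of Theorem 1.1 essentially verbatim, with three substitutions: the Dirichlet series of $\zeta$ is replaced by that of $L(s,f)$; the mean-value estimates for Dirichlet polynomials that controlled the $t$-average are replaced by the Petersson (harmonic) trace formula for the family $S_2^p(q)$; and the model variable ``uniform on the unit circle'' is replaced by $X=2\cos\theta$ with $\theta$ distributed according to $\tfrac2\pi\sin^2t\,dt$. Concretely, two inputs must be supplied in place of the corresponding ones for $\zeta$: (i) an approximation $\log L(\sigma,f)=\sum_{p\le y}\lambda_f(p)p^{-\sigma}+(\text{small})$ valid for all but a set of $f\in S_2^p(q)$ of negligible harmonic mass, with $y$ a fixed power of $\log q$; and (ii) the evaluation of the harmonically weighted high moments (equivalently the Laplace transform) of $\sum_{p\le y}\lambda_f(p)p^{-\sigma}$, matched against the corresponding quantity for the random Euler product $L(\sigma,\X)=\prod_p(1-X(p)p^{-\sigma}+p^{-2\sigma})^{-1}$.

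For input (i): since $q$ is prime, $S_2^p(q)$ is the whole cuspidal space, the Ramanujan bound $|\lambda_f(p)|\le 2$ holds, and $\log L(s,f)=\sum_p\lambda_f(p)p^{-s}+O(1)$ for $\text{Re}(s)>1$, the $O(1)$ (from the prime-power terms) being absolutely bounded because $\sigma>\tfrac12$. If $L(s,f)$ is zero-free in a suitable rectangle to the right of the line $\text{Re}(s)=\sigma$, a standard contour shift gives $\log L(\sigma,f)=\sum_{p\le y}\lambda_f(p)p^{-\sigma}+O(r(\log q,\tau)\cdot(\cdots))$; the zero-density estimate of Kowalski and Michel [13] bounds the harmonic mass of the remaining exceptional $f$ by $q^{-\delta(\sigma)}$, which is negligible against the target size $\exp(-A_3(\sigma)\tau^{1/(1-\sigma)}(\log\tau)^{\sigma/(1-\sigma)})\ge\exp(-c\log q/\log_2 q)$ throughout the range $\tau\le c_3(\sigma)(\log q)^{1-\sigma}/\log_2 q$.

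For input (ii): one expands $\exp(z\sum_{p\le y}\lambda_f(p)p^{-\sigma})$ for a real parameter $z>0$, uses the Hecke relations to rewrite each monomial $\prod_p\lambda_f(p)^{a_p}$ with $\sum a_p\le 2k$ as a combination $\sum_n c(n)\lambda_f(n)$ with $n\le y^{2k}$, and applies the Petersson formula $\sum_{f\in S_2^p(q)}\omega_f\lambda_f(m)\lambda_f(n)=\delta_{m=n}+(\text{Kloosterman terms})$ for $(mn,q)=1$ (the single prime $p=q$, where $\lambda_f(q)$ is tiny, is handled trivially). After dividing by $\sum_f\omega_f=1+o(1)$, the diagonal contribution is exactly $\prod_p\ex((2\cos\theta)^{a_p})=\ex(\prod_pX(p)^{a_p})$, because $\lambda_f(p^i)=U_i(\cos\theta_f(p))$ are the Chebyshev polynomials of the second kind and $\int_0^\pi U_i(\cos\theta)\tfrac2\pi\sin^2\theta\,d\theta=\delta_{i=0}$; thus the diagonal reproduces, term by term, the Laplace transform of $\sum_{p\le y}X(p)p^{-\sigma}$ with $X=2\cos\theta$, for which $\ex(e^{uX})=\tfrac2\pi\int_0^\pi e^{2u\cos t}\sin^2t\,dt=I_1(2u)/u$ (the exact analogue of $\ex(e^{uX})=I_0(u)$ in Theorem 1.1). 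The off-diagonal Kloosterman terms, whose moduli are divisible by $q$, contribute a total that is a negative power of $q$ as long as the conductors $mn$ occurring above stay below a fixed power of $q$; since the upper tail at level $\tau$ is governed by the primes up to $w\asymp(\tau\log\tau)^{1/(1-\sigma)}$ with moment order $2k\asymp w/\log w$, this holds precisely when $w\ll\log q$, i.e.\ in the stated range, and it is this constraint that forces $c_3(\sigma)$ to be small. Feeding the matched Laplace transform into the large-deviation estimates of Section 3 (which need only that $X$ is bounded with $\ex(X)=0$; here $\ex(2\cos\theta)=0$) yields $\Phi_q^{\text{aut}}(\sigma,\tau)=\exp(-A_X(\sigma)\tau^{1/(1-\sigma)}(\log\tau)^{\sigma/(1-\sigma)}(1+O(1/\sqrt{\log\tau}+r(\log q,\tau))))$, and $A_3(\sigma)=A_X(\sigma)$ by (1.5). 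The matching lower bound for $\Phi_q^{\text{aut}}(\sigma,\tau)$ comes, as for Theorem 1.1, from a second-moment (positivity) argument on the Petersson side — matching enough moments of the Dirichlet polynomial with those of $\sum_{p\le y}X(p)p^{-\sigma}$ to force the two upper tails to agree down to the target size — together with input (i) to pass from the Dirichlet polynomial to $\log L(\sigma,f)$.

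The only genuinely new ingredient compared with Theorem 1.1 is the replacement of the Dirichlet-polynomial mean-value bounds by the Petersson formula, and the point requiring the most care there is input (ii): keeping the off-diagonal Kloosterman contribution negligible uniformly over the entire range $\tau\le c_3(\sigma)(\log q)^{1-\sigma}/\log_2 q$, which is exactly the constraint that pins down that range. The remaining points — that the prime-power terms with $m\ge 2$ and the prime $p=q$ produce only an $O(1)$ shift, absorbed into the $1/\sqrt{\log\tau}$ error, and that passing from the Dirichlet polynomial back to $\log L(\sigma,f)$ via input (i) costs only $O(r(\log q,\tau))$ — are transcriptions of the corresponding steps in the proof of Theorem 1.1.
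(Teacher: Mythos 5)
Your proposal follows the same broad strategy as the paper's Section 6: approximate $\log L(\sigma,f)$ by a short sum over primes using the Kowalski--Michel zero-density theorem (input (i)), compute harmonically weighted moments via the Petersson trace formula (input (ii)), match the diagonal against the Sato--Tate random model, and then run the Section 3 saddle-point machinery. The identification $\ex(e^{uX})=I_1(2u)/u$ and the reduction to $\lambda_f(m)$ via Hecke relations and the orthonormality of $S_n(\theta)=\sin((n+1)\theta)/\sin\theta$ are both present in the paper (Lemmas 6.2 and 6.4).

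There is, however, one genuine gap in the proposal as written: you conflate two distinct truncation lengths which the paper keeps separate, and in fact no single choice of $y$ can play both roles. For input (ii), the Petersson off-diagonal (Kloosterman) terms have moduli $\sqrt{mn}/q^{3/2}$, so the Dirichlet polynomials arising from the $2k$-th moment must have length $\leq q^{1-\epsilon}$; with $k\asymp (\log q)^\sigma$ this forces $y$ to be on the scale of $\log q$ itself (the paper takes $y=\log q$ in Proposition 6.1). But for input (i), the contour-shift approximation (Lemma 2.4 of the paper) with error $z^{\sigma_1-\sigma}\log q/(\sigma_1-\sigma_0)^2$ only yields a small error once $z$ is a \emph{higher} fixed power of $\log q$, namely $z\asymp(\log q)^{3/(\sigma-1/2)}$; with $z=\log q$ the error is not small. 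Thus you cannot approximate $\log L(\sigma,f)$ directly by $\sum_{p\le y}\lambda_f(p)p^{-\sigma}$ with $y$ of moment-computable size. The paper bridges the gap with a separate large-sieve-type inequality (Lemma 6.5) bounding the harmonic $2k$-th moment of the tail $\sum_{y\le p\le z}\lambda_f(p)p^{-\sigma}$, which in turn rests on a nontrivial combinatorial estimate for the Rudnick--Soundararajan coefficients $b_1(p_1,\dots,p_{2k})$ (their Lemma 6.4, giving $\ll 2^{2k}(2k)!/k!\,(\sum 1/p^{2\sigma})^k$) because $\lambda_f$ is not completely multiplicative. Your proposal gestures at the Hecke-relation rewriting but does not supply this counting argument, and more importantly it does not name the intermediate step $\sum_{y<p\le z}$ that makes the two-scale argument close. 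Once that step is added, the rest of your outline is faithful to the paper's proof.
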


As a corollary of Theorems 4, 5 and 4.5 we can produce large values of $L$-functions when averaged over families.
 Indeed we can show that the logarithm of the absolute value of the $L$-function at $1/2<\sigma<1$ in the corresponding
 family, can be as large as   $(\log Q)^{1-\sigma}/\log_2 Q,$ where $Q$ is the conductor of the family.
This can also be derived by a ``resonance'' method of Soundararajan [25] which produces large values of $L$-functions
on the critical line. However, the analogue of  Montgomery's $\Omega$ result (1.2) is not known to hold for other families
of $L$-functions, since his method does not appear to generalize to this
situation; and it is certainly interesting to prove such a result in an other context than for $\zeta(\sigma+it)$.
We achieved this, conditionally on the Generalized Riemann Hypothesis GRH, for Dirichlet $L$-functions attached
to quadratic characters of prime moduli.  Let $\chi_p=\left(\frac{\cdot}{p}\right)$ denote the Legendre symbol
modulo a prime $p$.  In [18], Montgomery established that if the GRH is true  then there are infinitely many
primes $p$ such that the least quadratic non-residue $(\text{mod } p)$ is $\gg \log p\log_2 p$. This idea has
been exploited by Granville and Soundararajan [8] to examine extreme values of $L(1,\chi)$ on GRH. We adapt
this technique to our setting and show that
\begin{thm} Assume GRH. Let $s=\sigma+it$ where $1/2< \sigma<1$, and $t\in \Bbb{R}$. Let $x$ be large.
Then there are $\gg x^{1/2}$ primes $p\leq x$ such that
$$ \log |L(s,\chi_p)|\geq (\beta(s)+o(1))(\log x)^{1-\sigma}(\log_2 x)^{-\sigma},$$
and $\gg x^{1/2}$ primes $q\leq x$ such that
$$ \log |L(s,\chi_q)|\leq -(\beta(s)+o(1))(\log x)^{1-\sigma}(\log_2 x)^{-\sigma},$$
where $\beta(\sigma)=(2\log 2)^{\sigma-1}/(1-\sigma)$ and $\beta(s)=\beta(\sigma)t^2/\left((1-\sigma)^2+4 t^2\right)$ if $t\neq 0$.
\end{thm}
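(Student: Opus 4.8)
The plan is to run the conditional $\Omega$‑method of Granville and Soundararajan in the strip: under GRH one manufactures many primes $p$ for which the quadratic character $\chi_p=\left(\tfrac{\cdot}{p}\right)$ is forced to a prescribed sign on every small prime, and then shows that for almost all such $p$ the value $\log|L(s,\chi_p)|$ is governed by the corresponding truncated Euler product. Throughout, $s=\sigma+it$ with $\sigma,t$ fixed, $P(z)=\prod_{\ell\le z}\ell$, and $V=(\log x)^{1-\sigma}(\log_2x)^{-\sigma}$. \emph{Step 1 (the primes).} Fix a threshold $z=z(x)$ and a pattern $(\epsilon_\ell)_{\ell\le z}\in\{\pm1\}$ indexed by primes; for $p>z$,
$$\mathbf 1\big[\chi_p(\ell)=\epsilon_\ell\ \ \forall\,\ell\le z\big]=\frac1{2^{\pi(z)}}\sum_{d\mid P(z)}\Big(\prod_{\ell\mid d}\epsilon_\ell\Big)\left(\frac dp\right).$$
Summing over primes $p\le x$, the $d=1$ term gives $\pi(x)/2^{\pi(z)}$, while for squarefree $d>1$ quadratic reciprocity identifies $p\mapsto\left(\tfrac dp\right)$ with a fixed non‑principal real character of modulus $\ll d\le P(z)=e^{\theta(z)}$, so GRH gives $\sum_{p\le x}\left(\tfrac dp\right)\ll\sqrt x\big(\log(P(z)x)\big)^2\ll\sqrt x\,z^2$ once $z\gg\log x$. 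Hence the number of such primes $p\le x$ is $\pi(x)/2^{\pi(z)}+O(\sqrt x\,z^2)$, for any pattern. Taking $\pi(z)=\frac{\log x}{2\log 2}-C\log_2x$ with $C=C(\sigma)$ a large constant to be fixed in Step 2 gives $2^{\pi(z)}\le x^{1/2}(\log x)^{-C\log 2}$, and by the prime number theorem $z=(1+o(1))\frac{\log x\log_2x}{2\log2}$ (the correction to $\pi(z)$ moves $z$ only by a factor $1+o(1)$); thus the count is $\gg x^{1/2}(\log x)^{C\log 2-1}\gg x^{1/2}$.

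\emph{Step 2 (from $\chi_p$ to $\log|L|$).} By the standard truncated Euler product estimate under GRH, for a smooth weight $W$ with $W\equiv1$ on $[0,\tfrac12]$, $\mathrm{supp}\,W\subset[0,1]$, and any modulus $q$,
$$\log|L(\sigma+it,\chi)|=\mathrm{Re}\sum_{n\le Y}\frac{\Lambda(n)\chi(n)}{n^{\sigma+it}\log n}\,W\!\Big(\frac nY\Big)+O_{\sigma,t}\big(Y^{1/2-\sigma}\log(q(|t|+2))\big).$$
Choose $Y=(\log x)^{B}$ with $B>\tfrac{2\sigma}{2\sigma-1}$ (so $B>2$ and $z\le Y^{1/2}$); for $q=p\le x$ the error is $o(V)$. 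Since the weight is $1$ on $\ell\le z$ and the prime‑power terms converge (as $2\sigma>1$), for every good $p$
$$\log|L(\sigma+it,\chi_p)|=\sum_{\ell\le z}\frac{\epsilon_\ell\cos(t\log\ell)}{\ell^{\sigma}}+\sum_{z<\ell\le Y}\frac{\chi_p(\ell)\cos(t\log\ell)}{\ell^{\sigma}}W\!\Big(\frac\ell Y\Big)+o(V).$$
The middle sum is not controlled by the construction, so bound its second moment over the good primes. Its off‑diagonal terms involve $\sum_{p\ \text{good}}\left(\tfrac{\ell_1\ell_2}{p}\right)=2^{-\pi(z)}\sum_{d\mid P(z)}(\prod_{\ell\mid d}\epsilon_\ell)\sum_{p\le x}\left(\tfrac{d\ell_1\ell_2}{p}\right)\ll\sqrt x\,z^2$ (again by GRH, $d\ell_1\ell_2$ being squarefree and $\ell_1,\ell_2>z$), while the diagonal is $\ll\#\{\text{good}\}\sum_{\ell>z}\ell^{-2\sigma}$, clearly $o(\#\{\text{good}\}V^2)$. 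Hence the second moment is $\ll\#\{\text{good}\}\,z^{1-2\sigma}+\sqrt x\,z^2Y^{2(1-\sigma)}(\log Y)^{-2}$, which is $o(\#\{\text{good}\}V^2)$ once $C$ is large enough in terms of $B$ and $\sigma$ (so that $\#\{\text{good}\}\gg x^{1/2}(\log x)^{D}$ with $D>2+2(1-\sigma)(B-1)$). Chebyshev then shows that for all but $o(\#\{\text{good}\})$ of the good primes the middle sum is $o(V)$, and for these $\log|L(\sigma+it,\chi_p)|=\sum_{\ell\le z}\epsilon_\ell\cos(t\log\ell)\ell^{-\sigma}+o(V)$.

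\emph{Step 3 (optimising the pattern).} For $t=0$ take $\epsilon_\ell\equiv1$: by the prime number theorem and $z=(1+o(1))\frac{\log x\log_2x}{2\log2}$, $\log z=(1+o(1))\log_2x$, one has $\sum_{\ell\le z}\ell^{-\sigma}=(1+o(1))\frac{z^{1-\sigma}}{(1-\sigma)\log z}=(\beta(\sigma)+o(1))V$ with $\beta(\sigma)=(2\log2)^{\sigma-1}/(1-\sigma)$. For $t\ne0$ take $\epsilon_\ell=\mathrm{sign}(\cos(t\log\ell))$; using $\big|\mathrm{Re}\sum_{\ell\le z}\ell^{-(\sigma+2it)}\big|\le\big|\sum_{\ell\le z}\ell^{-(\sigma+2it)}\big|=(1+o(1))\frac{z^{1-\sigma}}{|1-\sigma-2it|\log z}$ together with $|\cos\theta|\ge\tfrac12+\tfrac12\cos2\theta$,
$$\sum_{\ell\le z}\frac{|\cos(t\log\ell)|}{\ell^{\sigma}}\ge\Big(\tfrac12-\tfrac{1-\sigma}{2\sqrt{(1-\sigma)^2+4t^2}}+o(1)\Big)\sum_{\ell\le z}\frac1{\ell^{\sigma}}\ge\Big(\beta(\sigma)\tfrac{t^2}{(1-\sigma)^2+4t^2}+o(1)\Big)V,$$
the last step being the elementary inequality $\frac{\sqrt{(1-\sigma)^2+4t^2}-(1-\sigma)}{2\sqrt{(1-\sigma)^2+4t^2}}\ge\frac{t^2}{(1-\sigma)^2+4t^2}$. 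Combined with Steps 1--2 this gives $\gg x^{1/2}$ primes $p\le x$ with $\log|L(s,\chi_p)|\ge(\beta(s)+o(1))(\log x)^{1-\sigma}(\log_2x)^{-\sigma}$. Running the argument with the pattern $(-\epsilon_\ell)$ (i.e.\ $\epsilon_\ell\equiv-1$ if $t=0$, $\epsilon_\ell=-\mathrm{sign}(\cos(t\log\ell))$ if $t\ne0$) makes the leading sum $-\sum_{\ell\le z}|\cos(t\log\ell)|\ell^{-\sigma}$, producing $\gg x^{1/2}$ primes $q\le x$ with $\log|L(s,\chi_q)|\le-(\beta(s)+o(1))(\log x)^{1-\sigma}(\log_2x)^{-\sigma}$.

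The main obstacle is the balancing in Step 2: the cutoff $Y$ must be a sufficiently large power of $\log x$ for the truncated Euler product error to be $o(V)$, yet small enough that the uncontrolled range $z<\ell\le Y$ contributes negligibly to the second moment over the good primes — requirements that conflict for the obvious choices, since the second moment alone would force $Y$ essentially below $\log x$. The resolution is to observe that one may demand much more than $\gg x^{1/2}$ good primes — any fixed power $x^{1/2}(\log x)^{D}$ — at the negligible price of shrinking $z$ by a factor $1+o(1)$ (leaving $\beta(\sigma)$ unchanged), and this extra room tames the off‑diagonal term. (Relatedly, the tail $\sum_{\ell>z}\chi_p(\ell)\ell^{-\sigma}$ is genuinely of size $\gg V$ for the worst good $p$, which is exactly why the conclusion must be extracted from a second moment over the good primes rather than from any single $p$.)
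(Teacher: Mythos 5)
Your construction of the set of ``good'' primes via the sieve indicator $2^{-\pi(z)}\sum_{d\mid P(z)}(\prod_{\ell\mid d}\epsilon_\ell)\left(\frac dp\right)$, your choice of signs $\epsilon_\ell=\mathrm{sign}(\cos(t\log\ell))$, and the optimisation in Step~3 (including the inequality $|\cos\theta|\ge\cos^2\theta$ and the elementary manipulation producing $\beta(s)$) are exactly the ideas the paper uses; the same sieve identity underlies Proposition~5.4, equation~(5.7), and the final choice of $z\approx\log x\log_2 x/(2\log 2)$. So the two proofs share their combinatorial and analytic core, and your account is correct.

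Where you genuinely diverge is in Step~2, the passage from ``$\chi_p$ is forced on $\ell\le z$'' to a statement about $\log|L(s,\chi_p)|$. You invoke a GRH truncated Euler product at scale $Y=(\log x)^B$ with $B>2\sigma/(2\sigma-1)$, keep the fully controlled range $\ell\le z$, and dispose of the uncontrolled range $z<\ell\le Y$ by a second--moment (Chebyshev) estimate over the family of good primes, which forces you to demand $\gg x^{1/2}(\log x)^D$ good primes for a large $D=D(\sigma)$ and to notice that this only shifts $z$ by a factor $1+o(1)$. The paper proceeds differently: it does not use any second moment, but instead computes the \emph{first} moment $\sum_{q\in\mathcal P_x(z,\{\epsilon_p\})}\log|L(s,\chi_q)|\log q$ directly (Proposition~5.4), where the sieve weight and GRH together annihilate the off--diagonal contribution coming from primes $\ell$ in the range $z<\ell\le(\log x)^A$ inside the truncated formula, and then converts this average lower bound into a positivity statement about the number of large values by pairing it with the trivial a~priori bound $|\log L(s,\chi_q)|\le\log x$ (valid for every $q\le x$ under GRH via Lemma~2.3). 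The paper's route is slightly slicker: it entirely sidesteps the balancing between the cutoff $Y$ and the size of the family that you correctly identify as the main obstacle, since the middle range is absorbed into the off--diagonal error of the first moment rather than needing a separate variance bound. Your approach, in compensation, is more ``pointwise'' in flavour: for almost every good $p$ you genuinely produce the asymptotic $\log|L(s,\chi_p)|=\sum_{\ell\le z}\epsilon_\ell\cos(t\log\ell)\ell^{-\sigma}+o(V)$, which is a marginally stronger structural statement than the paper's counting conclusion, at the cost of a longer argument. Two small points worth fixing in a written version: the truncated--Euler--product error under GRH picks up an extra $(\log Y)^2$ factor from the $(\sigma_1-\sigma_0)^{-2}$ in Lemma~2.3, which is harmless for $Y$ a power of $\log x$ but should be carried; and in bounding the off--diagonal second moment you should note explicitly that $d\ell_1\ell_2$ is squarefree (so the character is non--principal) because $\ell_1,\ell_2>z$ are coprime to $P(z)$ and to each other.
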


{\it Remark 3.} When $t=0$, notice that $\beta(\sigma)>\sqrt{\frac{2}{\log 2}}\approx 1.698$ is larger than $1/20$
which corresponds to Montgomery's $\Omega$ result for  $\zeta(\sigma+it)$ under the assumption of RH (see (1.2)).

We now describe the probabilistic part of our work. Let $\mathcal{L}=\{L(s,\pi), \pi \in \mathcal{F}\}$ be a family of $L$-functions
attached to a set of arithmetic objects $\mathcal{F}$ (characters, modular forms, ...), where $L(s,\pi)$ have degree $d$ for some
$d\in \Bbb{N}$, that is $L(s,\pi)=\prod_{p}\prod_{j=1}^d (1-\alpha_{j,\pi}(p)p^{-s})^{-1}$ for Re$(s)>1$. Then one expects that
 as $\pi$ varies in $\mathcal{F}$ and $|\mathcal{F}|\to\infty$, the local roots $\alpha_{j,\pi}(p)$ should behave like random variables
  $X_j(p)$ which are expected to be independent for different primes (at least for small primes). Then we model the values of
 $L(s,\pi)$ by the random Euler product $L(s,\X)=\prod_{p}\prod_{j=1}^d (1-X_j(p)p^{-s})^{-1}$, which is absolutely
convergent a.s. for Re$(s)>1/2$, provided that the $X_j(p)$ are bounded and that $\ex(X_j(p))=0$.

Instead of studying the probabilistic random model for each family, we construct a class of these models which satisfy
some natural conditions, and can be useful to model even more general families of $L$-functions (for example symmetric powers $L$-functions of holomorphic forms).
Let $d$ be a positive integer, and let $\X(p)=(X_1(p), X_2(p),..., X_d(p))$
be independent random vectors of dimension $d$ indexed by the prime numbers $p$, where the $X_j(p)$ are random variables
 defined on a probability space $(\Omega,\mu)$, and taking values on a disk $\Bbb{D}(M)=\{z\in \Bbb{C}, |z|\leq M\}$ of
the complex plane, where $M$ is some absolute constant. Moreover we assume that the mean vector
${\Bbb {\bold{E}}}({\X}(p)):=(\ex(X_1(p)), \ex(X_2(p)),..., \ex(X_d(p)))=(0,0,...,0)$ for all primes $p$.
For a real number $y\geq 2$ we define the following random product

$$ L(s,\X;y):
=\prod_{p\leq y}\prod_{j=1}^d\left(1-\frac{X_j(p)}{p^s}\right)^{-1}.$$
Our aim is to study the distribution of the random variables $\log|L(\sigma,\X;y)|$, and
$\arg L(\sigma,\X;y)$ for $1/2<\sigma<1$. Specifically we intend to estimate the tails of distribution
$$\Phi_X(\tau;y):=\pr\left(\log|L(\sigma,\X;y)|>\tau\right), \text{ and } \Psi_X(\tau;y):=\pr\left(\arg L(\sigma,\X;y)>\tau\right)$$
uniformly in $y$ and $\tau$ (letting $y\to\infty$ we also get information on the distribution of       $L(s,\X)$).
In fact we shall see that the distribution of $\log |L(\sigma,\X;y)|$ (respectively that of $\arg L(\sigma,\X;y)$)
is governed by the distribution of the random variables
$ Z(p):=\sum_{j=1}^d\text{Re}X_j(p)$ (respectively $Y(p):=\sum_{j=1}^d\text{Im}X_j(p)$). In fact the only condition
we need in order to estimate the Laplace transform of $\log|L(\sigma,\X;y)|$ (respectively $\arg L(\sigma,\X;y)$) is
 that the sequence $\{Z(p)\}_p$  (respectively $\{Y(p)\}_p$) converges in distribution to some random variable $Z$ (respectively $Y$), in a uniform way:

{\bf Uniform limiting distribution hypothesis (ULD)}.
We say that a sequence of random variables $\{X(p)\}_{p \text{ prime }}$ satisfies (ULD) if there exists a
 random variable $X$ such that for any $A>0$, and large primes $p$ we have
$$ \ex\left(e^{tX(p)}\right)=\ex\left(e^{tX}\right)\left(1+O_A\left(\frac{1}{\log^A p}\right)\right),  \text{  uniformly for  all } t\in \Bbb{R}.$$
\begin{thm} Let $\tau$ be large and $y\geq (\tau\log\tau)^{1/(1-\sigma)}$ be a real number.
Assume that the sequence $\{Z(p)\}_{p}$ satisfies hypothesis (ULD), and denote by $Z$ the random
 variable to which it converges in distribution. Then we have
\begin{equation}
 \Phi_X(\tau;y)=\exp\left(-A_Z(\sigma)\tau^{\frac{1}{1-\sigma}}(\log\tau)^{\frac{\sigma}{1-\sigma}}
\left(1+O\left(\frac{1}{\sqrt{\log \tau}}+r(y,\tau)\right)\right)\right),
\end{equation}
where $A_Z(\sigma)$ and $r(y,\tau)$ are defined by (1.5) and (1.6) respectively.
Furthermore, if the sequence $\{Y(p)\}_{p}$ satisfies hypothesis (ULD), and $Y$ is the random variable
to which it converges in distribution, then $\Psi_X(\tau;y)$ has the asymptotic (1.8) with $A_Z(\sigma)$ replaced by $A_Y(\sigma).$
\end{thm}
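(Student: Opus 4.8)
The strategy is to reduce the statement to a large deviation estimate for a sum of independent bounded random variables, and to establish that estimate by analysing the Laplace transform of the sum, in the spirit of the method of Granville and Soundararajan. Since $\log|L(\sigma,\X;y)|=\sum_{p\le y}\sum_{j=1}^{d}(-\log|1-X_j(p)p^{-\sigma}|)$ and $-\log|1-z|=\text{Re}(z)+O(|z|^{2})$ for $|z|\le 1/2$, one has $\log|L(\sigma,\X;y)|=\sum_{p\le y}Z(p)p^{-\sigma}+R$, where $R$ is a random variable with $|R|\le C(\sigma,d,M)$ almost surely (the finitely many primes with $p^{\sigma}\le 2M$ being handled trivially, and $\sum_p p^{-2\sigma}<\infty$ since $\sigma>1/2$); similarly $\arg L(\sigma,\X;y)=\sum_{p\le y}Y(p)p^{-\sigma}+R'$ with $|R'|$ bounded. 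Because $\tau$ is large, replacing $\tau$ by $\tau+O(1)$ multiplies $\tau^{1/(1-\sigma)}(\log\tau)^{\sigma/(1-\sigma)}$ by $1+O(1/\tau)$, which is absorbed by the error term, so it suffices to estimate $\pr(S_Z>\tau)$ with $S_Z:=\sum_{p\le y}Z(p)p^{-\sigma}$, and analogously $\pr(S_Y>\tau)$.

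We then compute, for real $s>0$, the Laplace transform $\mathcal{M}(s):=\ex(e^{sS_Z})=\prod_{p\le y}\ex(e^{sZ(p)/p^{\sigma}})$ by independence of the $\X(p)$, and we replace each $Z(p)$ by $Z$. Put $P_1:=\min\{y,Cs^{1/\sigma}\}$ for a large absolute constant $C$. For $p\le P_1$, hypothesis (ULD) together with $\ex(e^{tZ})\ge 1$ (Jensen, as $\ex Z=0$) gives $\log\ex(e^{sZ(p)/p^{\sigma}})-\log\ex(e^{sZ/p^{\sigma}})=O_A((\log p)^{-A})$ uniformly; for $P_1<p\le y$ one has $s/p^{\sigma}<1$, so both logarithms, hence their difference, are $O(s^{2}p^{-2\sigma})$ (because $\ex Z(p)=\ex Z=0$ and the variables are bounded). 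Summing, the total replacement error is $\ll s^{1/\sigma}(\log s)^{-A}$ for any fixed $A$, and so is negligible for what follows (the finitely many primes below a fixed bound, where (ULD) need not apply, contribute only $O(s)$, also negligible). By the prime number theorem and the substitution $u=s/p^{\sigma}$,
$$\sum_{p\le y}\log\ex(e^{sZ/p^{\sigma}})=s^{1/\sigma}\int_{s/y^{\sigma}}^{s/2^{\sigma}}\frac{\log\ex(e^{uZ})}{u^{1/\sigma+1}\,(\log s-\log u)}\,du,$$
and, using $\log\ex(e^{uZ})\ll\min(u^{2},u)$ (Lemma~3.1) to truncate this $u$-integral near $0$ and near $\infty$ and to replace $\log s-\log u$ by $\log s$ on the bulk, one obtains
$$\log\mathcal{M}(s)=\frac{G_Z(\sigma)}{\log s}\,s^{1/\sigma}(1+O(1/\sqrt{\log s}+(s/y^{\sigma})^{2-1/\sigma})).$$

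Finally we pass from the Laplace transform to the tail by a saddle point argument. Let $s_0$ be defined by $(\log\mathcal{M})'(s_0)=\tau+\Delta$, where $\Delta>0$ is a small slack fixed below; since $\log\mathcal{M}$ is convex and $S_Z$ is bounded, $s_0$ exists in the regime $\tau\ll y^{1-\sigma}/\log y$ guaranteed by the hypothesis $y\ge(\tau\log\tau)^{1/(1-\sigma)}$, and solving the defining relation gives $s_0\asymp(\tau\log\tau)^{\sigma/(1-\sigma)}$, $\log s_0=\frac{\sigma}{1-\sigma}\log\tau\,(1+o(1))$, $s_0^{1/\sigma}\lesssim y$, and (by a short computation) $(s_0/y^{\sigma})^{2-1/\sigma}\ll r(y,\tau)$. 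For the upper bound, Markov's inequality gives $\pr(S_Z>\tau)\le e^{-s_0\tau}\mathcal{M}(s_0)$, and inserting the estimate for $\log\mathcal{M}(s_0)$ and the explicit form of $A_Z(\sigma)$ from (1.5) yields $s_0\tau-\log\mathcal{M}(s_0)=A_Z(\sigma)\tau^{1/(1-\sigma)}(\log\tau)^{\sigma/(1-\sigma)}(1+O(1/\sqrt{\log\tau}+r(y,\tau)))$. For the lower bound, tilt the probability measure by $e^{s_0S_Z}/\mathcal{M}(s_0)$; under this measure $S_Z$ has mean $\tau+\Delta$ and variance $(\log\mathcal{M})''(s_0)\asymp s_0^{1/\sigma-2}/\log s_0$, so taking $\Delta$ to be a fixed multiple of the standard deviation and applying Chebyshev's inequality under the tilted measure shows $S_Z\in[\tau,\tau+2\Delta]$ with probability at least $3/4$; undoing the tilt gives $\pr(S_Z>\tau)\ge\frac34 e^{-s_0(\tau+2\Delta)}\mathcal{M}(s_0)$, and since $s_0\Delta\ll(s_0^{1/\sigma}/\log s_0)^{1/2}$ is of smaller order than the main term, the two bounds match to the claimed precision. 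The statement for $\Psi_X(\tau;y)$ follows in the same way with $Z,\text{Re}$ replaced by $Y,\text{Im}$ and with (ULD) invoked for $\{Y(p)\}_p$.

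The main obstacle throughout is uniformity: each of the errors above must be tracked as a function of both $s$ (equivalently $\tau$) and $y$ so that together they collapse exactly to $O(1/\sqrt{\log\tau}+r(y,\tau))$. The delicate points are the behaviour of the integral $\int_0^{\infty}\log\ex(e^{uZ})\,u^{-1/\sigma-1}\,du$ near its upper range, where $\log(s/u)$ genuinely departs from $\log s$, and the exact matching of the saddle point value with the constant $A_Z(\sigma)$; the truncation of the Euler product at $y$ is precisely what produces the term $r(y,\tau)$, and the need to make it small is exactly what forces the hypothesis $y\ge(\tau\log\tau)^{1/(1-\sigma)}$.
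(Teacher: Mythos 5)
Your proposal is correct, and it reaches the same Laplace-transform estimate as the paper's Proposition 3.2 (linearise the Euler factors, invoke (ULD), apply the prime number theorem, substitute $u=s/p^\sigma$); the preliminary reduction of $\log|L(\sigma,\X;y)|$ to $\sum_{p\le y}Z(p)p^{-\sigma}$ up to a bounded shift is the same linearisation the paper performs inside its moment computation, and moving $\tau$ by $O(1)$ is indeed harmless. Where you genuinely diverge is the extraction step: the paper defines $s$ by the saddle relation $\tau=G_Z(\sigma)s^{1/\sigma-1}/(\sigma\log s)$, shows that the two-sided Laplace integral $\int e^{st}\Phi_X(t;y)\,dt$ is concentrated on a window $[\tau(1-\epsilon/2\sigma),\tau(1+\epsilon/2\sigma)]$, and sandwiches $\Phi_X$ using only its monotonicity — this never requires differentiating the log-Laplace transform, and the final $1/\sqrt{\log\tau}$ comes from having to take the window width $\epsilon\gg\sqrt{E_4}$. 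You instead run the Cramér/Chernoff argument: Markov for the upper bound, exponential tilting plus Chebyshev for the lower bound. This is a perfectly standard alternative, and the loss $s_0\Delta\asymp(s_0^{1/\sigma}/\log s_0)^{1/2}$ from the Chebyshev step is in fact much smaller than $1/\sqrt{\log\tau}$ relative to the main term, so it could in principle give a better error; but it costs you two extra uniform estimates — for $(\log\mathcal{M})'(s)$ (needed to locate $s_0$ and verify $s_0\asymp(\tau\log\tau)^{\sigma/(1-\sigma)}$) and for $(\log\mathcal{M})''(s)$ (needed to size $\Delta$) — each of which must be proved by the same PNT analysis you carried out for $\log\mathcal{M}$ itself, with the $r(y,\tau)$-type tail error tracked throughout. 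Your proposal gestures at this ("uniformity... must be tracked") but does not carry out either derivative estimate; that is the only real gap, and it is a matter of additional bookkeeping rather than a missing idea. The paper's method buys you freedom from derivative estimates at the price of the two-sided windowing argument; your method buys conceptual transparency (pure large-deviations) at the price of more uniform analytic estimates.
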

{\it Remark 4.} If $Z$ is symmetric (that is $Z$ and $-Z$ are identically distributed) then we obtain
the same asymptotic for  $\pr\left(\log|L(\sigma,\X;y)|<-\tau\right)$.

{\it Remark 5.} This theorem is an improvement of a recent work of Hatori and Matsumoto [10], who found
an asymptotic formula for  $\log \pr(\sum_pZ(p)/p^{\sigma}>\tau)$ (without an explicit error term), where
$Z(p)$ are bounded real valued identically distributed random variables with $\ex(Z(p))=0$. Their method
relies on a Tauberian theorem of exponential type. We should also note that their approach is more general
and provides asymptotics for the distribution of $\sum_n Z(n)r_n$, where $\{r_n\}$ is  a regularly varying
sequence of index $-\sigma$. However, in the special case where $r_n=p^{-\sigma}$, our method is simpler,
more effective and does not use these Tauberian type arguments.

\section{Preliminaries}

\subsection{Estimates for divisor functions}

Here and throughout $S(y)$ denotes the set of $y$-smooth numbers, defined to be positive integers $n$ whose prime factors are below $y$.
In this section we collect some useful estimates  for the divisor function $d_z(n)$. First we recall some easy bounds borrowed from [8]. We have that
$ |d_z(n)|\leq d_{|z|}(n)\leq d_k(n),$
for any integer $k\geq |z|.$ If $a$ and $b$ are positive integers then $d_a(n)d_b(n)\leq d_{ab}(n)$ for all $n\in \Bbb{N}$. We also record that $d_a(n^2)d(n)\leq d_{2a+2}(n)^2$.  These inequalities may be shown by first proving them for prime powers, and then using multiplicativity.
Let $k$ be a positive integer. Then for $1/2<\sigma<1$ we have that
\begin{equation}
 \begin{aligned}
\sum_{n\in S(y)}\frac{d_k(n)}{n^{\sigma}}&=\prod_{p\leq y}\left(1-\frac{1}{p^{\sigma}}\right)^{-k}=\exp\left(k\sum_{p\leq y}\frac{1}{p^{\sigma}}+O(k)\right)\\
&=\exp\left(\frac{ky^{1-\sigma}}{(1-\sigma)\log y}+O\left(\frac{ky^{1-\sigma}}{(1-\sigma)^2\log^2 y}\right)\right),
\end{aligned}
\end{equation}
using the prime number theorem. Let  $X>3$ be a real number. Then
 $d_k(n)e^{-n/X}\leq
e^{k/X}\sum_{a_1...a_k=n}e^{-(a_1+...+a_k)/X}$, which implies that
\begin{equation}
 \sum_{n=1}^{\infty}\frac{d_k(n)}{n}e^{-n/X}\leq \left(e^{1/X}
\sum_{a=1}^{\infty}\frac{e^{-a/X}}{a}\right)^k\leq (\log
3X)^k.
\end{equation}

Furthermore we note that for any  $0\leq \sigma\leq 1$ we have that
\begin{equation}
 \begin{aligned}
\sum_{n=1}^{\infty}\frac{d_k(n)}{n^{\sigma}}e^{-n/X}&=\sum_{n\leq 3X\log X}+\sum_{n>3X\log X}\frac{d_k(n)}{n^{\sigma}}e^{-n/X}\\
&\leq (3X\log X)^{1-\sigma}\sum_{n=1}^{\infty}\frac{d_k(n)}{n} e^{-n/X}+\sum_{m>3X\log X}\frac{d_k(m)}{m}e^{-m/(2X)}\\
& \ll (X\log X)^{1-\sigma}\left(\log 3X\right)^{k}.\\
\end{aligned}
\end{equation}
Let $z_1$ and $z_2$ be complex numbers. Then for any $\sigma>1/2$ we have
\begin{equation}
 \sum_{n=1}^{\infty}
\frac{d_{z_1}(n)d_{z_2}(n)}{n^{2\sigma}}=\prod_{p}\left(\frac{1}{2\pi}\int_{-\pi}^{\pi}\left(1-\frac{e^{i\theta}}{p^{\sigma}}\right)^{-z_1}
\left(1-\frac{e^{-i\theta}}{p^{\sigma}}\right)^{-z_2}d\theta\right).
\end{equation}
This follows by multiplicativity upon noting that
\begin{equation*}
\begin{aligned}
&\frac{1}{2\pi}\int_{-\pi}^{\pi}\left(1-\frac{e^{i\theta}}{p^{\sigma}}\right)^{-z_1}
\left(1-\frac{e^{-i\theta}}{p^{\sigma}}\right)^{-z_2}d\theta=\frac{1}{2\pi}\int_{-\pi}^{\pi}\sum_{a=0}^{\infty}
\frac{d_{z_1}(p^a)e^{i\theta a}}{p^{\sigma a}}\sum_{b=0}^{\infty}\frac{d_{z_2}(p^b)e^{-i\theta b}}{p^{\sigma b}}d\theta\\
&= \sum_{a,b\geq 0}
\frac{d_{z_1}(p^a)d_{z_2}(p^b)}{p^{\sigma(a+b)}}\frac{1}{2\pi}\int_{-\pi}^{\pi}e^{i(a-b)\theta}d\theta=\sum_{a=0}^{\infty}
\frac{d_{z_1}(p^a)d_{z_2}(p^a)}{p^{2\sigma a}}.
\end{aligned}
\end{equation*}
Finally we prove
\begin{lem} There exists $C>0$ such that for any $1/2<\sigma<1$, and $k>0$ large, we have
$$\sum_{n=1}^{\infty}\frac{d_k(n)^2}{n^{2\sigma}}\leq \exp\left(C\frac{k^{1/\sigma}}{(2\sigma-1)(1-\sigma)\log k}\right).$$
\end{lem}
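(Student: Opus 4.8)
The plan is to exploit the Euler product of $\sum_{n}d_k(n)^2n^{-2\sigma}$ and to split the primes at the threshold $X:=k^{1/\sigma}$ (so that $X^{\sigma}=k$), which is precisely the point at which the small–prime and large–prime contributions balance. Since $d_k(n)^2$ is multiplicative,
$$\sum_{n=1}^{\infty}\frac{d_k(n)^2}{n^{2\sigma}}=\prod_{p}f_p=\Big(\sum_{n\in S(X)}\frac{d_k(n)^2}{n^{2\sigma}}\Big)\prod_{p>X}f_p,\qquad f_p:=\sum_{a=0}^{\infty}\frac{d_k(p^a)^2}{p^{2\sigma a}}.$$
By (2.4) with $z_1=z_2=k$, and the computation recorded just after it, $f_p=\frac{1}{2\pi}\int_{-\pi}^{\pi}|1-e^{i\theta}/p^{\sigma}|^{-2k}\,d\theta$. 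I will use two bounds for $f_p$: a ``small-prime'' bound valid for every $p$, and a ``large-prime'' bound valid for $p>X$.

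For the small-prime bound, note that $|1-e^{i\theta}/p^{\sigma}|^{2}=1-2p^{-\sigma}\cos\theta+p^{-2\sigma}\ge(1-p^{-\sigma})^{2}$, so the integrand is everywhere at most $(1-p^{-\sigma})^{-2k}$, giving $f_p\le(1-p^{-\sigma})^{-2k}$. Hence
$$\sum_{n\in S(X)}\frac{d_k(n)^2}{n^{2\sigma}}=\prod_{p\le X}f_p\le\prod_{p\le X}\Big(1-\frac1{p^{\sigma}}\Big)^{-2k}=\exp\Big(2k\sum_{p\le X}\big(-\log(1-p^{-\sigma})\big)\Big)\le\exp\Big(O\big(k\sum_{p\le X}p^{-\sigma}\big)\Big),$$
using $-\log(1-t)\le 2t$ for $0<t\le 2^{-1/2}$. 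By partial summation together with $\pi(t)\ll t/\log t$ one has $\sum_{p\le X}p^{-\sigma}\ll X^{1-\sigma}/((1-\sigma)\log X)$ (this is exactly the estimate used in the proof of (2.1), which remains valid for real exponents, applied with $k$ replaced by $2k$ and $y$ by $X$). Since $X=k^{1/\sigma}$ we get $X^{1-\sigma}=k^{(1-\sigma)/\sigma}$ and $\log X=\sigma^{-1}\log k$, so $k\sum_{p\le X}p^{-\sigma}\ll k^{1/\sigma}/((1-\sigma)\log k)$, and therefore $\sum_{n\in S(X)}d_k(n)^2n^{-2\sigma}\le\exp\big(O(k^{1/\sigma}/((1-\sigma)\log k))\big)$.

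For the large-prime bound, fix $p>X$, so that $p^{\sigma}>k$ and $r:=p^{-\sigma}<1/k$. From $f_p=\frac{1}{2\pi}\int_{-\pi}^{\pi}(1-2r\cos\theta+r^2)^{-k}\,d\theta$ and the expansion of the logarithm, for $k$ large ($2r+r^2<1/2$) we have $-k\log(1-2r\cos\theta+r^2)\le 2kr\cos\theta+O(kr^2)$, hence
$$f_p\le e^{O(kr^2)}\cdot\frac1{2\pi}\int_{-\pi}^{\pi}e^{2kr\cos\theta}\,d\theta=e^{O(kr^2)}\,I_0(2kr),$$
and since $2kr\le 2$ we have $\log I_0(2kr)\ll (kr)^2$, whence $\log f_p\ll kr^2+k^2r^2\ll k^2/p^{2\sigma}$. (A purely termwise treatment of the series defining $f_p$ is awkward, since $d_k(p^a)\le(k^a/a!)e^{a^2/(2k)}$ is far too weak for large $a$; the integral representation sidesteps this.) Summing and using $\sum_{p>X}p^{-2\sigma}\ll X^{1-2\sigma}/((2\sigma-1)\log X)$ (again partial summation and $\pi(t)\ll t/\log t$, legitimate because $2\sigma>1$), and noting $k^2X^{1-2\sigma}=k^{2+(1-2\sigma)/\sigma}=k^{1/\sigma}$, we obtain
$$\prod_{p>X}f_p=\exp\Big(\sum_{p>X}\log f_p\Big)\le\exp\Big(O\Big(\frac{k^{1/\sigma}}{(2\sigma-1)\log k}\Big)\Big).$$

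Multiplying the two estimates, and using $\tfrac1{1-\sigma}+\tfrac1{2\sigma-1}=\tfrac{\sigma}{(1-\sigma)(2\sigma-1)}\le\tfrac{1}{(2\sigma-1)(1-\sigma)}$, yields $\sum_{n}d_k(n)^2n^{-2\sigma}\le\exp\big(Ck^{1/\sigma}/((2\sigma-1)(1-\sigma)\log k)\big)$ for a suitable absolute constant $C$ and $k$ large (in terms of $\sigma$). The essential idea is the choice of the cut-off $X=k^{1/\sigma}$, which makes both $kX^{1-\sigma}$ and $k^2X^{1-2\sigma}$ equal to $k^{1/\sigma}$; the one delicate point is the uniform local bound $\log f_p\ll k^2/p^{2\sigma}$ for $p$ just above the threshold, where one must genuinely use $p^{-\sigma}<1/k$ rather than any crude termwise estimate for $d_k(p^a)$.
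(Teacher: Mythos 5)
Your proof is correct and is essentially the paper's own argument: both exploit the Euler product representation coming from (2.4) with $z_1=z_2=k$, split the primes at (roughly) $k^{1/\sigma}$, bound $f_p\le(1-p^{-\sigma})^{-2k}$ for small primes, use $f_p\approx I_0(2k/p^{\sigma})$ with $\log I_0(t)\ll t^2$ for large primes, and finish with the prime number theorem. The only cosmetic difference is that the paper cuts at $(2k)^{1/\sigma}$ rather than $k^{1/\sigma}$, which changes nothing.
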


\begin{proof}[Proof] For a prime $p$, let $E_p(k)=\frac{1}{2\pi}\int_{-\pi}^{\pi}\left(1-e^{i\theta}/p^{\sigma}\right)^{-k}
\left(1-e^{-i\theta}/p^{\sigma}\right)^{-k}d\theta$. Then for primes $p\leq (2k)^{1/\sigma}$ we use that $E_p(k)\leq
(1-1/p^{\sigma})^{-2k}$, and for primes $p>(2k)^{1/\sigma}$ we have that $ E_p(k)=I_0(2k/p^{\sigma})(1+O(k/p^{2\sigma}))$.
 Now using the prime number theorem,  equation (2.4) with $z_1=z_2=k$, along with the fact that $\log I_0(t)=O(t^2)$
for $0<t\leq 1$, we deduce that
$$ \log \left(\sum_{n=1}^{\infty}\frac{d_k(n)^2}{n^{2\sigma}}\right)\ll \sum_{p\leq (2k)^{1/\sigma}}\frac{k}{p^{\sigma}}+
 \sum_{p\geq (2k)^{1/\sigma}}\frac{k^2}{p^{2\sigma}}\ll \frac{k^{1/\sigma}}{\log k}\left(\frac{\sigma}{1-\sigma}+\frac{\sigma}{2\sigma-1}\right).$$
\end{proof}
\subsection{ Approximating $L$-functions by short Euler products}

 We begin by stating the following approximation lemmas which have been proved in [9] and [7] for the Riemann zeta function and Dirichlet $L$-functions, and in [2] for $L$-functions attached to holomorphic cusp forms of weight 2 and large level.  These results will later be combined with zero-density estimates, to show that with very few exceptions, the $L$-functions belonging to one of the families we are considering can be approximated by very short Euler products (over the primes $p\leq (\log Q)^A$, where $Q$ is the conductor of the corresponding family) in the strip $1/2<\text{Re}(s) <1$. We have
\begin{lem}[{Lemma 1 of [9]}]
Let $z\ge 2$ and $|t| \ge z+3$ be real numbers.
Let $\frac{1}{2} \leq \sigma_0 <\sigma \leq 1$ and suppose that the
rectangle $\{ s: \ \ \sigma_0 <\text{Re}(s) \leq 1, \ \
|\text{Im}(s) -t| \leq z+2\}$ does not contain zeros of $\zeta(s)$.
Then
$$
\log \zeta(\sigma+it)= \sum_{n=2}^{z} \frac{\Lambda(n)}{n^{\sigma+it}\log n} +
O\left( \frac{\log
|t|}{(\sigma_1-\sigma_0)^2}z^{\sigma_1-\sigma}\right),
$$
where  $\sigma_1 = \min(\sigma_0+\frac{1}{\log z},
\frac{\sigma+\sigma_0}{2})$.
\end{lem}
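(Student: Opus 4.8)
The approach I have in mind is a contour shift exploiting the fact that $z^{w}/w$ is the Mellin kernel extracting the partial sum $\sum_{n<z}$ from the Dirichlet series $\log\zeta(s+w)=\sum_{n\ge 2}\Lambda(n)n^{-s-w}/\log n$. Put $c=1-\sigma+\frac{1}{\log z}$ and $X=z$, so that on $\mathrm{Re}(w)=c$ one has $\mathrm{Re}(s+w)=1+\frac{1}{\log z}$ and $|\log\zeta(s+w)|\le\log\zeta\bigl(1+\tfrac1{\log z}\bigr)\ll\log_2 z$. First I would establish the truncated identity
\[
\frac{1}{2\pi i}\int_{c-iX}^{c+iX}\log\zeta(s+w)\,\frac{z^{w}}{w}\,dw=\sum_{2\le n\le z}\frac{\Lambda(n)}{n^{s}\log n}+O\!\left(z^{\sigma_1-\sigma}\right),
\]
by interchanging the (absolutely convergent) series with the finite integral and applying, term by term with $y=z/n$, the truncated Perron formula $\frac{1}{2\pi i}\int_{c-iX}^{c+iX}y^{w}w^{-1}\,dw=\delta(y)+O\!\bigl(\frac{y^{c}}{X|\log y|}\bigr)$, with $\delta(y)=1$ for $y>1$ and $\delta(y)=0$ for $0<y<1$. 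The tails $n\le z/2$ and $n\ge 2z$ contribute $\ll z^{c}X^{-1}\log\zeta(\sigma+c)\ll z^{-\sigma}\log_2 z$; for $z/2<n<2z$ one uses $\Lambda(n)\le\log n$ and $|\log(z/n)|\asymp|n-z|/z$ to get $\ll z^{-\sigma}\log z$; the possible term $n=z$ is $O(z^{-\sigma_0})$; and since $\sigma_0,\sigma_1\ge\tfrac12$ gives $z^{\sigma_1}\ge z^{1/2}$ while $\sigma\le\sigma_0+\sigma_1$, every one of these errors is $\ll z^{\sigma_1-\sigma}$.

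Next I would shift the contour to $\mathrm{Re}(w)=a:=\sigma_1-\sigma<0$, keeping $|\mathrm{Im}(w)|\le X$. In the $s+w$ variable the swept rectangle is $\{\sigma_1\le\mathrm{Re}(s+w)\le1+\tfrac1{\log z},\ |\mathrm{Im}(s+w)-t|\le z\}$, which (using $\sigma_1>\sigma_0$) lies in the zero-free region of the hypothesis, so there $\log\zeta(s+w)$ is analytic and single-valued and the integrand's only pole is the simple one at $w=0$, with residue $\log\zeta(\sigma+it)$. It then remains to bound the integrals over the left edge $\mathrm{Re}(w)=a$ and the horizontal edges $\mathrm{Im}(w)=\pm X$, and for this I need a pointwise estimate for $\log\zeta$ in the zero-free rectangle. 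Applying Borel--Carath\'eodory on a disc centred at $1+\tfrac1{\log z}+i\,\mathrm{Im}(s+w)$ of radius slightly less than $1+\tfrac1{\log z}-\sigma_0\ (\le1)$ — on which $\zeta\neq0$ and $|\log\zeta|\ll\log_2 z$ at the centre — together with the standard estimate $\zeta(s')\ll|t'|$ for $\mathrm{Re}(s')\ge\tfrac12$, $|t'|\ge1$, yields
\[
|\log\zeta(s+w)|\ll\frac{\log|t|}{\sigma_1-\sigma_0}\qquad\text{for }\mathrm{Re}(s+w)\ge\sigma_1,\ |\mathrm{Im}(w)|\le X .
\]
On the left edge $|z^{w}|=z^{\sigma_1-\sigma}$ and $\int|dw|/|w|\ll\log(X/|a|)\ll(\sigma_1-\sigma_0)^{-1}$ (one checks $|a|^{-1}\le(\sigma_1-\sigma_0)^{-1}$ from the two cases defining $\sigma_1$), so this edge contributes $\ll\frac{\log|t|}{(\sigma_1-\sigma_0)^{2}}z^{\sigma_1-\sigma}$; on each horizontal edge $|z^{w}/w|\le z^{c}/X\ll z^{-\sigma}$ while $\int_{a}^{c}|\log\zeta(\sigma+u+i(t\pm X))|\,du\ll\frac{\log|t|}{\sigma_1-\sigma_0}$, giving the smaller bound $\ll z^{-\sigma}\frac{\log|t|}{\sigma_1-\sigma_0}$. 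Combining these with the residue and with the identity of the first paragraph yields the lemma (the case of bounded $z$ being trivial, since then both sides are $O(\log|t|)$).

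The step I expect to be the main obstacle is the pointwise bound $|\log\zeta|\ll\log|t|/(\sigma_1-\sigma_0)$ with precisely this dependence on $\sigma_1-\sigma_0$. The trouble is that the zero-free rectangle has bounded height $2(z+2)$, so one cannot run the Borel--Carath\'eodory comparison all the way out to $\mathrm{Re}(s)=2$ (where $\zeta$ is genuinely of size $O(1)$): the comparison disc would protrude past $|\mathrm{Im}(s)-t|=z+2$. One is therefore forced to compare with the nearby point $1+\tfrac1{\log z}+i\,\mathrm{Im}(s+w)$, at which $|\log\zeta|$ is only $O(\log_2 z)$ but which lies within $O\bigl((\sigma_1-\sigma_0)^{-1}\bigr)$ Borel--Carath\'eodory radii of the boundary; the resulting gain factor $R/(\sigma_1-\sigma_0)$ becomes $(\sigma_1-\sigma_0)^{-2}$ after the $dw/w$ integration along the left edge, which is exactly what appears in the error term of the lemma.
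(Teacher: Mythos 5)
The paper does not prove this statement: it is quoted verbatim as Lemma~1 of Granville--Soundararajan~[9], whose proof (as with the parallel Lemma~8.2 of~[7] for Dirichlet $L$-functions) runs exactly as yours does --- a truncated Perron integral of $\log\zeta(s+w)\,z^{w}/w$ along $\mathrm{Re}(w)=1-\sigma+1/\log z$ with $X=z$, a shift to $\mathrm{Re}(w)=\sigma_1-\sigma$ through the hypothesised zero-free rectangle picking up the residue $\log\zeta(\sigma+it)$, and a Borel--Carath\'eodory bound $\log\zeta\ll\log|t|/(\sigma_1-\sigma_0)$ inside the rectangle, with the second power of $(\sigma_1-\sigma_0)^{-1}$ coming from $\int|dw|/|w|\ll\log(z/|a|)\ll(\sigma_1-\sigma_0)^{-1}$ along the left edge. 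Your argument is correct and tracks the cited proof in every essential respect, including the observation that the small Perron truncation errors are dominated by $z^{\sigma_1-\sigma}$ because $\sigma_1\ge\tfrac12$, and that $|a|=\sigma-\sigma_1\ge\sigma_1-\sigma_0$ in both cases defining $\sigma_1$.

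One small aside on your closing paragraph: the obstacle you single out is not actually there. A Borel--Carath\'eodory disc centred at $2+it'$ with $|t'-t|\le z$ and radius $R<2-\sigma_0\le 3/2$ protrudes vertically only to $|\mathrm{Im}-t|\le z+3/2<z+2$, so it remains inside the hypothesised zero-free rectangle, and one \emph{can} compare directly with $\log\zeta(2+it')=O(1)$; the factor $(\sigma_1-\sigma_0)^{-1}$ then arises from $R-r\gg\sigma_1-\sigma_0$ in exactly the same way. Your alternative centring at $1+\tfrac1{\log z}+it'$ works equally well, so this does not affect the proof --- it only means the difficulty you anticipated is illusory, not the explanation for the shape of the error term.
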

\begin{lem}[{Lemma 8.2 of [7]}] Let $q$ be  a large prime and $\chi$ a character $(\text{mod } q)$. Let  $z\geq 2$ and $|t|\leq 3q$ be real numbers.
Let $\frac{1}{2} \leq \sigma_0 <\sigma\leq 1$ and suppose that the
rectangle $\{ s: \ \ \sigma_0 <\text{Re}(s) \leq 1, \ \
|\text{Im}(s) -t| \leq z+2\}$ does not contain zeros of $L(s,\chi)$.
Then
$$
\log L(\sigma+it,\chi)= \sum_{n=2}^{z} \frac{\Lambda(n)\chi(n)}{n^{\sigma+it}\log n} +
O\left( \frac{\log
q}{(\sigma_1-\sigma_0)^2}z^{\sigma_1-\sigma}\right),
$$
where  $\sigma_1 = \min(\sigma_0+\frac{1}{\log z},
\frac{\sigma+\sigma_0}{2})$.
\end{lem}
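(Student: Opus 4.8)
The plan is to imitate the proof of Lemma 2.2 (Lemma 1 of [9]) with $\zeta(s)$ replaced by $L(s,\chi)$, realizing the truncated prime sum as a Perron integral of $\log L(s+w,\chi)$ and then shifting the contour past $w=0$. Write $s=\sigma+it$ and use $\log L(s',\chi)=\sum_{n\ge 2}\Lambda(n)\chi(n)n^{-s'}/\log n$ for $\text{Re}(s')>1$. Two points make this slightly different from the $\zeta$-case: since $\chi$ is non-principal modulo the prime $q$, the function $L(s,\chi)$ is entire, so there is no pole near which the contour must be kept, and hence no lower bound on $|t|$ is needed; and the hypothesis $|t|\le 3q$ will be used only to bound the analytic conductor $q(|t|+2)$ by $O(q)$, which is exactly what replaces the $\log|t|$ of Lemma 2.2 by $\log q$ here.

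First I would fix $c=1-\sigma_0+1/\log z$, so that $\text{Re}(s+w)>1$ on the line $\text{Re}(w)=c$ and the Dirichlet series for $\log L(s+w,\chi)$ converges absolutely there, and then apply the truncated Perron formula with kernel $z^{w}/w$ at height $U=z+2$:
$$\sum_{2\le n\le z}\frac{\Lambda(n)\chi(n)}{n^{s}\log n}=\frac{1}{2\pi i}\int_{c-iU}^{c+iU}\log L(s+w,\chi)\,\frac{z^{w}}{w}\,dw+O\left(z^{\sigma_1-\sigma}\right).$$
The Perron error is the usual estimate of $\sum_n\Lambda(n)n^{-\sigma}(\log n)^{-1}(z/n)^{c}\bigl(1+U|\log(z/n)|\bigr)^{-1}$, split according to whether $n$ is close to $z$; it is bounded using $\sum_n\Lambda(n)n^{-\sigma-c}/\log n\ll\log\log z$ (as $\sigma+c\ge 1+1/\log z$) together with $z^{c-1}=z^{-\sigma_0+o(1)}\ll z^{\sigma_1-\sigma}$, which holds since $\sigma_1+\sigma_0>2\sigma_0\ge 1\ge\sigma$, so the whole Perron error is absorbed into the stated error term.

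Next I would move the contour to the line $\text{Re}(w)=\sigma_1-\sigma<0$, keeping $|\text{Im}(w)|\le U$. For every $w$ in the rectangle swept out one has either $\text{Re}(s+w)>1$ or $\sigma_0<\text{Re}(s+w)\le 1$ with $|\text{Im}(s+w)-t|\le z+2$; by hypothesis $L(\cdot,\chi)$ is zero-free on both pieces, so $\log L(s+w,\chi)$ continues analytically across the rectangle and the only pole of the integrand inside it is the simple pole of $z^{w}/w$ at $w=0$, with residue $\log L(s,\chi)$. Thus the shifted expression equals $\log L(s,\chi)$ plus the integrals over the top, bottom and left edges plus the $O(z^{\sigma_1-\sigma})$ from the first step. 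To bound the edge-integrals I would use the zero-free hypothesis quantitatively: on the part of the region with $\text{Re}(\cdot)\ge\sigma_1$, each zero $\rho$ of $L(\cdot,\chi)$ with $|\text{Im}\,\rho-\text{Im}(s+w)|\le 1$ satisfies $|s+w-\rho|\ge\sigma_1-\sigma_0$, and there are $\ll\log q$ of them, so the partial-fraction expansion of $L'/L$ followed by integration in the real direction back to $\text{Re}(\cdot)=2$ gives $\log L(s+w,\chi)\ll\log q/(\sigma_1-\sigma_0)$ throughout. On the two horizontal edges $|z^{w}/w|\le z^{c-1}$ over a segment of length $O(1)$, contributing $\ll z^{c-1}\log q/(\sigma_1-\sigma_0)\ll z^{\sigma_1-\sigma}\log q/(\sigma_1-\sigma_0)$; on the left edge $|z^{w}|=z^{\sigma_1-\sigma}$ while $\int_{-U}^{U}|w|^{-1}\,dy\ll\log\bigl(z/(\sigma-\sigma_1)\bigr)\ll 1/(\sigma_1-\sigma_0)$ (using $\sigma-\sigma_1\ge(\sigma_1-\sigma_0)/2$ and $\sigma_1-\sigma_0\le 1/\log z$), so this contributes $\ll z^{\sigma_1-\sigma}\log q/(\sigma_1-\sigma_0)^{2}$. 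Adding these up gives the lemma.

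The one step I expect to require genuine work, rather than bookkeeping, is the passage from the qualitative zero-free rectangle hypothesis to the quantitative bound $\log L(s+w,\chi)\ll\log q/(\sigma_1-\sigma_0)$: this rests on the classical count — via Jensen's formula together with the functional equation of $L(s,\chi)$ (whence the role of $|t|\le 3q$) — for the number of zeros in a window of unit height, and on the resulting bound for $L'/L$ inside a zero-free region. Everything else is the Perron-plus-contour-shift routine that already underlies Lemma 2.2, so for the common parts I would simply refer to that proof.
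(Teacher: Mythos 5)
Your proof is correct and follows the standard Perron-plus-contour-shift argument, which is exactly how this result is established in the cited reference (Lemma~8.2 of [7], Granville--Soundararajan, \emph{Large character sums}); the paper itself does not prove the lemma but only cites it. You correctly identify the two substantive differences from the $\zeta$-case of Lemma~2.2: the absence of the $|t|\ge z+3$ constraint (since $L(s,\chi)$ is entire for non-principal $\chi$), and the replacement of $\log|t|$ by $\log q$ via the bound on the analytic conductor; the quantitative step $\log L(s+w,\chi)\ll\log q/(\sigma_1-\sigma_0)$ that you flag as the crux is indeed the classical zero-counting-plus-integration estimate, and the rest of your bookkeeping (including the observation that $\sigma_1-\sigma_0\le 1/\log z$, so the $(\sigma_1-\sigma_0)^{-2}$ factor comfortably absorbs the $\log z$ and $\log\log z$ losses from the Perron truncation and the left-edge integral) is sound.
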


Let $q$ be a large prime and $f\in S_2^p(q)$. Then Deligne's Theorem implies that for all primes $p\neq q$ there exists $\theta_f(p)\in [0,\pi]$ such that $\lambda_f(p)=2\cos\theta_f(p)$. We have
\begin{lem} [{Lemma 4.3 of [2]}] Let $2\leq z<q $ and $|t|\leq 2q$ be real numbers.
Let $\frac{1}{2} \leq \sigma_0 <\sigma\leq 1$ and suppose that the
rectangle $\{ s: \ \ \sigma_0 <\text{Re}(s) \leq 1, \ \
|\text{Im}(s) -t| \leq z+2\}$ does not contain zeros of $L(s,f)$.
Then
$$
\log L(\sigma+it,f)= \sum_{n=2}^{z} \frac{\Lambda(n)b_f(n)}{n^{\sigma+it}\log n} +
O\left( \frac{\log
q}{(\sigma_1-\sigma_0)^2}z^{\sigma_1-\sigma}\right),
$$
where $\sigma_1 = \min(\sigma_0+\frac{1}{\log z},
\frac{\sigma+\sigma_0}{2})$ and $b_f(n)=(e^{i\theta_f(p)})^m+(e^{-i\theta_f(p)})^m$ if $n=p^m$ for some prime $p$, and equals $0$ otherwise.
\end{lem}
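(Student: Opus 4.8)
The plan is to follow the template of the two approximation lemmas for $\zeta(s)$ and for Dirichlet $L$-functions stated just above, substituting $L(s,f)$ for $\zeta(s)$. First I would record the basic Dirichlet series identities: writing $\Lambda_f(n):=\Lambda(n)b_f(n)$, logarithmic differentiation of the degree-$2$ Euler product $L(s,f)=\prod_{p\neq q}\bigl(1-e^{i\theta_f(p)}p^{-s}\bigr)^{-1}\bigl(1-e^{-i\theta_f(p)}p^{-s}\bigr)^{-1}$ (times the local factor at $q$) gives, for $\mathrm{Re}(s)>1$, both $-\frac{L'}{L}(s,f)=\sum_{n\geq 1}\Lambda_f(n)n^{-s}$ and $\log L(s,f)=\sum_{n\geq 2}\Lambda_f(n)(n^{s}\log n)^{-1}$. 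By Deligne's theorem $\theta_f(p)\in\Bbb{R}$, so $|b_f(p^m)|\leq 2$ and these series are controlled exactly as for $\zeta$; and since $z<q$, no prime power $q^{m}$ ever lies in the range $n\leq z$, so the ramified Euler factor at $p=q$ (where the formula for $b_f(q^{m})$ in the statement is in any case immaterial) never enters the sum. Finally, for $|t|\leq 2q$ the analytic conductor of $L(\,\cdot+it,f)$ is $\ll q^{O(1)}$, so $\log q$ takes over the role played by $\log|t|$ in the error term.

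Next I would run the standard truncated Perron plus contour shift. Fix the vertical line $\mathrm{Re}(w)=a:=1-\sigma+1/\log z$, on which $\log L(s+w,f)=\sum_{n\geq 2}\Lambda_f(n)n^{-s-w}/\log n$ converges absolutely, and take a truncation height $T_{0}\asymp z^{2}$ chosen so that the lines $\mathrm{Im}(w)=\pm T_{0}$ stay at distance $\gg 1/\log q$ from the imaginary parts of zeros of $L(s,f)$. Using $\frac{1}{2\pi i}\int_{a-iT_{0}}^{a+iT_{0}}y^{w}w^{-1}\,dw=\mathbf{1}_{y>1}+O\bigl(y^{a}(1+T_{0}|\log y|)^{-1}\bigr)$ termwise, one obtains
$$\sum_{2\leq n\leq z}\frac{\Lambda_f(n)}{n^{s}\log n}=\frac{1}{2\pi i}\int_{a-iT_{0}}^{a+iT_{0}}\log L(s+w,f)\,\frac{z^{w}}{w}\,dw+O\bigl(z^{-1/2}\bigr),$$
the Perron error (including the boundary term when $z$ is a prime power) being negligible since $\sum_{n}|\Lambda_f(n)|n^{-1-1/\log z}/\log n\ll\log z$ and $z^{a}/T_{0}\ll z^{-1}$. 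Now shift the contour to $\mathrm{Re}(w)=\sigma_{1}-\sigma$. Because $L(s,f)$ has no zeros in the rectangle $\{\sigma_{0}<\mathrm{Re}(u)\leq 1,\ |\mathrm{Im}(u)-t|\leq z+2\}$ and none with $\mathrm{Re}>1$, the integrand is holomorphic throughout the region swept out, except for the simple pole of $z^{w}/w$ at $w=0$ with residue $\log L(s,f)$. So the left side equals $\log L(s,f)+I_{\mathrm{v}}+I_{\mathrm{h}}+O(z^{-1/2})$, where $I_{\mathrm{v}}$ is the integral along $\mathrm{Re}(w)=\sigma_{1}-\sigma$ truncated at height $\pm T_{0}$, and $I_{\mathrm{h}}$ the two horizontal segments.

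It remains to estimate $I_{\mathrm{v}}$ and $I_{\mathrm{h}}$, and the one substantive input is the bound $|\log L(u,f)|\ll\log q/(\sigma_{1}-\sigma_{0})$ valid for $\mathrm{Re}(u)\geq\sigma_{1}$, $|\mathrm{Im}(u)-t|\leq z+1$. This is the $GL_{2}$ analogue of the classical bound for $\log\zeta$: from the Hadamard factorization of $L(s,f)$ one has $-\frac{L'}{L}(u,f)=\sum_{|\rho-u|\leq 1}(u-\rho)^{-1}+O(\log q)$, and since the rectangle is zero-free (and there are no zeros with $\mathrm{Re}>1$) every zero $\rho$ with $|\rho-u|\leq 1$ has $\mathrm{Re}(\rho)\leq\sigma_{0}$, hence $|u-\rho|\geq\mathrm{Re}(u)-\sigma_{0}\geq\sigma_{1}-\sigma_{0}$; as the number of such zeros is $\ll\log(q(|t|+3))\ll\log q$, we get $|\frac{L'}{L}(u,f)|\ll\log q/(\sigma_{1}-\sigma_{0})$, and integrating along the bounded-length segment from $\mathrm{Re}(u)=1+1/\log z$ (where $\log L(u,f)=O(1)$ trivially) back to $\mathrm{Re}(u)=\sigma_{1}$ gives the claim. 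Feeding this in, $I_{\mathrm{v}}\ll z^{\sigma_{1}-\sigma}\cdot\frac{\log q}{\sigma_{1}-\sigma_{0}}\cdot\int_{-T_{0}}^{T_{0}}\frac{dv}{|\sigma_{1}-\sigma+iv|}\ll z^{\sigma_{1}-\sigma}\frac{\log q}{\sigma_{1}-\sigma_{0}}\log T_{0}$, and since $\sigma_{1}-\sigma_{0}=\min(1/\log z,(\sigma-\sigma_{0})/2)\leq 1/\log z$ we have $\log T_{0}\ll\log z\leq(\sigma_{1}-\sigma_{0})^{-1}$, whence $I_{\mathrm{v}}\ll\frac{\log q}{(\sigma_{1}-\sigma_{0})^{2}}z^{\sigma_{1}-\sigma}$; the horizontal pieces $I_{\mathrm{h}}$ and the tail $|\mathrm{Im}(w)|>T_{0}$ are negligible given the choice of $T_{0}$, using a crude bound $\log L(u+i\gamma,f)\ll(\log q)^{O(1)}$ on the well-chosen lines together with the decay of $z^{w}/w$. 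I expect the main obstacle to be precisely this zero-free-box bound on $\log L(u,f)$ with the correct joint dependence on $q$ and $\sigma_{1}-\sigma_{0}$: it rests on the standard analytic theory of $GL_{2}$ $L$-functions (Hadamard factorization of $L(s,f)$, the local zero-counting estimate near a given height, and Deligne's bound), after which every remaining step is the same bookkeeping as in the proofs of the $\zeta$- and Dirichlet-$L$ versions quoted above.
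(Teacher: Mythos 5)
The paper cites this lemma as Lemma 4.3 of Cogdell--Michel [2] and gives no proof of its own, so there is no direct comparison to draw; what you wrote is the natural reconstruction (truncated Perron plus contour shift plus the Hadamard/zero-free-box bound on $\log L$), which is indeed how such approximation lemmas are proved. But as written your choice of truncation height $T_0$ breaks the argument. You take $T_0\asymp z^2$, and then shift the vertical contour to $\mathrm{Re}(w)=\sigma_1-\sigma$, which corresponds to points $u=s+w$ with $\mathrm{Re}(u)=\sigma_1\in(\sigma_0,\sigma)$ and $|\mathrm{Im}(u)-t|=|\mathrm{Im}(w)|$. The only zero-free information you have at such an abscissa is the hypothesis, which covers only $|\mathrm{Im}(u)-t|\leq z+2$; for $z+2<|\mathrm{Im}(w)|\leq T_0\asymp z^2$ there may well be zeros of $L(\cdot,f)$ at $\mathrm{Re}=\sigma_1$, so the contour shift is not justified and the key bound $|\log L(u,f)|\ll\log q/(\sigma_1-\sigma_0)$ is simply unavailable on that stretch. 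Your remark about keeping the horizontal lines $\mathrm{Im}(w)=\pm T_0$ away from ordinates of zeros doesn't help here: the problem is the vertical part of $I_{\mathrm{v}}$, not $I_{\mathrm{h}}$ or the tail of the original line.

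The fix is to take $T_0=z+1$ (or anything $\leq z+2$), so the whole swept rectangle $\{\sigma_1\leq\mathrm{Re}(u)\leq 1+1/\log z,\ |\mathrm{Im}(u)-t|\leq T_0\}$ lies inside the hypothesis box union the region $\mathrm{Re}>1$ where $L$ never vanishes. One must then recheck the Perron error: with $a=1-\sigma+1/\log z$ and $T_0=z+1$ the standard dyadic estimate gives a truncation error $\ll z^{a}(\log z)(\log T_0)/T_0\ll z^{-\sigma}(\log z)^2$, and since $\sigma_1>1/2$ this is $\ll z^{\sigma_1-\sigma}\cdot z^{-\sigma_1}(\log z)^2$, which is easily absorbed by $\frac{\log q}{(\sigma_1-\sigma_0)^2}z^{\sigma_1-\sigma}$. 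With that single correction the remaining steps --- the Hadamard-factorization bound on $\log L$ in the zero-free box with joint dependence $\log q/(\sigma_1-\sigma_0)$, the observation that $\sigma_1-\sigma_0\leq 1/\log z$ turns the $\log T_0$ from $I_{\mathrm{v}}$ into an extra $1/(\sigma_1-\sigma_0)$, the remark that $z<q$ keeps the ramified prime $q$ out of the sum, and the use of $|t|\leq 2q$ to make the analytic conductor $\ll q^{O(1)}$ so that $\log q$ replaces $\log|t|$ --- are all sound.
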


In some cases it is helpful to approximate short Euler products by Dirichlet polynomials. Our next lemma shows that this is possible if the coefficients are bounded by some divisor function. This will be used in order to apply the Petersson trace formula to compute moments of short Euler products of automorphic $L$-functions (see section 6 below).
\begin{lem}
Let $g(n)$ be a multiplicative function such that $g(n)\ll d_k(n)$ for some positive integer $k$. Let $y>2$ be a real number and define
$$ L(s,g;y):=\sum_{n\in S(y)}\frac{g(n)}{n^s}, \text{ for } s\in \Bbb{C}.$$
Then for $1/2<\text{Re(s)}=\sigma<1$ and $x\geq y^2$ we have
$$ L(s,g;y)=\sum_{\substack{ n\leq x\\n\in S(y)}} \frac{g(n)}{n^s}+ O\left(\exp\left(-\frac{\log x}{\log y}+
 \log_2 x+ \frac{eky^{1-\sigma}}{(1-\sigma)\log y}(1+o(1))\right)\right).$$
\end{lem}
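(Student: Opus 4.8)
The inequality to be proved concerns only the size of the tail $\sum_{n>x,\,n\in S(y)}g(n)/n^{s}$, since $L(s,g;y)-\sum_{n\le x,\,n\in S(y)}g(n)/n^{s}$ is exactly this tail; in particular the imaginary part of $s$ plays no role and only $\sigma=\text{Re}(s)$ enters. My plan is a direct application of Rankin's trick.

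Using $|g(n)|\ll d_k(n)$ together with the elementary bound $1\le (n/x)^{\alpha}$, valid for $n>x$ and any $\alpha>0$, one gets, for every $\alpha$ with $0<\alpha<\sigma$,
$$\Bigl|\,L(s,g;y)-\sum_{\substack{n\le x\\ n\in S(y)}}\frac{g(n)}{n^{s}}\,\Bigr|\ \ll\ \sum_{\substack{n>x\\ n\in S(y)}}\frac{d_k(n)}{n^{\sigma}}\ \le\ x^{-\alpha}\sum_{n\in S(y)}\frac{d_k(n)}{n^{\sigma-\alpha}}\ =\ x^{-\alpha}\prod_{p\le y}\Bigl(1-\frac{1}{p^{\sigma-\alpha}}\Bigr)^{-k},$$
the last equality by multiplicativity over $S(y)$ (legitimate since $\sigma-\alpha>0$). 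The decisive choice is $\alpha=1/\log y$. With it, $x^{-\alpha}=\exp\!\left(-\log x/\log y\right)$, which produces the first term in the exponent of the claimed error, while $y^{\alpha}=y^{1/\log y}=e$, which is precisely what manufactures the constant $e$ in the statement. Feeding $\sigma-1/\log y$ in place of $\sigma$ into estimate (2.1) then gives
$$\prod_{p\le y}\Bigl(1-\frac{1}{p^{\sigma-1/\log y}}\Bigr)^{-k}=\exp\!\left(\frac{k\,y^{1-\sigma+1/\log y}}{(1-\sigma+1/\log y)\log y}\bigl(1+o(1)\bigr)\right)=\exp\!\left(\frac{e\,k\,y^{1-\sigma}}{(1-\sigma)\log y}\bigl(1+o(1)\bigr)\right),$$
using again $y^{1/\log y}=e$ and $1-\sigma+1/\log y=(1-\sigma)(1+o(1))$. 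Multiplying the two displays yields a bound comfortably inside the asserted one — the extra term $+\log_2 x$ in the statement is harmless slack that one need not even use.

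The argument is routine; there is no genuine obstacle, only a bit of bookkeeping to watch. The two points deserving attention are: (i) that the shifted exponent $\sigma-1/\log y$ must stay inside the range in which estimate (2.1) is quoted, which holds once $y$ is large in terms of $\sigma$ — the regime relevant throughout this paper; and (ii) that the contribution of the prime powers $p^{m}$, $m\ge 2$, to $\log\prod_{p\le y}\left(1-p^{-(\sigma-\alpha)}\right)^{-k}$, which is $O(k)$ as soon as $\sigma-\alpha>\tfrac12$, is genuinely absorbed into the factor $1+o(1)$ — which it is, because $y^{1-\sigma}/\log y\to\infty$. The hypothesis $x\ge y^{2}$ is used only to ensure $\log x/\log y\ge 2$, so that the saving $-\log x/\log y$ is the dominant term and the estimate is non-trivial.
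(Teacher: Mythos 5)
Your proof is correct, and it takes a genuinely different route from the paper's. The paper applies Perron's formula with $c=1/\log x$, $T=x^2$, moves the contour to $\mathrm{Re}(z)=-\beta=-1/\log y$, picks up the residue at $z=0$, and bounds the three legs of the shifted contour; the $\log T$ factor from the truncated Perron integral is what produces the $+\log_2 x$ in the exponent. Your Rankin-trick approach — bounding $1\le (n/x)^{\alpha}$ and pulling out $x^{-\alpha}$, then expanding $\sum_{n\in S(y)}d_k(n)n^{-(\sigma-\alpha)}$ as a finite Euler product with $\alpha=1/\log y$ — reaches the same conclusion in a couple of lines, and in fact proves a slightly sharper bound in which the $+\log_2 x$ is absent; since $\log_2 x\ge 0$ under the hypothesis $x\ge y^2>4$, your estimate implies the stated one. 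Both arguments carry the same implicit requirement that $\sigma-1/\log y>1/2$ so that the shifted Euler-product estimate (2.1) applies, a point you correctly flag; this is automatic in the paper's applications, where $y$ is large. One might prefer the contour-shift version if one cared about the coefficients $g(n)$ themselves (it isolates the partial sum exactly via a residue rather than bounding a tail in absolute value), but for the purpose to which the lemma is put in Section 6 — controlling the error after truncating a short Euler product before applying the Petersson formula — your simpler argument is entirely adequate and arguably cleaner.
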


\begin{proof}[Proof]
Without loss of generality we may assume that $x\in \Bbb{Z}+1/2$. We use Perron's formula (See [3]). Let $c=1/\log x$ and $T=x^2$, then we have
\begin{equation}\frac{1}{2\pi i} \int_{c-iT}^{c+iT} L(s+z,g;y)\frac{x^z}{z}dz=\sum_{\substack{ n\leq x\\n\in S(y)}} \frac{g(n)}{n^s}+ E_1,\end{equation}
where
$$ E_1\ll \frac1T \sum_{n\in S(y)}\frac{d_k(n)x^c}{n^{\sigma+c}|\log(x/n)|}\ll \frac{x}{T}\sum_{n\in S(y)}\frac{d_k(n)}{n^{\sigma+c}}\ll \exp\left(-\log x+O\left(\frac{ky^{1-\sigma}}{\log y}\right)\right),$$
using (2.1) along with the fact that $\log(x/n)\gg 1/x$. Now we move the line of integration to the line $\text{Re}(s)=-\beta$ where $\beta=1/\log y$. We encounter a simple pole at $s=0$ which leaves the residue $L(s,g;y)$. It follows from (2.1) that the LHS of (2.5) equals $L(s,g;y)$ plus
\begin{equation*}
\begin{aligned}
&\frac{1}{2\pi i}\left(\int_{c-iT}^{-\beta-iT}+\int_{-\beta-iT}^{-\beta+iT}
+\int_{-\beta+iT}^{c+iT}\right)L(s+z,g;y)\frac{x^z}{z}dz\\
&\ll \frac{1}{T}\exp\left(O\left(\frac{ky^{1-\sigma}}{\log y}\right)\right)+x^{-\beta}\log T\exp\left(\frac{eky^{1-\sigma}}{(1-\sigma)\log y}(1+o(1))\right)\\
&\ll \exp\left(-\frac{\log x}{\log y}+ \log_2 x+ \frac{eky^{1-\sigma}}{(1-\sigma)\log y}(1+o(1))\right).
\end{aligned}
\end{equation*}
\end{proof}

\section{Random Euler products and their distribution}
 For a random variable $Y$, the cumulant-generating function of $Y$ if it exists, is defined by
$ g_Y(t):=\log \ex\left(e^{tY}\right)=\sum_{n=1}^{\infty}\kappa_nt^{n}/n!,$
 where $\kappa_n$ are the cumulants of $Y$. Moreover one has $\kappa_1=\ex(Y)$ and $\kappa_2=\text{Var}(Y)$.
 Our first lemma describes some useful properties and estimates for  the function $g_Y$.
\begin{lem} Let $Y$ be a bounded real-valued random variable such that $\ex(Y)=0$. Then
$g_Y$ is smooth on $\Bbb{R}$ (is of class $C^{\infty}$) and $|g_Y'(t)|\ll 1$. Moreover, we have that $g_Y(t)=O(t^2)$ if $0\leq t\leq 1$ and $g_Y(t)=O(t)$ if $t\geq 1$.
\end{lem}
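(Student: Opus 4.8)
The plan is to work directly with the moment generating function $M(t):=\ex(e^{tY})$ and to exploit the almost sure boundedness of $Y$ — say $|Y|\le B$ a.s. for some constant $B$ — at every step. First I would note that since $|Y^n e^{tY}|\le B^n e^{B|t|}$ pointwise, with a dominating function that is locally bounded in $t$, one may differentiate under the expectation arbitrarily often; hence $M$ is of class $C^\infty$ on $\Bbb{R}$ (in fact real-analytic) with $M^{(n)}(t)=\ex(Y^n e^{tY})$. By Jensen's inequality $M(t)\ge e^{t\ex(Y)}=1>0$, so $g_Y=\log M$ is well-defined and $C^\infty$ on all of $\Bbb{R}$.

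Next, for the derivative bound I would compute $g_Y'(t)=M'(t)/M(t)=\ex(Y e^{tY})/\ex(e^{tY})$, which is the mean of $Y$ against the tilted probability measure $e^{tY}/M(t)\,d\mu$; since $|Y|\le B$ a.s., this has absolute value at most $B$, giving $|g_Y'(t)|\le B\ll 1$ uniformly in $t$. Differentiating once more, $g_Y''(t)=\ex(Y^2 e^{tY})/M(t)-\bigl(\ex(Y e^{tY})/M(t)\bigr)^2$ is the variance of $Y$ under the same tilted measure, so $0\le g_Y''(t)\le B^2$ for every $t\in\Bbb{R}$.

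For the two size estimates I would Taylor-expand at $t=0$, which is where the hypothesis $\ex(Y)=0$ enters: one has $g_Y(0)=\log 1=0$ and $g_Y'(0)=\ex(Y)=0$, so Taylor's theorem with Lagrange remainder gives $g_Y(t)=\tfrac12 g_Y''(\xi)\,t^2$ for some $\xi$ between $0$ and $t$, and the bound $|g_Y''|\le B^2$ yields $g_Y(t)=O(t^2)$, which is the asserted estimate for $0\le t\le 1$. For $t\ge 1$ I would instead integrate the derivative bound, $g_Y(t)=\int_0^t g_Y'(u)\,du$, whence $|g_Y(t)|\le Bt=O(t)$.

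The argument is entirely routine; the only points needing a small amount of care are the justification of differentiation under the integral sign and the fact that the bounds on $g_Y'$ and $g_Y''$ are \emph{uniform} over all of $\Bbb{R}$, and both of these follow cleanly from the almost sure bound $|Y|\le B$ together with the tilted-measure (mean and variance) interpretation of $g_Y'$ and $g_Y''$. So there is no genuine obstacle here, only bookkeeping.
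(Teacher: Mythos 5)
Your proof is correct and follows essentially the same route as the paper: both establish smoothness via differentiation under the expectation (using boundedness of $Y$), both identify $g_Y' = M'/M$ and bound it using $|\ex(Ye^{tY})| \le B\,\ex(e^{tY})$, both get the $O(t^2)$ bound near the origin from the vanishing of $g_Y(0)$ and $g_Y'(0)$ together with a bound on the second derivative (Taylor expansion), and both get the $O(t)$ bound for $t\ge 1$ from boundedness of $Y$. The only cosmetic difference is that the paper bounds $g_Y(t)=\log\ex(e^{tY})\le Bt$ directly for $t\ge 1$, while you integrate the derivative bound; both give the same estimate.
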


\begin{proof}
Let $M_Y(t)=\ex(e^{tY})$ be the moment-generating function of $Y$. Since $Y$ is bounded
then $M_Y$ is a positive smooth function which implies that $g_Y$ is smooth.
Moreover, it is easy to check that $M_Y'(t)=\ex(Ye^{tY})$, simply by differentiating the Taylor series expansion of $M_Y$.
 Then the first assumption follows upon noting that  $g_Y'(t)=M_Y'(t)/M_Y(t)$ and $|M_Y'(t)|=|\ex(Ye^{tY})|\leq \ex(|Y|e^{tY})\ll M_Y(t).$

 The estimate for $g_Y$ on  $[0,1]$, follows from its Taylor expansion along with the fact that $\ex(Y)=0$.
Now for $t\geq 1$, this follows from the facts that $Y$ is bounded and that  $g_Y(t)=\log\ex(e^{tY})$.
\end{proof}
It follows from this lemma that if $Y$ is a bounded real-valued random variable with mean $0$, then $G_Y(\sigma)=\int_{0}^{\infty}\log \ex(e^{uY})u^{-1-\frac{1}{\sigma}}du$ is absolutely convergent for any $1/2<\sigma<1$.
In order to prove Theorem 1.9 we shall compute large moments of the random variable $L(\sigma,\X;y)$.

\begin{pro} Assume that the sequence $\{Z(p)\}_{p}$ satisfies hypothesis (ULD), and denote by $Z$ the random variable to which it converges in distribution. Let $r$ be large and $y\geq r^{1/\sigma}$ be a real number. Then we have
$$ \log\ex\left(|L(\sigma,\X;y)|^{r}\right)=G_Z(\sigma)\frac{r^{1/\sigma}}{\log r}\left(1+O\left(\frac{1}{\log r}+\left(\frac{r^{1/\sigma}}{y}\right)^{2\sigma-1}\right)\right).
$$
If $Z$ is symmetric then we get the same estimate for $\log\ex\left(|L(\sigma,\X;y)|^{-r}\right)$.
\end{pro}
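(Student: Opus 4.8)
The plan is to expand $\ex(|L(\sigma,\X;y)|^r)$ into an Euler product over $p\le y$, estimate the $p$-th factor, and then sum over primes. Since the $\X(p)$ are independent across $p$, we have
$$
\ex\left(|L(\sigma,\X;y)|^r\right)=\prod_{p\le y}\ex\left(\left|1-\tfrac{X_1(p)}{p^\sigma}\right|^{-r}\cdots\left|1-\tfrac{X_d(p)}{p^\sigma}\right|^{-r}\right),
$$
so that $\log\ex(|L(\sigma,\X;y)|^r)=\sum_{p\le y}\log\ex\big(\exp(-r\sum_{j}\log|1-X_j(p)p^{-\sigma}|)\big)$. The first step is to show that, for $p$ large, $-\sum_j\log|1-X_j(p)p^{-\sigma}| = \sum_j \mathrm{Re}\,X_j(p)/p^\sigma + O(1/p^{2\sigma}) = Z(p)/p^\sigma + O(p^{-2\sigma})$, uniformly; together with the boundedness $|X_j(p)|\le M$ this gives $\log\ex(|1-X_1(p)/p^\sigma|^{-r}\cdots)= g_{Z(p)}(r/p^\sigma)+O(r/p^{2\sigma})$ for $p$ above some absolute constant, and for the finitely many small $p$ the factor is $O(1)$ uniformly. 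Then, invoking hypothesis (ULD) with a suitable $A$, I would replace $g_{Z(p)}(r/p^\sigma)$ by $g_Z(r/p^\sigma)$ at the cost of an error $O_A(r/(p^\sigma\log^A p))$ on each factor (using $|g_{Z(p)}'|\ll 1$ from Lemma 3.1 and the (ULD) estimate on moment-generating functions), whose sum over $p\le y$ is negligible compared to the main term.

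The second step is to evaluate $\sum_{p\le y} g_Z(r/p^\sigma)$. Writing $u=r/p^\sigma$, i.e.\ $p=(r/u)^{1/\sigma}$, partial summation against the prime number theorem converts this sum into an integral: $\sum_{p\le y} g_Z(r/p^\sigma)\approx \int_{2}^{y} g_Z(r/t^\sigma)\,\frac{dt}{\log t}$. Substituting $t=(r/u)^{1/\sigma}$ and using that the range $p\le y$ with $y\ge r^{1/\sigma}$ corresponds to $u$ ranging over $[r/y^\sigma, \text{(large)}]\supseteq$ essentially $(0,\infty)$ up to controlled tails, one finds this integral equals $\frac{r^{1/\sigma}}{\sigma\log r}\int_0^\infty g_Z(u)u^{-1-1/\sigma}\,du\cdot(1+o(1))$, and the factor $\sigma$ and the normalization combine — after carefully tracking the $\log$ — to give the main term $G_Z(\sigma)\,r^{1/\sigma}/\log r$, with $G_Z(\sigma)$ as in (1.5). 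The convergence of $\int_0^\infty g_Z(u)u^{-1-1/\sigma}du$ is exactly the content of the remark following Lemma 3.1 (using $g_Z(u)=O(u^2)$ near $0$ and $g_Z(u)=O(u)$ at $\infty$, which forces $1/2<\sigma<1$).

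The main obstacle, and where the bulk of the work lies, is tracking the two error terms in the stated estimate with the claimed uniformity. The $O(1/\log r)$ term comes from the second-order refinement of the prime number theorem / partial summation, i.e.\ from comparing $\sum_{p\le y}g_Z(r/p^\sigma)$ with its leading asymptotic; this is analogous to the secondary term $O(ky^{1-\sigma}/((1-\sigma)^2\log^2 y))$ appearing in (2.1). The $O((r^{1/\sigma}/y)^{2\sigma-1})$ term measures the truncation: it is the contribution of the tail $u\lesssim r/y^\sigma$ of the integral $\int g_Z(u)u^{-1-1/\sigma}du$, where $g_Z(u)=O(u^2)$ makes the missing piece $O(\int_0^{r/y^\sigma} u^{1-1/\sigma}du)=O((r/y^\sigma)^{2-1/\sigma})$, which relative to the main term $r^{1/\sigma}$ is $O((r^{1/\sigma}/y)^{2\sigma-1})$ — here the hypothesis $y\ge r^{1/\sigma}$ keeps this under control, and it is also the step that pins down why $\sigma>1/2$ is needed for a nontrivial bound. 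Finally, the symmetric case for $\ex(|L(\sigma,\X;y)|^{-r})$ is immediate: replacing $r$ by $-r$ replaces $g_{Z(p)}(r/p^\sigma)$ by $g_{Z(p)}(-r/p^\sigma)$, and if $Z$ is symmetric then $g_Z(-u)=g_Z(u)$, so $G_Z(\sigma)$ is unchanged and the entire computation goes through verbatim.
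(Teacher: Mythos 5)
Your strategy is essentially the one in the paper: expand $\ex(|L(\sigma,\X;y)|^r)$ as an Euler product using independence, write the local factor as $f_p(r/p^\sigma)+O(r/p^{2\sigma})$ where $f_p$ is the cumulant-generating function of $Z(p)$, replace $f_p$ by $g_Z$ via (ULD), turn the sum over primes into an integral via the prime number theorem, substitute $u=r/t^\sigma$, expand $1/\log(r/u)=1/\log r+O(\log u/\log^2 r)$, and extend the integral to $(0,\infty)$; and you correctly identify the sources of both error terms.

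However, the step where you apply (ULD) has a gap. As stated, (ULD) gives $f_p(t)=g_Z(t)+O_A(1/\log^A p)$ uniformly in $t$; it does not give the decaying per-prime error $O_A(r/(p^\sigma\log^A p))$ you write, and the appeal to $|g'_{Z(p)}|\ll 1$ does not produce that factor. The issue matters because the unweighted error $\sum_{p\le y} O_A(1/\log^A p) \asymp y/\log^{A+1} y$ is not controlled when $y$ is much larger than $r^{1/\sigma}$. The paper's proof handles this by first using $g(t)\ll t^2$ for $t\le 1$ (Lemma 3.1) to show that the primes $p>r^{1/\sigma}\log^A r$ contribute $\ll r^{1/\sigma}/(\log r)^{1+A(2\sigma-1)}$ to $\sum_p f_p(r/p^\sigma)$, so that one may assume $y\le r^{1/\sigma}(\log r)^{1/(2\sigma-1)}$ before invoking (ULD) with $A=3/(2\sigma-1)$; your proposal omits this reduction. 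A smaller slip: for the small primes (the paper cuts at $p\le r^{1/(2\sigma)}$, using $\log E_p(r)=O(r/p^\sigma)$ there), the contribution of each factor is $O(r)$, not $O(1)$ as you write; the cumulative error $O(r^{1/2+1/(2\sigma)})$ is still negligible compared with $r^{1/\sigma}/\log^2 r$, so this affects only the statement, not the outcome.
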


\begin{pro} Assume that the sequence $\{Y(p)\}_{p}$ satisfies hypothesis (ULD), and denote by $Y$ the random variable to which it converges in distribution. Let $r$ be large and $y\geq r^{1/\sigma}$ be a real number. Then we have
$$ \log\ex\left(L(\sigma,\X;y)^{-i r}\overline{L(\sigma,\X;y)}^{ir}\right)=G_Y(\sigma)\frac{r^{1/\sigma}}{\log r}\left(1+O\left(\frac{1}{\log r}+\left(\frac{r^{1/\sigma}}{y}\right)^{2\sigma-1}\right)\right).$$
If $Y$ is symmetric then we get the same estimate for
 $\log\ex\left(L(\sigma,\X;y)^{i r}\overline{L(\sigma,\X;y)}^{-ir}\right)$.
\end{pro}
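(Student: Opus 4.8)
The plan is to run the proof of Proposition 3.2 essentially verbatim, reading off the imaginary rather than the real part of $\log L(\sigma,\X;y)$. Write $\log L(\sigma,\X;y)=\sum_{p\le y}\ell_p$ with $\ell_p:=\sum_{j=1}^{d}\sum_{m\ge 1}X_j(p)^m/(mp^{m\sigma})$; the finitely many primes $p$ with $p^{\sigma}\le 2M$, for which $\ell_p$ need not converge, are peeled off and contribute a factor $e^{O(r)}$ to the moment, negligible against the eventual main term $\asymp r^{1/\sigma}/\log r$. Since $\overline{\log L(\sigma,\X;y)}=\log\overline{L(\sigma,\X;y)}$, the quantity to be averaged is
\[
L(\sigma,\X;y)^{-ir}\,\overline{L(\sigma,\X;y)}^{ir}
=\exp\!\Big(-ir\log L(\sigma,\X;y)+ir\,\overline{\log L(\sigma,\X;y)}\Big)
=\exp\!\Big(2r\sum_{p\le y}\operatorname{Im}\ell_p\Big),
\]
a \emph{positive real} random variable; by independence of the vectors $\X(p)$ its expectation factors as $\prod_{p\le y}\ex\big(e^{2r\operatorname{Im}\ell_p}\big)$, so $\log\ex\big(L(\sigma,\X;y)^{-ir}\overline{L(\sigma,\X;y)}^{ir}\big)=\sum_{p\le y}\log\ex\big(e^{2r\operatorname{Im}\ell_p}\big)+O(r)$. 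For $p$ beyond a fixed constant, $\operatorname{Im}\ell_p=Y(p)/p^{\sigma}+O_{d,M}(p^{-2\sigma})$, and since this remainder is bounded a.s.\ it shifts each summand by $O(r\,p^{-2\sigma})$, a total of $O_\sigma(r)$ (this is where $\sigma$ fixed with $2\sigma>1$ enters).

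Next, discarding the primes with $p>r^{1/\sigma}(\log r)^{C}$ for a suitable $C=C(\sigma)$, whose joint contribution is $\ll r^{2}\sum_{p>r^{1/\sigma}(\log r)^{C}}p^{-2\sigma}\ll r^{1/\sigma}/\log^{2}r$, one may on the remaining range replace $Y(p)$ by its limit $Y$ using hypothesis (ULD): $\ex(e^{2rY(p)/p^{\sigma}})=\ex(e^{2rY/p^{\sigma}})\big(1+O_{A}(\log^{-A}p)\big)$, which for $A=A(\sigma)$ large contributes at most $O_\sigma(r^{1/\sigma}/\log^{2}r)$. This reduces the assertion to estimating $S:=\sum_{p\le y}g_{Y}\big(2r/p^{\sigma}\big)$, where $g_Y(t)=\log\ex(e^{tY})$.

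The sum $S$ is handled exactly as in Proposition 3.2. First, by the prime number theorem, $S=\int_{2}^{y}g_{Y}(2r/t^{\sigma})\,dt/\log t+O(r^{1/\sigma}/\log^{2}r)$, the error being controlled using $|g_{Y}'|\ll 1$, $g_{Y}'(u)\ll u$ near $u=0$ (as $\ex Y=0$; Lemma 3.1), and $y^{1-2\sigma}\le r^{(1-2\sigma)/\sigma}$, which follows from $y\ge r^{1/\sigma}$. The substitution $u=2r/t^{\sigma}$ then produces the asserted estimate: one uses $g_{Y}(u)\ll u^{2}$ near $0$ to extend the $u$-integral down to $0$ — the truncation at its lower endpoint $u=2ry^{-\sigma}$ costing $O\big((r^{1/\sigma}/y)^{2\sigma-1}\cdot r^{1/\sigma}/\log r\big)$ — together with $g_{Y}(u)\ll u$ at infinity, the identity $\int_{0}^{\infty}g_{Y}(u)u^{-1/\sigma-1}\,du=G_{Y}(\sigma)$, and the absolute convergence of $\int_{0}^{\infty}|g_{Y}(u)|(\log u)\,u^{-1/\sigma-1}\,du$, the last two holding precisely because $\tfrac12<\sigma<1$ (Lemma 3.1); the distortion $\log t=\tfrac1\sigma\log(2r)(1+O(\log u/\log r))$ under the substitution is what produces the $1/\log r$ in the error term. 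Finally, if $Y$ is symmetric then so is its limiting law, and since $L(\sigma,\X;y)^{ir}\overline{L(\sigma,\X;y)}^{-ir}=\exp\big(-2r\sum_{p\le y}\operatorname{Im}\ell_p\big)$ with $\ex(e^{-2rY/p^{\sigma}})=\ex(e^{2rY/p^{\sigma}})\big(1+O_{A}(\log^{-A}p)\big)$, the same computation gives the claimed estimate for $\log\ex\big(L(\sigma,\X;y)^{ir}\overline{L(\sigma,\X;y)}^{-ir}\big)$.

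The one step with no exact counterpart in Proposition 3.2 is the opening identity — recognizing $L(\sigma,\X;y)^{-ir}\overline{L(\sigma,\X;y)}^{ir}$ as a positive real random variable whose logarithm is a real multiple of $\operatorname{Im}\log L(\sigma,\X;y)$, so that $Y$ and hypothesis (ULD) for $\{Y(p)\}_{p}$ assume the roles played by $Z$ and $\{Z(p)\}_{p}$ in Proposition 3.2; after that there is no new obstacle. I expect the delicate part to be, just as for Proposition 3.2, the error bookkeeping in the passage from the prime sum to the integral — verifying that the prime-number-theorem error, the discarding of the large primes, the cut-off of the small primes, and the truncation at $y$ all collapse uniformly into the shape $O\big(1/\log r+(r^{1/\sigma}/y)^{2\sigma-1}\big)$, which is exactly where the hypotheses $y\ge r^{1/\sigma}$ and $\tfrac12<\sigma<1$ are used.
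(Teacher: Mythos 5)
Your approach tracks the paper's almost line by line: factor the expectation over primes by independence, approximate $\operatorname{Im}\log$ of each Euler factor by $Y(p)/p^{\sigma}$ with an $O(p^{-2\sigma})$ a.s.\ remainder, pass from $Y(p)$ to $Y$ via (ULD), discard the primes beyond $r^{1/\sigma}(\log r)^{C}$, and convert the prime sum to an integral by the prime number theorem and the usual change of variable. The only cosmetic difference is in the handling of the small primes: the paper cuts off at $p\le r^{1/(2\sigma)}$ so that $r/p^{2\sigma}=O(1)$ and the Taylor expansion of the $p$-factor can be written as a multiplicative $(1+O(r/p^{2\sigma}))$, whereas you peel off only $O(1)$ primes and use the a.s.\ bound $\operatorname{Im}\ell_p=Y(p)/p^{\sigma}+O(p^{-2\sigma})$ to get an additive $O(r/p^{2\sigma})$ shift in each log-expectation; both variants incur $O(r)$ at this step, well within budget.

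There is, however, a constant you did not track. Taking the exponents $-ir,\ ir$ of the statement at face value, as you do, gives $L(\sigma,\X;y)^{-ir}\overline{L(\sigma,\X;y)}^{ir}=\exp\bigl(2r\arg L(\sigma,\X;y)\bigr)$, so that $S=\sum_{p\le y}g_{Y}(2r/p^{\sigma})$. The substitution $u=2r/t^{\sigma}$ then yields
$$
S=(2r)^{1/\sigma}\int_{0}^{\infty}\frac{g_Y(u)}{u^{1/\sigma+1}\log(2r/u)}\,du\,(1+\cdots)=G_Y(\sigma)\,\frac{(2r)^{1/\sigma}}{\log r}\Bigl(1+O\Bigl(\tfrac{1}{\log r}+\cdots\Bigr)\Bigr),
$$
which overshoots the asserted $G_Y(\sigma)\,r^{1/\sigma}/\log r$ by the factor $2^{1/\sigma}\in(2,4)$. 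You claim that the substitution ``produces the asserted estimate,'' but it does not, and the discrepancy does not collapse into the error term. It traces to a typo in the statement: the exponents should be $\pm ir/2$, as in the paper's own proof (which defines $E^i_p(r)$ with exponents $\pm ir/2$ and records $\ex\bigl(L(\sigma,\X;y)^{-ir/2}\overline{L(\sigma,\X;y)}^{ir/2}\bigr)=\prod_{p\le y}E^i_p(r)$) and in the only application, the proof of Theorem 1.9 (where $\int_{-\infty}^{\infty}se^{st}\Psi_X(t;y)\,dt=\ex\bigl(L^{-is/2}\overline{L}^{is/2}\bigr)$). With the corrected exponents your $S$ becomes $\sum_{p\le y}g_Y(r/p^{\sigma})$ and everything lands on the stated $G_Y(\sigma)r^{1/\sigma}/\log r$; but as written, the constant you derive does not match what you assert, and you should have flagged it.
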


Using equation (2.4) one can observe that $\ex\left(|\zeta(\sigma,\X)|^{2k}\right)=\sum_{n=1}^{\infty}d_k(n)^2/n^{2\sigma}.$ Then from  Proposition 3.2 we can deduce the following corollary
\begin{cor}
Let $1/2<\sigma<1$, and $k$ be a large positive real number.
Then
$$\sum_{n=1}^{\infty}\frac{d_k(n)^2}{n^{2\sigma}}=\exp\left(G_1(\sigma)\frac{(2k)^{1/\sigma}}{\log k}\left(1+O\left(\frac{1}{\log k}\right)\right)\right).
$$
\end{cor}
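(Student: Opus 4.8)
The plan is to apply Proposition 3.2 to the specific random Euler product that models $\zeta(\sigma+it)$, namely $\zeta(\sigma,\X)=\prod_p(1-X(p)/p^\sigma)^{-1}$ where the $X(p)$ are independent and uniformly distributed on the unit circle. The starting point is the identity observed just before the corollary: using equation (2.4) with $z_1=z_2=k$ and noting that each local integral $\frac{1}{2\pi}\int_{-\pi}^\pi(1-e^{i\theta}/p^\sigma)^{-k}(1-e^{-i\theta}/p^\sigma)^{-k}d\theta$ is exactly $\ex\bigl(|1-X(p)/p^\sigma|^{-2k}\bigr)$, one gets
$$\ex\bigl(|\zeta(\sigma,\X)|^{2k}\bigr)=\sum_{n=1}^\infty\frac{d_k(n)^2}{n^{2\sigma}}.$$
So it suffices to estimate the left-hand side via the probabilistic machinery. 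Here the relevant random variable is $Z(p)=\operatorname{Re}X(p)$ for all $p$ (the sequence is constant in $p$, so it trivially satisfies (ULD) with limit $Z=\operatorname{Re}X=\cos\theta$, $\theta$ uniform on $[0,2\pi)$).

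The main step is to invoke Proposition 3.2 with $r=2k$ and, say, $y$ chosen large enough that the truncation error is negligible — concretely $y$ a fixed large power of $r$, or more carefully $y\to\infty$, which is legitimate since $\ex(|L(\sigma,\X;y)|^{2k})$ increases to $\ex(|\zeta(\sigma,\X)|^{2k})$ by independence and positivity of the local factors. With $(r^{1/\sigma}/y)^{2\sigma-1}\to 0$ this yields
$$\log\ex\bigl(|\zeta(\sigma,\X)|^{2k}\bigr)=G_Z(\sigma)\frac{(2k)^{1/\sigma}}{\log(2k)}\Bigl(1+O\bigl(1/\log k\bigr)\Bigr).$$
Finally I would reconcile the constants: $G_Z(\sigma)=\int_0^\infty\log\ex(e^{uZ})\,u^{-1-1/\sigma}\,du$ with $Z=\cos\theta$. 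Since $\ex(e^{u\cos\theta})=\frac{1}{2\pi}\int_0^{2\pi}e^{u\cos\theta}d\theta=I_0(u)$, we have $G_Z(\sigma)=\int_0^\infty\log I_0(u)\,u^{-1-1/\sigma}\,du=G_1(\sigma)$, matching the definition of $G_1(\sigma)$ given in the statement of Theorem 1.3. Also $\log(2k)=(\log k)(1+O(1/\log k))$, so the $\log(2k)$ in the denominator can be replaced by $\log k$ at the cost of absorbing the discrepancy into the error term. This gives exactly the claimed formula.

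I do not anticipate a genuine obstacle here, since Proposition 3.2 does all the real work; the only points requiring a little care are (i) justifying the passage $y\to\infty$ inside the expectation (monotone convergence / the a.s.\ absolute convergence of $\zeta(\sigma,\X)$ for $\sigma>1/2$, both already available), and (ii) the bookkeeping that identifies $\ex(e^{u\cos\theta})$ with the Bessel function $I_0(u)$ and hence $G_Z(\sigma)$ with $G_1(\sigma)$, together with the harmless replacement of $\log(2k)$ by $\log k$. Everything else is a direct substitution into the proposition.
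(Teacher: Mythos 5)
Your proposal is correct and matches the paper's own (unwritten but clearly indicated) proof: the paper observes just before the corollary that $\ex(|\zeta(\sigma,\X)|^{2k})=\sum_n d_k(n)^2/n^{2\sigma}$ via (2.4) and then says "from Proposition 3.2 we can deduce the following corollary." You have correctly filled in the remaining bookkeeping (letting $y\to\infty$ by monotone convergence of the partial products, identifying $\ex(e^{u\cos\theta})=I_0(u)$ so that $G_Z(\sigma)=G_1(\sigma)$, and absorbing $\log(2k)$ into $\log k$), which is precisely what is needed.
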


\begin{proof}[Proof of Proposition 3.2] For a prime number $p$, let $f_p(t):=\log\ex(e^{tZ(p)})$ be the cumulant-generating function of $Z(p)$. Define $$E_p(r):=\ex\left(\prod_{j=1}^d
\left|1-\frac{X_j(p)}{p^{\sigma}}\right|^{-r}\right)=\ex\left(\prod_{j=1}^d
\left(1-\frac{2\text{Re}X_j(p)}{p^{\sigma}}+\frac
{1}{p^{2\sigma}}\right)^{-r/2}\right).$$
 Then by the independence of the $\X(p)$ we know that $\ex(|L(\sigma,\X;y)|^r)=\prod_{p\leq y}E_p(r)$.
 If $p\leq r^{1/(2\sigma)}$, then $\log E_p(r)=O(r/p^{\sigma})$, which follows simply from the fact that the $X_j(p)$ are bounded. Now for primes $p$ such that $r^{1/{2\sigma}}<p\leq y
$, we have that
$$
\prod_{j=1}^d
\left(1-\frac{2\text{Re}X_j(p)}{p^{\sigma}}+\frac
{1}{p^{2\sigma}}\right)^{-r/2}
=\exp\left(\frac{r}{p^{\sigma}}Z(p)\right)\left(1+
O\left(\frac{r}{p^{2\sigma}}\right)\right),
$$
so that $\log E_p(r)=f_p\left(r/p^{\sigma}\right)+O\left(r/p^{2\sigma}\right)$.
Hence combining these estimates we deduce that
$$
\log\ex\left(|L(\sigma,\X;y)|^{r}\right)=
 \sum_{r^{1/(2\sigma)}<p\leq y}f_p\left(\frac{r}{p^{\sigma}}\right)+ E_2,
$$
where
$$ E_2 \ll \sum_{p\leq r^{1/(2\sigma)}}\frac{r}{p^{\sigma}}+\sum_{r^{1/(2\sigma)}<p}\frac{r}{p^{2\sigma}}\ll r^{1/2+1/(2\sigma)}.$$
Now using Lemma 3.1 we find that
$$ \sum_{r^{1/\sigma}\log^A r<p}f_p\left(\frac{r}{p^{\sigma}}\right)\ll r^2\sum_{r^{1/\sigma}\log^A r<p} \frac{1}{p^{2\sigma}}\ll \frac{r^{1/\sigma}}{(\log r)^{1+A(2\sigma-1)}},$$
by the prime number theorem. Therefore we may assume that $y\leq r^{1/\sigma}(\log r)^{1/(2\sigma-1)}$, otherwise the error term corresponding to $y$ in Proposition 3.2 can be omitted. Since the sequence $\{Z(p)\}_p$ satisfies hypothesis (ULD), then for large primes $p$ we have  that $f_p(t)=g_Z(t)+O_A((\log p)^{-A}).$ Hence choosing $A=3/(2\sigma-1)$ gives that
\begin{equation*}
\begin{aligned}
\sum_{r^{1/(2\sigma)}<p\leq y}f_p\left(\frac{r}{p^{\sigma}}\right)
&= \sum_{r^{1/(2\sigma)}<p\leq y}g_Z\left(\frac{r}{p^{\sigma}}\right)+O\left(\frac{1}{(\log r)^A}\sum_{r^{1/(2\sigma)}<p\leq y}1\right)\\
&=\sum_{r^{1/(2\sigma)}<p\leq y}g_Z\left(\frac{r}{p^{\sigma}}\right)+O\left(\frac{r^{1/\sigma}}{\log^2 r}\right),\\
\end{aligned}
\end{equation*}
by the prime number theorem and our assumption on $y$.
Thus it only remains to evaluate the sum over $g_Z(r/p^{\sigma})$. To this end we use the prime number theorem
 in the form
$$\pi(t)=\int_2^t\frac{du}{\log u}+O\left(te^{-8\sqrt{\log t}}\right).$$
Moreover since the sequence $\{Z(p)\}_p$ converges in distribution to $Z$, then $Z$ has bounded support and $\ex(Z)=0$. Therefore by Lemma 3.1 and our hypothesis on $y$ we get that
$$ \sum_{r^{1/(2\sigma)}<p\leq y}g_Z\left(\frac{r}{p^{\sigma}}\right)=
\int_{r^{1/(2\sigma)}}^{y}g_Z\left(\frac{r}{t^{\sigma}}\right)d\pi(t)=
\int_{r^{1/(2\sigma)}}^{y}g_Z\left(\frac{r}{t^{\sigma}}\right)\frac{dt}{\log t}+E_3,$$
where
\begin{equation*}
\begin{aligned}
E_3&\ll g_Z\left(\frac{r}{y^{\sigma}}\right)ye^{-8\sqrt{\log y}}+g_Z\left(\sqrt{r}\right)r^{\frac{1}{2\sigma}}e^{-4\sqrt{\log r}}
+\int_{r^{\frac{1}{2\sigma}}}^{y}\frac{r}{t^{\sigma+1}}\left|g_Z'\left(\frac{r}{t^{\sigma}}\right)\right|te^{-8\sqrt{\log t}}dt.\\
&\ll \frac{r^2}{y^{2\sigma-1}}e^{-4\sqrt{\log r}}+r^{1/\sigma}e^{-4\sqrt{\log r}}+ re^{-4\sqrt{\log r}}
\left(\int_{(rd)^{\frac{1}{2\sigma}}}^{y}\frac{1}{t^{\sigma}}dt\right),\\
&\ll r^{1/\sigma}e^{-\sqrt{\log r}},
\end{aligned}
\end{equation*}
To estimate the main term we make the change of variables $u=r/t^{\sigma}$. This gives
$$
\int_{r^{1/(2\sigma)}}^{y}g_Z\left(\frac{r}{t^{\sigma}}\right)\frac{dt}{\log t}=
r^{1/\sigma}\int_{r/y^{\sigma}}^{r^{1/2}} \frac{g_Z(u)}{u^{1+\frac{1}{\sigma}}\log(r/u)}du.$$
In the range $r/y^{\sigma}\leq u\leq r^{1/2}$, we have
$$ \frac{1}{\log(r/u)}=\frac{1}{\log r}\frac{1}{1-\frac{\log u}{\log r}}=\frac{1}{\log r}+O\left(\frac{\log u}{\log^2 r}\right),$$
which implies that
$$ \int_{r/y^{\sigma}}^{r^{1/2}} \frac{g_Z(u)}{u^{1+\frac{1}{\sigma}}\log(r/u)}du=
\frac{1}{\log r} \int_{r/y^{\sigma}}^{r^{1/2}} \frac{g_Z(u)}{u^{1+\frac{1}{\sigma}}}du+O\left(\frac{1}{\log^2r}\right),$$
using that
$$ \int_{0}^{\infty}\frac{g_Z(u)\log(u)}{u^{1+\frac{1}{\sigma}}}du\ll 1,$$
which follows from Lemma 3.1.
Using Lemma 3.1 again gives that
$$ \int_{r/y^{\sigma}}^{r^{1/2}} \frac{g_Z(u)}{u^{1+\frac{1}{\sigma}}}du=\int_{0}^{\infty}\frac{g_Z(u)}{u^{1+\frac{1}{\sigma}}}du+O\left(r^{1/2-1/(2\sigma)}+\left(\frac{r^{1/\sigma}}{y}\right)^{2\sigma-1}\right).$$
Hence we deduce that
$$ \sum_{r^{1/(2\sigma)}<p\leq y}g_Z\left(\frac{r}{p^{\sigma}}\right)
= \frac{r^{1/\sigma}}{\log r}\int_{0}^{\infty}\frac{g_Z(u)}{u^{1+\frac{1}{\sigma}}}du\left(1+O\left(\frac{1}{\log r}+\left(\frac{r^{1/\sigma}}{y}\right)^{2\sigma-1}\right)\right).
$$
Finally if $Z$ is symmetric then $g_Z(u)$ is even, and hence we get the same asymptotic if $r$ is replaced by $-r$. This concludes the proof.
\end{proof}

\begin{proof}[Proof of Proposition 3.3] For a prime $p$ let $h_p(t):=\log\ex(e^{tY(p)})$ be the cumulant-generating function of $Y(p)$.
 We follow the same lines as the proof of Proposition 3.2. Define $$E^i_p(r):=\ex\left(\prod_{j=1}^d
\left(1-\frac{X_j(p)}{p^{\sigma}}\right)^{ir/2}\left(1-\frac{\overline{X_j(p)}}{p^{\sigma}}\right)^{-ir/2}\right).$$
 The independence of the $\X(p)$ implies that $\ex\left(L(\sigma,\X;y)^{-i r/2}\overline{L(\sigma,\X;y)}^{ir/2}\right)=\prod_{p\leq y}E^i_p(r).$
 If $p\leq r^{1/(2\sigma)}$, then $\log E^i_p(r)=O(r/p^{\sigma})$, since the $X_j(p)$ are bounded. Now for primes $p$ such that $r^{1/{2\sigma}}<p\leq y
$, we have that
$$
\prod_{j=1}^d
\left(1-\frac{X_j(p)}{p^{\sigma}}\right)^{ir/2}\left(1-\frac{\overline{X_j(p)}}{p^{\sigma}}\right)^{-ir/2}
=\exp\left(\frac{r}{p^{\sigma}}Y(p)\right)\left(1+
O\left(\frac{r}{p^{2\sigma}}\right)\right),
$$
so that $\log E^i_p(r)=h_p\left(r/p^{\sigma}\right)+O\left(r/p^{2\sigma}\right)$. Then following exactly the same method as in the proof of Proposition 3.2 gives the result.
 \end{proof}

\begin{proof}[Proof of Theorem 1.9] We begin by estimating $\Phi_X(\tau;y)$. For $s>0$ we have
\begin{equation*}
\begin{aligned}
&\int_{-\infty}^{\infty}se^{st}\Phi_X(t;y)dt =
\int_{-\infty}^{\infty}se^{st}\int_{\log|L(\sigma,\X(\omega);y)|>t}d\mu(\omega)dt\\
&=\int_{\Omega}|L(\sigma,\X(\omega);y)|^{s}d\mu(\omega)=\ex\left(|L(\sigma,\X;y)|^{s}\right).\\
\end{aligned}
\end{equation*}
Therefore if $s$ is large, then Proposition 3.2 gives that
\begin{equation}
\int_{-\infty}^{\infty}e^{st}\Phi_X(t;y)dt=\exp\left(G_Z(\sigma)\frac{s^{1/\sigma}}{\log s}\left(1+O\left(\frac{1}{\log s}+\left(\frac{s^{1/\sigma}}{y}\right)^{2\sigma-1}\right)\right)\right).
\end{equation}
To estimate $\Phi_X(\tau;y)$ we use the saddle point method. Let $s$ be the unique solution to the equation
\begin{equation}
\tau=G_Z(\sigma)\frac{s^{1/\sigma-1}}{\sigma\log s}.
\end{equation}
Let $\epsilon>0$ be a small number to be chosen later and define $$s_1:=s(1+\epsilon), \ s_2:=s(1-\epsilon),\text{ and }
\tau_1:=\tau\left(1+\frac{\epsilon}{2\sigma}\right), \ \tau_2:=\tau\left(1-\frac{\epsilon}{2\sigma}\right).$$
Since $s-s_2>0$, then
$$ \int_{-\infty}^{\tau_2}e^{st}\Phi_X(t;y)dt\leq \int_{-\infty}^{\tau_2}e^{(s-s_2)(\tau_2-t)+st}\Phi_X(t;y)dt\leq e^{\epsilon s\tau_2}\int_{-\infty}^{+\infty}e^{s_2t}\Phi_X(t;y)dt.$$
Hence using (3.1) we find that
\begin{equation}
\int_{-\infty}^{\tau_2}e^{st}\Phi_X(t;y)dt\leq \exp\left(G_Z(\sigma)\frac{s^{1/\sigma}}{\log s}\left((1-\epsilon)^{1/\sigma}+\frac{\epsilon}{\sigma}-\frac{\epsilon^2}{2\sigma^2}+E_4\right)\right),
\end{equation}
where $E_4\ll 1/\log s+\left(s^{1/\sigma}y^{-1}\right)^{2\sigma-1}.$ Now we have that $(1+x)^{1/\sigma}=1+\sigma^{-1}x+x^2\sigma^{-1}(\sigma^{-1}-1)/2+O(x^3)$ if $|x|<1$. Then we choose
$$ \epsilon=K\left(\frac{1}{\sqrt{\log s}}+\left(\frac{s^{1/\sigma}}{y}\right)^{\sigma-1/2}\right),$$
where $K$ is a suitably large constant, to deduce that
\begin{equation}
\int_{-\infty}^{\tau_2}e^{st}\Phi_X(t;y)dt\leq \frac{1}{10}\int_{-\infty}^{\infty}e^{st}\Phi_X(t;y)dt.
\end{equation}
Similarly one has
$$ \int_{\tau_1}^{+\infty}e^{st}\Phi_X(t;y)dt\leq \int_{\tau_1}^{+\infty}e^{(s_1-s)(t-\tau_1)+st}\Phi_X(t;y)dt\leq e^{-\epsilon s\tau_1}\int_{-\infty}^{+\infty}e^{s_1t}\Phi_X(t;y)dt,$$
and using exactly the same argument as before we deduce that
\begin{equation}
\int_{\tau_1}^{+\infty}e^{st}\Phi_X(t;y)dt\leq \frac{1}{10}\int_{-\infty}^{\infty}e^{st}\Phi_X(t;y)dt.
\end{equation}
Combining inequalities (3.4) and (3.5) along with the estimate (3.1) we obtain that
$$  \int_{\tau_2}^{\tau_1}e^{st}\Phi_X(t;y)dt= \exp\left(G_Z(\sigma)\frac{s^{1/\sigma}}{\log s}\left(1+O(\epsilon^2)\right)\right).$$
Moreover, since $\Phi_X(t;y)$ is a non-increasing function and $\int_{\tau_2}^{\tau_1}e^{st}dt=\exp(s\tau(1+O(\epsilon)))$, we get that
$$ \Phi_X\left(\tau\left(1+\frac{\epsilon}{2\sigma}\right);y\right)\leq \exp\left(-\frac{(1-\sigma)G_Z(\sigma)}{\sigma}\frac{s^{1/\sigma}}{\log s}(1+O(\epsilon))\right)\leq \Phi_X\left(\tau\left(1-\frac{\epsilon}{2\sigma}\right);y\right).$$
Hence it only remains to solve equation (3.2) in $s$. Taking the logarithm of both sides we get that $\log s= \frac{\sigma}{(1-\sigma)}\log \tau +O(\log_2\tau).$ Then an easy calculation gives that
$$ s=\left(\frac{\sigma^2}{(1-\sigma)G_Z(\sigma)}(\tau\log \tau)\right)^{\frac{\sigma}{(1-\sigma)}}\left(1+O\left(\frac{\log_2\tau}{\log \tau}\right)\right).$$
Thus we deduce that
$$ \Phi_X\left(\tau;y\right)=\exp\left(-A_Z(\sigma)\tau^{\frac{1}{1-\sigma}}(\log\tau)^{\frac{\sigma}{1-\sigma}}\left(1+O\left(\frac{1}{\sqrt{\log \tau}}+r(y,\tau)\right)\right)\right).$$
Now concerning $\Psi_X(\tau;y)$ we have for $s>0$
\begin{equation*}
\begin{aligned}
&\int_{-\infty}^{\infty}se^{st}\Psi_X(t;y)dt =
\int_{-\infty}^{\infty}se^{st}\int_{\arg L(\sigma,\X(\omega);y)>t}d\mu(\omega)dt\\
&=\int_{\Omega}e^{s\arg L(\sigma,\X(\omega);y)}d\mu(\omega)=\ex\left(L(\sigma,\X;y)^{-is/2}{\overline{ L(\sigma,\X;y)}}^{is/2}\right).\\
\end{aligned}
\end{equation*}
Then by Proposition 3.3 we can use exactly the same saddle-point method as for $\Phi_X(\tau;y)$ to derive the analogous estimate for $\Psi_X(\tau;y)$. Finally to get estimates for the left tails we proceed along the same lines by changing $s$ to $-s$.
\end{proof}

\section{The distribution of $\zeta(\sigma+it)$ and $L(\sigma,\chi)$}

\subsection{The distribution of the Riemann zeta function}
Define $\zeta(s;y):=\prod_{p\leq y}(1-p^{-s})^{-1}$, and $L(\sigma,\X_1;y):=\prod_{p\leq y}(1-X_1(p)p^{-\sigma})^{-1}$, where $
\{X_1(p)\}_p$ are independent random variables uniformly distributed on the unit circle. In order to prove Theorem 1.1 our strategy consists of using zeros density estimates for $\zeta(s)$, Lemma 2.2 along with a basic ``large sieve'' (Lemma 4.2 below) to show that one can approximate
$\log \zeta(\sigma+it)$ by $\log\zeta(\sigma+it;y)$ (where $y=\log T$) for all $t\in [T,2T]$ except for a set of a very small measure.
Then we compute large moments of $\zeta(\sigma+it;y)$ (Proposition 4.1 below) to show that the distribution of $\log\zeta(\sigma+it;y)$ is very close to that of $\log L(\sigma,\X_1;y)$, and that the latter can be deduced from the results of section 3. We prove
\begin{pro} Let $T$ be large, and $y\leq (\log T)^2$ be a large real number. Then uniformly for all complex numbers $z_1, z_2$ such that $|z_i|y^{1-\sigma}\leq (1-\sigma)\log T/16$ we have
\begin{equation*}
\begin{aligned}
\frac{1}{T}\int_{T}^{2T}\zeta(\sigma+it;y)^{z_1}\overline{\zeta(\sigma+it;y)}^{z_2}dt&= \ex\left(L(\sigma,\X_1;y)^{z_1}\overline{L(\sigma,\X_1;y)}^{z_2}\right)\\
&+O\left(\exp\left(-\frac{\log T}{4\log y}\right)\right).\\
\end{aligned}
\end{equation*}
\end{pro}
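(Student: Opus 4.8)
The plan is to expand both sides into Dirichlet series in the variables $z_1, z_2$, match them term by term, and control the tails and the error from the mean value theorem for Dirichlet polynomials. On the random side, by independence of the $X_1(p)$ we have $\ex\left(L(\sigma,\X_1;y)^{z_1}\overline{L(\sigma,\X_1;y)}^{z_2}\right) = \prod_{p\le y} \frac{1}{2\pi}\int_{-\pi}^{\pi}\left(1-\frac{e^{i\theta}}{p^\sigma}\right)^{-z_1}\left(1-\frac{e^{-i\theta}}{p^\sigma}\right)^{-z_2}d\theta$, which by the multiplicativity computation already recorded in the excerpt (the one leading to (2.4)) equals $\sum_{n\in S(y)} \frac{d_{z_1}(n)d_{z_2}(n)}{n^{2\sigma}}$, the sum being over $y$-smooth integers. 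On the $\zeta$-side, $\zeta(\sigma+it;y)^{z_1}\overline{\zeta(\sigma+it;y)}^{z_2} = \sum_{m,n\in S(y)} \frac{d_{z_1}(m)d_{z_2}(n)}{m^{\sigma+it}n^{\sigma-it}}$, an absolutely convergent double Dirichlet series over smooth numbers. So the identity we want is essentially the statement that, after integrating $t$ over $[T,2T]$, the off-diagonal terms $m\neq n$ contribute only the stated error while the diagonal $m=n$ reproduces the random side.

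First I would truncate both series at a height $N$ to be chosen (something like $N = T^{1/2}$ or a small power of $T$), using the divisor-function bounds from Section 2.1 — in particular $|d_z(n)|\le d_k(n)$ for $k\ge|z|$ and estimates (2.1), (2.2), (2.3) — to bound the discarded tails. The hypothesis $|z_i|y^{1-\sigma}\le (1-\sigma)\log T/16$ is exactly what makes $\sum_{n\in S(y)} d_{|z_i|}(n)/n^\sigma = \prod_{p\le y}(1-p^{-\sigma})^{-|z_i|} = \exp\bigl(|z_i|y^{1-\sigma}/((1-\sigma)\log y)(1+o(1))\bigr)$ be at most a small power of $T$, so that the smooth parts of the series are genuinely short and the tail beyond $N$ is negligible compared to $\exp(-\log T/(4\log y))$. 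Then I would apply the mean-value theorem for Dirichlet polynomials: for $\sum_{m\le N} a_m m^{-it}$ one has $\frac1T\int_T^{2T}\left|\sum_{m\le N}a_m m^{-\sigma-it}\right|^2 dt = \sum_{m\le N}\frac{|a_m|^2}{m^{2\sigma}}(1+O(N/T)) $ plus an off-diagonal error $\ll \sum_{m\neq n}\frac{|a_m||a_n|}{(mn)^\sigma|\log(m/n)|}$, or more efficiently use the "large sieve" Lemma 4.2 that the author references. Applied to $a_m = d_{z_1}(m)$ for one factor and $a_n = d_{z_2}(n)$ for the other (Cauchy–Schwarz to separate, or directly estimate the bilinear form), the diagonal $m=n$ yields $\sum_{n\le N,\, n\in S(y)} d_{z_1}(n)d_{z_2}(n)/n^{2\sigma}$, which up to the negligible tail is the random-model quantity, and the off-diagonal is absorbed into the $O(N/T)$-type term, which is $\le \exp(-\log T/(4\log y))$ once $N$ and the size of the coefficient sums are controlled by the hypothesis on the $z_i$.

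The main obstacle, and the place where the precise shape of the hypothesis $|z_i|y^{1-\sigma}\le(1-\sigma)\log T/16$ is consumed, is balancing the truncation level $N$ against the off-diagonal/$N/T$ error: one needs $N$ large enough that the tails of the smooth Dirichlet series past $N$ are smaller than $\exp(-\log T/(4\log y))$, yet small enough (relative to $T$) that $N\cdot(\text{coefficient sums})^2/T$ is also that small. Since the relevant coefficient sum $\sum_{n\le N, n\in S(y)}d_{|z_i|}(n)n^{-\sigma}$ grows like $\exp\bigl(c|z_i|y^{1-\sigma}/\log y\bigr)$, the constant $16$ is chosen so that this is at most $T^{1/8}$ or so, leaving room to pick $N$ appropriately; I would set $N$ to make the tail bound (via a Rankin-type shift $n^{-\sigma}\le N^{-\delta}n^{-\sigma+\delta}$ with $\delta$ a small multiple of $1/\log y$) exactly of order $\exp(-\log T/(4\log y))$, and then verify the mean-value error is no worse. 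Everything else — the multiplicativity identity on the random side, the tail estimates, the mean value theorem — is routine given the lemmas of Section 2; the only real bookkeeping is tracking the three competing exponential scales ($\log T/\log y$ from the truncation, $|z_i|y^{1-\sigma}/\log y$ from the coefficient sums, $\log N$ from the length) and confirming the stated hypothesis keeps them in the right order.
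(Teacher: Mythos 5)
Your plan is the same diagonal/off-diagonal decomposition the paper uses: expand the double Dirichlet series over $y$-smooth integers, identify the diagonal $m=n$ with the random-model moment via the identity preceding~(2.4), and bound the off-diagonal by combining the trivial estimate $\int_T^{2T}(m/n)^{it}\,dt\ll 1/|\log(m/n)|$ for small $m,n$ with a Rankin-type shift for the large ones, all fed by~(2.1) and the hypothesis on $|z_i|y^{1-\sigma}$. The paper just avoids your explicit truncation step by splitting the off-diagonal sum directly at $\max(m,n)=\sqrt T$, so it never has to bound a discarded tail separately; your version with a truncation height $N$ and a tail estimate is morally identical and will close for the same reasons, but it introduces one extra bookkeeping layer. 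One small caution: the two alternatives you float in passing are not the right tools here --- Cauchy--Schwarz separates $\zeta^{z_1}\bar\zeta^{z_2}$ into two absolute moments and so only gives an upper bound rather than the asymptotic, and Lemma~4.2 of the paper is a moment estimate for prime sums $\sum_{y\le p\le z}p^{-\sigma-it}$, not for the full smooth Dirichlet polynomial; stick with the direct bilinear estimate you also describe.
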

\begin{proof} We have that
$$
\frac{1}{T}\int_{T}^{2T}\zeta(\sigma+it,y)^{z_1}\overline{\zeta(\sigma+it,y)}^{z_2}dt=\sum_{m,n\in S(y)}\frac{d_{z_1}(n)d_{z_2}(m)}{(mn)^{\sigma}}\frac{1}{T}\int_{T}^{2T}\left(\frac{m}{n}\right)^{it}dt.
$$
The contribution of the diagonal terms $m=n$ equals
$\sum_{n\in S(y)}d_{z_1}(n)d_{z_2}(n)/n^{2\sigma}=\ex\left(L(\sigma,\X_1;y)^{z_1}\overline{L(\sigma,\X_1;y)}^{z_2}\right)$ by equation (2.4). This contribution constitutes the main term to the moments as we shall now prove.
Let $k$ be the smallest positive integer such that $k\geq \max(|z_1|,|z_2|)$. Concerning the off-diagonal terms $m\neq n$, we split these into two cases. First we handle the terms $m, n\leq T^{1/2}$. In this case observe that
$\int_{T}^{2T}\left(\frac{m}{n}\right)^{it}dt\ll 1/|\log(m/n)|\ll T^{1/2}.$
Hence by (2.1) it follows that the contribution of these terms is
$$ \ll \frac{1}{\sqrt{T}}\left(\sum_{n\in S(y)}\frac{d_k(n)}{n^{\sigma}}\right)^2=\exp\left(-\frac{\log T}{2}+O\left(\frac{ky^{1-\sigma}}{\log y}\right)\right).$$
Next we handle the terms $m\neq n$ with $\max(m,n)>\sqrt{T}.$
 Let $\beta=1/\log y$. By (2.1) the contribution of these terms is
\begin{equation*}
\begin{aligned}
&\ll \sum_{\substack{ m>\sqrt{T}\\ m\in S(y)}} \sum_{n\in S(y)}\frac{d_k(n)d_k(m)}{(mn)^{\sigma}}\ll T^{-\beta/2}\sum_{m\in S(y)}\frac{d_k(m)}{m^{\sigma-\beta}}\sum_{n\in S(y)}\frac{d_k(n)}{n^{\sigma}}\\
&\ll \exp\left(-\frac{\log T}{2\log y}+ \frac{(e+1)ky^{1-\sigma}}{(1-\sigma)\log y}+O\left(\frac{ky^{1-\sigma}}{(1-\sigma)^2\log^2 y}\right)\right),\\
\end{aligned}
\end{equation*}
 which completes the proof.
\end{proof}

\begin{lem} Let $2\leq y\leq z$ be real numbers. Then for all positive integers $k$ with $1\leq k\leq \log T/(3\log z)$ we have
$$ \frac{1}{T}\int_T^{2T}\left|\sum_{y\leq p\leq z}\frac{1}{p^{\sigma+it}}\right|^{2k}dt\ll k!\left(\sum_{y\leq p\leq z}\frac{1}{p^{2\sigma}}\right)^k+ O\left(T^{-1/3}\right) .$$
\end{lem}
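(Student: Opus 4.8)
The plan is to square out the $2k$-th moment of the Dirichlet polynomial $D(t):=\sum_{y\le p\le z}p^{-\sigma-it}$ into a double sum indexed by products of $k$ primes, extract the diagonal as the main term, and show that the off-diagonal contribution is negligible by exploiting the hypothesis $k\le \log T/(3\log z)$.

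First I would write $D(t)^k=\sum_m c_k(m)\,m^{-\sigma-it}$, where $c_k(m)$ is the number of ordered $k$-tuples of primes in $[y,z]$ with product $m$; thus $c_k$ is supported on integers $m\le z^k$, and by unique factorization $c_k(m)\le k!$ (there are at most $k!$ orderings of the prime factorization of $m$). Squaring the modulus and integrating termwise gives
$$\frac1T\int_T^{2T}|D(t)|^{2k}\,dt=\sum_m\frac{c_k(m)^2}{m^{2\sigma}}+\sum_{m\ne n,\ m,n\le z^k}\frac{c_k(m)c_k(n)}{(mn)^\sigma}\cdot\frac1T\int_T^{2T}(n/m)^{it}\,dt.$$
For the diagonal, $c_k(m)^2\le k!\,c_k(m)$ yields $\sum_m c_k(m)^2 m^{-2\sigma}\le k!\sum_m c_k(m)m^{-2\sigma}=k!\big(\sum_{y\le p\le z}p^{-2\sigma}\big)^k$, which is exactly the asserted main term. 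For the off-diagonal, I would use $\big|\frac1T\int_T^{2T}(n/m)^{it}\,dt\big|\le \frac{2}{T|\log(m/n)|}$; since $m\ne n$ are integers with $m,n\le z^k$ one has $|\log(m/n)|\gg z^{-k}$, so the off-diagonal is $\ll \frac{z^k}{T}\big(\sum_m c_k(m)m^{-\sigma}\big)^2=\frac{z^k}{T}\big(\sum_{y\le p\le z}p^{-\sigma}\big)^{2k}$. Now the hypothesis gives $z^k\le T^{1/3}$, while $\sum_{y\le p\le z}p^{-\sigma}\le\sum_{p\le z}p^{-1/2}\le\sqrt z$ (by the prime number theorem, or a direct estimate), so the off-diagonal is $\ll z^{2k}/T\le T^{-1/3}$, which gives the claim.

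The one point requiring care is the off-diagonal bookkeeping: the crude lower bound $|\log(m/n)|\gg z^{-k}$ is only good enough because the constraint $z^k\le T^{1/3}$ makes $z^{2k}/T$ as small as $T^{-1/3}$, and because $\sum_{y\le p\le z}p^{-\sigma}$ stays below $\sqrt z$, so that raising it to the $2k$-th power does not spoil the saving. Alternatively, estimating $|\log(m/n)|^{-1}\ll z^k/|m-n|$ and applying a Hilbert-type inequality $\sum_{m\ne n}a_m a_n/|m-n|\ll (k\log z)\sum_m a_m^2$ absorbs the off-diagonal directly into the diagonal term with a factor $1+o(1)$ (since $kz^k\log z/T=o(1)$), and then one needs no estimate for $\sum p^{-\sigma}$ at all. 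Everything else is a routine expansion of the moment.
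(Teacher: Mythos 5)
Your proposal is correct and follows essentially the same route as the paper: expand the $2k$-th power, bound the diagonal by $k!\big(\sum_{y\le p\le z}p^{-2\sigma}\big)^k$ via the coefficient bound $c_k(m)\le k!$, and dispose of the off-diagonal by noting that $m\ne n$ with $m,n\le z^k\le T^{1/3}$ forces $|\log(m/n)|^{-1}\ll z^k$, which combined with $\sum_{y\le p\le z}p^{-\sigma}\le\sqrt z$ gives the $O(T^{-1/3})$ term. The paper phrases the off-diagonal bound slightly differently (writing $\frac{1}{T|\log(m/n)|}\ll T^{-2/3}$ and then using $z^{2k(1-\sigma)}\le T^{1/3}$), but this is the same estimate rearranged.
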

\begin{proof}
First we have that
$$\frac{1}{T}\int_T^{2T}\left|\sum_{y\leq p\leq z}\frac{1}{p^{\sigma+it}}\right|^{2k}dt= \sum_{\substack{ y\leq p_1,...,p_k\leq z\\y\leq q_1,...,q_k\leq z}}\frac{1}{(p_1\cdots p_kq_1\cdots q_k)^{\sigma}}\frac{1}{T}\int_T^{2T}\left(\frac{p_1\cdots p_k}{q_1\cdots q_k}\right)^{it}dt.$$
The diagonal terms $p_1\cdots p_k=q_1\cdots q_k$ contributes
$$\ll k!\left(\sum_{y\leq p\leq z}\frac{1}{p^{2\sigma}}\right)^k.$$
If $p_1\cdots p_k\neq q_1\cdots q_k$ then both products are below $z^k\leq T^{1/3}$, which implies that
$$ \frac{1}{T}\int_T^{2T}\left(\frac{p_1\cdots p_k}{q_1\cdots q_k}\right)^{it}dt\ll \frac{1}{T|\log((p_1\cdots p_k)/(q_1\cdots q_k))|}\ll T^{-2/3}.$$
Therefore the contribution of the off-diagonal terms is $T^{-\frac23}\left(\sum_{y\leq p\leq z}p^{-\sigma}\right)^{2k}\ll T^{-2/3}z^{2k(1-\sigma)} \ll T^{-1/3}$. \qed
\end{proof}

\begin{proof}[Proof of Theorem 1.1] Let $1/2<\sigma<1$ and take $y=\log T$. For simplicity we write $\Phi_T(\tau)=\Phi_T(\sigma,\tau)$.
 Proposition 4.1 (with $z_1=z_2=r/2$) implies that
\begin{equation}
\frac{1}{T}\int_T^{2T}|\zeta(\sigma+it;y)|^rdt=\ex\left(|L(\sigma,\X_1;y)|^r\right)+O\left(\exp\left(-\frac{\log T}{4\log y}\right)\right),
\end{equation}
uniformly for all real numbers $r$ in the range  $r\leq (1-\sigma)(\log T)^{\sigma}/8.$ Let
$$ \Phi_T(\tau;y):=\frac{1}{T}\text{meas}\{t\in [T,2T]: \log|\zeta(\sigma+it;y)|>\tau\}.$$ Then using (4.1) along with Proposition 3.2 (with $d=1$ and $\X(p)=X_1(p)$) gives that
\begin{equation}
\begin{aligned}
\int_{-\infty}^{\infty}re^{ru}\Phi_T(u;y)du&=\frac{1}{T}\int_T^{2T}|\zeta(\sigma+it;y)|^rdt\\
&=\exp\left(G_1(\sigma)
\frac{r^{1/\sigma}}{\log r}\left(1+O\left(\frac{1}{\log r}+\left(\frac{r^{1/\sigma}}{\log T}\right)^{2\sigma-1}\right)\right)\right),\\
\end{aligned}
\end{equation}
where $G_1(\sigma)=\int_{0}^{\infty}\log I_0(u)u^{-1-\frac{1}{\sigma}}du.$
In order to estimate $\Phi_T(\tau;y)$ we use the saddle point method exactly as in the proof of Theorem 1.9 (see section 3).
In this case $r$ will be chosen to be the unique solution to the equation $\tau=G_1(\sigma)r^{1/\sigma-1}/(\sigma\log r)$ (see 3.2), which implies that
$$ r=\left(\frac{\sigma^2}{(1-\sigma)G_1(\sigma)}(\tau\log \tau)\right)
^{\frac{\sigma}{(1-\sigma)}}\left(1+O\left(\frac{\log_2\tau}{\log \tau}\right)\right).$$
Therefore, choosing $c_1(\sigma)$ small enough and applying the saddle point method to equation (4.2), we deduce that uniformly for $1\ll\tau\leq c_1(\sigma)(\log T)^{1-\sigma}/\log_2 T$, we have that
\begin{equation}
\Phi_T\left(\tau;y\right)=\exp\left(-A_1(\sigma)\tau^{\frac{1}{(1-\sigma)}}(\log\tau)^{\frac{\sigma}{(1-\sigma)}}\left(1+O\left(\frac{1}{\sqrt{\log\tau}}
+r(\log T,\tau)\right)\right)\right).
\end{equation}
Therefore what remains is to show that $\log\Phi_T(\tau)$ has the same asymptotic formula as $\log\Phi_T(\tau;y)$, in our range of $\tau$. To this end we will construct a set $\mathcal{A}(T,\tau)\subset [T,2T]$ with very small measure (negligible compared to $T\Phi_T(\tau;y)$) such that $\log\zeta(\sigma+it)\approx\log\zeta(\sigma+it;y)$ for $t\in [T,2T]\setminus \mathcal{A}(T,\tau)$.

Let $N(\sigma_0,T)$ denote the number of zeros of $\zeta(s)$ in the rectangle $\{\text{Re}(s)>\sigma_0, |\text{Im}(s)|\leq T\}$. Then using the zeros-density result $N(\sigma_0,T)\ll T^{3/2-\sigma_0}(\log T)^5$ (see Theorem 9.19 A of [27]) along with Lemma 2.2 with $z=(\log T)^{3/(\sigma-1/2)}$, and $\sigma_0=\sigma/2+1/4>1/2$, it follows that
\begin{equation}
\log\zeta(\sigma+it)=\sum_{n=2}^z\frac{\Lambda(n)}{n^{\sigma+it}\log n}+O\left(\frac{1}{(\log T)^{1/4}}\right),
\end{equation}
for all $t\in [T,2T]$ except for a set $\mathcal{A}_0(T)$ with measure $\ll T^{1-(\sigma-1/2)/4}$. Since $\tau\ll (\log T)^{1-\sigma}/\log_2 T$, it follows from (4.3) that
\begin{equation}
\frac{1}{T}\text{meas}\mathcal{A}_0(T)=o(\Phi_T(2\tau;y)).
\end{equation}
  Moreover, we have that
\begin{equation}
\sum_{n=2}^z\frac{\Lambda(n)}{n^{\sigma+it}\log n}= \log\zeta(\sigma+it;y)+\sum_{y\leq p\leq z}\frac{1}{p^{\sigma +it}}+O\left(\frac{1}{(\log T)^{\sigma-1/2}}\right).
\end{equation}
Now let $\mathcal{A}_1(T,\tau,\epsilon)$ be the set of values $t\in [T,2T]$ such that $|\sum_{y\leq p\leq z}1/p^{\sigma+it}|>\epsilon\tau$. Then using Lemma 4.2 we have that
$$\text{meas}\mathcal{A}_1(T,\tau,\epsilon)\leq (\epsilon\tau)^{-2k}\int_T^{2T}\left|\sum_{y\leq p\leq z}\frac{1}{p^{\sigma+it}}\right|^{2k}dt\ll T\left(\frac{2k}{(2\sigma-1)\epsilon^2\tau^2y^{2\sigma-1}\log y}\right)^k,$$
for all integers $1\leq k\leq (\sigma-1/2)\log T/(9\log_2 T)$. We choose $\epsilon=Cr(\log T,\tau)$, where $C$ is a suitably large constant. Remark that $r(s,\tau)^2\tau^2s^{2\sigma-1}=\tau^{1/(1-\sigma)}(\log s)^{\frac{(2\sigma-1)}{(1-\sigma)}}$. Then with this choice of $\epsilon$ and if $c_1(\sigma)$ is small enough, we may choose $$k=[((2\sigma-1)\epsilon^2\tau^2(\log T)^{2\sigma-1}\log_2 T)/10]$$ to get that
\begin{equation}
\frac{1}{T}\text{meas}\mathcal{A}_1(T,\tau,\epsilon)
\ll \exp\left(-\frac{C^2(2\sigma-1)}{10}\tau^{1/(1-\sigma)}(\log_2 T)^{\sigma/(1-\sigma)}\right).
\end{equation}
Therefore if $C$ is large enough, it follows from (4.3) and (4.7) that
\begin{equation}
\frac{1}{T}\text{meas}\mathcal{A}_1(T,\tau,\epsilon)=o(\Phi_T(2\tau;y)).
\end{equation}
Now let $\mathcal{A}(T,\tau):=\mathcal{A}_0(T)\cup \mathcal{A}_1(T,\tau,\epsilon).$
Then by (4.4) and (4.6) we have that
\begin{equation} |\log \zeta(\sigma+it)-\log\zeta(\sigma+it;y)|<\delta\tau, \text{ where } \delta=\epsilon+\frac{1}{\log\tau},
\end{equation}
for all $t\in [T,2T]\setminus \mathcal{A}(T,\tau).$
This implies that
$$\Phi_T\left(\tau(1+\delta);y\right)+O\left(\frac{\text{meas}\mathcal{A}(T,\tau)}{T}\right)\leq \Phi_T(\tau)\leq \Phi_T\left(\tau(1-\delta);y\right)+O\left(\frac{\text{meas}\mathcal{A}(T,\tau)}{T}\right).$$
The result then follows upon combining (4.3), (4.5) and (4.8).

Similarly let $\Psi_T(\tau)=\frac{1}{T}\text{meas}\{t\in [T,2T]: \arg\zeta(\sigma+it)>\tau\}$ and  $\Psi_T(\tau;y)=\frac{1}{T}\text{meas}\{t\in [T,2T]: \arg\zeta(\sigma+it;y)>\tau\}.$ Then by Proposition 4.1 we have that
\begin{equation*}
\begin{aligned}
\int_{-\infty}^{\infty}re^{rt}\Psi_T(\tau;y)dt&=\frac{1}{T}\int_T^{2T}\zeta(\sigma+it;y)^{-ir/2}\zeta(\sigma-it;y)^{ir/2}dt\\
&=\ex\left(L(\sigma,\X_1;y)^{-ir/2}\overline{L(\sigma,\X_1;y)}^{ir/2}\right)+O\left(\exp\left(-\frac{\log T}{4\log y}\right)\right),\\
\end{aligned}
\end{equation*}
 for $r\leq (1-\sigma)(\log T)^{\sigma}/8$. Then appealing to Proposition 3.3 and using the saddle-point method as in the proof of Theorem 1.9, we can deduce that $\Psi_T(\tau;y)$ has the same asymptotic as (4.3). This is due to the fact that $\ex\left(e^{t\text{Re}X}\right)=\ex\left(e^{t\text{Im}X}\right)=I_0(t)$ for a random variable $X$ uniformly distributed on the unit circle. Finally we should note that the last part of the argument to estimate $\Psi_T(\tau)$ is the same as for $\Phi_T(\tau)$ using the same choice of the parameters $k$ and $\epsilon$, since the inequality (4.9) does also control the difference  $|\arg\zeta(\sigma+it)-\arg\zeta(\sigma+it;y)|$. The procedure is also analogous for the left tails of $\log|\zeta(\sigma+it)|$ and $\arg\zeta(\sigma+it)$, changing $r$ to $-r$.
\end{proof}
\subsection{ The distribution of Dirichlet $L$-functions}

In order to apply the same method (as in the case of $\zeta(\sigma+it)$)  and derive similar results for the family $\{L(\sigma,\chi): \chi\neq \chi_0 \ (\text{mod } q)\}$, we need to compute asymptotics for complex moments of short Euler products $L(\sigma,\chi;y):=\prod_{p\leq y}\left(1-\chi(p)p^{-\sigma}\right)^{-1}$ (analogue of Proposition 4.1) and prove the analogue of Lemma 4.2.
We prove
\begin{pro} Let $q$ be a large prime, and $y\leq (\log q)^2$ be a large real number. Then
uniformly for all complex numbers $z_1, z_2$ such that
 $|z_i|y^{1-\sigma}\leq (1-\sigma)\log q/8$ we have
\begin{equation*}
\begin{aligned}
\frac{1}{\phi(q)}\sum_{\substack{\chi (\text{mod } q)\\ \chi\neq\chi_0}} L(\sigma,\chi;y)^{z_1}\overline{L(\sigma,\chi;y)}^{z_2}&= \ex\left(L(\sigma,\X_1;y)^{z_1}\overline
{L(\sigma,\X_1;y)}^{z_2}\right)\\
&+O\left(\exp\left(-\frac{\log q}{2\log y}\right)\right).\\
\end{aligned}
\end{equation*}
\end{pro}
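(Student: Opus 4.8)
The plan is to run the argument for Proposition 4.1 verbatim, with the mean value $\frac1T\int_T^{2T}(m/n)^{it}dt$ (which isolates $m=n$) replaced by the orthogonality relations for Dirichlet characters modulo $q$. Since $q$ is prime and $y\le(\log q)^2<q$ for $q$ large, every $n\in S(y)$ is coprime to $q$, so expanding the finite Euler products gives the absolutely convergent Dirichlet series $L(\sigma,\chi;y)^{z_1}=\sum_{n\in S(y)}d_{z_1}(n)\chi(n)n^{-\sigma}$ and $\overline{L(\sigma,\chi;y)}^{z_2}=\sum_{m\in S(y)}d_{z_2}(m)\overline{\chi(m)}m^{-\sigma}$. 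Multiplying, summing over $\chi\ne\chi_0$, and interchanging the (absolutely convergent) sums, I would insert
$$\frac{1}{\phi(q)}\sum_{\chi\ne\chi_0}\chi(n)\overline{\chi(m)}=\mathbf 1_{n\equiv m\,(q)}-\frac{1}{\phi(q)},$$
which is valid because $(mn,q)=1$ always. The true diagonal $n=m$ contributes $\sum_{n\in S(y)}d_{z_1}(n)d_{z_2}(n)n^{-2\sigma}$, which by (2.4) with the Euler product truncated at $p\le y$ equals $\ex\bigl(L(\sigma,\X_1;y)^{z_1}\overline{L(\sigma,\X_1;y)}^{z_2}\bigr)$ --- the asserted main term. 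Everything then reduces to bounding the two leftover pieces.

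Let $k$ be the least integer with $k\ge\max(|z_1|,|z_2|)$, so $|d_{z_i}(n)|\le d_k(n)$; using the hypothesis $|z_i|y^{1-\sigma}\le(1-\sigma)\log q/8$ together with $\sigma>\tfrac12$ (whence $y^{1-\sigma}\le(\log q)^{2(1-\sigma)}=o(\log q)$) one gets $ky^{1-\sigma}/\bigl((1-\sigma)\log y\bigr)\le\tfrac18(\log q/\log y)(1+o(1))$. For the $-1/\phi(q)$ term: it is $\ll\phi(q)^{-1}\bigl(\sum_{n\in S(y)}d_k(n)n^{-\sigma}\bigr)^2$, which by (2.1) is $\ll\exp\bigl(-\log q+\tfrac14(\log q/\log y)(1+o(1))\bigr)\ll\exp(-\tfrac12\log q/\log y)$. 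For the off-diagonal part $n\equiv m\,(q)$ with $n\ne m$: here the larger of $m,n$ exceeds $q$, so writing $\beta=1/\log y$ and bounding $n^{-\sigma}\le q^{-\beta}n^{-(\sigma-\beta)}$ for that larger variable exactly as in Proposition 4.1, this part is $\ll q^{-\beta}\sum_{n\in S(y)}d_k(n)n^{-(\sigma-\beta)}\sum_{m\in S(y)}d_k(m)m^{-\sigma}$; since $y^{\beta}=e$, applying (2.1) twice bounds it by $\exp\bigl(-\log q/\log y+(e+1)ky^{1-\sigma}/\bigl((1-\sigma)\log y\bigr)(1+o(1))\bigr)$.

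The only real point to verify is this last bound. Inserting the estimate for $ky^{1-\sigma}$ turns it into $\ll\exp\bigl(-(1-\tfrac{e+1}{8})(\log q/\log y)(1+o(1))\bigr)$; since $(e+1)/8<\tfrac12$ we have $1-(e+1)/8>\tfrac12$, so for $q$ large this is $\ll\exp(-\tfrac12\log q/\log y)$. This is precisely where the constant $8$ in the hypothesis is used, and it explains why a weaker restriction on the $z_i$ (and a correspondingly larger error term) than in Proposition 4.1 is permissible here: in the $q$-aspect one extracts the full factor $q^{-\beta}$, whereas in the $t$-aspect only $T^{-\beta/2}$ is available. Adding the main term and the two $O\bigl(\exp(-\tfrac12\log q/\log y)\bigr)$ contributions gives the proposition.
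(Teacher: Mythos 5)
Your argument matches the paper's proof essentially verbatim: orthogonality isolates the true diagonal (which by (2.4) is the random-model expectation), the removal of $\chi_0$ produces the $1/\phi(q)$ error, and the off-diagonal terms $n\equiv m\ (\mathrm{mod}\ q)$ with $n\neq m$ force $\max(m,n)>q$, so that extracting $q^{-\beta}$ with $\beta=1/\log y$ and applying (2.1) gives the claimed bound. Your closing observation about why the constant $8$ (rather than $16$ as in Proposition 4.1) suffices --- full $q^{-\beta}$ saving here versus only $T^{-\beta/2}$ in the $t$-aspect --- is a correct and useful remark not spelled out in the paper.
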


\begin{proof}[Proof]
  Let $k$ be the smallest integer with $k\geq \max(|z_1|,|z_2|)$. Then
\begin{equation*}
\begin{aligned}
&\frac{1}{\phi(q)}\sum_{\substack{\chi (\text{mod } q)\\ \chi\neq\chi_0}} L(\sigma,\chi;y)^{z_1}\overline{L(\sigma,\chi;y)}^{z_2}=\frac{1}{\phi(q)}\sum_{\substack{\chi (\text{mod } q)\\\chi\neq\chi_0}} \chi(n)\overline{\chi(m)}\sum_{m,n\in S(y)} \frac{d_{z_1}(n)d_{z_2}(m)}{(mn)^{\sigma}}\\
&=\frac{1}{\phi(q)}\sum_{\chi (\text{mod } q)}\chi(n)\overline{\chi(m)}\sum_{m,n\in S(y)} \frac{d_{z_1}(n)d_{z_2}(m)}{(mn)^{\sigma}}+O\left(\frac{1}{q}\left(\sum_{n\in S(y)}\frac{d_k(n)}{n^{\sigma}}\right)^2\right).\\
&=\sum_{\substack{ m,n\in S(y)\\ m\equiv n\text{ mod } q}} \frac{d_{z_1}(n)d_{z_2}(m)}{(mn)^{\sigma}}+O\left(\exp\left(-\log q+ \frac{3ky^{1-\sigma}}{(1-\sigma)\log y}\right)\right),
\end{aligned}
\end{equation*}
using equation (2.1) along with the orthogonality relation for characters. The contribution of the diagonal terms $m=n$ equals
$$\sum_{n\in S(y)}\frac{d_{z_1}(n)d_{z_2}(n)}{n^{2\sigma}}=\ex\left(L(\sigma,\X_1;y)^{z_1}\overline{L(\sigma,\X_1;y)}^{z_2}\right).$$
Since $m\equiv n \text{ mod } q$, the off-diagonal terms $m\neq n$ must satisfy  $\max(m,n)>q$ . Put $\beta=\frac{1}{\log y}$. Then by (2.1) the contribution of these terms is bounded by
\begin{equation*}
\begin{aligned}
2\sum_{\substack{ n\in S(y)\\ n>q}}\frac{d_k(n)}{n^{\sigma}}\sum_{m\in S(y)} \frac{d_k(m)}{m^{\sigma}}
&\leq 2q^{-\beta}\sum_{m\in S(y)}\frac{d_k(m)}{m^{\sigma}}\sum_{n\in S(y)}\frac{d_k(m)}{m^{\sigma-\beta}}.\\
&\ll \exp\left(-\frac{\log q}{\log y}+\frac{(e+1)ky^{1-\sigma}}{(1-\sigma)\log y}+O\left(\frac{ky^{1-\sigma}}{(1-\sigma)^2\log^2 y}\right)\right),\\
\end{aligned}
\end{equation*}
which completes the proof.
\end{proof}

\begin{lem} Let $q$ be a large prime and $2\leq y\leq z$ be real numbers. Then for all positive integers $k$ such that $1\leq k\leq \log q/(2\log z)$ we have
$$ \frac{1}{\phi(q)}\sum_{\substack{\chi (\text{mod } q)\\ \chi\neq\chi_0}}\left|\sum_{y\leq p\leq z}\frac{\chi(p)}{p^{\sigma}}\right|^{2k}\ll k! \left(\sum_{y\leq p\leq z}\frac{1}{p^{2\sigma}}\right)^k +O\left(q^{-1/2}\right).$$
\end{lem}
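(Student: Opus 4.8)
The plan is to mimic the proof of Lemma 4.2, using the orthogonality of Dirichlet characters modulo $q$ in place of the mean value $\frac1T\int_T^{2T}(m/n)^{it}\,dt$. First I would expand the $2k$-th moment as
$$ \frac{1}{\phi(q)}\sum_{\substack{\chi (\text{mod } q)\\ \chi\neq\chi_0}}\left|\sum_{y\leq p\leq z}\frac{\chi(p)}{p^{\sigma}}\right|^{2k}= \sum_{\substack{y\leq p_1,\dots,p_k\leq z\\ y\leq q_1,\dots,q_k\leq z}}\frac{1}{(p_1\cdots p_k\,q_1\cdots q_k)^{\sigma}}\cdot\frac{1}{\phi(q)}\sum_{\substack{\chi (\text{mod } q)\\ \chi\neq\chi_0}}\chi(p_1\cdots p_k)\overline{\chi(q_1\cdots q_k)}. $$
Write $m=p_1\cdots p_k$ and $n=q_1\cdots q_k$. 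The hypothesis $k\leq \log q/(2\log z)$ forces $1\leq m,n\leq z^k\leq q^{1/2}$; in particular every prime that occurs is $<q$, so $(mn,q)=1$ and $\phi(q)=q-1$. Splitting $\sum_{\chi\neq\chi_0}=\sum_{\chi}-\chi_0$ and invoking $\frac1{\phi(q)}\sum_{\chi(\text{mod }q)}\chi(m)\overline{\chi(n)}=\mathbf 1_{m\equiv n\,(\text{mod }q)}$ (valid because $(mn,q)=1$), the inner character average equals $\mathbf 1_{m\equiv n\,(\text{mod }q)}-\tfrac{1}{\phi(q)}$.

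Next I would treat the two resulting pieces separately. Since $|m-n|\leq z^k\leq q^{1/2}<q$, the congruence $m\equiv n\,(\text{mod }q)$ is equivalent to the equality $m=n$ as integers, hence, by unique factorization, to the statement that $(q_1,\dots,q_k)$ is a permutation of $(p_1,\dots,p_k)$. For each fixed $(p_1,\dots,p_k)$ there are at most $k!$ such tuples $(q_1,\dots,q_k)$, so the contribution of this diagonal is at most
$$ k!\sum_{y\leq p_1,\dots,p_k\leq z}\frac{1}{(p_1\cdots p_k)^{2\sigma}}=k!\left(\sum_{y\leq p\leq z}\frac{1}{p^{2\sigma}}\right)^{k}, $$
which is exactly the main term in the statement. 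The remaining piece, coming from the $-\tfrac1{\phi(q)}$ term, is in absolute value at most
$$ \frac{1}{\phi(q)}\left(\sum_{y\leq p\leq z}\frac{1}{p^{\sigma}}\right)^{2k}\leq \frac{z^{2k(1-\sigma)}}{\phi(q)}\ll \frac{q^{1-\sigma}}{q}=q^{-\sigma}\ll q^{-1/2}, $$
using $z^{2k}\leq q$ and $\sigma>1/2$. Adding the two bounds gives the lemma.

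There is no genuinely hard step here: the whole argument is routine once one notices that the constraint $k\leq\log q/(2\log z)$ keeps the products $p_1\cdots p_k$ and $q_1\cdots q_k$ below $q^{1/2}$, so that ``diagonal modulo $q$'' collapses to ``diagonal over the integers'', just as $z^k\leq T^{1/3}$ is used in Lemma 4.2. The only small points to verify are that all primes in $[y,z]$ are coprime to $q$ (immediate from $z\leq q^{1/2}<q$) and that $\phi(q)\gg q$ (immediate since $q$ is prime), after which the off-diagonal error is comfortably absorbed into $O(q^{-1/2})$.
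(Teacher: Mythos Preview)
Your proof is correct and follows essentially the same route as the paper's: expand the moment, use orthogonality of characters modulo $q$, observe that $z^k\leq q^{1/2}$ forces the congruence $m\equiv n\pmod q$ to become $m=n$, and bound the off-diagonal by $q^{-1}\bigl(\sum_{y\le p\le z}p^{-\sigma}\bigr)^{2k}\ll q^{-1/2}$. The only cosmetic difference is that you split off the $-1/\phi(q)$ contribution from all terms at once, whereas the paper computes $\sum_{\chi\neq\chi_0}\chi(m)\overline{\chi}(n)=-1$ directly on the off-diagonal; this changes nothing of substance.
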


\begin{proof}
We have that
$$ \frac{1}{\phi(q)}\sum_{\substack{\chi (\text{mod } q)\\ \chi\neq\chi_0}}\left|\sum_{y\leq p\leq z}\frac{\chi(p)}{p^{\sigma}}\right|^{2k}=\frac{1}{\phi(q)}\sum_{\substack{\chi (\text{mod } q)\\\chi\neq\chi_0}}\sum_{y\leq p_1,...,p_{2k}\leq z}\frac{\chi(p_1\cdots p_k)\overline{\chi}(p_{k+1}\cdots p_{2k})}{(p_1\cdots p_{2k})^{\sigma}}.$$
The contribution of the diagonal terms $p_1\cdots p_k=p_{k+1}\cdots p_{2k}$ is
$$ \ll k! \left(\sum_{y\leq p\leq z}\frac{1}{p^{2\sigma}}\right)^k.$$
Now if $p_1\cdots p_k\neq p_{k+1}\cdots p_{2k}$, then $ \sum_{\chi\neq\chi_0}\chi(p_1\cdots p_k)\overline{\chi}(p_{k+1}\cdots p_{2k})=-1$ since

\noindent  $p_1\cdots p_k, p_{k+1}\cdots p_{2k}\leq z^k<q.$ Therefore the contribution of these terms is

\noindent $q^{-1}\left(\sum_{y\leq p\leq z}p^{-\sigma}\right)^{2k}\ll q^{-1}z^{2k(1-\sigma)} \ll q^{-1/2}$.
\end{proof}
Let $q$ be a large prime and define $$\Phi^{\text{char}}_{q}(\sigma,\tau):=\frac{1}{\phi(q)}|\{\chi\neq \chi_0, \chi \ (\text{mod } q): \log|L(\sigma,\chi)|>\tau\}|.$$
Then using the same method as in the proof of Theorem 1.1 we derive
\begin{thm}
Let $1/2<\sigma<1$, and $q$ be a large prime. Then there exists $c_4(\sigma)>0$ such that uniformly in the range $1\ll \tau\leq c_4(\sigma)(\log q)^{1-\sigma}/\log_2 q$ we have
$$ \Phi^{\text{char}}_{q}(\sigma,\tau)=\exp\left(-A_1(\sigma)\tau^{\frac{1}{(1-\sigma)}}(\log\tau)^{\frac{\sigma}{(1-\sigma)}}\left(1+O\left(\frac{1}{\sqrt{\log\tau}}+r(\log q,\tau)\right)\right)\right).
$$
This estimate also holds for the proportion of non-principal characters $\chi \ (\text{mod }q)$ such that $\arg L(s,\chi)>\tau$.
\end{thm}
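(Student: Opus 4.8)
The plan is to mirror the proof of Theorem 1.1, replacing the $t$-average over $[T,2T]$ by the average over non-principal characters modulo $q$, taking $y=\log q$ throughout and letting $q$ play the role of $T$. The first step is to compute large moments of the short Euler product $L(\sigma,\chi;y)=\prod_{p\le y}(1-\chi(p)p^{-\sigma})^{-1}$: taking $z_1=z_2=r/2$ in Proposition 4.3 and feeding this into Proposition 3.2 with $d=1$ and $\X(p)=X_1(p)$ — so that the governing real random variable is $Z(p)=\text{Re}\,X_1(p)$, which is identically distributed in $p$ and hence trivially satisfies (ULD), with $g_Z(t)=\log I_0(t)$, $G_Z(\sigma)=G_1(\sigma)$ and $A_Z(\sigma)=A_1(\sigma)$ — one gets, for $r\le(1-\sigma)(\log q)^{\sigma}/4$,
$$
\int_{-\infty}^{\infty}re^{ru}\Phi^{\text{char}}_q(u;y)\,du=\frac{1}{\phi(q)}\sum_{\chi\neq\chi_0}|L(\sigma,\chi;y)|^{r}=\exp\left(G_1(\sigma)\frac{r^{1/\sigma}}{\log r}\left(1+O\left(\frac{1}{\log r}+\left(\frac{r^{1/\sigma}}{\log q}\right)^{2\sigma-1}\right)\right)\right),
$$
where $\Phi^{\text{char}}_q(\tau;y)$ is the proportion of $\chi\neq\chi_0\pmod q$ with $\log|L(\sigma,\chi;y)|>\tau$ (the $O(\exp(-\log q/(2\log y)))$ error of Proposition 4.3 being absorbed once $c_4(\sigma)$ is small). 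Running the saddle-point method exactly as in the proof of Theorem 1.9, with $r$ the solution of $\tau=G_1(\sigma)r^{1/\sigma-1}/(\sigma\log r)$ (so $r\asymp(\tfrac{\sigma^2}{(1-\sigma)G_1(\sigma)}\tau\log\tau)^{\sigma/(1-\sigma)}$), then yields the asserted asymptotic for $\Phi^{\text{char}}_q(\tau;y)$, uniformly for $1\ll\tau\le c_4(\sigma)(\log q)^{1-\sigma}/\log_2 q$ with $c_4(\sigma)$ small; this choice also secures $r\le(1-\sigma)(\log q)^{\sigma}/4$ and $y\ge r^{1/\sigma}$, as required in Propositions 4.3 and 3.2.

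The second step is to pass from $\log|L(\sigma,\chi;y)|$ to $\log|L(\sigma,\chi)|$ for all but a negligible proportion of $\chi$. With $\sigma_0=\sigma/2+1/4$ and $z=(\log q)^{3/(\sigma-1/2)}$, Lemma 2.2 at $t=0$ gives, whenever $L(s,\chi)$ is zero-free in the rectangle $\{\sigma_0<\text{Re}(s)\le 1,\ |\text{Im}(s)|\le z+2\}$, that $\log L(\sigma,\chi)=\sum_{n=2}^{z}\Lambda(n)\chi(n)/(n^{\sigma}\log n)+O((\log q)^{-1/4})$, and then, peeling off prime powers and tails as in (4.6), $\log L(\sigma,\chi)=\log L(\sigma,\chi;y)+\sum_{y\le p\le z}\chi(p)p^{-\sigma}+O((\log q)^{-(\sigma-1/2)})$. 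The exceptional set $\mathcal{B}_0$ of characters with a zero in this rectangle is bounded using a zero-density estimate for the family $\{L(s,\chi):\chi\bmod q\}$ — the single-prime-modulus, $q$-aspect analogue of the bound $N(\sigma_0,T)\ll T^{3/2-\sigma_0}(\log T)^5$ used for $\zeta$ in the proof of Theorem 1.1 — which, since the height $z+2$ is only a fixed power of $\log q$ and $\sigma_0$ is bounded away from $1/2$, yields $|\mathcal{B}_0|\ll q^{1-\delta(\sigma)}$ for some $\delta(\sigma)>0$, hence negligible against $\phi(q)\,\Phi^{\text{char}}_q(2\tau;y)=q^{1-o(1)}$ in our range of $\tau$. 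Supplying this zero-density input (and confirming that its exponent of $q$ stays $<1$ uniformly for $1/2<\sigma<1$) is the one genuinely new ingredient, and the step I expect to demand the most care; everything else is a transcription of the argument for Theorem 1.1.

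For the remaining prime sum one uses Lemma 4.4 in place of Lemma 4.2. With $\epsilon=C\,r(\log q,\tau)$ ($C$ a large absolute constant) and $k=[(2\sigma-1)\epsilon^2\tau^2(\log q)^{2\sigma-1}\log_2 q/10]$ — which lies in the admissible range $k\le\log q/(2\log z)$ once $c_4(\sigma)$ is small, as $\log z\asymp\log_2 q$ — Lemma 4.4, together with $\sum_{y\le p\le z}p^{-2\sigma}\asymp y^{1-2\sigma}/((2\sigma-1)\log y)$ and the identity $r(s,\tau)^2\tau^2 s^{2\sigma-1}=\tau^{1/(1-\sigma)}(\log s)^{(2\sigma-1)/(1-\sigma)}$, shows that the set $\mathcal{B}_1$ of $\chi\neq\chi_0$ with $\bigl|\sum_{y\le p\le z}\chi(p)p^{-\sigma}\bigr|>\epsilon\tau$ satisfies $|\mathcal{B}_1|/\phi(q)\ll\exp\bigl(-\tfrac{C^2(2\sigma-1)}{10}\tau^{1/(1-\sigma)}(\log_2 q)^{\sigma/(1-\sigma)}\bigr)$, again negligible for $C$ large.

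Finally, for $\chi\notin\mathcal{B}_0\cup\mathcal{B}_1$ the two approximations give $\bigl|\log|L(\sigma,\chi)|-\log|L(\sigma,\chi;y)|\bigr|<\delta\tau$ with $\delta=\epsilon+1/\log\tau\ll 1/\sqrt{\log\tau}+r(\log q,\tau)$, so
$$
\Phi^{\text{char}}_q\bigl(\tau(1+\delta);y\bigr)+O\left(\frac{|\mathcal{B}_0|+|\mathcal{B}_1|}{\phi(q)}\right)\le\Phi^{\text{char}}_q(\tau)\le\Phi^{\text{char}}_q\bigl(\tau(1-\delta);y\bigr)+O\left(\frac{|\mathcal{B}_0|+|\mathcal{B}_1|}{\phi(q)}\right),
$$
and the claimed asymptotic for $\Phi^{\text{char}}_q(\tau)$ follows, the $\delta$-perturbation being absorbed into the stated error. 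The statement for $\arg L(\sigma,\chi)$ is obtained identically via Proposition 3.3 with $Y(p)=\text{Im}\,X_1(p)$: since $X_1$ is uniform on the unit circle, $\text{Im}\,X_1$ and $\text{Re}\,X_1$ share the Laplace transform $I_0$, so the constant is again $A_1(\sigma)$, and the displayed inequality also controls $|\arg L(\sigma,\chi)-\arg L(\sigma,\chi;y)|$.
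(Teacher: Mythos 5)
Your proposal follows the paper's proof of Theorem 4.5 essentially verbatim: Proposition 4.3 feeds into Proposition 3.2 (with $d=1$, $\X(p)=X_1(p)$, $Z(p)=\mathrm{Re}\,X_1(p)$) to give the asymptotic for $\Phi_q(\tau;y)$ via the saddle-point method of Theorem 1.9, and the passage from $L(\sigma,\chi;y)$ to $L(\sigma,\chi)$ proceeds via the Euler-product approximation lemma, a zero-density bound for the exceptional set, and Lemma 4.4 for the prime sum over $[y,z]$, exactly as you describe. Two small remarks: the approximation lemma you need is Lemma 2.3 (the Dirichlet $L$-function analogue, Lemma 8.2 of [7]), not Lemma 2.2 (which is for $\zeta$), though you plainly intend the former; and the zero-density input you leave unspecified is supplied in the paper by Montgomery's estimate $\sum_{\chi\bmod q}N(\sigma_0,T,\chi)\ll(qT)^{3(1-\sigma_0)/(2-\sigma_0)}(\log qT)^{14}$ from [18], whose exponent $3(1-\sigma_0)/(2-\sigma_0)$ is indeed $<1$ for $\sigma_0>1/2$, so that with $T\ll(\log q)^{3/(\sigma-1/2)}$ and $\sigma_0=\sigma/2+1/4$ one gets $|\mathcal{A}_0(q)|\le q^{1-a(\sigma)}$ as you anticipate.
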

\begin{proof}
For simplicity write $\Phi_q(\tau)=\Phi^{\text{char}}_{q}(\sigma,\tau)$. The result can be deduced by proceeding along the same lines as in the proof of Theorem 1.1. Indeed all the parameters will be chosen exactly by changing $T$ to $q$. Let $y=\log q$ and define
$\Phi_q(\tau;y)$ to be the proportion of characters $\chi\neq \chi_0 (\text{mod } q)$ such that $\log|L(\sigma,\chi;y)|>\tau.$
Then for all positive real numbers $r\leq (1-\sigma)(\log q)^{\sigma}/8,$ Proposition 4.3 gives that
$$
\int_{-\infty}^{\infty}re^{rt}\Phi_q(t;y)dt=\frac{1}{\phi(q)}\sum_{\substack{\chi (\text{mod } q)\\ \chi\neq\chi_0}}|L(\sigma,\chi;y)|^r=\ex\left(|L(\sigma,\X_1;y)|^{r}\right)+o(1).
$$
Then using Proposition 3.2 and the saddle point method (as in the proof of Theorem 1.9) we deduce that $\Phi_q(\tau;y)$ as the same asymptotic as $\Phi_T(\tau;y)$ (see equation (4.3)).
Therefore it only remains to construct a set $\mathcal{A}(q,\tau)$ which will play an similar role to that of $\mathcal{A}(T,\tau)$ in the proof of Theorem 1.1.
Let $N(\sigma,T, \chi)$ denotes the number of zeros of $L(s,\chi)$ such that Re$(s)\geq \sigma$ and $|\text{Im}(s)|\leq T$. We use the following zero-density result of Montgomery [18] which states that for $T\geq 2$ and $1/2<\sigma<1$  we have
$ \sum_{\chi \ (\text{mod } q)} N(\sigma,T, \chi) \ll (qT)^{3(1-\sigma)/(2-\sigma)}(\log qT)^{14}$. Using this result along with Lemma 2.3 with $t=0$,
 $z=(\log q)^{3/(\sigma-1/2)}$ and $\sigma_0=\sigma/2+1/4>1/2$, gives that
$$ \log L(\sigma,\chi)=\sum_{n=2}^z\frac{\Lambda(n)\chi(n)}{n^{\sigma}}+O\left(\frac{1}{(\log q)^{1/4}}\right),$$
for all characters $\chi \ (\text{mod }q)$ except for a set $\mathcal{A}_0(q)$ of cardinality $\leq q^{1-a(\sigma)}$
for some constant $a(\sigma)>0$. Now we choose $\epsilon=Cr(\log q,\tau)$ where $C$ is a suitably large constant, and $\mathcal{A}_1(q,\tau,\epsilon)$ to be the set of characters such that
 $|\sum_{y\leq p\leq z}\chi(p)/p^{\sigma+it}|>\epsilon\tau$. Then Lemma 4.4 insures
that $|\mathcal{A}_1(q,\tau,\epsilon)|/\phi(q)=o(\Phi_q(2\tau;y))$, if $c_4(\sigma)$ is suitably small. Finally taking $\mathcal{A}(q,\tau)=\mathcal{A}_0(q)\cup\mathcal{A}_1(q,\tau,\epsilon)$,
 we see that $|\log L(\sigma,\chi)-\log L(\sigma,\chi;y)|<\delta\tau$, for all characters $\chi \notin \mathcal{A}(q,\tau)$ where $\delta=\epsilon+1/\log\tau$; and that $ |\mathcal{A}(q,\tau)|/\phi(q)=o(\Phi_q(2\tau;y))$. This gives the desired asymptotic for $\Phi_q(\tau)$, and one can handle the left tail of $\log |L(\sigma,\chi)|$ similarly. The analogous result for $\arg L(\sigma,\chi)$ follows along the same lines.
\end{proof}

\section{Distribution and extreme values of $L(\sigma,\chi_d)$}

\subsection{Distribution of values of $L(\sigma,\chi_d)$: proof of Theorem 1.6}

Let us first describe the probabilistic random model attached to this family. Let $\{X_2(p)\}$ be independent random variables taking the values $1$ and $-1$ with equal probability $p/(2(p+1))$ and the value $0$ with probability $1/(p+1)$. Then define
$$ L(\sigma,\X_2):=\prod_{p\leq y}\left(1-\frac{X_2(p)}{p^{\sigma}}\right)^{-1}.$$
This model was first introduced by Granville and Soundararajan [8] for $\sigma=1$. The reason for this choice over the simpler $\pm 1$ with probability $1/2$ (which was previously considered by many people including Chowla-Erd\"os, Elliott, and Montgomery-Vaughan) is that for odd primes $p$, fundamental discriminants $d$ lie in one of $p^2-1$ residue classes mod $p^2$ so that $\chi_d(p)=0$ for $p-1$ of these classes, and the remaining $p(p-1)$ residue classes split equally into $\pm 1$ values (for $p=2$ one can check that the values $0,\pm 1$ occur equally often).

As mentioned in the introduction, we obtain stronger results in this case
comparatively with the Riemann zeta function and other families of $L$-functions studied in this paper.
This is due to a careful study for the off-diagonal terms of moments of short Euler products of $L(\sigma,\chi_d)$ using
the following Lemma of [8] which is a consequence of the work of Graham and Ringrose [6] on bounds for character sums to smooth moduli
\begin{lem}[{Lemma 4.2 of [8]}]
Let $n$ be a positive number not a perfect square. Write $n=n_0\square$ where $n_0>1$ is square-free, and suppose that all prime factors of $n_0$ are below $y$. Let $l\geq 1$ be an integer and put $L=2^l$. Then
$$ \sums_{|d|\leq x}\chi_d(n)\ll x^{1-\frac{l}{8L}}\prod_{p|n}\left(1+\frac{1}{p^{1-l/8L}}\right)y^{1/3}n_0^{\frac{1}{7L}}d(n_0)^{l^2/L}.$$
\end{lem}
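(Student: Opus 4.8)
The plan is to peel off the square part of $n$, use quadratic reciprocity to realise the map $d\mapsto\chi_d(n_0)$ as a genuine non-principal character to a $y$-smooth modulus of size $\asymp n_0$, sieve for fundamental discriminants, and feed the resulting character sums over intervals into the Graham--Ringrose bound from [6].

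Write $n=n_0m^2$ with $n_0>1$ squarefree and $m$ supported on primes $<y$. Complete multiplicativity of $\chi_d$ gives $\chi_d(n)=\chi_d(n_0)$ when $\gcd(d,m)=1$ and $\chi_d(n)=0$ otherwise, while $\chi_d(n_0)$ itself vanishes unless $\gcd(d,n_0)=1$. Detecting the coprimality to $m$ by $\sum_{e\mid\gcd(d,m)}\mu(e)$ reduces $\sums_{|d|\le x}\chi_d(n)$ to $\sum_{e\mid m}\mu(e)\sums_{\substack{|d|\le x\\ e\mid d}}\chi_d(n_0)$, and summing the inner bounds over squarefree $e\mid m$ will ultimately rebuild the factor $\prod_{p\mid n}\bigl(1+p^{-(1-l/8L)}\bigr)$. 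For a fixed squarefree $n_0>1$, quadratic reciprocity shows that on each residue class of $d$ modulo $8$ the function $d\mapsto\chi_d(n_0)$ coincides with a fixed \emph{non-principal} character $\psi=\psi_{n_0}$ of modulus $q\mid 8n_0$; since $n_0>1$ has all its prime factors below $y$ we necessarily have $y>2$, so $q$ is $y$-smooth with $q\asymp n_0$. Restriction to fundamental discriminants is then carried out by splitting into the admissible congruences modulo $8$ and writing the squarefreeness condition as $\sum_{a^2\mid d}\mu(a)$, handling small $a$ by the character-sum bound below and absorbing the tail $a$ through the trivial estimate $\ll x/a^2$.

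After reorganising all these side conditions into residue classes, each surviving sum is an interval sum $\sum_{t\le X}\psi(t)$ with $\psi$ non-principal modulo a $y$-smooth $q\asymp n_0$ and $X\ll x/e$, and here I invoke the Graham--Ringrose estimate recorded in [6] (as used in [8]): iterated Weyl--van der Corput differencing, exploiting a factorisation of the smooth modulus $q$ to shrink the modulus at each of the $l$ steps at the cost of a divisor-function loss, yields a bound of the shape
$$\sum_{t\le X}\psi(t)\ \ll\ X^{1-1/L}\,q^{O(1/L)}\,y^{1/3}\,d(q)^{l^2/L}$$
up to lower-order factors. Substituting $X\ll x/e$, $q\asymp n_0$, $d(q)\ll d(n_0)$, summing over the $O(1)$ classes modulo $8$, over the divisors $e\mid m$ (which reproduces $\prod_{p\mid n}(1+p^{-(1-l/8L)})$) and over the square-free-detecting integers $a$, and balancing the sieve parameters --- which is what turns the Graham--Ringrose saving $1/L$ into the uniform exponent $l/8L$ on $x$ and the exponent $\tfrac{1}{7L}$ on $n_0$ --- produces the stated inequality.

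The main obstacle is the Graham--Ringrose differencing together with its bookkeeping: at each of the $l$ steps one must isolate and control the diagonal contribution, check that the smoothness of $q$ genuinely permits the modulus to shrink by a controlled factor, and track the accumulating powers of $d(q)$ and of $y$ so that after $l$ iterations the exponents collapse to exactly $n_0^{1/7L}$, $y^{1/3}$ and $d(n_0)^{l^2/L}$. Interleaving this with the fundamental-discriminant sieve (the congruence split modulo $8$ and the sum $\sum_{a^2\mid d}\mu(a)$) without losing more than the permitted $y^{1/3}$, and truncating the $a$-sum so that its tail stays absorbed rather than dominant, is the delicate part.
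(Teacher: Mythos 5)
The paper does not prove this lemma; it simply cites Lemma~4.2 of Granville--Soundararajan [8]. Your sketch does reconstruct the strategy of that source: peel off the square part of $n$ so that $d\mapsto\chi_d(n_0)$ becomes a genuine non-principal real character to a squarefree $y$-smooth modulus of size $\asymp n_0$; sieve for the fundamental-discriminant condition (congruence mod $4$ or $8$ together with $\sum_{a^2\mid d}\mu(a)$, truncating the $a$-sum trivially) and for coprimality to $m$ via M\"obius; and feed the resulting incomplete sums into the Graham--Ringrose iterated $q$-van der Corput estimate, whose effectiveness rests precisely on the smoothness of the modulus. That is the right outline, and the ingredients you name (reciprocity, the squarefree sieve, GR differencing with a controlled divisor-loss at each of the $l$ steps) are exactly the ones [8] uses.

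Two caveats worth flagging. First, the sketch is entirely opaque about how the very specific exponents $l/(8L)$, $1/(7L)$, $l^2/L$ and the $y^{1/3}$ arise; you attribute the $y^{1/3}$ to the GR bound itself and wave at ``balancing the sieve parameters,'' but these are in fact the substantive content --- the $y$-power comes from the impossibility of perfectly equalising the $y$-smooth factorisation $q=q_1\cdots q_{l+1}$, and the $x$- and $n_0$-exponents come from optimising the GR saving against the trivial bound across the whole range $n_0\lessgtr x$ (including the regime $n_0\gg x$, which you do not address). A sketch that cannot even heuristically reproduce the stated exponents has not really closed the argument. Second, two small misstatements: nothing in the hypothesis forces $m$ to be $y$-smooth (only $n_0$ is assumed $y$-smooth), and the M\"obius sum over $e\mid m$ rebuilds $\prod_{p\mid m}(1+p^{-(1-l/8L)})$, not $\prod_{p\mid n}$; the latter is simply a larger, hence admissible, factor, not an exact reconstruction. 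Neither of these is fatal, but ``rebuilds'' oversells what the sieve actually gives you.
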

Using this lemma we prove the analogue of Proposition 4.1 for this family
\begin{pro} Let $x$ be large, and $y\leq (\log x)^2$ be a large real number.
Then uniformly for all real numbers $r$ such that
 $|r|y^{1-\sigma}\leq (1-\sigma)\log x\log_2 y/500$ we have
$$
\frac{\pi^2}{6x}\sums_{|d|\leq x} L(\sigma,\chi_d;y)^{r}= \ex\left(L(\sigma,\X_2;y)^{r}\right)+O\left(\exp\left(-\frac{\log x\log_2y}{40\log y}\right)\right).
$$
\end{pro}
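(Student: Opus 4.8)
The plan is to expand the $r$-th power of the short Euler product $L(\sigma,\chi_d;y)=\sum_{n\in S(y)}d_r(n)\chi_d(n)/n^\sigma$ as a Dirichlet series over $y$-smooth integers, multiply by $\tfrac{\pi^2}{6x}$, and exploit the fact that $\sums_{|d|\le x}\chi_d(n)$ is (up to a main term) $\tfrac{6x}{\pi^2}$ when $n$ is a perfect square and genuinely small otherwise. Concretely, I would first write
\begin{equation*}
\frac{\pi^2}{6x}\sums_{|d|\leq x} L(\sigma,\chi_d;y)^{r}
=\frac{\pi^2}{6x}\sum_{\substack{n\in S(y)}}\frac{d_r(n)}{n^{\sigma}}\sums_{|d|\leq x}\chi_d(n).
\end{equation*}
Since $\chi_d(n)=\chi_d(n_0)$ when $n=n_0\square$, the terms with $n$ a perfect square contribute, after recalling $\sums_{|d|\le x}1=\tfrac{6x}{\pi^2}+O(\sqrt x)$ and that for a perfect square $n$ one has $\sums_{|d|\le x}\chi_d(n)=(\tfrac{6x}{\pi^2}\prod_{p\mid n}\tfrac{p}{p+1})(1+o(1))$, exactly the Euler factor structure of $\ex\bigl(L(\sigma,\X_2;y)^r\bigr)$; indeed by independence $\ex(L(\sigma,\X_2;y)^r)=\prod_{p\le y}\ex((1-X_2(p)/p^\sigma)^{-r})$ and one checks on prime powers, using that $X_2(p)$ takes $\pm1$ with probability $p/(2(p+1))$ and $0$ with probability $1/(p+1)$, that this equals $\sum_{n\in S(y),\, n=\square}d_r(n)n^{-\sigma}\prod_{p\mid n}\tfrac{p}{p+1}$. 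So the diagonal (square) part reproduces the claimed main term up to a small error.

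The real work is bounding the off-diagonal contribution of the non-square $n\in S(y)$, and here I would invoke Lemma 5.1 (Graham--Ringrose via [8]). Writing $n=n_0\square$ with $n_0>1$ squarefree and $y$-smooth, bound $|d_r(n)|\le d_k(n)$ where $k=\lceil|r|\rceil$, and apply Lemma 5.1 with a parameter $l$ (hence $L=2^l$) to be optimized. The resulting bound is of the shape
\begin{equation*}
\frac{\pi^2}{6x}\sum_{\substack{n\in S(y)\\ n\neq\square}}\frac{d_k(n)}{n^{\sigma}}\sums_{|d|\leq x}\chi_d(n)
\ll x^{-l/8L}\, y^{1/3}\sum_{n\in S(y)}\frac{d_k(n)d(n_0)^{l^2/L}}{n^{\sigma-l/8L}}\prod_{p\mid n}\Bigl(1+p^{-1+l/8L}\Bigr)n_0^{1/7L},
\end{equation*}
and the task is to show the sum over $n$ is at most $\exp(O(ky^{1-\sigma}/\log y))$-ish so that the saving $x^{-l/8L}$ dominates. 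Using $d(n_0)^{l^2/L}\le (d_2(n)^{l^2/L})$ and the product bounds from subsection 2.1 (namely $d_a(n^2)d(n)\le d_{2a+2}(n)^2$ to absorb the $d(n_0)^{l^2/L}$ factor into a divisor function of $n$, since $l^2/L\to0$ can be taken a bounded integer), together with (2.1) for $\sum_{n\in S(y)}d_K(n)n^{-\sigma'}$, one gets a bound $\exp(O(Ky^{1-\sigma}/\log y))$ where $K$ depends polynomially on $k$ and on the $l$-dependent exponents. Choosing $l\asymp\log_2 y$ (so $L=2^l$ is a power of $\log y$) makes the extra exponent $l/8L$ tiny, kills the $y^{1/3}$ and $n_0^{1/7L}$ factors, and yields a total error $\ll\exp(-c\,l\log x/L+O(ky^{1-\sigma}/\log y))$. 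With the hypothesis $|r|y^{1-\sigma}\le(1-\sigma)\log x\log_2 y/500$ this should come out to $\ll\exp(-\log x\log_2 y/(40\log y))$.

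The main obstacle is the bookkeeping of the optimization: one must choose $l$ (equivalently $L$) as a function of $y$ and $x$ so that simultaneously (i) the power saving $x^{-l/8L}$ survives against the divisor-sum growth $\exp(O(ky^{1-\sigma}/\log y))$, (ii) the parasitic factors $y^{1/3}$, $n_0^{1/7L}$, $\prod_{p\mid n}(1+p^{-1+l/8L})$ are negligible, and (iii) the term $d(n_0)^{l^2/L}$ — which grows if $l^2/L$ is large — is kept under control, forcing $l^2/L=l^2/2^l$ to be small, i.e. $l$ not too small either. Balancing these is exactly where the slightly awkward constants ($500$, $40$, the $\log_2 y$ gain) come from, and it is the one place the argument genuinely differs from the $\zeta$ and Dirichlet-$L$ cases, where the modulus is prime and the off-diagonal is trivially tiny. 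Once the off-diagonal is shown to be $\ll\exp(-\log x\log_2 y/(40\log y))$, the identification of the square-diagonal with $\ex(L(\sigma,\X_2;y)^r)$ finishes the proof; the case of negative $r$ presents no additional difficulty since $X_2$ is symmetric and $d_r(n)$ still satisfies $|d_r(n)|\le d_k(n)$.
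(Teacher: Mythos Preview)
Your outline is correct and follows the paper's approach: expand as a Dirichlet series over $S(y)$, identify the square terms with $\ex(L(\sigma,\X_2;y)^r)$ via the count $\sums_{|d|\le x,(d,n)=1}1=\tfrac{6}{\pi^2}x\prod_{p\mid n}\tfrac{p}{p+1}+O(x^{1/2+\epsilon}d(n))$, and kill the non-square terms with Graham--Ringrose (Lemma~5.1) choosing $l=[\log_2 y/\log 2]$.

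The one place where the paper proceeds differently is the bookkeeping of the off-diagonal sum. Rather than summing crudely over all non-square $n\in S(y)$ with weight $d_k(n)/n^\sigma$ times the Graham--Ringrose factors, the paper writes $n=n_1 n_2^2 n_3^2$ with $n_1,n_2$ squarefree, $(n_1,n_2)=1$, and $p\mid n_3\Rightarrow p\mid n_1n_2$; observes that $\chi_d(n)=\chi_d(n_1n_2^2)$; and then sums over $n_3$ first to get a closed form
\[
\sum_{p\mid n_3\Rightarrow p\mid n_1n_2}\frac{d_r(n_1n_2^2n_3^2)}{(n_1n_2^2n_3^2)^\sigma}=\prod_{p\mid n_1}F(p)\prod_{p\mid n_2}H(p),
\]
with $F(p)=\tfrac12((1-p^{-\sigma})^{-r}-(1+p^{-\sigma})^{-r})$ and $H(p)=\tfrac12((1-p^{-\sigma})^{-r}+(1+p^{-\sigma})^{-r})-1$. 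Lemma~5.1 is then applied only to $n_1n_2^2$, and the resulting product over $p\le y$ of $(1+8F(p)+2H(p))$ is bounded by splitting at $p=k^{1/\sigma}$. This buys cleaner constants and avoids having to juggle the factors $n_0^{1/7L}$, $d(n_0)^{l^2/L}$, $\prod_{p\mid n}(1+p^{-1+l/8L})$ against a generic $d_k(n)$.

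Your direct route also works once you note that with $l=[\log_2 y/\log 2]$ one has $p^{1/7L}\le 2$, $l^2/L\le 1$, and $1+p^{-1+l/8L}\le 2$ for all $p\le y$, so the three parasitic factors are each $\le 2^{\omega(n)}$ and together $\le d_8(n)$; then $d_k(n)d_8(n)\le d_{8k}(n)$ and (2.1) gives $\sum_{n\in S(y)}d_{8k}(n)n^{-\sigma}=\exp(O(ky^{1-\sigma}/((1-\sigma)\log y)))$, linear in $k$ as needed. Two small slips in your write-up: $l^2/L$ is not ``a bounded integer'' but a small real $\le 1$, and the inequality $d_a(n^2)d(n)\le d_{2a+2}(n)^2$ is not the one you want here (that one is used for the \emph{diagonal} error term $x^{1/2+\epsilon}\sum d_k(n^2)d(n)/n^{2\sigma}$, which you should also record). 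Neither is a real gap.
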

\begin{proof}
Let $k$ be the smallest integer with $k\geq |r|$. We have that
\begin{equation}
\sums_{|d|\leq x} L(\sigma,\chi_d; y)^r=\sums_{|d|\leq x}\left(\sum_{n\in S(y)}\frac{\chi_d(n)}{n^{\sigma}}\right)^r=\sums_{|d|\leq x}\sum_{n\in S(y)}\frac{d_r(n)\chi_d(n)}{n^{\sigma}}.
\end{equation}
We begin by estimating the contribution of the diagonal terms $n=\square$ which give the main term of (5.1). Using that
$$ \sums_{|d|\leq x}\chi_d(n^2)=\sums_{\substack{|d|\leq x\\ (d,n)=1}}1= \frac{6}{\pi^2}x\prod_{p|n}\left(\frac{p}{p+1}\right)+O\left(x^{\frac{1}{2}+\epsilon}d(n)\right),$$
we deduce that the contribution of these terms is
\begin{equation}
\frac{6}{\pi^2}x\sum_{n\in S(y)}\frac{d_r(n^2)}{n^{2\sigma}}\prod_{p|n}\left(\frac{p}{p+1}\right)+ O\left(x^{\frac{1}{2}+\epsilon}\sum_{n\in S(y)}\frac{d_k(n^2)d(n)}{n^{2\sigma}}\right).
\end{equation}
Since $d_k(n^2)d(n)\leq d_{2k+2}(n)^2$, then the error term above is
$$ \ll x^{\frac{1}{2}+\epsilon}\sum_{n\in S(y)}\frac{d_{2k+2}(n)^2}{n^{2\sigma}}\ll
x^{\frac{1}{2}+\epsilon}\left(\sum_{n\in S(y)}\frac{d_{2k+2}(n)}{n^{\sigma}}\right)^2\ll
x^{\frac{1}{2}+\epsilon}\exp\left(O\left(\frac{ky^{1-\sigma}}{\log y}\right)\right),$$
which follows from (2.1). Moreover, we have that
\begin{equation}
\ex\left(L(\sigma,\X_2;y)^{r}\right)= \sum_{n\in S(y)}\frac{d_r(n^2)}{n^{2\sigma}}\prod_{p|n}\left(\frac{p}{p+1}\right).
\end{equation}
Now it remains to bound the contribution of the off-diagonal terms $n\neq \square$ to (5.1).
We use Lemma 5.1 to handle these terms. Write $n=n_1n_2^2n_3^2$ where $n_1,n_2$ are squarefree, with $(n_1,n_2)=1$, and $p|n_3\implies p|n_1n_2$: that is $n_1$ is the product of all primes dividing $n$ to an odd power (so $n_1>1$) and $n_2$ is the product of all primes dividing $n$ to an even power $\geq 2$. Observe that $\sums_{|d|\leq x}\chi_d(n)= \sums_{|d|\leq x}\chi_d(n_1n_2^2)$. Therefore these terms contribute
\begin{equation}
\sum_{\substack{n_1\in S(y)\\ n_1\neq 1}} \mu^2(n_1)\sum_{\substack{ n_2\in S(y)\\(n_1,n_2)=1}} \mu^2(n_2)\sums_{|d|\leq x}\chi_d(n_1n_2^2)\sum_{p|n_3\implies p|n_1n_2}\frac{d_r(n_1n_2^2n_3^2)}{(n_1n_2^2n_3^2)^{\sigma}}.
\end{equation}
Since $d_r(n)$ is a multiplicative function we obtain that
$$ \sum_{p|n_3\implies p|n_1n_2}\frac{d_r(n_1n_2^2n_3^2)}{(n_1n_2^2n_3^2)^{\sigma}}= \prod_{p|n_1}F(p)\prod_{p|n_2}H(p),$$
where
$$ F(p):=\sum_{a=0}^{\infty}\frac{d_r(p^{2a+1})}{p^{\sigma(2a+1)}}= \frac{1}{2}\left(\left(1-\frac{1}{p^{\sigma}}\right)^{-r}-\left(1+\frac{1}{p^{\sigma}}\right)^{-r}\right),$$
and
$$ H(p):=\sum_{a=0}^{\infty}\frac{d_r(p^{2a+2})}{p^{\sigma(2a+2)}}=
\frac{1}{2}\left(\left(1-\frac{1}{p^{\sigma}}\right)^{-r}+\left(1+\frac{1}{p^{\sigma}}\right)^{-r}\right)-1.$$
Using this and appealing to Lemma 5.1 we see that the sum (5.4) is
$$ \ll x^{1-\frac{l}{8L}}y^{1/3}\prod_{p\leq y}\left(1+2^{l^2/L}p^{1/(7L)}F(p)\left(1+\frac{1}{p^{1-l/(8L)}}\right)+ H(p)
\left(1+\frac{1}{p^{1-l/(8L)}}\right)\right),$$
for any positive integer $l\geq 1$ with $L=2^l$. We choose $l=[\log_2y/\log2]$ to get that
$$2^{l^2/L}\leq 2, p^{\frac{1}{7L}}\leq 2, \text{ and } 1+1/p^{1-\frac{l}{8L}}\leq 2, \text{ for all primes } p\leq y.$$
This implies that the sum (5.4) is bounded by
$$ x^{1-\log_2y/(10\log y)}\prod_{p\leq y}\left(1+8F(p)+ 2H(p)
\right).$$
Furthermore we know that
\begin{equation*}
F(p)\leq \left\{\begin{aligned} &2\frac{k}{p^{\sigma}} \ \ \ \ \text{ if } p>k^{1/\sigma},\\  &\left(1-\frac{1}{p^{\sigma}}\right)^{-k} \ \text{ if } p\leq k^{1/\sigma}, \end{aligned} \right.
\text{ and } H(p)\leq \left\{\begin{aligned} &2\frac{k^2}{p^{2\sigma}} \ \ \ \ \text{ if } p>k^{1/\sigma},\\  &\left(1-\frac{1}{p^{\sigma}}\right)^{-k} \ \text{ if } p\leq k^{1/\sigma}. \end{aligned}\right.
\end{equation*}
Then using these inequalities we obtain that the sum (5.4) is
$$
\ll x^{1-\frac{\log_2y}{10\log y}}\prod_{p\leq y}\left(20\left(1-\frac{1}{p^{\sigma}}\right)^{-k}\right)
\ll x\exp\left(-\frac{\log x\log_2y}{10\log y} + \frac{4 ky^{1-\sigma}}{(1-\sigma)\log y}\right),
$$
if $y\leq k^{1/\sigma}$, and is
\begin{equation*}
\begin{aligned}
&\ll x^{1-\frac{\log_2y}{10\log y}}\prod_{p\leq k^{1/\sigma}}\left(20\left(1-\frac{1}{p^{\sigma}}\right)^{-k}\right)\prod_{k^{1/\sigma}<p\leq y}\left(1+20\frac{k}{p^{\sigma}}\right)\\
&\ll x\exp\left(-\frac{\log x\log_2y}{10\log y}+ \frac{4k^{1/\sigma}}{(1-\sigma)\log k} + \frac{20ky^{1-\sigma}}{(1-\sigma)\log y}\right),
\end{aligned}
\end{equation*}
if  $y\geq k^{1/\sigma}$. In the two cases we get that the contribution of the off-diagonal terms is
 $\ll x\exp\left(-\frac{\log x\log_2y}{20\log y}\right)$, completing the proof.
\end{proof}
We now prove the analogue of Lemma 4.2
\begin{lem}
Let $x$ be large and $2\leq y\leq z$ be real numbers. Then for all positive integers $k$ such that $1\leq k\leq \log x/(6\log z)$ we have
$$ \sums_{|d|\leq x}\left|\sum_{y\leq p\leq z}\frac{\chi_d(p)}{p^{\sigma}}\right|^{2k}\ll x\frac{(2k)!}{2^k k!}\left(\sum_{y\leq p\leq z}\frac{1}{p^{2\sigma}}\right)^k +O\left(x^{3/4}\right).$$
\end{lem}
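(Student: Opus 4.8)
The strategy is to expand the $2k$-th moment, separate diagonal from off-diagonal contributions, bound the diagonal by a product over primes and the off-diagonal via Lemma~5.1. Writing out the moment,
\[
\sums_{|d|\leq x}\left|\sum_{y\leq p\leq z}\frac{\chi_d(p)}{p^{\sigma}}\right|^{2k}
=\sum_{y\leq p_1,\dots,p_{2k}\leq z}\frac{1}{(p_1\cdots p_{2k})^{\sigma}}\sums_{|d|\leq x}\chi_d(p_1\cdots p_k)\chi_d(p_{k+1}\cdots p_{2k}),
\]
since $\chi_d$ is real. The diagonal terms are those for which $p_1\cdots p_k \cdot p_{k+1}\cdots p_{2k}$ is a perfect square; for such tuples $\chi_d(p_1\cdots p_{2k})=\chi_d(\square)\leq 1$, so their total contribution is at most $x$ times the number of ways to pair up $2k$ primes from the set so that the product is a square, which is $\tfrac{(2k)!}{2^k k!}\bigl(\sum_{y\leq p\leq z}p^{-2\sigma}\bigr)^k$ (the standard count: each of the $k$ "matched pairs" uses the same prime twice), matching the stated main term.

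For the off-diagonal contribution I would apply Lemma~5.1 with a suitable choice of $l$ (taking $L=2^l$ with $l\asymp \log_2 z$, so that the auxiliary factors $n_0^{1/(7L)}$, $d(n_0)^{l^2/L}$ and $\prod_{p|n}(1+p^{-1+l/8L})$ are all $O(1)$ or at worst bounded by a small power of $z^{k}$), bounding $\sums_{|d|\leq x}\chi_d(n)\ll x^{1-l/(8L)}z^{1/3}(\cdots)$ uniformly over the $n=p_1\cdots p_{2k}$ that are not squares. Since each such $n\leq z^{2k}$ and we are summing $\leq (\pi(z))^{2k}$ terms each with $1/n^{\sigma}\leq 1$, the total off-diagonal contribution is $\ll x^{1-l/(8L)}z^{2k(1-\sigma)+O(1)}$; using $k\leq \log x/(6\log z)$ forces $z^{2k}\leq x^{1/3}$, which absorbs everything into $O(x^{1-\delta})$ for a fixed $\delta>0$, and in particular into the claimed $O(x^{3/4})$. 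This is exactly parallel to the off-diagonal estimates in the proofs of Lemma~4.2 and Lemma~4.4, except that Lemma~5.1 (rather than trivial orthogonality) is what makes the character sum over $d$ small.

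\textbf{Main obstacle.} The delicate point is the bookkeeping in Lemma~5.1: one must factor $n=p_1\cdots p_{2k}$ as $n_0\square$ with $n_0$ squarefree and track the parasitic factors $d(n_0)^{l^2/L}$ and $n_0^{1/(7L)}$ carefully enough that the exponential-in-$k$ losses they create are still dominated by the saving $x^{-l/(8L)}$. The choice $l=[\log_2 z/\log 2]$ (as in the proof of Proposition~5.2) is precisely what balances these: it makes $2^{l^2/L}$, $p^{1/(7L)}$ and $1+p^{-1+l/8L}$ all $\leq 2$ for $p\leq z$, so the product over the (at most $2k$) prime factors contributes only a harmless $2^{O(k)}$, while $x^{1-l/(8L)}=x^{1-\Theta(1/\log z)}$ together with the constraint $z^{2k}\leq x^{1/3}$ still wins. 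Everything else — the combinatorial count of square-producing tuples and the application of the prime number theorem to $\sum_{y\leq p\leq z}p^{-2\sigma}$ — is routine.
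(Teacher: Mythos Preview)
Your diagonal analysis is fine and matches the paper. The problem is the off-diagonal step: Lemma~5.1 (Graham--Ringrose) is the wrong tool here, and your bookkeeping does not close. With $l=[\log_2 z/\log 2]$ one has $L=2^l\asymp \log z$, so the saving from Lemma~5.1 is only $x^{-l/(8L)}=x^{-\Theta(\log_2 z/\log z)}$, i.e.\ a \emph{sub-power} of $x$. Meanwhile the sum over off-diagonal $2k$-tuples costs at least $\bigl(\sum_{y\le p\le z}p^{-\sigma}\bigr)^{2k}\asymp z^{2k(1-\sigma)}$, which under $z^{2k}\le x^{1/3}$ is still as large as $x^{(1-\sigma)/3}$, a genuine positive power of $x$. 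Thus your off-diagonal bound is of the shape $x^{1-\Theta(\log_2 z/\log z)}\cdot x^{(1-\sigma)/3}$, and for large $z$ this exceeds $x$, let alone $x^{3/4}$. The claim that ``$z^{2k}\le x^{1/3}$ absorbs everything into $O(x^{1-\delta})$'' is exactly where the argument breaks: a sub-power saving cannot beat a fixed-power loss.

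The paper does not use Lemma~5.1 here at all. It invokes the much simpler (and, in this range, much stronger) P\'olya--Vinogradov-type estimate $\sums_{|d|\le x}\chi_d(n)\ll x^{1/2}n^{1/4}\log n$ for $n\neq\square$ (Lemma~4.1 of [8]). This gives an honest power saving $x^{1/2}$; since $n=p_1\cdots p_{2k}\le z^{2k}\le x^{1/3}$, one gets the off-diagonal $\ll x^{1/2}z^{k/2}\log x\cdot z^{2k(1-\sigma)}=x^{1/2}z^{(5/2-2\sigma)k}\log x\ll x^{3/4}$, using $\sigma>1/2$ and $z^k\le x^{1/6}$. Graham--Ringrose is reserved for Proposition~5.2, where the moduli are genuinely smooth and the target saving is itself only $x^{-c\log_2 y/\log y}$; in the present lemma a crude power-saving bound is both simpler and necessary.
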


\begin{proof}
First we have that
$$ \sums_{|d|\leq x}\left|\sum_{y\leq p\leq z}\frac{\chi_d(p)}{p^{\sigma}}\right|^{2k}= \sums_{|d|\leq x}\sum_{y\leq p_1,...,p_{2k}\leq z}\frac{\chi_d(p_1...p_{2k})}{(p_1...p_{2k})^{\sigma}}.$$
The diagonal terms $p_1...p_{2k}=\square$ contribute
$$ \ll x\frac{(2k)!}{2^k k!}\left(\sum_{y\leq p\leq z}\frac{1}{p^{2\sigma}}\right)^k.$$
To handle the off-diagonal terms we use a result of  Granville and Soundararajan (Lemma 4.1 of [8]) which states that $ \sums_{|d|\leq x}\chi_d(n)\ll x^{1/2}n^{1/4}\log n,$
if $n\neq \square$. Thus if $p_1p_2...p_{2k}\neq \square$ and $p_i\leq z$ then
$ \sums_{|d|\leq x}\chi_d(p_1p_2...p_{2k})\ll x^{1/2}z^{k/2}\log x,$
which implies that the contribution of these terms is
$$ \ll x^{1/2}z^{k/2}\log x\left(\sum_{y\leq p\leq z}\frac{1}{p^{\sigma}}\right)^{2k}\ll x^{1/2}z^{(5/2-2\sigma)k}\log x\ll x^{3/4}.$$
\end{proof}
\begin{proof}[Proof of Theorem 1.6]
For simplicity write $\Phi_x(\tau)=\Phi_x^{\text{quad}}(\sigma,\tau)$. The proof is obtained by following the same lines as Theorems 1 and 4.5. Let $y=\log x\log_3 x$ and define $\Phi_x(\tau;y)$ to be the proportion of fundamental discriminants $d$ such that $|d|\leq x$ and  $\log|L(\sigma,\chi_d;y)|>\tau.$
Applying Proposition 5.3 gives that for all positive real numbers $r \leq (1-\sigma)(\log x\log_3 x)^{\sigma}/500,$ we have
$$
\int_{-\infty}^{\infty}re^{rt}\Phi_x(t;y)dt=\frac{1}{\left(\sums_{|d|\leq x}1\right)}\sums_{|d|\leq x}L(\sigma,\chi_d;y)^r=\ex\left(L(\sigma,\X_2;y)^{r}\right)+o(1).
$$
To estimate the moments of the random model we use Proposition 3.2 with $d=1$ and $Z(p)=X_2(p)$. Notice that the sequence $\{Z(p)\}_p$ satisfy hypothesis (ULD) with $Z$ being a random variable taking the values $1$ and $-1$ with equal probability $1/2$. Hence it follows that
$$\int_{-\infty}^{\infty}re^{rt}\Phi_x(t;y)dt=\exp\left(G_2(\sigma)
\frac{r^{1/\sigma}}{\log r}\left(1+O\left(\frac{1}{\log r}+\left(\frac{r^{1/\sigma}}{y}\right)^{2\sigma-1}\right)\right)\right),
$$
where $G_2(\sigma)=\int_{0}^{\infty}\log \cosh(u)u^{-1-\frac{1}{\sigma}}du.$
Therefore, using the saddle point method (as in the proof of Theorem 1.9) we get that
\begin{equation}
\Phi_x\left(\tau;y\right)=\exp\left(-A_2(\sigma)\tau^{\frac{1}{1-\sigma}}
(\log\tau)^{\frac{\sigma}{1-\sigma}}\left(1+O\left(\frac{1}{\sqrt{\log\tau}}+r(y,\tau)\right)\right)\right),
\end{equation}
 in a region $1\ll \tau\ll (\log x\log_3 x)^{1-\sigma}/\log_2 x$. Thus it only remains to construct a set $\mathcal{A}(x,\tau)$ which will play a similar role to that of $\mathcal{A}(T,\tau)$ in the proof of Theorem 1.1.
To this end we use the following zero-density result of Heath-Brown [11], which states that
for any $\delta>0$ we have
$\sums_{|d|\leq x} N(\sigma,T, \chi_d)\ll (xT)^{\delta}x^{3(1-\sigma)/(2-\sigma)}T^{(3-2\sigma)/(2-\sigma)}.$
Using this result along with Lemma 2.3 with $t=0$, $z=(\log x)^{3/(\sigma-1/2)}$
and $\sigma_0=\sigma/2+1/4>1/2$, give that
$$ \log L(\sigma,\chi_d)=\sum_{n=2}^z\frac{\Lambda(n)\chi_d(n)}{n^{\sigma}}+O\left(\frac{1}{(\log x)^{1/4}}\right),$$
for all fundamental discriminants $|d|\leq x$ except for a set $\mathcal{A}_0(x)$ of cardinality $\leq x^{1-a(\sigma)}$ for some constant $a(\sigma)>0$.
Now take $\epsilon=r(\log x\sqrt{\log_3x},\tau)$ and let $\mathcal{A}_1(x,\tau,\epsilon)$ be the set of fundamental discriminants $|d|\leq x$ such that $|\sum_{y\leq p\leq z}\chi_d(p)/p^{\sigma+it}|>\epsilon\tau.$  Then using Lemma 5.3 we see that
\begin{equation}
 \frac{1}{x}|\mathcal{A}_1(x,\tau,\epsilon)|\ll \left(\frac{3k}{(2\sigma-1)\epsilon^2\tau^2y^{2\sigma-1}\log y}\right)^k,
 \end{equation}
for all integers $k\leq (\sigma-\frac12)\log x/(18\log_2 x)$. Observing that $r(s,\tau)^2\tau^2s^{2\sigma-1}\log s = \tau^{\frac{1}{(1-\sigma)}}(\log s)^{\frac{\sigma}{(1-\sigma)}}$,  we deduce that $\epsilon^2\tau^2y^{2\sigma-1}\log y\geq \tau^{\frac{1}{(1-\sigma)}}(\log \tau)^{\frac{\sigma}{(1-\sigma)}}(\log_3 x)^{\sigma-\frac{1}{2}}$. If $\tau\leq (\log x)^{1-\sigma}/\log_2 x$ then we choose $k$ to be the largest integer below

\noindent $b_1\tau^{1/(1-\sigma)}
(\log\tau)^{\sigma/(1-\sigma)}$, for some suitably small constant $b_1>0$. In this case one can check that $|\mathcal{A}_1(x,\tau,\epsilon)|/x=o(\Phi_x(2\tau;y))$.

\noindent On the other hand, if $\tau\geq (\log x)^{1-\sigma}/\log_2 x$ we choose $k=[(\sigma-\frac12)\log x/(18\log_2x)]$. In this case it follows from (5.6) that $|\mathcal{A}_1(x,\tau,\epsilon)|/x\ll \exp(-b_2\log x\log_4 x/\log_2 x)$ for some constant $b_2>0$. This implies that $ |\mathcal{A}_1(x,\tau,\epsilon)|/x=o(\Phi_x(2\tau;y))$
 uniformly for
 $\tau \leq c_2(\sigma)(\log x\log_4 x)^{1-\sigma}/\log_2 x$, if $c_2(\sigma)$ is small enough.
Finally taking $\mathcal{A}(x,\tau)=\mathcal{A}_0(x)\cup\mathcal{A}_1(x,\tau,\epsilon)$,
we obtain that $|\mathcal{A}(x,\tau)|/x=o(\Phi_x(2\tau;y))$, and that $|\log L(\sigma,\chi_d)-\log L(\sigma,\chi_d;y)|<\epsilon_1\tau$,
for all fundamental discriminants $|d|\leq x$ with $d \notin \mathcal{A}(x,\tau)$, where $\epsilon_1=\epsilon+1/\log\tau$. This along with (5.5) completes the proof.
\end{proof}

\subsection{ 5.2. $\Omega$-results on GRH: proof of Theorem 1.8}

Let $s=\sigma+it$ where $1/2<\sigma<1$ and $t\in \mathbb{R}$.  Let $z\geq 2$ be a real number and define $P(z)=\prod_{p\leq z}p=e^{z+o(z)}$. For each prime $p\leq z$ let $\epsilon_p=\pm 1$, and denote by $\mathcal{P}_x(z,\{\epsilon_p\})$
 the set of primes $q\leq x$ such that $\left(\frac{p}{q}\right)=\epsilon_p$ for all primes $p\leq z$. Assuming GRH,  Granville and Soundararajan (see equation (9.1) of [8]) showed that
\begin{equation}
\sum_{q\in \mathcal{P}_x(z,\{\epsilon_p\})}\log q=\frac{x}{2^{\pi(z)}}+O\left(x^{\frac{1}{2}}\log^2(xP(z))\right).
\end{equation}
To prove Theorem 1.8, our strategy consists of computing the average of $\log |L(s,\chi_q)|$ over $q\in \mathcal{P}_x(z,\{\epsilon_p\})$, for some suitable set of signs $\{\epsilon_p\}_{p\leq z}$. We have

\begin{pro} Assume the GRH. Let $z$ be a real number with $2\leq z\leq (\log x)^2$. Then there exists a constant $B>0$ (which may depend only on $\sigma$) such that
$$ \sum_{q\in  \mathcal{P}_x(z,\{\epsilon_p\})} \log |L(s,\chi_q)|\log q= \frac{x}{2^{\pi(z)}}\sum_{p\leq z}\frac{\epsilon_p\cos(t\log p)}{p^{\sigma}}+ O\left(\frac{x}{2^{\pi(z)}}+x^{\frac{1}{2}}(\log x)^{B}\right).$$
\end{pro}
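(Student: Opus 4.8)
The plan is to replace $\log|L(s,\chi_q)|$ by a short Dirichlet polynomial over prime powers, interchange the two summations, and then exploit the complete multiplicativity of the Legendre symbol $\chi_q(n)=(\tfrac nq)$ together with GRH — used once in the approximation step and once to bound non‑principal character sums — to isolate the contribution of the primes $p\le z$, where $\chi_q(p)$ is forced to equal $\epsilon_p$.

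Fix a truncation length $w=(\log x)^{4/(2\sigma-1)}$. On GRH, $L(s,\chi_q)$ has no zeros with $\text{Re}(s)>1/2$, so Lemma 2.3 (Lemma 8.2 of [7]), applied with $\sigma_0=\tfrac12$ and with $w$ in place of its parameter, gives, uniformly for all primes $q$ with $w<q\le x$,
$$\log L(\sigma+it,\chi_q)=\sum_{2\le n\le w}\frac{\Lambda(n)\chi_q(n)}{n^{s}\log n}+O\bigl((\log q)(\log w)^2 w^{1/2-\sigma}\bigr)=\sum_{2\le n\le w}\frac{\Lambda(n)\chi_q(n)}{n^{s}\log n}+o(1).$$
Taking real parts, the same holds for $\log|L(s,\chi_q)|$. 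Summing over $q\in\mathcal P_x(z,\{\epsilon_p\})$ with weight $\log q$ and invoking (5.7) in the form $\sum_{q\in\mathcal P_x(z,\{\epsilon_p\})}\log q\ll x/2^{\pi(z)}+x^{1/2}\log^2(xP(z))$, the $o(1)$ error contributes within the claimed bound, while the primes $q\le w$ contribute $\ll w\sup_{q\le w}|\log|L(s,\chi_q)||\ll(\log x)^{O(1)}$ by the GRH bound $\log|L(s,\chi_q)|\ll(\log q)^{2-2\sigma}$. Thus it remains to evaluate $\sum_{q}\chi_q(p^k)\log q$ over $q\in\mathcal P_x(z,\{\epsilon_p\})$ with $q>w$, for each prime power $p^k\le w$, where $\chi_q(p^k)=(\tfrac pq)^k$.

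For $q>w$ the defining condition $(\tfrac\ell q)=\epsilon_\ell$ for all primes $\ell\le z$ has indicator $\prod_{\ell\le z}\tfrac12\bigl(1+\epsilon_\ell(\tfrac\ell q)\bigr)=2^{-\pi(z)}\sum_{n_0\mid P(z)}\eta_{n_0}(\tfrac{n_0}q)$, where $\eta_{n_0}:=\prod_{\ell\mid n_0}\epsilon_\ell\in\{\pm1\}$ and the sum runs over the exactly $2^{\pi(z)}$ divisors of the squarefree integer $P(z)$. Hence, up to negligible terms with $q\mid p^kn_0$,
$$\sum_{\substack{q\in\mathcal P_x(z,\{\epsilon_p\})\\ q>w}}\Bigl(\tfrac pq\Bigr)^{k}\log q=\frac1{2^{\pi(z)}}\sum_{n_0\mid P(z)}\eta_{n_0}\sum_{q\le x}\Bigl(\tfrac{p^kn_0}{q}\Bigr)\log q+O((\log x)^{O(1)}).$$
The crucial structural point is that the outer sum has exactly $2^{\pi(z)}$ terms, so it cancels the factor $2^{-\pi(z)}$, and any bound uniform in $n_0$ survives — even though $P(z)$ itself may be as large as $\exp((\log x)^2)$. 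Now $p^kn_0$ is a perfect square exactly when $k$ is even and $n_0=1$ (since $n_0$ is squarefree, and if $p\le z$ the prime factorizations still force this); otherwise $q\mapsto(\tfrac{p^kn_0}{q})$ is a non‑principal real Dirichlet character of modulus $\le 4wP(z)$, so by GRH and the explicit formula $\sum_{q\le x}(\tfrac{p^kn_0}{q})\log q\ll x^{1/2}\log^2(xwP(z))\ll x^{1/2}(\log x)^{O(1)}$, using $\log P(z)=\sum_{\ell\le z}\log\ell\ll z\ll(\log x)^2$; in the exceptional case the sum is $x+O(x^{1/2}\log^2 x)$.

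Splitting $\sum_{2\le n\le w}$ according to these cases finishes the proof. The terms $n=p$ with $p\le z$ give, using $(\tfrac pq)=\epsilon_p$ and (5.7), the main term $\tfrac{x}{2^{\pi(z)}}\text{Re}\sum_{p\le z}\epsilon_p p^{-s}=\tfrac{x}{2^{\pi(z)}}\sum_{p\le z}\epsilon_p\cos(t\log p)p^{-\sigma}$ with error $\ll x^{1/2}\log^2(xP(z))\sum_{p\le z}p^{-\sigma}\ll x^{1/2}(\log x)^{B}$; the prime powers $p^k\le w$ with $k$ even, together with those with $p\le z$ and $k\ge2$, have an essentially $q$‑independent $\pm1$‑valued coefficient and, since $\sum_p\sum_{k\ge2}(kp^{k\sigma})^{-1}\ll1$, contribute $\ll x/2^{\pi(z)}+x^{1/2}(\log x)^{B}$; and the terms with $z<p\le w$ and $k$ odd contribute, by the non‑principal character bound and $\sum_{z<p\le w}p^{-\sigma}\ll w^{1-\sigma}\ll(\log x)^{O(1)}$, at most $\ll x^{1/2}(\log x)^{B}$, giving the proposition with $B=B(\sigma)$. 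The two delicate points are the calibration of $w$ — a fixed power of $\log x$, large enough that the approximation error is $o(1)$ uniformly in $q\le x$ (truncating at $z$ itself would not be small) yet small enough to keep $\log^2(xwP(z))$ polylogarithmic in $x$ — and the fact that $\mathcal P_x(z,\{\epsilon_p\})$ must be opened over the divisors of $P(z)$, so that the $2^{\pi(z)}$ resulting terms exactly cancel the normalisation, rather than over residue classes modulo $8P(z)$, which would lose a fatal factor of size $\phi(P(z))$.
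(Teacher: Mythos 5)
Your approach matches the paper's: approximate $\log|L(s,\chi_q)|$ by a short Dirichlet polynomial under GRH, detect $q\in\mathcal P_x(z,\{\epsilon_p\})$ via $\prod_{\ell\le z}\tfrac12(1+\epsilon_\ell(\tfrac{\ell}{q}))=2^{-\pi(z)}\sum_{n_0\mid P(z)}\eta_{n_0}(\tfrac{n_0}{q})$, and then split by whether the resulting $p^kn_0$ is a perfect square, using GRH for the non-principal case. However, there is a genuine error in the middle of your argument: the claim that ``$p^kn_0$ is a perfect square exactly when $k$ is even and $n_0=1$, (since $n_0$ is squarefree, and if $p\le z$ the prime factorizations still force this)'' is false for $p\le z$. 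If $p\le z$ then $p\mid P(z)$, so $n_0=p$ is an admissible divisor, and $p^kn_0=p^{k+1}$ is a perfect square whenever $k$ is odd. This is precisely the case ($k=1$, $n_0=p$) that produces the main term $\tfrac{x}{2^{\pi(z)}}\sum_{p\le z}\epsilon_p\cos(t\log p)p^{-\sigma}$: the coefficient of $(\tfrac{p^2}{q})=1$ picks up $\eta_p=\epsilon_p$, giving the weighted count of primes $q$. Followed consistently, your ``exactly when $k$ even, $n_0=1$'' classification would relegate these terms to the non-principal-character bound $O(x^{1/2}(\log x)^{O(1)})$ and lose the main term entirely.

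Your final paragraph quietly escapes this by abandoning the indicator-opening for $p\le z$ and instead substituting $(\tfrac{p}{q})=\epsilon_p$ directly, then using (5.7); and it applies the non-principal bound only to $z<p\le w$ with $k$ odd, where your perfect-square claim happens to be true. So the final computation is correct, and the error terms you collect ($O(x/2^{\pi(z)})$ from the $q$-independent prime-power contributions, $O(x^{1/2}(\log x)^B)$ from the character-sum and counting errors) are the right ones. But as written, the middle and the end of the proof contradict each other: you should either restrict the indicator-opening to $p>z$, $k$ odd, from the outset (matching what you actually do at the end), or else correct the perfect-square analysis to include $n_0=p$, $k$ odd when $p\le z$, in which case the main term drops out of the same computation — as in the paper, which evaluates $\sum_{q\le x}(\tfrac{nl}{q})\log q$ and correctly recognises $nl=\square$ with $n=p^\alpha$ squarefree-times-power and $l\mid P(z)$ as including $l=p$, $\alpha$ odd.
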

\begin{proof}
Since the GRH is assumed, then Lemma 2.3 gives that
$$ \log L(s,\chi_q)= \sum_{n=2}^{\log^{A} x} \frac{\Lambda(n)\left(\frac{n}{q}\right)}{n^{s}\log n}+ O\left(\frac{1}{\log^2 x}\right),$$
where $A= 4/(\sigma-\frac{1}{2})$. Using this estimate we obtain that
\begin{equation}
\sum_{q\in  \mathcal{P}_x(z,\{\epsilon_p\})} \log |L(s,\chi_q)|\log q=\text{Re}\sum_{q\in  \mathcal{P}_x(z,\{\epsilon_p\})} \sum_{n=2}^{\log^{A} x}\frac{\Lambda(n)\left(\frac{n}{q}\right)\log q }{n^{s}\log n} +E_5,
\end{equation}
where
$ E_5\ll x/2^{\pi(z)}+x^{\frac{1}{2}}\log^2 x,$
by (5.7).
To deal with the main term we define $\epsilon_l=\prod_{p|l}\epsilon_p$ and use the following identity
\begin{equation*}
\sum_{l|P(z)}\epsilon_l\left(\frac{l}{q}\right)=\prod_{p\leq z}\left(1+\epsilon_p\left(\frac{p}{q}\right)\right)=\left\{\begin{aligned}&2^{\pi(z)} \ \text{ if } q\in  \mathcal{P}_x(z,\{\epsilon_p\}),\\  &0 \ \text{ otherwise }. \end{aligned}\right.
\end{equation*}
This gives that
\begin{equation}
\sum_{q\in  \mathcal{P}_x(z,\{\epsilon_p\})}\sum_{n=2}^{\log^{A} x}\frac{\Lambda(n)\left(\frac{n}{q}\right)\log q }{n^{s}\log n}
= \frac{1}{2^{\pi(z)}}\sum_{l|P(z)}\epsilon_l\sum_{n=2}^{\log^{A} x}\frac{\Lambda(n) }{n^{s}\log n}\sum_{q\leq x}\left(\frac{nl}{q}\right)\log q.
\end{equation}
If $nl=\square$ then the inner sum above equals $\sum_{q\leq x}\log q+O\left(\sum_{p|ln}\log p\right)=x+O(x/\log^{4}x)$ by the prime number theorem. Moreover, since $n=p^{\alpha}$ and $l$ is square-free then $nl=\square$ if and only if $l=p$ and $\alpha=2m+1$ for some non-negative integer $m$. Hence the contribution of the diagonal terms $nl=\square$ to (5.9) equals
\begin{equation}
\frac{x}{2^{\pi(z)}}\sum_{p\leq z}\frac{\epsilon_p}{p^{s}}+O\left(\frac{x}{2^{\pi(z)}}\right),
\end{equation}
since $\sum_{p\leq z}\epsilon_pp^{-s}\ll z^{1-\sigma}\leq(\log x)^2$.
Now we bound the contribution of the off-diagonal terms $nl\neq \square$. In this case $\psi=\left(\frac{nl}{\cdot}\right)$ is a character of modulus $nl$ or $4nl$. Thus the inner sum over $q$ in (5.9) equals $\sum_{m\leq x}\psi(m)\Lambda(m)+O\left(x^{1/2}\right)\ll x^{\frac{1}{2}}\log^2(4nl)\ll x^{\frac{1}{2}}\log^4x,$ by GRH. This implies that the contribution of these terms to (5.9) is
$$ \ll x^{1/2}\log^4x\sum_{n=2}^{\log^{A} x}\frac{\Lambda(n) }{n^{\sigma}\log n}\ll x^{1/2}(\log x)^{4+A(1-\sigma)}.$$
This along with (5.8), (5.9) and (5.10) complete the proof.
\end{proof}
\begin{proof}[Proof of Theorem 1.8]
Let $2\leq z\leq (\log x)^2$ be a real number and for each $p\leq z$ take $\epsilon_p$ to be the sign of $\cos(t\log p)$. We shall only prove the first part of the Theorem since the second one can be deduced similarly (by taking $\epsilon_p$ to be minus the sign of $\cos(t\log p)$). Then Proposition 5.4 gives that
$$ \sum_{q\in  \mathcal{P}_x(z,\{\epsilon_p\})} \log |L(s,\chi_q)|\log q=\frac{x}{2^{\pi(z)}}\sum_{p\leq z}\frac{|\cos(t\log p)|}{p^{\sigma}}+ O\left(\frac{x}{2^{\pi(z)}}+x^{\frac{1}{2}}(\log x)^{B}\right),$$
for some $B>0$. Now $|\cos(t\log p)|\geq \cos(t\log p)^2=(1+\cos(2t\log p))/2$. Putting $s_0=\sigma+2it$ we deduce that
\begin{equation*}
\begin{aligned}\sum_{p\leq z}\frac{|\cos(t\log p)|}{p^{\sigma}}&\geq \frac{1}{2}\sum_{p\leq z}\left(\frac{1}{p^{\sigma}}+ \text{Re}\frac{1}{p^{s_0}}\right)\\
&= \frac{z^{1-\sigma}}{2(1-\sigma)\log z}+\text{Re}\frac{z^{1-s_0}}{2(1-s_0)\log z}+O\left(\frac{z^{1-\sigma}}{\log^2 z}\right),\\
\end{aligned}
\end{equation*}
by the prime number theorem. Now if $t=0$ the main term on the RHS of the last inequality equals $z^{1-\sigma}/((1-\sigma)\log z)$, otherwise we have
$$\frac{z^{1-\sigma}}{(1-\sigma)}+ \text{Re}\frac{z^{1-s_0}}{(1-s_0)}\geq z^{1-\sigma}\left(\frac{1}{1-\sigma}-\frac{1}{|1-s_0|}\right)\geq z^{1-\sigma}\frac{2 t^2}{(1-\sigma)\left((1-\sigma)^2+4 t^2\right)}.$$ This implies that
\begin{equation}
\sum_{q\in  \mathcal{P}_x(z,\{\epsilon_p\})} \log |L(s,\chi_q)|\log q\geq \alpha(s)\frac{xz^{1-\sigma}}{2^{\pi(z)}\log z}+ O\left(\frac{xz^{1-\sigma}}{2^{\pi(z)}\log^2 z}+x^{\frac{1}{2}}(\log x)^{B}\right),
\end{equation}
where $\alpha(s)=(1-\sigma)^{-1}$ if $t=0$, and $\alpha(s)= \alpha(\sigma)t^2/\left((1-\sigma)^2+4 t^2\right)$ otherwise.
Let $M_x$ be the number of primes $q\leq x$ such that $$\log |L(s,\chi_q)|\geq \alpha(s)\frac{z^{1-\sigma}}{\log z}\left(1-\frac{1}{\sqrt{\log z}}\right).$$
Since
$|\log L(s,\chi_q)|\leq \log x$ for all primes $q\leq x$ (which follows from GRH and Lemma 2.3), we deduce that
\begin{equation}
\sum_{q\in  \mathcal{P}_x(z,\{\epsilon_p\})} \log |L(s,\chi_q)|\log q\leq M_x\log^2x+ \alpha(s)\frac{z^{1-\sigma}}{\log z}\left(1-\frac{1}{\sqrt{\log z}}\right)\sum_{q\in  \mathcal{P}_x(z,\{\epsilon_p\})}\log q.
\end{equation}
Hence combining equations (5.7), (5.11) and (5.12) gives that
$$ M_x \geq \alpha(s)\frac{xz^{1-\sigma}}{2^{\pi(z)}\log^2x\log^{3/2}z} +O\left(\frac{xz^{1-\sigma}}{2^{\pi(z)}\log^2x\log^{2}z}+x^{1/2}\log^B x\right).$$
Choosing
$ z=\log x\log_2 x/(2\log 2)-\log x\sqrt{\log_2x},$
we conclude that there are $\gg x^{1/2}$ primes $q\leq x$ such that
$$ \log |L(s,\chi_q)|\geq (\beta(s)+o(1))(\log x)^{1-\sigma}(\log_2 x)^{-\sigma},$$ completing the proof.
\end{proof}

\section{The distribution of automorphic $L$-functions}

Let $1/2<\sigma<1$. In this section we study the distribution of the values $\log L(\sigma,f)$, for $f\in S_2^p(q)$, where $q$ is a large prime number. Given a sequence $(\alpha_f)_{f\in S_2^p(q)}$, its harmonic average is defined as the sum
$$ \sumh_{f\in S_2^p(q)}\alpha_f=\sum_{f\in S_2^p(q)}\frac{\alpha_f}{4\pi\langle f,f\rangle},$$
and if $S\subset S_2^p(q)$ then we will let $|S|_h$ denote the harmonic measure of $S$, that is
$$ |S|_h:=\sumh_{f\in S}1.$$
Such averaging is natural in view of the two following facts
$$ |S_2^p(q)|_h=1+O\left(\frac{\log q}{q^{3/2}}\right), \text{ and } \frac{1}{q(\log q)^3}\ll \omega_f\ll \frac{\log q}{q};$$
so that the harmonic weight $\omega_f$ is not far from the natural weight $1/|S_2^p(q)|$ (since $|S_2^p(q)|\asymp q$), and it defines asymptotically a probability measure on $S_2^p(q)$.

 Let $f\in S_2^p(q)$. For $2\leq y<q$ and $s\in \mathbb{C}$ define
$$ L(s,f;y)=\prod_{p\leq y}\left(1-\frac{e^{i\theta_f(p)}}{p^s}\right)^{-1}
\left(1-\frac{e^{-i\theta_f(p)}}{p^s}\right)^{-1}=\sum_{n\in S(y)}\frac{\lambda_f(n)}{n^s}.$$
We now describe the corresponding probabilistic model for this family. Let $\{\theta(p)\}_{p \text{ prime}}$ be independent random variables distributed on $[0,\pi]$ according to the Sato-Tate measure $\frac{2}{\pi}\sin^2t dt,$ and define $\X_3(p)=(X_3^1(p), X_3^2(p))$ where $X_3^1(p)=e^{i\theta(p)}$ and $X_3^2(p)=e^{-i\theta(p)}$. For $2\leq y$ and $s\in \Bbb{C}$ define the following random Euler product
$$ L(s,\X_3;y):=\prod_{p\leq y}\left(1-\frac{X_3^1(p)}{p^s}\right)^{-1}\left(1-\frac{X_3^2(p)}{p^s}\right)^{-1}.$$
We prove
\begin{pro} let $q$ be a large prime, and $y\leq (\log q)^{2}$ be a large real number. Then uniformly for all real numbers $r$ such that
$|r|y^{1-\sigma}\leq (1-\sigma)\log q/16$, we have that
$$\frac{1}{|S_2^p(q)|_h}\sumh_{f\in S_2^p(q)}L(\sigma,f;y)^r=\ex\left(L(\sigma,\X_3;y)^r\right)+O\left(\exp\left(-\frac{\log q}{4\log y}\right)\right).$$
\end{pro}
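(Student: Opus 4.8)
The plan is to run the same scheme as in Propositions 4.1, 4.3 and 5.3, the new ingredients being Lemma 2.5 (to replace the short Euler product by a Dirichlet polynomial) and the Petersson trace formula (playing the role of the orthogonality relations used there). \emph{Step 1 (expansion in the Hecke basis).} Writing $w=p^{-\sigma}$, expand the $r$-th power of the local Euler factor of $L(\sigma,f;y)$ at $p\le y$ as $\big(1-2\cos\theta_f(p)\,w+w^2\big)^{-r}=\sum_{a\ge 0}G_a^{(r)}(\cos\theta_f(p))\,w^a$ with $G_a^{(r)}$ the Gegenbauer polynomial of degree $a$, and re-expand each $G_a^{(r)}$ in the Chebyshev basis $\lambda_f(p^\ell)=U_\ell(\cos\theta_f(p))$. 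Multiplying over $p\le y$ gives
$$ L(\sigma,f;y)^r=\sum_{m\in S(y)}A_r(m)\,\lambda_f(m)$$
for an explicit coefficient $A_r$. Using $|d_r(p^a)|\le d_k(p^a)$ with $k=\lceil|r|\rceil$ (whence $\|G_a^{(r)}\|_\infty\le d_{2k}(p^a)$, so each Chebyshev coefficient of $G_a^{(r)}$ is $\ll d_{2k}(p^a)$ by Cauchy--Schwarz) together with (2.1), one gets the uniform bound $|A_r(m)|\le\mathcal C_r\,d_{2k}(m)/m^\sigma$ with $\mathcal C_r:=\exp\!\big(O(|r|y^{1-\sigma}/\log y)\big)$, and also $A_r(1)=\prod_{p\le y}\big(\tfrac2\pi\int_0^\pi(1-2\cos\theta\,p^{-\sigma}+p^{-2\sigma})^{-r}\sin^2\theta\,d\theta\big)=\ex\big(L(\sigma,\X_3;y)^r\big)$, the last equality being exactly the statement that the Sato--Tate expectation of the $p$-th local factor of $L(\sigma,\X_3;y)^r$ is the $U_0$-coefficient of its Gegenbauer expansion. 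This identifies the main term.

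\emph{Step 2 (truncation).} Apply Lemma 2.5 to $L(\sigma,f;y)^r=\sum_{m\in S(y)}g(m)/m^\sigma$ with $g(m)=A_r(m)m^\sigma\lambda_f(m)\ll\mathcal C_r\,d_{4k}(m)$, taking $x=q^2$ (admissible since $q\ge y^2$, as $y\le(\log q)^2$). Multiplying back and using the bound on $\mathcal C_r$ with the hypothesis $|r|y^{1-\sigma}\le(1-\sigma)\log q/16$, the error incurred is
$$ \ll\exp\!\Big(-\tfrac{2\log q}{\log y}+\log_2 q+O\big(\tfrac{|r|y^{1-\sigma}}{\log y}\big)\Big)\ll\exp\!\Big(-\tfrac{\log q}{\log y}\Big),$$
so that $L(\sigma,f;y)^r=\sum_{m\le q^2,\ m\in S(y)}A_r(m)\lambda_f(m)+O\big(\exp(-\tfrac{\log q}{\log y})\big)$.

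\emph{Step 3 (Petersson averaging).} As $q$ is prime, $S_2^p(q)$ is an orthogonal basis of $S_2(q)$; the weight-$2$ Petersson trace formula gives, for $m\le q^2$,
$$ \sumh_{f\in S_2^p(q)}\lambda_f(m)=\mathbf 1_{m=1}\,|S_2^p(q)|_h+O\big(q^{-1/2}\big),$$
the Kloosterman part being $O(q^{-1/2})$ because the moduli $c$ run over multiples of $q$, so $c\ge q\ge\sqrt m$, $4\pi\sqrt m/c\le 4\pi$, hence $J_1(4\pi\sqrt m/c)\ll\sqrt m/c$ and $\sum_{q\mid c}|S(1,m;c)|c^{-1}J_1(4\pi\sqrt m/c)\ll\sqrt m\sum_{q\mid c}d(c)c^{-3/2}\ll\sqrt m\,q^{-3/2}\le q^{-1/2}$. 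Averaging the identity of Step 2 over $f$ and dividing by $|S_2^p(q)|_h=1+O(q^{-3/2}\log q)$, the diagonal $m=1$ produces $A_r(1)=\ex\big(L(\sigma,\X_3;y)^r\big)$, while the off-diagonal is $\ll q^{-1/2}\sum_{m\in S(y)}|A_r(m)|\ll q^{-1/2}\mathcal C_r\exp(O(ky^{1-\sigma}/\log y))=q^{-1/2+o(1)}$ by (2.1). Together with the truncation error (which is $\ll\exp(-\log q/\log y)\ll\exp(-\log q/(4\log y))$) this yields the proposition.

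\emph{Main obstacle.} The genuinely new part is Step 1: producing the Hecke-basis expansion $L(\sigma,f;y)^r=\sum_mA_r(m)\lambda_f(m)$ with a clean divisor-type bound on $A_r$ uniform in the real parameter $r$ (the coefficients arise from Gegenbauer polynomials, not from ordinary $d_r$'s), and recognizing $A_r(1)$ as $\ex\big(L(\sigma,\X_3;y)^r\big)$ via orthogonality of the Chebyshev polynomials against the Sato--Tate measure. Once that is in place, Steps 2 and 3 are routine applications of Lemma 2.5 and the Petersson formula, the ranges of $m$ and $y$ having been arranged so that both the truncation error and the Kloosterman contribution beat $\exp(-\log q/(4\log y))$.
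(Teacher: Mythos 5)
Your proposal is correct and follows the same basic architecture as the paper's proof, but with a genuinely different ordering of the two main moves, which is worth comparing.

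The paper (Section~6) works with the multiplicative Dirichlet coefficient $\lambda_{f,r}(n)$ defined by $L(\sigma,f;y)^r=\sum_{n\in S(y)}\lambda_{f,r}(n)/n^\sigma$: it first shows $|\lambda_{f,r}(n)|\le d_{6k}(n)$ (Lemma~6.2 plus Deligne), applies Lemma~2.5 to truncate at $n\le q^2$, \emph{then} passes to the Hecke basis via Lemma~6.3 (which is exactly your Gegenbauer/Chebyshev re-expansion packaged with Petersson), obtaining $\sum_{n\le q^2}\lambda_r^{\text{rand}}(n)/n^\sigma$, and finally applies Lemma~2.5 a \emph{second} time, now to the multiplicative sequence $\lambda_r^{\text{rand}}(n)\le d_{2k}(n)$, to replace the truncated sum by the full $\ex(L(\sigma,\X_3;y)^r)$. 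Your proposal instead performs the full Hecke-basis expansion $L(\sigma,f;y)^r=\sum_{m\in S(y)}A_r(m)\lambda_f(m)$ first, truncates once, and applies Petersson directly; your identification $A_r(1)=\ex(L(\sigma,\X_3;y)^r)$ is the same observation the paper makes when writing $\ex(L(s,\X_3;y)^r)=\sum_n\lambda_r^{\text{rand}}(n)/n^s$. Net effect: your reordering collapses the paper's two applications of Lemma~2.5 into one truncation, which is a modest simplification of the bookkeeping.

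There is one point to tidy up. Lemma~2.5 as stated requires $g$ to be multiplicative, but your $g(m)=A_r(m)m^\sigma\lambda_f(m)$ is not: $A_r(m)$ carries a factor $\prod_{p\le y,\,p\nmid m}B_p(0)$ with $B_p(0)=\ex\big((1-2\cos\theta\,p^{-\sigma}+p^{-2\sigma})^{-r}\big)$, so $g(1)=A_r(1)\ne 1$ and $g$ does not factor over coprime arguments. You cannot simply renormalize by $A_r(1)$, because the resulting multiplicative sequence involves $B_p(0)^{-1}$, which has no useful pointwise upper bound uniformly in $p$ and $r$. Fortunately the multiplicativity hypothesis in Lemma~2.5 is not actually used in its proof --- the Perron-contour argument there only exploits the bound $|g(n)|\ll d_k(n)$ via (2.1) --- so your application is sound, but you should either say so explicitly or simply repeat the Perron argument for the non-multiplicative $A_r(m)m^\sigma\lambda_f(m)$ (with the scaling by $\mathcal C_r$ handled as you indicate). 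Everything else --- the bound $|A_r(m)|m^\sigma\le\mathcal C_r\,d_{2k}(m)$ via the Chebyshev-coefficient estimate, the Petersson formula with $m\le q^2$ giving $O(q^{-1/2}\log q)$, and the final error bookkeeping --- matches the paper.
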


Let $r\in \Bbb{R}$. Then
$$ L(s,f;y)^r=\sum_{n\in S(y)}\frac{\lambda_{f,r}(n)}{n^s}=\prod_{p\leq y}\left(\sum_{a=0}^{\infty}\frac{\lambda_{f,r}(p^a)}{p^{as}}\right),$$
where $\lambda_{f,r}(n)$ is a multiplicative function. Our next lemma establishes a formula for $\lambda_{f,r}(p^a)$ in terms of $\lambda_f(p^{b})$ for $0\leq b\leq a$. Cogdell and Michel [2] achieved this in a more general context of compact groups, and for all symmetric powers of $f$ via representation theory. However in our specific case we can use a simple elementary approach  which avoids the representation theory language.
\begin{lem}
For any real number $r$ we have
\begin{equation}
\lambda_{f,r}(p^a)=\sum_{0\leq b\leq a}C_r(a,b)\lambda_f(p^b),
\end{equation}
where the coefficients $C_r(a,b)$ are defined by
$$ C_r(a,b):=\frac{1}{\pi}\sum_{\substack{ m,l\geq 0\\ m+l=a}} d_r(p^m)d_r(p^l)\int_0^{\pi}\cos((m-l)\theta)\sin\theta\sin((b+1)\theta)d\theta.$$ Moreover we have that
$|C_r(a,b)|\leq d_{2k}(p^a)$ for all $0\leq b\leq a$, where $k$ is the smallest integer with $k\geq |r|$.
\end{lem}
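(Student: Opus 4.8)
The plan is to expand the local Euler factor of $L(s,f;y)^r$ at each prime $p\le y$, identify $\lambda_{f,r}(p^a)$ as a trigonometric polynomial in $\theta_f(p)$, and then re-expand it in the orthonormal basis attached to the Sato--Tate measure. Writing $\theta=\theta_f(p)$ and expanding each binomial via the divisor function $d_r$, the $p$-factor of $L(s,f;y)^r$ is
\[
\left(1-\frac{e^{i\theta}}{p^s}\right)^{-r}\left(1-\frac{e^{-i\theta}}{p^s}\right)^{-r}=\sum_{m,l\ge 0}\frac{d_r(p^m)d_r(p^l)}{p^{(m+l)s}}\,e^{i(m-l)\theta},
\]
so comparing the coefficient of $p^{-as}$ gives $\lambda_{f,r}(p^a)=\sum_{m+l=a}d_r(p^m)d_r(p^l)e^{i(m-l)\theta}$. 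Interchanging the summation indices $m$ and $l$ leaves this sum unchanged, hence $\lambda_{f,r}(p^a)=\sum_{m+l=a}d_r(p^m)d_r(p^l)\cos((m-l)\theta)$; in particular $\lambda_{f,r}(p^a)$ is real-valued, and since each $\cos(c\theta)$ with $|c|\le a$ is a polynomial of degree $|c|$ in $\cos\theta$, $\lambda_{f,r}(p^a)$ is a polynomial of degree at most $a$ in $\cos\theta=\frac12\lambda_f(p)$.

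Next I would recall that $\lambda_f(p^b)=\sin((b+1)\theta)/\sin\theta$ is the $b$-th Chebyshev polynomial of the second kind in $\cos\theta$, and that these polynomials are orthonormal for the Sato--Tate measure $\frac{2}{\pi}\sin^2t\,dt$ on $[0,\pi]$, equivalently $\frac{2}{\pi}\int_0^\pi\sin((a+1)\theta)\sin((b+1)\theta)\,d\theta=\delta_{ab}$. Since $\lambda_f(p^0),\dots,\lambda_f(p^a)$ span the polynomials of degree $\le a$, the quantity $\lambda_{f,r}(p^a)$ admits a \emph{finite} expansion $\lambda_{f,r}(p^a)=\sum_{0\le b\le a}C_r(a,b)\lambda_f(p^b)$, and extracting the coefficient of $\lambda_f(p^b)$ by integrating against $\lambda_f(p^b)$ with the Sato--Tate weight, then interchanging the finite sum over $(m,l)$ with the integral, produces exactly the integral representation for $C_r(a,b)$ in the statement. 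This proves (6.1).

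Finally, for the bound I would argue straight from that integral representation: by the triangle inequality, the trivial estimate $|\cos((m-l)\theta)\sin\theta\sin((b+1)\theta)|\le 1$ on $[0,\pi]$, and the inequality $|d_r(p^m)|\le d_{|r|}(p^m)\le d_k(p^m)$ recalled in \S2.1 (valid since $k\ge|r|$), one obtains $|C_r(a,b)|\le\sum_{m+l=a}d_k(p^m)d_k(p^l)$, and the right-hand side equals $d_{2k}(p^a)$ by the Dirichlet convolution identity $d_k\ast d_k=d_{2k}$ (i.e.\ $\zeta(s)^k\cdot\zeta(s)^k=\zeta(s)^{2k}$). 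This gives $|C_r(a,b)|\le d_{2k}(p^a)$ for every $0\le b\le a$. The lemma is essentially mechanical; the only points that take a little care are the bookkeeping showing the Chebyshev expansion terminates at $b=a$ (so that (6.1) is an exact identity, not merely an $L^2$-expansion), and the observation that $\lambda_{f,r}(p^a)$ is genuinely real so that expanding it in the real basis $\{\lambda_f(p^b)\}$ is legitimate.
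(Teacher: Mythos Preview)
Your proof is correct and follows essentially the same route as the paper: expand the local Euler factor to identify $\lambda_{f,r}(p^a)$ as a trigonometric polynomial in $\theta_f(p)$, then project onto the orthonormal basis $S_b(\theta)=\sin((b+1)\theta)/\sin\theta$ for the Sato--Tate measure to read off the $C_r(a,b)$, and finally bound via $|d_r|\le d_k$ and $d_k*d_k=d_{2k}$. The only wrinkle is a harmless factor-of-two ambiguity already present in the paper's own proof (the intermediate ``$=\frac12\sum\cos((m-l)\theta)$'' there versus the $\frac{1}{\pi}$ in the stated formula); your version without the $\frac12$ is the correct one, and carrying the computation through consistently gives the constant $\frac{2}{\pi}$ rather than $\frac{1}{\pi}$, which does not affect the bound $|C_r(a,b)|\le d_{2k}(p^a)$ or any subsequent use of the lemma.
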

\begin{proof} We have that
\begin{equation*}
\begin{aligned}
\sum_{a=0}^{\infty}\frac{\lambda_{f,r}(p^a)}{p^{as}}&=\left(1-\frac{e^{i\theta_f(p)}}{p^s}\right)^{-r}
\left(1-\frac{e^{-i\theta_f(p)}}{p^s}\right)^{-r}\\
&= \sum_{m=0}^{\infty}\frac{d_r(p^m)e^{im\theta_f(p)}}{p^{ms}}
\sum_{l=0}^{\infty}\frac{d_r(p^l)e^{-il\theta_f(p)}}{p^{ls}},\\
\end{aligned}
\end{equation*}
which implies that
$$ \lambda_{f,r}(p^a)=\sum_{\substack{ m,l\geq 0\\ m+l=a}} d_r(p^m)d_r(p^l)e^{i(m-l)\theta_f(p)}=\frac{1}{2}\sum_{\substack{ m,l\geq 0\\ m+l=a}} d_r(p^m)d_r(p^l)\cos((m-l)\theta_f(p)).$$
Now recall that for any $b\geq 0$ we have that
$ \lambda_f(p^b)=\sin((b+1)\theta_f(p))/\sin\theta_f(p),$
and that the functions $\{S_n\}_{n\geq 0}$, defined by
$$S_n(\theta):=\frac{\sin((n+1)\theta)}{\sin\theta}$$
form an orthonormal basis of $L^2([0,\pi],\mu_{ST})$ where $\mu_{ST}$ is the Sato-Tate measure $\frac{2}{\pi}\sin^2\theta d\theta$ on $[0,\pi]$. These facts imply that
$$  Y_{r,a}(\theta):=\frac{1}{2}\sum_{\substack{m,l\geq 0\\ m+l=a}} d_r(p^m)d_r(p^l)\cos((m-l)\theta)= \sum_{0\leq b\leq a}C_r(a,b)S_b(\theta),$$
since $Y_{r,a}(\theta)$ is a trigonometric polynomial of degree $\leq a$, and the coefficients $C_r(a,b)$ are defined by
\begin{equation*}
\begin{aligned}
C_r(a,b)&=\frac{1}{\pi}\int_0^{\pi}\sum_{\substack{ m,l\geq 0\\ m+l=a}} d_r(p^m)d_r(p^l)\cos((m-l)\theta)S_b(\theta)\sin^2\theta d\theta\\
&=\frac{1}{\pi}\sum_{\substack{ m,l\geq 0\\ m+l=a}} d_r(p^m)d_r(p^l)\int_0^{\pi}\cos((m-l)\theta)\sin\theta\sin((b+1)\theta)d\theta.\\
\end{aligned}
\end{equation*}
Furthermore, noting that $\lambda_{f,r}(p^a)=Y_{r,a}(\theta_f(p))$ and $\lambda_f(p^b)=S_b(\theta_f(p))$, gives (6.1). Finally the last estimate follows from the fact that
$$ |C_r(a,b)|\leq \sum_{\substack{ m,l\geq 0\\ m+l=a}} d_k(p^m)d_k(p^l)\leq d_{2k}(p^a).$$
\end{proof}
For any positive integer $n=p_1^{a_1}\cdots p_j^{a_j}$, define
$ \lambda^{\text{rand}}_{r}(n):=\prod_{i=1}^jC_r(a_i,0).$
Then using the Petersson trace formula we prove the following lemma
\begin{lem}
For all positive integers $n$ with $(n,q)=1$, and all real numbers $r$ we have
$$ \frac{1}{|S_2^p(q)|_h}\sumh_{f\in S_2^p(q)}\lambda_{f,r}(n)=\lambda^{\text{rand}}_{r}(n)+O\left(q^{-3/2}n^{1/2}\log(qn)d_{4k}(n)\right),$$
where $k$ is the smallest integer with $k\geq |r|$.
\end{lem}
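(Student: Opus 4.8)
The plan is to expand $\lambda_{f,r}(n)$ by multiplicativity using Lemma 6.3, apply the Petersson trace formula term by term to the resulting linear combination of Hecke eigenvalues $\lambda_f(m)$, and isolate the ``diagonal'' contribution $m=1$, which turns out to be exactly $\lambda^{\text{rand}}_r(n)$. Concretely, write $n=p_1^{a_1}\cdots p_j^{a_j}$; since $\lambda_{f,r}$ and $\lambda_f$ are multiplicative and $\lambda_{f,r}(p^a)=\sum_{0\le b\le a}C_r(a,b)\lambda_f(p^b)$ by Lemma 6.3, distributing the product over $i$ gives
$$\lambda_{f,r}(n)=\sum_{m}C_r(n,m)\,\lambda_f(m),\qquad C_r(n,m):=\prod_{i=1}^jC_r(a_i,b_i),$$
where $m=\prod_i p_i^{b_i}$ runs over the divisors of $n$ with $0\le b_i\le a_i$. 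In particular $C_r(n,1)=\prod_iC_r(a_i,0)=\lambda^{\text{rand}}_r(n)$, and $|C_r(n,1)|\le d_{2k}(n)$ by the last estimate in Lemma 6.3.

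Next I would apply the Petersson trace formula. Since $q$ is prime and the weight is $2$, there are no oldforms, so $S_2^p(q)$ is the full space of cusp forms and the formula applies directly: because $(n,q)=1$ forces $(m,q)=1$ for each $m$ above, one has
$$\sumh_{f\in S_2^p(q)}\lambda_f(m)=\delta_{m=1}+2\pi\sum_{c\equiv 0\ (\mathrm{mod}\ q)}\frac{S(m,1;c)}{c}\,J_1\!\left(\frac{4\pi\sqrt m}{c}\right)=:\delta_{m=1}+\Delta_q(m).$$
Using Weil's bound $|S(m,1;c)|\le d(c)\sqrt c$ together with $J_1(x)\ll x$, and summing over $c=qc'$ (note $(m,c)=(m,c')$ since $(m,q)=1$), the Kloosterman term satisfies $\Delta_q(m)\ll q^{-3/2}\sqrt m$ (a slightly cruder Weil bound inserts an extra $\log$, which the statement allows anyway). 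Taking $m=1$ here also yields $|S_2^p(q)|_h=1+O(q^{-3/2}\log q)$, consistent with the normalization recalled at the start of the section.

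Summing over $m$ then gives
$$\sumh_{f\in S_2^p(q)}\lambda_{f,r}(n)=\lambda^{\text{rand}}_r(n)+O\!\left(q^{-3/2}\sum_m|C_r(n,m)|\,m^{1/2}\right).$$
To bound the error I would use $|C_r(a,b)|\le d_{2k}(p^a)$ from Lemma 6.3: there are $a+1=d(p^a)$ values of $b$, and $d(p^a)d_{2k}(p^a)\le d_{4k}(p^a)$ by the inequality $d_ud_v\le d_{uv}$ recorded in Section 2, so multiplicativity gives $\sum_m|C_r(n,m)|\le d_{4k}(n)$; combined with $m\le n$ this yields $\sum_m|C_r(n,m)|m^{1/2}\le n^{1/2}d_{4k}(n)$. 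Finally, dividing through by $|S_2^p(q)|_h=1+O(q^{-3/2}\log q)$ produces an additional error $O(q^{-3/2}\log q\,|\lambda^{\text{rand}}_r(n)|)$ with $|\lambda^{\text{rand}}_r(n)|\le d_{2k}(n)\le d_{4k}(n)$, and all error terms are absorbed into $O\big(q^{-3/2}n^{1/2}\log(qn)\,d_{4k}(n)\big)$.

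The argument is essentially organized bookkeeping once Lemma 6.3 is available; the only points needing a little care are verifying that the Petersson trace formula applies verbatim to $S_2^p(q)$ (no oldforms at prime level in weight $2$), and tracking the divisor-function estimates so that the final error features $d_{4k}(n)$ rather than a larger divisor function — this is exactly where the bounds $|C_r(a,b)|\le d_{2k}(p^a)$ and $d_ud_v\le d_{uv}$ must be used in tandem. The analytic input (Weil plus the Bessel bound $J_1(x)\ll x$ for $c\ge q$) is routine and gives room to spare.
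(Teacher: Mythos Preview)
Your argument is correct and follows essentially the same route as the paper: expand $\lambda_{f,r}(n)$ via the decomposition $\lambda_{f,r}(p^a)=\sum_{0\le b\le a}C_r(a,b)\lambda_f(p^b)$ (this is Lemma~6.2 in the paper, not Lemma~6.3), apply Petersson's formula to each $\lambda_f(m)$, pick off the diagonal $m=1$ as $\lambda_r^{\text{rand}}(n)$, and bound the error using $|C_r(a,b)|\le d_{2k}(p^a)$, $m\le n$, and $d(n)d_{2k}(n)\le d_{4k}(n)$. The only cosmetic difference is that the paper quotes the already-normalized Petersson estimate (6.3), so it never has to divide by $|S_2^p(q)|_h$ separately or unpack the Kloosterman/Bessel analysis.
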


\begin{proof}
Write $n=p_1^{a_1}\cdots p_j^{a_j}$. Then by Lemma 6.2 we have that
\begin{equation}
\begin{aligned}
\frac{1}{|S_2^p(q)|_h}\sumh_{f\in S_2^p(q)}\lambda_{f,r}(n)&=
\frac{1}{|S_2^p(q)|_h}\sumh_{f\in S_2^p(q)}\prod_{i=1}^j\lambda_{f,r}(p_i^{a_i})\\
&=\frac{1}{|S_2^p(q)|_h}\sumh_{f\in S_2^p(q)}\prod_{i=1}^j\left(\sum_{0\leq b_i\leq a_i}C_r(a_i,b_i)\lambda_f(p_i^{b_i})\right)\\
&= \sum_{0\leq b_1\leq a_1}\cdots \sum_{0\leq b_j\leq a_j} \prod_{i=1}^jC_r(a_i,b_i)\frac{1}{|S_2^p(q)|_h}\sumh_{f\in S_2^p(q)}\lambda_f(p_1^{b_1}\cdots p_j^{b_j}).\\
\end{aligned}
\end{equation}
Now applying the Petersson trace formula (see [12] and [2])
\begin{equation}
 \frac{1}{|S_2^p(q)|_h}\sumh_{f\in S_2^p(q)}\lambda_f(m)=\delta_{m,1}+O\left(\frac{\log(qm)m^{1/2}}{q^{3/2}}\right),
\end{equation}
to the inner sum on the RHS of (6.2) gives that
$$ \frac{1}{|S_2^p(q)|_h}\sumh_{f\in S_2^p(q)}\lambda_{f,r}(n)=\lambda^{\text{rand}}_{r}(n)+ E_6,$$
where
\begin{equation*}
\begin{aligned}
E_6 &\ll \frac{\log(qn)}{q^{3/2}}\sum_{0\leq b_1\leq a_1}\cdots \sum_{0\leq b_j\leq a_j} \prod_{i=1}^j|C_r(a_i,b_i)|(p_1^{b_1}\cdots p_j^{b_j})^{\frac12}\\
&\ll q^{-3/2}n^{1/2}\log(qn)d_{2k}(n)d(n)\ll q^{-3/2}n^{1/2}\log(qn)d_{4k}(n),
\end{aligned}
\end{equation*}
which follows from Lemma 6.2 along with the fact that $\prod_{i=1}^j(a_i+1)=d(n).$
\end{proof}
\begin{proof}[Proof of Proposition 6.1]
Let $k$ be the smallest integer with $k\geq |r|$. Then by Deligne's bound $|\lambda_f(n)|\leq d(n)$ and Lemma 6.2 we have that $$|\lambda_{f,r}(p^a)|\leq \sum_{0\leq b\leq a} d_{2k}(p^a)d(p^b)=d_{2k}(p^a)d_3(p^a)\leq d_{6k}(p^a),$$
and so $|\lambda_{f,r}(n)|\leq d_{6k}(n)$ for all positive integers $n$ by multiplicativity. Therefore Lemma 2.5 implies that
$$ L(\sigma,f;y)^r=\sum_{\substack{ n\leq q^2\\ n\in S(y)}} \frac{\lambda_{f,r}(n)}{n^{\sigma}}+O\left(\exp\left(-\frac{2\log q}{\log y}+ \log_2 q+ \frac{6eky^{1-\sigma}}{(1-\sigma)\log y}(1+o(1))\right)\right),$$
and observe that the error term above is
$ \ll \exp\left(-\frac{\log q}{4\log y}\right),$ by our hypothesis on $r$.
Furthermore Lemma 6.3 gives that
$$ \frac{1}{|S_2^p(q)|_h}\sumh_{f\in S_2^p(q)}\sum_{\substack{n\leq q^2\\ n\in S(y)}} \frac{\lambda_{f,r}(n)}{n^{\sigma}}= \sum_{\substack{ n\leq q^2\\ n\in S(y)}} \frac{\lambda^{\text{rand}}_{r}(n)}{n^{\sigma}}+O\left(q^{-3/2}\log q\sum_{\substack{ n\leq q^2\\ n\in S(y)}} \frac{d_{4k}(n)}{n^{\sigma-1/2}}\right).$$
Now the error term above is
$$ \ll \frac{\log q}{q^{1/2}}\sum_{n\in S(y)}\frac{d_{4k}(n)}{n^{\sigma}}\ll\frac{\log q}{q^{1/2}}\exp\left(5\frac{ky^{1-\sigma}}{(1-\sigma)\log y}\right)\ll q^{-1/8},$$
which follows from (2.1) and our hypothesis on $k$.
Moreover, notice that
\begin{equation*}
\begin{aligned}
\ex\left(L(s,\X_3;y)^r\right)&=\prod_{p\leq y}\ex\left(\left(1-\frac{X_3^1(p)}{p^{\sigma}}\right)^{-r}\left(1-\frac{X_3^2(p)}{p^{\sigma}}\right)^{-r}\right)=\prod_{p\leq y}\left(\sum_{a=0}^{\infty}\frac{C_r(a,0)}{p^{as}}\right)\\
&=\sum_{n\in S(y)}\frac{\lambda^{\text{rand}}_r(n)}{n^s}.\\
\end{aligned}
\end{equation*}
Finally, since $|\lambda^{\text{rand}}_{r}(n)|\leq d_{2k}(n)$ by Lemma 6.2, then applying Lemma 2.5 gives that
$$  \sum_{\substack{n\leq q^2\\ n\in S(y)}} \frac{\lambda^{\text{rand}}_{r}(n)}{n^{\sigma}}= \ex\left(L(\sigma,\X_3;y)^r\right)+ O\left(\exp\left(-\frac{\log q}{4\log y}\right)\right),$$ completing the proof.
\end{proof}
In order to prove Theorem 1.7, we need a large sieve type inequality for the Fourier coefficients $\lambda_f(p)$, analogous to Lemmas 4.4 and 5.3. In this case the argument is different since $\lambda_f(n)$ is not completely  multiplicative. A crucial role will be played by the Hecke relations:
\begin{equation}
 \lambda_f(m)\lambda_f(n)=\sum_{d|(m,n)}\lambda_f\left(\frac{mn}{d^2}\right).
\end{equation}
To understand the combinatorics of these relations, Rudnick and Soundararajan [22] introduced a ring $\mathcal{H}$ generated over the integers by symbols $x(n)(n\in\mathbb{N})$ which satisfy the following
$$ x(1)=1, \text{ and } x(m)x(n)=\sum_{d|(m,n)} x\left(\frac{mn}{d^2}\right).$$
Therefore, using the Hecke relations (6.4) we may write
$$ x(n_1)\cdots x(n_r)=\sum_{m|\prod_{j=1}^rn_j}b_m(n_1,\dots,n_r)x(m),$$
for some integers $b_m(n_1,\dots,n_r)$.
Rudnick and Soundararajan explored some properties of these coefficients, showing that $b_m(n_1,\dots,n_r)$ is always non-negative and is symmetric in the variables $n_1,\dots, n_r$, and finally that $b_m(n_1,\dots, n_r)\leq d(n_1)\cdots d(n_r)$. The case $m=1$ is of special importance to us. Here Rudnick and Soundararajan noticed that $b_1$ satisfies a multiplicative property:
\begin{equation}
b_1(m_1n_1,\dots,m_rn_r)=b_1(m_1,\dots,m_r)b_1(n_1,\dots,n_r),
\end{equation}
if $(\prod_{i=1}^rm_i,\prod_{i=1}^rn_i)=1$; and that for a prime $p$ we have $b_1(p^{a_1},\dots,p^{a_r})=0$ if $a_1+\cdots + a_r$ is odd.
We prove the following lemma
\begin{lem} Let $k$ be a positive integer, $2\leq y\leq z$ be real numbers and $\{\alpha(p)\}_{p \text{ prime }}$ be a sequence of real numbers. Then
$$ \sum_{y\leq p_1,\dots,p_{2k}\leq z}\alpha(p_1)\cdots \alpha(p_{2k})b_1(p_1,\dots, p_{2k})\leq 2^{2k}\frac{(2k)!}{k!}\left(\sum_{y\leq p\leq z}\alpha(p)^2\right)^k.
$$
\end{lem}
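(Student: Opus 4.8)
The plan is to factor the multi-argument coefficient $b_1(p_1,\dots,p_{2k})$ into a product over the \emph{distinct} primes occurring in the tuple, using the multiplicativity relation (6.5). Given a tuple $(p_1,\dots,p_{2k})$ of primes from $[y,z]$, let $q_1<\cdots<q_j$ be the distinct values occurring, with $q_l$ appearing $a_l$ times, so $a_1+\cdots+a_j=2k$. Decomposing each $p_i$ as a product of its ``$q_l$-parts'' (the $q_l$-part being $q_l$ when $p_i=q_l$ and $1$ otherwise), the products of the $q_l$-parts over $i$ are $q_1^{a_1},\dots,q_j^{a_j}$, which are pairwise coprime; hence iterating (6.5), together with the symmetry of $b_1$ and the relation $x(1)=1$ (so that a $1$ among the arguments may be deleted, $b_1(1,n_1,\dots,n_r)=b_1(n_1,\dots,n_r)$), one gets
$$
b_1(p_1,\dots,p_{2k})=\prod_{l=1}^{j}b_1\bigl(\underbrace{q_l,\dots,q_l}_{a_l\ \text{copies}}\bigr).
$$

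Next I use the two properties of these coefficients recalled above: $b_1(q_l,\dots,q_l)$ with $a_l$ copies vanishes when $a_l$ is odd, and in general $b_1(q_l,\dots,q_l)\le d(q_l)^{a_l}=2^{a_l}$. Thus only tuples in which every prime appears an even number of times contribute, and for such a tuple $b_1(p_1,\dots,p_{2k})\le\prod_l 2^{a_l}=2^{2k}$. Since $b_1\ge 0$ and $\alpha(p_1)\cdots\alpha(p_{2k})=\prod_l\alpha(q_l)^{a_l}\ge0$ when all $a_l$ are even, the left-hand side of the lemma is at most $2^{2k}$ times the sum of $\alpha(p_1)\cdots\alpha(p_{2k})$ over tuples with all multiplicities even. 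Writing such a tuple's multiplicity pattern as $a_l=2m_l$ with $m_l\ge1$ and $\sum_l m_l=k$, and counting the orderings of a fixed multiset by the multinomial coefficient $(2k)!/\bigl((2m_1)!\cdots(2m_j)!\bigr)$, I arrive at
$$
\sum_{y\le p_1,\dots,p_{2k}\le z}\alpha(p_1)\cdots\alpha(p_{2k})\,b_1(p_1,\dots,p_{2k})\le 2^{2k}\sum_{\substack{q_1<\cdots<q_j\\ m_1+\cdots+m_j=k,\ m_l\ge1}}\frac{(2k)!}{(2m_1)!\cdots(2m_j)!}\prod_{l=1}^{j}\alpha(q_l)^{2m_l}.
$$

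To finish, I expand the target quantity by the multinomial theorem: $\bigl(\sum_{y\le p\le z}\alpha(p)^2\bigr)^k=\sum\frac{k!}{m_1!\cdots m_j!}\prod_l\alpha(q_l)^{2m_l}$, the sum running over the same data $q_1<\cdots<q_j$, $m_l\ge1$, $\sum_l m_l=k$. Since $(2m)!\ge m!$ for every $m\ge0$, one has $\frac{(2k)!}{(2m_1)!\cdots(2m_j)!}\le\frac{(2k)!}{k!}\cdot\frac{k!}{m_1!\cdots m_j!}$ for each admissible choice of the $m_l$, so comparing the two displays term by term yields the stated bound $2^{2k}\frac{(2k)!}{k!}\bigl(\sum_{y\le p\le z}\alpha(p)^2\bigr)^k$. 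I do not anticipate any real difficulty here; the step that needs the most care is the factorization of $b_1$ over distinct primes in the first paragraph (correctly iterating (6.5) and using $x(1)=1$ and symmetry), after which the argument is elementary bookkeeping with multinomial coefficients.
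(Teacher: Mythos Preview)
Your proof is correct and follows essentially the same approach as the paper: both factor $b_1(p_1,\dots,p_{2k})$ over the distinct primes via (6.5), use that the contribution from a prime appearing to an odd power vanishes, bound each surviving $b_1$ by $2^{2k}$, and then compare the multinomial coefficient $\binom{2k}{2m_1,\dots,2m_j}$ with $\frac{(2k)!}{k!}\binom{k}{m_1,\dots,m_j}$ using $(2m)!\ge m!$. The paper packages the same argument by introducing $B_{2k}(n):=\sum_{p_1\cdots p_{2k}=n}b_1(p_1,\dots,p_{2k})$ and bounding it, but the substance is identical.
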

\begin{proof}
Let us define
$$B_{2k}(n)=\sum_{\substack{p_1,\dots,p_{2k}\\ p_1\cdots p_{2k}=n}}b_1(p_1,\dots,p_{2k}),$$
First one can see that $B_{2k}(n)=0$ unless $\Omega(n)= 2k$ (where $\Omega(n)$ is the total number of prime factors of $n$ counted with multiplicities) and $n$ is a perfect square. This follows from the multiplicative property (6.5) along with the fact that $b_1(p,\dots,p,1,\dots,1)=0$ if the number of occurrence of the prime $p$ is odd. Let $n$ be a square with $\Omega(n)=2k$. Write $n=q_1^{2a_1}\cdots q_j^{2a_j}$ where $q_1<q_2<\dots<q_j$ are distinct primes,  and the $a_i$ are positive integers with $a_1+\cdots +a_j=k$.
Using that $b_1(p_1,\dots,p_{2k})\leq d(p_1)\cdots d(p_{2k})=2^{2k}$ we obtain that
$$ B_{2k}(n)\leq 2^{2k} \sum_{\substack{p_1,\dots,p_{2k}\\ p_1\cdots p_{2k}=n}} 1 = 2^{2k}\binom{2k}{2a_1,...,2a_j}\leq 2^{2k}\frac{(2k)!}{k!}\binom{k}{a_1,...,a_j}.
$$
Thus we get that
\begin{equation*}
\begin{aligned}
&\sum_{y\leq p_1,\dots,p_{2k}\leq z}\alpha(p_1)\cdots \alpha(p_{2k})b_1(p_1,\dots, p_{2k})\\
&= \sum_{y\leq q_1<\dots <q_{j}\leq z}\sum_{\substack{a_1,\dots,a_j\geq 1\\
a_1+\dots +a_j=k}}\alpha(q_1)^{2a_1}\cdots \alpha(q_j)^{2a_j}B_{2k}(q_1^{2a_1}\cdots q_j^{2a_j})\\
&\leq 2^{2k}\frac{(2k)!}{k!} \sum_{y\leq q_1<\dots <q_{j}\leq z}\sum_{\substack{a_1,\dots,a_j\geq 1\\
a_1+\dots +a_j=k}}\alpha(q_1)^{2a_1}\cdots \alpha(q_j)^{2a_j}\binom{k}{a_1,...,a_j}\\
&= 2^{2k}\frac{(2k)!}{k!}\left(\sum_{y\leq p\leq z}\alpha(p)^2\right)^k.
\end{aligned}
\end{equation*}
\end{proof}
Combining this result with the Petersson trace formula (6.3) we deduce the following lemma
\begin{lem}
Let $\sigma>1/2$. Let $q$ be a large prime and $1\ll y\leq z$ be real numbers. Then for all positive integers $k$ such that $1\leq k\leq \log q/(2\log z)$ we have
\begin{equation}
\frac{1}{|S_2^p(q)|_h}\sumh_{f\in S_2^p(q)}\left(\sum_{y\leq p\leq z}\frac{\lambda_f(p)}{p^{\sigma}}\right)^{2k}\ll 2^{2k}\frac{(2k)!}{k!}\left(\sum_{y\leq p\leq z}\frac{1}{p^{2\sigma}}\right)^k +q^{-1/2}.
\end{equation}
\end{lem}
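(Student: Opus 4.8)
The plan is to expand the $2k$-th power, collapse the average over $f$ by means of the Hecke relations and the Petersson trace formula (6.3), and then split off the diagonal term — to which the preceding lemma applies verbatim — from an off-diagonal remainder that is negligible because of the size restriction $k\leq \log q/(2\log z)$. First I would write
$$\sumh_{f\in S_2^p(q)}\left(\sum_{y\leq p\leq z}\frac{\lambda_f(p)}{p^{\sigma}}\right)^{2k}=\sum_{y\leq p_1,\dots,p_{2k}\leq z}\frac{1}{(p_1\cdots p_{2k})^{\sigma}}\sumh_{f\in S_2^p(q)}\lambda_f(p_1)\cdots\lambda_f(p_{2k}),$$
and use the ring $\mathcal{H}$ of Rudnick and Soundararajan: expanding $x(p_1)\cdots x(p_{2k})=\sum_{m\mid p_1\cdots p_{2k}}b_m(p_1,\dots,p_{2k})x(m)$ and applying the Hecke relations (6.4) gives $\lambda_f(p_1)\cdots\lambda_f(p_{2k})=\sum_{m\mid p_1\cdots p_{2k}}b_m(p_1,\dots,p_{2k})\lambda_f(m)$. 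Averaging over $f$ with (6.3), and noting $qm\leq q^{2}$ so $\log(qm)\ll\log q$, yields
$$\frac{1}{|S_2^p(q)|_h}\sumh_{f\in S_2^p(q)}\lambda_f(p_1)\cdots\lambda_f(p_{2k})=b_1(p_1,\dots,p_{2k})+O\!\left(\frac{\log q}{q^{3/2}}\sum_{m\mid p_1\cdots p_{2k}}b_m(p_1,\dots,p_{2k})\,m^{1/2}\right),$$
where non-negativity of the $b_m$ is used to absorb everything into the error term.

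For the main term, summing $b_1(p_1,\dots,p_{2k})(p_1\cdots p_{2k})^{-\sigma}$ over $y\leq p_i\leq z$ is exactly the quantity estimated in the preceding lemma with $\alpha(p)=p^{-\sigma}$ for $y\leq p\leq z$ and $\alpha(p)=0$ otherwise, which produces precisely $2^{2k}\frac{(2k)!}{k!}\big(\sum_{y\leq p\leq z}p^{-2\sigma}\big)^{k}$. For the off-diagonal error I would use three crude bounds: every $m\mid p_1\cdots p_{2k}$ satisfies $m\leq z^{2k}$, hence $m^{1/2}\leq z^{k}\leq q^{1/2}$ by the hypothesis $k\leq\log q/(2\log z)$; the inner sum is controlled by $\sum_{m\mid p_1\cdots p_{2k}}b_m(p_1,\dots,p_{2k})\leq 2^{4k}$, since each $b_m\leq d(p_1)\cdots d(p_{2k})=2^{2k}$ and $p_1\cdots p_{2k}$ has at most $2^{2k}$ divisors; and $\sum_{y\leq p_i\leq z}(p_1\cdots p_{2k})^{-\sigma}\leq\big(\sum_{p\leq z}p^{-\sigma}\big)^{2k}\ll z^{2k(1-\sigma)}$. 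Multiplying, the off-diagonal contribution is
$$\ll\frac{\log q}{q^{3/2}}\cdot q^{1/2}\cdot 2^{4k}\cdot z^{2k(1-\sigma)}\ll(\log q)\,q^{-1}\,(z^{2k})^{1-\sigma}\,2^{4k}\ll(\log q)\,q^{-\sigma+O(1/\log z)}\ll q^{-1/2},$$
using once more $z^{2k}\leq q$, together with $2^{4k}\leq q^{O(1/\log z)}$, the bound $\sigma>1/2$, and $z$ large (which is part of the hypothesis $1\ll y\leq z$).

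The one genuinely delicate point is this off-diagonal estimate: one has to notice that the $m^{1/2}$ loss in the Petersson error is exactly compensated by the constraint $z^{2k}\leq q$, and that the combinatorial factors $b_m$, summed over all divisors of $p_1\cdots p_{2k}$, cost only $O(16^{k})=q^{o(1)}$ in the admissible range — after which the surplus saving $q^{-3/2}\cdot q^{1/2}\cdot q^{1-\sigma}=q^{-\sigma}$ with $\sigma>1/2$ closes the bound. Everything else — the multinomial expansion, the passage through $\mathcal{H}$ via (6.4), the application of (6.3), and the appeal to the preceding lemma for the diagonal — is routine.
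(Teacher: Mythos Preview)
Your proof is correct and follows essentially the same route as the paper: expand, pass through the Hecke ring $\mathcal{H}$, apply the Petersson formula (6.3), invoke Lemma 6.4 for the diagonal $m=1$ term, and kill the off-diagonal with $m^{1/2}\leq z^k\leq q^{1/2}$ together with $\sum_{p\leq z}p^{-\sigma}\ll z^{1-\sigma}$. The only cosmetic difference is in the bookkeeping of the combinatorial factor: you bound $\sum_{m\mid p_1\cdots p_{2k}} b_m\leq 2^{4k}$ by multiplying $b_m\leq 2^{2k}$ with the divisor count $\leq 2^{2k}$, whereas the paper simply writes the off-diagonal as $\ll (\log q)q^{-1}\bigl(4\sum_{y\leq p\leq z}p^{-\sigma}\bigr)^{2k}$ and absorbs the $4^{2k}$ into the final $q^{-1/2}$; both arguments rely on $4^{2k}=q^{O(1/\log z)}$ being negligible once $z$ is large.
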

\begin{proof} Expanding the LHS of (6.6) we obtain
$$\sum_{y\leq p_1,\dots, p_{2k}\leq z}\frac{1}{(p_1\cdots p_{2k})^{\sigma}}\sum_{m|(p_1\cdots
p_{2k})}b_m(p_1,\dots,p_{2k})\frac{1}{|S_2^p(q)|_h}\sumh_{f\in S_2^p(q)}\lambda_f(m).$$
Using the Petersson trace formula (6.3) along with the facts that $b_m(p_1,\dots, p_{2k})\leq 2^{2k}$ and $p_1\cdots p_{2k}\leq q$, we deduce that the contribution of the terms $m\neq 1$ is
$$ \ll \frac{\log q}{q}\left(4\sum_{y\leq p\leq z}\frac{1}{p^{\sigma}}\right)^{2k}\ll \frac{z^{2k(1-\sigma)}\log q}{q}\ll q^{-1/2}, $$
which follows from our hypothesis on $k$. On the other hand using Lemma 6.4 we deduce that the contribution of the term $m=1$ is
$$ \ll \sum_{y\leq p_1,\dots, p_{2k}\leq z}\frac{b_1(p_1,\dots,p_{2k})}{(p_1\cdots p_{2k})^{\sigma}}\ll 2^{2k}\frac{(2k)!}{k!}\left(\sum_{y\leq p\leq z}\frac{1}{p^{2\sigma}}\right)^k,$$
completing the proof.
\end{proof}

\begin{proof}[Proof of Theorem 1.7]

We follow exactly the proof of Theorem 4.5: first, we replace Proposition 4.3 with Proposition 6.1, then we estimate the moments of $L(\sigma,\X_3;y)$ using Proposition 3.2. Further, we replace Lemma 4.4 with Lemma 6.5 and Lemma 2.3 with Lemma 2.4. The only ingredient which remains is zeros-density estimates. This has been achieved by Kowalski and Michel in [13]. Let $N(f,\alpha,T)$ denote the number  of zeros of $L(s,f)$ such that Re$(s)\geq \alpha$ and $|\text{Im}(s)|\leq T$. Then Theorem 4 of [13] states that for a large prime $q$ and $T>1$, there exists an absolute constant $A>0$ such that for any $\alpha\geq 1/2+(\log q)^{-1}$, and for any $c$, $0<c<1/4$, one has
$$ \sum_{f\in S_2^p(q)}N(f,\alpha,T)\ll  T^{A}q^{1-c(\alpha-1/2)}\log q.$$
\end{proof}

\section{Large moments of the Riemann zeta function on Re$(s)=\sigma$}

\subsection{Estimating complex moments of $\zeta(\sigma+it)$ under RH: Proof of Theorem 1.2}

The key ingredient in the proof of Theorem 1.2 is to show that, under RH, one can approximate complex powers of $\zeta(\sigma+it)$ by very short Dirichlet polynomials. Specifically we prove
\begin{pro}
Assume the RH. Let $t$ be a real number with $|t|$ large, and let $|t|^{1/8}\leq X\leq |t|$. Let $0<\epsilon<1/2$. Then there exists $A>0$ such that uniformly for  $1/2+A/\log_2 |t|\leq \sigma\leq 1-\epsilon$ we have
$$ \zeta(\sigma+it)^z=\sum_{n=1}^{\infty}\frac{d_z(n)}{n^{\sigma+it}}e^{-n/X}+O\left(\exp\left(-\frac{\log |t|}{20\log_2 |t|}\right)\right),$$
for all complex numbers $z$ in the region $|z|\leq b(A)(\log |t|)^{2\sigma-1}$, where $b(A)$ is some positive constant.
\end{pro}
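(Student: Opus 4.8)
The plan is to realise the smoothed Dirichlet series as a Mellin (Gamma-kernel) integral of $\zeta(\sigma+it+s)^z$ and then shift the contour a tiny amount to the left of $s=0$, recovering $\zeta(\sigma+it)^z$ as a residue and controlling the remaining integral by the conditional bound (1.1). Under RH, $\zeta$ has no zeros in $\mathrm{Re}(w)>1/2$, so on any simply connected subregion of $\{\mathrm{Re}(w)>1/2\}\setminus\{1\}$ there is a single-valued analytic branch of $\log\zeta(w)$ — the one agreeing with $\sum_n\Lambda(n)n^{-w}/\log n$ on $\mathrm{Re}(w)>1$ — and we set $\zeta(w)^z:=\exp(z\log\zeta(w))$; every contour below will lie in such a region and will avoid $w=1$. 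We shall also use the uniform form of (1.1): under RH, $|\log\zeta(\beta+iv)|\ll(\log(2+|v|))^{2-2\beta}/\log\log(3+|v|)$ uniformly for $1/2+1/\log_2|v|\le\beta\le1$, with an \emph{absolute} implied constant (see [27]).

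First I would apply $e^{-u}=\frac{1}{2\pi i}\int_{(1)}\Gamma(s)u^{-s}\,ds$ with $u=n/X$, sum over $n$, and interchange the sum and the integral; this is legitimate because $|d_z(n)|\le d_k(n)$ for an integer $k\ge|z|$ and $\mathrm{Re}(\sigma+it+s)=\sigma+1>1$, so that $\sum_n d_z(n)n^{-(\sigma+it+s)}=\zeta(\sigma+it+s)^z$ converges absolutely. This yields
\[
\sum_{n=1}^{\infty}\frac{d_z(n)}{n^{\sigma+it}}\,e^{-n/X}=\frac{1}{2\pi i}\int_{(1)}\zeta(\sigma+it+s)^z\,\Gamma(s)\,X^s\,ds .
\]
Next I would move the contour to $\mathrm{Re}(s)=-\alpha$ with $\alpha:=1/\log_2|t|$; choosing $A\ge2$ in the statement guarantees $\sigma-\alpha\ge1/2+1/\log_2|t|$, so the shifted line lies where $\zeta(\sigma+it+s)^z$ is analytic and where the RH bound is available. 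The only singularity crossed is the simple pole of $\Gamma$ at $s=0$, with residue exactly $\zeta(\sigma+it)^z$. Truncating the rectangle at $|\mathrm{Im}(s)|$ equal to a sufficiently large fixed power of $\log|t|$, the horizontal sides and the two vertical tails are negligible (by Stirling, $|\Gamma(\mathrm{Re}(s)+iv)|\ll e^{-\pi|v|/3}$, which overwhelms the at most $\exp((\log|t|)^{O(1)})$ growth of $|\zeta(\sigma+it+s)^z|\,|X^s|$ there).

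It then remains to bound the integral over $\mathrm{Re}(s)=-\alpha$, $|\mathrm{Im}(s)|\le(\log|t|)^{O(1)}$. On that piece $|X^s|=X^{-\alpha}\le|t|^{-\alpha/8}=\exp\!\big(-\tfrac18\log|t|/\log_2|t|\big)$, using $X\ge|t|^{1/8}$; meanwhile $|\zeta(\sigma+it+s)^z|\le\exp(|z|\,|\log\zeta(\sigma-\alpha+i(t+v))|)$, and since here $|t+v|\asymp|t|$, the RH bound combined with $|z|\le b(A)(\log|t|)^{2\sigma-1}$ and $(\log|t|)^{2\alpha}=O(1)$ gives $|z|\,|\log\zeta|\ll b(A)\,\log|t|/\log_2|t|$ \emph{uniformly in $\sigma$}. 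Thus the integrand is $\ll|\Gamma(-\alpha+iv)|\exp\!\big((C\,b(A)-\tfrac18)\log|t|/\log_2|t|\big)$, and $\int|\Gamma(-\alpha+iv)|\,dv\ll1/\alpha\ll\log_2|t|$, so taking $b(A)$ small enough that $C\,b(A)<\tfrac18-\tfrac1{20}$ makes the whole shifted integral $\ll\exp(-\log|t|/(20\log_2|t|))$ for $|t|$ large. Adding the residue, the negligible pieces, and this bound gives $\sum_n d_z(n)n^{-\sigma-it}e^{-n/X}=\zeta(\sigma+it)^z+O(\exp(-\log|t|/(20\log_2|t|)))$, which is the claim. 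The one genuinely delicate point is the balance of constants in this last step: it is essential that (1.1) holds with an \emph{absolute} constant uniformly down to $\mathrm{Re}(w)\approx1/2+1/\log_2|t|$, so that the saving $X^{-\alpha}$ coming from $X\ge|t|^{1/8}$ really does dominate the growth of $|z|\,|\log\zeta|$ on the shifted line; with $b(A)$ chosen small this is exactly what occurs, and the rest is routine contour estimation.
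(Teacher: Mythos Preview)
Your proof is correct and is essentially the same as the paper's: both use the Mellin--Gamma representation $\sum_n d_z(n)n^{-\sigma-it}e^{-n/X}=\frac{1}{2\pi i}\int \zeta(s+\sigma+it)^z\Gamma(s)X^s\,ds$, truncate the vertical line at a power of $\log|t|$, shift to $\mathrm{Re}(s)=-1/\log_2|t|$, collect the residue $\zeta(\sigma+it)^z$, and bound the shifted integral using the RH estimate $|\log\zeta|\ll(\log|t|)^{2-2\sigma}/\log_2|t|$ together with $X^{-1/\log_2|t|}\le\exp(-\tfrac18\log|t|/\log_2|t|)$. The only cosmetic difference is that the paper starts the contour at $\mathrm{Re}(s)=1-\sigma+1/\log X$ (just past the line of absolute convergence) rather than at $\mathrm{Re}(s)=1$; your choice makes the initial tail bound marginally cleaner but is otherwise equivalent.
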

\begin{proof}
Without loss of generality assume that $t>0$ is large. Let $c=1-\sigma+1/\log X$. Then
$$ \frac{1}{2\pi i} \int_{c-i\infty}^{c+i\infty}\zeta(s+\sigma+it)^z\Gamma(s)X^sds= \sum_{n=1}^{\infty}\frac{d_z(n)}{n^{\sigma+it}}e^{-n/X}.$$
Before moving the contour to the left we will bound the contribution of the parts from $c+i\log^3t$ to $c+i\infty$ and from $c-i\infty$ to $c-i\log^3t$ using Stirling's formula. Since
$$ |\log \zeta(1+1/(\log X)+it)|\leq \log\zeta(1+1/(\log X))\ll \log_2 X,$$
 then we get that
\begin{equation*}
\begin{aligned}
&\frac{1}{2\pi i}\left(\int_{c-i\infty}^{c-i\log^3t}+\int_{c+i\log^3t}^{c+i\infty}\right)\zeta(s+\sigma+it)^z\Gamma(s)X^s ds\\
&\ll X^{1-\sigma}e^{O(|z|\log_2 X)}\int_{\log^3t}^{\infty}e^{-\frac{\pi}{3}u}du\ll \frac{1}{t}.\\
\end{aligned}
\end{equation*}
Now we shift the line of integration to the path $\mathcal{C}$ joining $c-i\log^3t$, $-\eta-i\log^3t$, $-\eta+i\log^3t$ and $c+i\log^3t$, where $\eta=1/\log_2 t$. Since we are assuming the RH we only encounter a simple pole at $s=0$ which leaves the residue $\zeta(\sigma+it)^z$. Moreover, if $A$ is large enough, the RH implies that uniformly for $1/2+(A-1)/\log_2 t\leq \alpha<1-\epsilon$, we have that  (see equation 14.14.5 of [27])
\begin{equation}
|\log\zeta(\alpha+it)|\leq c(A)\frac{(\log t)^{2-2\alpha}}{\log_2 t},
\end{equation}
for some $c(A)>0$. Then using this bound along with Stirling's formula, we deduce that uniformly for $1/2+A/\log_2 t\leq \sigma<1-\epsilon$ we have
\begin{equation*}
\begin{aligned}
&\frac{1}{2\pi i}\left(\int_{c-i\log^3t}^{-\eta-i\log^3t}+\int_{-\eta-i\log^3t}^{-\eta+i\log^3t}+\int_{-\eta+i\log^3t}^{c+i\log^3t}\right)\zeta(s+\sigma+it)^z\Gamma(s)X^s ds\\
&\ll \exp\left(-\frac{\pi}{3}\log^3t +O(|z|\log t)\right)X^{1-\sigma}+\frac{1}{\eta}(\log t)^3 X^{-\eta}\exp\left(c(A)|z|\frac{(\log t)^{2-2\sigma+2\eta}}{\log_2 t}\right)\\
&\ll \exp\left(-\frac{\log t}{20\log_2 t}\right),
\end{aligned}
\end{equation*}
if $|z|\leq b(A)(\log |t|)^{2\sigma-1}$, and $b(A)=1/(100c(A))$.
\end{proof}

\begin{proof}[Proof of Theorem 1.2] Let $X=2T^{1/8}$, and denote by $k$ the smallest integer with $k\geq \max(|z_1|,|z_2|)$. Then Proposition 7.1 implies that for all real numbers $t$ with $|t|\in [T,2T]$ we have
$$ \zeta(\sigma+it)^{z}=\sum_{n=1}^{\infty}\frac{d_z(n)}{n^{\sigma+it}}e^{-n/X}+O\left(\exp\left(-\frac{\log T}{20\log_2 T}\right)\right),$$
uniformly for $1/2+A/\log_2 T\leq \sigma\leq 1-\epsilon$, and all complex numbers $z$ in the region $|z|\leq b(A)(\log T)^{2\sigma-1}$. Hence we obtain that
$$
\frac{1}{T}\int_T^{2T}\zeta(\sigma+it)^{z_1}\overline{\zeta(\sigma+it)}^{z_2}dt
=\sum_{m,n\geq 1}\frac{d_{z_1}(n)d_{z_2}(m)}{(mn)^{\sigma}}e^{-(m+n)/X}\frac{1}{T}\int_T^{2T}\left(\frac{m}{n}\right)^{it}dt+E_7,
$$
where $$E_7\ll \exp\left(-\frac{\log T}{20\log_2 T}\right)\max_{t\in [T,2T]}|\zeta(\sigma+it)^{z_1}\overline{\zeta(\sigma+it)}^{z_2}|\ll \exp\left(-\frac{\log T}{50\log_2 T}\right),
$$
which follows from (7.1) (recall that we chose $b(A)=1/(100c(A))$ in the proof of Proposition 7.1). First we estimate the contribution of the diagonal terms $m=n$. Let $0<\alpha\leq 1$ be a real number to be chosen later. Then using that $1-e^{-t}\leq 2t^{\alpha}$ for all $t>0$, we deduce that the contribution of these terms equals
$$ \sum_{n=1}^{\infty}\frac{d_{z_1}(n)d_{z_2}(n)}{n^{2\sigma}}e^{-2n/X}=\sum_{n=1}^{\infty}\frac{d_{z_1}(n)d_{z_2}(n)}{n^{2\sigma}}+ O\left(X^{-\alpha}\sum_{n=1}^{\infty}\frac{d_k(n)^2}{n^{2\sigma-\alpha}}\right).$$
Let $\alpha = \min(\sigma-1/2, 1-\sigma)$. Then using Lemma 2.1 we deduce that the error term above is
$$ \ll\exp\left(-\frac{\alpha}{8}\log T+ O\left(\log_2 T\frac{k^{1/(\sigma-\alpha/2)}}{\log k}\right)\right)\ll \exp\left(-\frac{\alpha}{10}\log T\right),$$
using our hypothesis on $k$ since the maximum of $(2\sigma-1)/(\sigma-\alpha/2)$ over the interval $[1/2+A/\log_2 T, 1-\epsilon]$ is $<1$.

Next we bound the contribution of the off-diagonal terms. First if $m\neq n$ and $\max(m,n)<\sqrt{T}$ then $\int_{T}^{2T}(m/n)^{it}dt\ll \sqrt{T}$, which implies that the contribution of these terms is
$$ \ll \frac{1}{\sqrt{T}}\left(\sum_{n\leq \sqrt{T}}\frac{d_k(n)}{n^{\sigma}}e^{-n/X}\right)^2\ll \frac{1}{\sqrt{T}}
X^{2-2\sigma}(\log 3 X)^{2k+2}\ll T^{-1/4},$$
using (2.3) and our hypothesis on $k$ and $X$.
Now we bound the contribution of the remaining terms $m\neq n$ such that $\max(m,n)>\sqrt{T}$. Let $\beta=1-\sigma$, then by (2.3) these terms contribute
\begin{equation*}
\begin{aligned} \ll \sum_{n>\sqrt{T}}\sum_{m\geq 1}\frac{d_k(n)d_k(m)}{(mn)^{\sigma}}e^{-(m+n)/X}&\ll T^{-\frac{\beta}{2}}X^{1-\sigma}(\log 3 X)^{k+1}\left(\sum_{n\geq 1}\frac{d_k(n)}{n^{\sigma-\beta}}e^{-n/X}\right)\\
&\ll T^{-\frac{(1-\sigma)}{2}}X^{3(1-\sigma)}(\log 3 X)^{2k+2}\ll T^{-(1-\sigma)/10},\\
\end{aligned}
\end{equation*}
proving the Theorem.
\end{proof}

\subsection{7.2 Exploring the range of validity for the asymptotic formula (1.7): proof of Theorems 3a and 3b}

In order to prove Theorems 3a and 3b, the first step consists in controlling the size of the derivative of $\zeta(s)$ on the line Re$(s)=\sigma$. For $\sigma=1/2$, Farmer, Gonek and Hughes [5] achieved this using the symmetry of the functional equation of $\zeta(s)$ about the line Re$(s)=1/2$. However we can not use such a tool for $\sigma>1/2$ since there is no symmetry in this case. Instead we use a Phragmen-Lindel\"of type argument that gives us a weaker bound, but will still  be sufficient for our purposes.
\begin{lem}
Let $1/2<\sigma<1$ and suppose that the asymptotic relation (1.7) holds for all integers $k\leq (\log T)^{\delta}$ for some $0<\delta\leq\sigma$. Then
$$m_T:=\max_{t\in [T,2T]}|\zeta(\sigma+it)|\leq \exp\left(O\left((\log T)^{1-\delta}\right)\right) \text{ and } |\zeta'(\sigma+it)|\ll_{\epsilon}t^{\epsilon},$$
for any $\epsilon>0$.
\end{lem}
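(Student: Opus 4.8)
The plan is to prove the two assertions separately, both starting from the fact that the hypothesis controls the high moments of $\zeta$ on $[T,2T]$. First I would record that, by the asymptotic relation (1.7) together with the estimate for $\sum_n d_k(n)^2/n^{2\sigma}$ in Lemma 2.1, for $T$ large and every integer $1\le k\le(\log T)^\delta$ one has $\frac1T\int_T^{2T}|\zeta(\sigma+it)|^{2k}\,dt\ll\exp\bigl(C(\sigma)k^{1/\sigma}/\log k\bigr)$ for a suitable constant $C(\sigma)$.

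For the bound on $m_T$, I would run the standard ``large mean value forces small maximum'' argument. Assuming $m_T\ge 2$ (otherwise there is nothing to do) and letting $t_0\in[T,2T]$ be a point where $|\zeta(\sigma+it)|=m_T$, a crude bound (unconditionally, e.g.\ by Cauchy's estimate from $|\zeta(s)|\ll|t|$ near the line $\mathrm{Re}(s)=\sigma$) gives $|\zeta'(\sigma+it)|\ll T$ on $[T,2T]$, so $|\zeta(\sigma+it)|\ge m_T/2$ on a subinterval of $[T,2T]$ around $t_0$ of length $\gg m_T/T\gg 1/T$. Comparing with the moment bound yields $(m_T/2)^{2k}\ll T^2\exp\bigl(C(\sigma)k^{1/\sigma}/\log k\bigr)$, hence $2k\log m_T\ll\log T+k^{1/\sigma}/\log k$. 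Choosing $k=\lfloor(\log T)^\delta\rfloor$, the first term on the right contributes $(\log T)^{1-\delta}$ and the second contributes $(\log T)^{\delta/\sigma-\delta}/\log_2 T$; here the hypothesis $\delta\le\sigma$ is exactly what guarantees $\delta/\sigma-\delta\le 1-\delta$, so the second term is negligible and $\log m_T\ll(\log T)^{1-\delta}$, as required.

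For the bound on $\zeta'(\sigma+it)$, I would first note that the estimate just obtained means $|\zeta(\sigma+it)|\le\exp(O((\log|t|)^{1-\delta}))\ll_\epsilon|t|^\epsilon$ for every $\epsilon>0$, i.e.\ Lindel\"of's function satisfies $\mu(\sigma)=0$. Since $\mu$ is convex, non-increasing and finite on $[1/2,\infty)$ with $\mu(\sigma)=0$, we get $\mu(\sigma-\eta)\le\eta\,\mu(1/2)/(\sigma-\tfrac12)\to 0$ as $\eta\to0^+$; choosing $\eta=\eta(\epsilon)>0$ small (and $<2(\sigma-\tfrac12)$, using $\sigma>1/2$) so that $\mu(\sigma-\eta)<\epsilon$, the Phragm\'en--Lindel\"of principle gives $|\zeta(w)|\ll_\epsilon|t|^\epsilon$ uniformly for $\sigma-\eta\le\mathrm{Re}(w)\le\sigma$ and $|\mathrm{Im}(w)|\asymp|t|$, $|t|$ large (the range $\mathrm{Re}(w)\ge\sigma$ being even easier, as $\mu$ is non-increasing). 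Then Cauchy's integral formula on the circle $|w-(\sigma+it)|=\eta/2$ gives $|\zeta'(\sigma+it)|\ll\eta^{-1}\max_{|w-(\sigma+it)|=\eta/2}|\zeta(w)|\ll_\epsilon|t|^\epsilon$, and since $\epsilon$ was arbitrary this is the claimed bound $|\zeta'(\sigma+it)|\ll_\epsilon t^\epsilon$.

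I expect the delicate point to be the second part: transferring the good pointwise bound from the line $\mathrm{Re}(s)=\sigma$ a little to its left, so that Cauchy's formula becomes applicable. This is precisely where the Phragm\'en--Lindel\"of convexity principle is needed, and where the assumption $\sigma>1/2$ is essential (there must be room between $\sigma$ and the critical line, since we have no usable bound on or below it). The first part is routine once the arithmetic input $\sum_n d_k(n)^2/n^{2\sigma}\le\exp(O_\sigma(k^{1/\sigma}/\log k))$ from Lemma 2.1 is in hand.
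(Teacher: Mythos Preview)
Your proof is correct and follows essentially the same route as the paper. The only cosmetic difference is the initial crude derivative bound: the paper uses the convexity estimate $|\zeta(z)|\ll t^{1/6}$ for $\text{Re}(z)\ge 1/2$ together with Cauchy's formula on a circle of radius $\sigma/2-1/4$ to obtain $|\zeta'(\sigma+it)|\ll t^{1/6}$, which yields a plateau of length $T^{-1/6}$ around the maximum, whereas you use the cruder $|\zeta'|\ll T$ and get a plateau of length $\gg 1/T$; both lead to $(m_T/2)^{2k}\ll T^2\exp(C k^{1/\sigma}/\log k)$ and hence to the same conclusion after choosing $k=\lfloor(\log T)^\delta\rfloor$. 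For the second assertion, your convexity argument for $\mu$ and the subsequent Cauchy step are exactly what the paper does (stated there as ``the Phragm\'en--Lindel\"of principle implies that $|\zeta(\alpha+it)|\ll_\epsilon t^\epsilon$ for $\sigma-c(\epsilon)<\alpha$'').
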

\begin{proof}
Let $s=\sigma+it$. By Cauchy's theorem we have that
\begin{equation}
\zeta'(s)=\frac{1}{2\pi i}\oint_{|z-s|=r}\frac{\zeta(z)}{(z-s)^2}dz.
\end{equation}
Taking $r=\sigma/2-1/4>0$, and inserting the standard bound  $|\zeta(z)|\ll t^{1/6}$ for Re$(z)\geq 1/2$ into (7.2) we find that
$ |\zeta'(s)|\ll t^{1/6}.$ Let $t_0\in [T,2T]$ be such that $m_T=|\zeta(\sigma+it_0)|.$ Then for any $t\in [T,2T]$ with $|t-t_0|<T^{-1/6}$ we have that
$$ |\zeta(\sigma+it)-\zeta(\sigma+it_0)|\leq |t-t_0|\max_{x\in [T,2T]}|\zeta'(\sigma+ix)|\ll 1,$$
which in view of (1.2) implies that $|\zeta(\sigma+it)|\geq m_T/2$ for all $t$ with $|t-t_0|<T^{-1/6}$. Furthermore for all integers $k\leq (\log T)^{\delta}$ we have by (1.7) that
$$ \sum_{n=1}^{\infty}\frac{d_k^2(n)}{n^{2\sigma}}\gg \frac{1}{T}\int_{t_0-T^{-1/6}}^{t_0+T^{-1/6}}|\zeta(\sigma+it)|^{2k}dt\gg T^{-2}\left(\frac{m_T}{2}\right)^{2k}.$$
Now using Corollary 3.4, we find that
$$ m_T\ll \exp\left(\frac{\log T}{k}+O\left(\frac{k^{1/\sigma-1}}{\log k}\right)\right),$$
which gives the desired bound on $m_T$, upon taking $k=[\log^{\delta}T]$. In particular this shows that $|\zeta(\sigma+it)|\ll _{\epsilon} t^{\epsilon}$. Therefore the Phragmen-Lindel\"of principle implies that for any $\epsilon>0$ there exists a constant $c(\epsilon)>0$ such that one has
$$ |\zeta(\alpha+it)|\ll_{\epsilon}t^{\epsilon} \text{ for } \sigma-c(\epsilon)<\alpha.$$
Inserting this in (7.2) and taking $r=c(\epsilon)/2$ gives the desired bound on $|\zeta'(s)|$.
\end{proof}
\begin{proof}[Proof of Theorem 1.4]
The first implication follows from Lemma 7.2. Furthermore the proof of the second one follows exactly along the lines of the proof of Theorem 1.2, since we don't need the assumption of RH if $z=k\in \Bbb{N}$ in Proposition 7.1.

\end{proof}

\begin{proof}[Proof of Theorem 1.3]
 Let $\epsilon>0$ be a suitably small constant. Then following the proof of Lemma 7.2 and using the bound of $|\zeta'(s)|$ on Re$(s)=\sigma$ we can show that for any $t$ such that $|t-t_0|<T^{-\epsilon}$ we have that $|\zeta(\sigma+it)|\geq m_T/2$. Let $l$ be a large real number for which (1.7) holds. Then one has
$$ \sum_{n=1}^{\infty}\frac{d_l^2(n)}{n^{2\sigma}}\gg \frac{1}{T}\int_{t_0-T^{-\epsilon}}^{t_0+T^{-\epsilon}}|\zeta(\sigma+it)|^{2l}dt\gg T^{-(1+\epsilon)}\left(\frac{m_T}{2}\right)^{2l}.$$
Therefore Corollary 3.4 gives that
$$ m_T\ll_{\epsilon} \exp\left(\frac{(1+\epsilon)\log T}{2l}+G_1(\sigma)\frac{(2l)^{1/\sigma-1}}{\log l}(1+o(1))\right).$$
Setting $l= b(\log T\log_2 T)^{\sigma}$, we find that
\begin{equation}
 m_T\ll_{\epsilon} \exp\left(\left(\frac{1}{2b}+\frac{G_1(\sigma)(2b)^{\frac{1}{\sigma}-1}}
{\sigma}+O(\epsilon)\right)\frac{(\log T)^{1-\sigma}}{(\log_2 T)^{\sigma}}\right).
\end{equation}
Moreover a simple calculation shows that the function  $f(x)=(2x)^{-1}+G_1(\sigma)(2x)^{\frac{1}{\sigma}-1}/\sigma$ is minimized when $x_0=\frac12(\sigma^2/(G_1(\sigma)(1-\sigma)))^{\sigma}$, and its minimum equals $C(\sigma)=G_1(\sigma)^{\sigma}\sigma^{-2\sigma}(1-\sigma)^{\sigma-1}$. Furthermore, if $k$ is a large real number for which (1.7) holds then
$$\sum_{n=1}^{\infty}\frac{d_k^2(n)}{n^{2\sigma}}\ll \frac{1}{T}\int_{T}^{2T}|\zeta(\sigma+it)|^{2k}dt\ll m_T^{2k},$$
which in view of Corollary 3.4 gives that
\begin{equation}
m_T\gg \exp\left(G_1(\sigma)\frac{(2k)^{1/\sigma-1}}{\log k}(1+o(1))\right).
\end{equation}
Hence if (1.7) holds for $k=c(\log T\log_2 T)^{\sigma}$ then
\begin{equation}
 m_T\gg \exp\left(\left(\frac{G_1(\sigma)(2c)^{\frac{1}{\sigma}-1}}
{\sigma}+o(1)\right)\frac{(\log T)^{1-\sigma}}{(\log_2 T)^{\sigma}}\right),
\end{equation}
which gives a contradiction to (7.3) if $c>\frac12(B(\sigma))^{\sigma}$ and $\epsilon$ is sufficiently small.
\end{proof}

\subsection{7.3. Lower bounds for the moments: Proof of Theorem 1.5}

We follow the approach of Rudnick and Soundararajan [21]. For a real number $x$ and a positive integer $k$ we define $d_k(n;x)$ to be the number of ways of writing $n$ as $a_1\cdots a_k$ with $a_i$ being positive integers such that $a_i\leq x$. Note that $d_k(n;x)\leq d_k(n)$ with equality holding if $n\leq x$.  First we prove the following proposition

\begin{pro}
Let $1/2\leq \sigma \leq 1$. Let $T$ be large, and $x\geq (\log T)^2$ be a real number. Then for all positive integers $k$ with $x^k\leq T^{1/6}$, we have
$$\frac{1}{T}\int_{T}^{2T}|\zeta(\sigma+it)|^{2k}dt\geq \sum_{n\leq x^{k}}\frac{d_k(n;x)^2}{n^{2\sigma}}+O\left(T^{-1/6}\right).$$
\end{pro}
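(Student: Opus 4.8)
The plan is to follow the method of Rudnick and Soundararajan [21]: test the $2k$-th moment against the short Dirichlet polynomial
$$M(t):=\Big(\sum_{m\le x}\frac1{m^{\sigma+it}}\Big)^{k}=\sum_{n\le x^{k}}\frac{d_{k}(n;x)}{n^{\sigma+it}},$$
whose length $x^{k}\le T^{1/6}$ is the only place the hypothesis on $k$ enters. Integrating the pointwise inequality
$$|\zeta(\sigma+it)^{k}|^{2}\ \ge\ 2\,\text{Re}\big(\zeta(\sigma+it)^{k}\overline{M(t)}\big)-|M(t)|^{2}$$
over $[T,2T]$, the proposition reduces to the two estimates
$$\frac1T\int_{T}^{2T}|M(t)|^{2}dt=\sum_{n\le x^{k}}\frac{d_{k}(n;x)^{2}}{n^{2\sigma}}+O(T^{-1/2}),\qquad
\text{Re}\,\frac1T\int_{T}^{2T}\zeta(\sigma+it)^{k}\overline{M(t)}\,dt\ \ge\ \sum_{n\le x^{k}}\frac{d_{k}(n;x)^{2}}{n^{2\sigma}}+O(T^{-1/6}),$$
since together they give $\tfrac1T\int_{T}^{2T}|\zeta(\sigma+it)|^{2k}dt\ge 2\sum_{n\le x^k}\tfrac{d_k(n;x)^{2}}{n^{2\sigma}}-\sum_{n\le x^k}\tfrac{d_k(n;x)^{2}}{n^{2\sigma}}+O(T^{-1/6})$, which is the assertion.

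For the mean square of $M$ I would expand $|M(t)|^{2}=\sum_{m,n\le x^{k}}d_{k}(m;x)d_{k}(n;x)(mn)^{-\sigma}(m/n)^{it}$ and use $\tfrac1T\int_{T}^{2T}(m/n)^{it}dt=\delta_{m,n}+O\!\big(\min(1,(T|\log(m/n)|)^{-1})\big)$; since $|\log(m/n)|\gg\max(m,n)^{-1}\ge x^{-k}$ for $m\ne n$, the off-diagonal is $\ll T^{-1}x^{k}\big(\sum_{m\le x}m^{-\sigma}\big)^{2k}\ll T^{-1}x^{2k}4^{k}\ll T^{-1/2}$ (using $x^{2k}\le T^{1/3}$ and $4^{k}=T^{o(1)}$), while the diagonal is exactly $\sum_{n\le x^{k}}d_{k}(n;x)^{2}n^{-2\sigma}$. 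For the cross term, write $\tfrac1T\int_{T}^{2T}\zeta(\sigma+it)^{k}\overline{M(t)}dt=\sum_{n\le x^{k}}\tfrac{d_{k}(n;x)}{n^{\sigma}}\,I_{n}$ with $I_{n}:=\tfrac1T\int_{T}^{2T}\zeta(\sigma+it)^{k}n^{it}dt$, and establish the uniform estimate
$$I_{n}=\frac{d_{k}(n)}{n^{\sigma}}+O\big(n^{1/2}T^{-1/2}\big)\qquad(1\le n\le T^{1/6}).$$
Because $0\le d_{k}(n;x)\le d_{k}(n)$ and $\sum_{n\le x^{k}}d_{k}(n;x)=x^{k}\le T^{1/6}$, this yields $\text{Re}\,\tfrac1T\int\zeta^{k}\overline M=\sum_{n\le x^{k}}\tfrac{d_{k}(n)d_{k}(n;x)}{n^{2\sigma}}+O(T^{-1/3})\ge\sum_{n\le x^{k}}\tfrac{d_{k}(n;x)^{2}}{n^{2\sigma}}+O(T^{-1/6})$, the extra nonnegative mass $d_k(n)-d_k(n;x)$ only helping.

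The estimate for $I_{n}$ is the analogue for $\zeta(s)^{k}$ of the basic input of [21], and is where essentially all the work lies. One would apply the approximate functional equation for $\zeta(s)^{k}$ — an object of ``degree $k$'' with analytic conductor $\asymp|t|^{k}$ — writing $\zeta(\sigma+it)^{k}$ as $\sum_{m}d_{k}(m)m^{-\sigma-it}V\!\big(m/(|t|/2\pi)^{k/2}\big)$, plus a dual sum weighted by $\chi(\sigma+it)^{k}$, plus a residue term from the order-$k$ pole at $s=1$, where $V$ is a fixed smooth weight equal to $1$ on $[0,1]$. Since $n\le T^{1/6}\ll T^{k/2}$, the term $m=n$ of the first sum contributes exactly $d_{k}(n)n^{-\sigma}$; the near-diagonal terms $0<|m-n|\le n/2$ contribute $\ll n^{1+\varepsilon}/T$; the remaining terms (for which $m$ lies in the bulk $m\asymp T^{k/2}$, so $|\log(m/n)|\gg\log T$) are handled by repeated integration by parts in $t$, each step gaining $|\log(m/n)|^{-1}\ll(\log T)^{-1}$ — precisely the point at which the shortness $x^{k}\le T^{1/6}$ of $M$ relative to the AFE length is exploited; the dual and residue terms carry a phase $\asymp t\log t$ with no stationary point in $[T,2T]$ (as $mn\ll T^{k/2}<T^{k}$) and are negligible. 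The hard part will be the uniformity in $k$ in this last computation: one must perform $\asymp k$ integrations by parts and keep the Fa\`a di Bruno constants arising from the $t$-derivatives of the cutoff $m\mapsto V(m/(|t|/2\pi)^{k/2})$ under control throughout the range $k\ll\log T/\log_{2}T$. The mean square of $M$ and the Cauchy--Schwarz (positivity) bookkeeping are routine by comparison.
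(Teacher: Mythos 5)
Your proposal misidentifies the key trick of the Rudnick--Soundararajan method, and as a result it veers into an argument that is both unfinished and substantially harder than what the paper does. The inequality you integrate, $|\zeta(\sigma+it)^k|^2 \geq 2\,\mathrm{Re}\bigl(\zeta(\sigma+it)^k\overline{M(t)}\bigr) - |M(t)|^2$, is a Cauchy--Schwarz lower bound that forces you to compute the cross moment $\frac{1}{T}\int_T^{2T}\zeta(\sigma+it)^k\,\overline{M(t)}\,dt$, i.e., a moment in which the full power $\zeta^k$ appears. That is the very object the Rudnick--Soundararajan approach is designed to avoid touching. Their device is different: with $\mathcal{D}(t)=\sum_{m\le x}m^{-\sigma-it}$ (so $M=\mathcal{D}^k$), one computes $S_1 := \frac{1}{T}\int_T^{2T}\zeta(\sigma+it)\,\mathcal{D}(t)^{k-1}\overline{\mathcal{D}(t)}^{\,k}\,dt$ and $S_2 := \frac{1}{T}\int_T^{2T}|\mathcal{D}(t)|^{2k}\,dt$, and then applies H\"older's inequality in the form $\frac{1}{T}\int|\zeta|^{2k} \geq |S_1|^{2k}/S_2^{2k-1}$. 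Crucially, $S_1$ contains only a \emph{single} factor of $\zeta$; the remaining factors are short Dirichlet polynomials of total length $x^{2k-1}\le T^{1/3}$. So the elementary approximation $\zeta(\sigma+it)=\sum_{n\le T}n^{-\sigma-it}+O(T^{-\sigma})$ suffices, and no approximate functional equation of any kind is needed. The diagonal of $S_1$ is then bounded below by $\sum_{n\le x^k}d_k(n;x)^2/n^{2\sigma}$ exactly because $\sum_{an=b,\,a\le x^k,\,n\le T}d_{k-1}(a;x)\ge d_k(b;x)$, and the off-diagonal is easily $O(T^{-1/6})$.

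Your route, by contrast, requires the estimate $I_n = \frac{1}{T}\int_T^{2T}\zeta(\sigma+it)^k n^{it}\,dt = d_k(n)/n^\sigma + O(n^{1/2}T^{-1/2})$ uniformly in $k$ up to $\asymp(\log T)^\sigma$. You correctly flag this as ``where essentially all the work lies,'' but it is more than just work: the approximate functional equation for $\zeta^k$ has $\asymp T^{k/2}$ terms plus a dual sum weighted by $\chi(\sigma+it)^k$, and the claimed clean error $O(n^{1/2}T^{-1/2})$ looks overoptimistic once $k$ grows — the off-diagonal and dual contributions involve sums like $T^{-1}\sum_{m\le T^{k/2}}d_k(m)/m^{\sigma}$, which already exceed any fixed negative power of $T$ for modest $k$ when $\sigma$ is near $1/2$, so that a much more delicate cancellation argument would be required, and it is not supplied. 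The mean-square computation of $M$ and the final positivity bookkeeping are fine and match the paper, but the heart of the proposition — obtaining the lower bound without ever needing asymptotics for a moment involving $\zeta^k$ — is missing. The fix is to replace your Cauchy--Schwarz step with the H\"older step above and compute $S_1$ with only one factor of $\zeta$.
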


\begin{proof}
Let
$$\da(t):=\sum_{n\leq x}\frac{1}{n^{\sigma+it}}.$$
We shall evaluate the moments
$$ S_1:=\frac{1}{T}\int_{T}^{2T}\zeta(\sigma+it)\da(t)^{k-1}\overline{\da(t)}^kdt, \text{ and } S_2:=\frac{1}{T}\int_{T}^{2T}|\da(t)|^{2k}dt.$$
Let us begin with $S_2$. Since $k$ is a positive integer, then $\da(t)^k=\sum_{n\leq x^k}d_k(n;x)/n^{\sigma+it}$, which gives that
$$S_2=\sum_{m,n\leq x^k}\frac{d_k(n;x)d_k(m;x)}{(mn)^{\sigma}}\frac{1}{T}\int_{T}^{2T}\left(\frac{m}{n}\right)^{it}dt.$$
The contribution of the diagonal terms $m=n$ equals $\sum_{n\leq x^{k}}d_k(n;x)^2/n^{2\sigma}$. Moreover if $m\neq n$ then
 $\int_{T}^{2T}\left(\frac{m}{n}\right)^{it}dt\ll 1/|\log(m/n)|\ll T^{1/6}$, since  both $m$ and $n$ are below $T^{1/6}$. This along with (2.3) implies that the contribution of these terms is
$$\ll T^{-5/6}\left(\sum_{n\leq T^{1/6}}\frac{d_k(n)}{n^{\sigma}}\right)^2\ll T^{-1/2-\sigma/3}(\log T)^{2k+1}\leq T^{-1/2}.$$
This implies that
\begin{equation}
S_2= \sum_{n\leq x^{k}}\frac{d_k(n;x)^2}{n^{2\sigma}}+O\left(T^{-1/2}\right).
\end{equation}
Now we estimate $S_1$. Using the simple approximation (see Theorem 4.11 of [27]) $\zeta(\sigma+it)=\sum_{n\leq T}1/n^{\sigma+it}+O(T^{-\sigma}),$ we deduce that
\begin{equation}
 S_1=\sum_{n\leq T}\sum_{a,b\leq x^k}\frac{d_{k-1}(a;x)d_k(b;x)}{(abn)^{\sigma}}\frac{1}{T}\int_{T}^{2T}\left(\frac{b}{an}\right)^{it}dt+E_8,
 \end{equation}
where $E_8\ll T^{-\sigma} S_2\ll T^{-1/2}(\sum_{n\leq x^{k}}d_k(n)/n^{\sigma})^2\ll T^{-1/6}$, which follows from (2.3). The contribution of the diagonal terms $an=b$ to the main term on the RHS of (7.7) equals
\begin{equation}
 \sum_{b\leq x^k}\frac{d_k(b;x)}{b^{2\sigma}}\sum_{\substack{n\leq T, a\leq x^k\\ an=b}} d_{k-1}(a;x)\geq \sum_{b\leq x^k}\frac{d_k(b;x)^2}{b^{2\sigma}},
 \end{equation}
 since $$\sum_{\substack{ n\leq T, a\leq x^k\\ an=b}} d_{k-1}(a;x)\geq \sum_{\substack{ n\leq x, a\leq x^k\\ an=b}} d_{k-1}(a;x)=d_k(b;x).$$
Now we estimate the contribution of the off-diagonal terms $an\neq b$. First if $n\leq 2T^{1/6}$ then $an\leq 2T^{1/3}$ which  implies that $|\log(b/an)|\gg T^{-1/3}$. Moreover in this case we have that $\sum_{n<2T^{\frac{1}{6}}}n^{-\sigma}\ll T^{1/12}.$ Now if $n>2T^{1/6}$ then $an>2b$ which gives that $|\log(b/an)|\gg 1$, and in this case one has $\sum_{n<2T}n^{-\sigma}\ll T^{1/2}.$ Therefore combining both cases we deduce that $\sum_{n\leq T} n^{-\sigma} \int_{T}^{2T}\left(\frac{b}{an}\right)^{it}dt\ll T^{1/2}$. Thus the contribution of these terms to (7.7) is
$$ \ll T^{-1/2}\left(\sum_{a\leq T^{1/6}}\frac{d_k(a)}{a^{\sigma}}\right)^2\ll T^{-1/6},$$
using (2.3). Hence we deduce that
\begin{equation}
 S_1\geq \sum_{n\leq x^k}\frac{d_k(n;x)^2}{n^{2\sigma}} + O(T^{-1/6}).
 \end{equation}
Finally combining (7.6), (7.9) along with H\"older's inequality, we deduce that
$$ \frac{1}{T}\int_{T}^{2T}|\zeta(\sigma+it)|^{2k}dt\geq\frac{|S_1|^{2k}}{S_2^{2k-1}}\geq \sum_{n\leq x^k}\frac{d_k(n;x)^2}{n^{2\sigma}} + O(T^{-1/6}).$$

\end{proof}

Let $x=T^{1/(6k)}$. If $k$ is bounded it is not so hard to prove that
\begin{equation}
\sum_{n\leq x^k}\frac{d_k(n;x)^2}{n^{2\sigma}}= (1+o(1)) \sum_{n=1}^{\infty}\frac{d_k(n)^2}{n^{2\sigma}}, \text{ as } T\to \infty,
\end{equation}
for any $\sigma>1/2$. Our aim is to prove this asymptotic relation in a uniform range of $k$. To this end our idea consists of expressing  the sum  $\sum_{n\leq x^k}d_k(n;x)^2n^{-2\sigma}$ as the $2k$-th moment of a sum of certain random variables. Let
$\{X(p)\}_{p \text{ prime}}$ be a sequence of independent random variables uniformly distributed on the unit circle. We extend $\{X(p)\}_{p}$ to a completely multiplicative sequence of random variables $\{X(n)\}_{n\in \Bbb{N}}$ by defining $X(n):=\prod_iX(p_i)^{a_i}$ if $n=\prod_ip_i^{a_i}$. Then $\ex(X(n)\overline{X(m)})=1$ if $m=n$ and equals $0$ otherwise. As before set $\zeta(\sigma,X)=\lim_{N\to\infty}\sum_{n\leq N}X(n)/n^{\sigma}=\prod_{p}\left(1-X(p)p^{-\sigma}\right)^{-1},$
which is absolutely convergent almost surely for $\sigma>1/2$. Then Corollary 3.4 gives that
\begin{equation}
\ex(|\zeta(\sigma,X)|^{2k})=\sum_{n\geq 1}\frac{d_k(n)^2}{n^{2\sigma}}=\exp\left(G_1(\sigma)\frac{(2k)^{1/\sigma}}{\log k}\left(1+O\left(\frac{1}{\log k}\right)\right)\right).
\end{equation}
Moreover from the fact that $(\sum_{n\leq x} X(n)/n^{\sigma})^{k}=\sum_{n\leq x^{k}}d_k(n;x)X(n)/n^{\sigma}$, one can see that
$$ \ex\left(\left|\sum_{n\leq x}\frac{X(n)}{n^{\sigma}}\right|^{2k}\right)=\sum_{n,m\leq x^{k}}\frac{d_k(n;x)d_k(m;x)\ex(X(n)\overline{X(m)})}{(nm)^{\sigma}}=\sum_{n\leq x^k}\frac{d_k(n;x)^2}{n^{2\sigma}}.$$
Then the question of determining when does (7.10) hold is equivalent to understand when does the $2k$-th moment of the partial sum $\sum_{n\leq x}X(n)/n^{\sigma}$ approximate that of $\zeta(\sigma,X)$. We prove the following result which we combine with Proposition 7.3 to get Theorem 1.5.

\begin{pro} Let $T$ be large and put $x=T^{1/(6k)}$. Let $\epsilon>0$ be small and $0< \alpha<1$ be a real number. Then there exists a constant $c>0$ such that uniformly for any $1/2+1/(\log T)^{\alpha}<\sigma<1-\epsilon$ and all positive integers $k\leq c((2\sigma-1)\log T)^{\sigma}$, we have
$$\sum_{n\leq x^k}\frac{d_k(n;x)^2}{n^{2\sigma}}= \sum_{n=1}^{\infty}\frac{d_k(n)^2}{n^{2\sigma}}\left(1+O\left(\exp\left(-\frac{\log T}{10k\log_2 T}\right)\right)\right).$$
Moreover given $1/2<\sigma<1$, and a suitably large constant $C>0$ we have
$$ \sum_{n\leq x^k}\frac{d_k(n;x)^2}{n^{2\sigma}}=o\left(\sum_{n=1}^{\infty}\frac{d_k(n)^2}{n^{2\sigma}}\right),$$
for all integers $k\geq C(\log T\log_2 T)^{\sigma}.$
\end{pro}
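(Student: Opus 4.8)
The plan is to pass to the probabilistic model of the previous paragraph. Let $\{X(p)\}_p$ be independent Steinhaus variables, extended completely multiplicatively to $\{X(n)\}_n$, put $S_x(\sigma)=\sum_{n\le x}X(n)/n^\sigma$ and $\zeta(\sigma,X)=\prod_p(1-X(p)p^{-\sigma})^{-1}=\sum_n X(n)/n^\sigma$ (absolutely convergent a.s. for $\sigma>\tfrac12$). Since $S_x(\sigma)^k=\sum_N d_k(N;x)X(N)/N^\sigma$ and $\zeta(\sigma,X)^k=\sum_N d_k(N)X(N)/N^\sigma$, one has
$$\sum_{n\le x^k}\frac{d_k(n;x)^2}{n^{2\sigma}}=\ex\big(|S_x(\sigma)^k|^2\big)=\|S_x(\sigma)^k\|_2^2,\qquad \sum_{n}\frac{d_k(n)^2}{n^{2\sigma}}=\ex\big(|\zeta(\sigma,X)^k|^2\big)=\|\zeta(\sigma,X)^k\|_2^2,$$
where $\|\cdot\|_2$ is the norm of $L^2$ of the probability space. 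Because $d_k(N;x)\le d_k(N)$ we have $\|S_x(\sigma)^k\|_2\le\|\zeta(\sigma,X)^k\|_2$, so Minkowski's inequality gives
$$0\le \frac{\|\zeta(\sigma,X)^k\|_2^2-\|S_x(\sigma)^k\|_2^2}{\|\zeta(\sigma,X)^k\|_2^2}\le \frac{2\|\zeta(\sigma,X)^k-S_x(\sigma)^k\|_2}{\|\zeta(\sigma,X)^k\|_2}=2\left(\frac{\sum_N(d_k(N)-d_k(N;x))^2/N^{2\sigma}}{\sum_N d_k(N)^2/N^{2\sigma}}\right)^{1/2}.$$
Thus the first assertion follows once the last ratio is shown to be $\ll\exp(-\log T/(5k\log_2 T))$, since $x=T^{1/(6k)}$.

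To estimate it, first use $(d_k(N)-d_k(N;x))^2\le d_k(N)\,(d_k(N)-d_k(N;x))$, and bound the number of ordered factorisations of $N$ into $k$ parts one of which exceeds $x$ by singling out that part: $d_k(N)-d_k(N;x)\le k\sum_{m\mid N,\ m>x}d_{k-1}(N/m)$. Inserting $1<(m/x)^{\eta}$ for a fixed small $\eta\in(0,\tfrac12)$ and writing the result as a Dirichlet convolution, the numerator is at most $k\,x^{-\eta}\sum_N d_k(N)\big(\sum_{m\mid N}m^\eta d_{k-1}(N/m)\big)/N^{2\sigma}=k\,x^{-\eta}\prod_p L_p$, where $L_p=\sum_{a\ge0}\frac{d_k(p^a)}{p^{2\sigma a}}\sum_{j=0}^a p^{j\eta}d_{k-1}(p^{a-j})$; writing $E_p=\sum_{a\ge0}d_k(p^a)^2p^{-2\sigma a}$ for the corresponding local factor of the denominator, the ratio is $\le k\,x^{-\eta}\prod_p(L_p/E_p)$. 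The crucial point is \emph{not} to bound $\sum_j p^{j\eta}d_{k-1}(p^{a-j})$ by $p^{a\eta}d_k(p^a)$: that would force $\eta<2\sigma-1$ (ruinous when $\sigma$ is near $\tfrac12$) and lose an essential factor $\tfrac1k$. Instead one observes that in the inner sum the variable $j$ carrying the weight $p^{j\eta}$ is the size of a single part among $k$ parts whose (multiplicative) total is $p^a$, so its mean is $a/k$; expanding $p^{j\eta}=1+j\eta\log p+\cdots$ gives $\sum_j p^{j\eta}d_{k-1}(p^{a-j})=d_k(p^a)(1+O(\eta(\log p)a/k))$ over the relevant range of $a$, whence $L_p/E_p=1+O(\eta(\log p)\langle a\rangle_p/k)$ where $\langle a\rangle_p$ is the mean of $a$ under the measure $\propto d_k(p^a)^2p^{-2\sigma a}$, with $\langle a\rangle_p\ll k/p^\sigma$ for $p\le k^{1/\sigma}$ and $\langle a\rangle_p\ll k^2/p^{2\sigma}$ for $p>k^{1/\sigma}$. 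Summing over $p$ by the prime number theorem (as in \S2.1) gives $\prod_p(L_p/E_p)\le\exp\big(O(\eta\,\sigma\,k^{1/\sigma-1}/((1-\sigma)(2\sigma-1)))\big)$. Since $x^{-\eta}=\exp(-\eta\log T/(6k))$ and $k\le c((2\sigma-1)\log T)^\sigma$, for $c=c(\epsilon)$ small enough the exponent $O(\sigma k^{1/\sigma-1}/((1-\sigma)(2\sigma-1)))-\log T/(6k)$ is $\le -\log T/(12k)$, so the ratio is at most $k\exp(-\eta\log T/(12k))\le\exp(-\log T/(5k\log_2 T))$ for $T$ large (as $\eta/12>1/(5\log_2 T)$ and $\log k=o(\log T/k)$). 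With the Minkowski bound this proves the asymptotic formula with the stated error; $\sum_N d_k(N)^2/N^{2\sigma}$ is finite throughout since $2\sigma>1$, and Corollary 3.4 is used only to write it in closed form.

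For the last, $o(\cdot)$, assertion the mechanism is reversed: $x^k=T^{1/6}$ is now fixed while $k\ge C(\log T\log_2 T)^\sigma$ is large enough that the multiplicative mass of $\sum_N d_k(N)^2/N^{2\sigma}$ has moved well past $T^{1/6}$. Quantitatively, Rankin's inequality in the opposite direction gives $\sum_{N\le y}d_k(N)^2/N^{2\sigma}\le y^{\delta}\sum_N d_k(N)^2/N^{2\sigma+\delta}$ for any $\delta>0$; taking $\delta$ a small fixed constant and applying Corollary 3.4 at the exponents $2\sigma$ and $2\sigma+\delta$, the fact that $(2k)^{1/\sigma}\ge(2C)^{1/\sigma}\log T\log_2 T$ forces $y^{\delta}\sum_N d_k(N)^2/N^{2\sigma+\delta}=o(\sum_N d_k(N)^2/N^{2\sigma})$ for $y=T^{1/6}$ once $C$ is large. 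Since $d_k(n;x)\le d_k(n)$, we get $\sum_{n\le x^k}d_k(n;x)^2/n^{2\sigma}\le\sum_{n\le T^{1/6}}d_k(n)^2/n^{2\sigma}=o(\sum_n d_k(n)^2/n^{2\sigma})$, as claimed.

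The hard part is the estimate of $\prod_p(L_p/E_p)$: one must make rigorous the heuristic that a uniformly random factorisation into $k$ parts of an integer $N$ typical for the measure $d_k(N)^2/N^{2\sigma}$ has all its parts of size about $N^{1/k}$, hence $\ll x$, so that having a part exceed $x$ is a large deviation that recovers a full power $x^{-\eta}$. The naive divisor bound $d_k(mN')\le d_k(m)d_k(N')$ destroys this gain (and reintroduces the constraint $\eta<2\sigma-1$), so it must be avoided by working with the combinatorial identity for the local convolution $\sum_{m\mid p^a}m^\eta d_{k-1}(p^a/m)$; everything else is routine Rankin/prime--number--theorem bookkeeping.
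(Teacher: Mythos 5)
Your approach of passing to the probabilistic model and applying Minkowski's inequality is the right idea, but you apply Minkowski at the wrong level, and this leads you into a genuinely harder estimate that you then don't carry out rigorously. You decompose $\zeta(\sigma,X)^k = S_x(\sigma)^k + \bigl(\zeta(\sigma,X)^k - S_x(\sigma)^k\bigr)$ and apply Minkowski in $L^2$, so the error you must control is
$$\bigl\|\zeta(\sigma,X)^k - S_x(\sigma)^k\bigr\|_2^2 \;=\; \sum_{N}\frac{(d_k(N)-d_k(N;x))^2}{N^{2\sigma}},$$
which is supported on \emph{all} $N$ having at least one factorization into $k$ parts with some part $>x$ — including many $N$ far below $T^{1/6}$. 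Extracting a Rankin gain $x^{-\eta}$ from this sum is delicate, and, as you correctly observe, the crude local bound $\sum_j p^{j\eta} d_{k-1}(p^{a-j})\le p^{a\eta} d_k(p^a)$ either forces $\eta<2\sigma-1$ or loses a factor $k$ in the exponent. The refinement you propose — expanding $p^{j\eta}=1+j\eta\log p+\cdots$ and using $\sum_j j\,d_{k-1}(p^{a-j})=(a/k)\,d_k(p^a)$ — is the right heuristic, but the first-order expansion is only valid when $\eta(\log p)\,a/k\ll1$, and you never control the contribution of the large-$a$ tail of $L_p$ (where the expansion diverges), nor the small primes $p$ where even for typical $a\asymp k/p^{\sigma}$ the quantity $\eta(\log p)a/k$ is not small. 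You explicitly flag this step as a heuristic to be made rigorous; as written it is a genuine gap.

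The paper avoids all of this by applying Minkowski in $L^{2k}$ to the decomposition $\zeta(\sigma,X)=S_x(\sigma)+R_x(\sigma)$, \emph{before} taking the $k$-th power. The error term is then $\|R_x\|_{2k}^{2k}=\ex\bigl(|\sum_{n>x}X(n)/n^\sigma|^{2k}\bigr)=\sum_N f_k(N;x)^2/N^{2\sigma}$, where $f_k(N;x)$ counts factorizations of $N$ into $k$ parts \emph{all} exceeding $x$. The crucial structural gain is automatic: every such $N$ satisfies $N>x^k=T^{1/6}$, so $f_k(N;x)\le d_k(N)$ gives $\|R_x\|_{2k}^{2k}\le\sum_{n>T^{1/6}}d_k(n)^2/n^{2\sigma}$. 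Rankin with a tiny shift $\alpha=1/\log_2 T$ and Lemma 2.1 then bound this at once, with no local analysis of convolutions whatsoever. This is the idea your argument is missing: decompose at the level of the Euler product, not its $k$-th power, so that the tail is supported far out to begin with. (Your treatment of the second assertion via Rankin and Corollary 3.4 is fine and essentially the same as the paper's.)
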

>From this result we can observe that given $1/2<\sigma<1$, there is a transition for the asymptotic behavior of $\sum_{n\leq x^k}d_k(n;x)^2n^{-2\sigma}$ at $k\approx (\log T)^{\sigma}$, which may explain why our method does not give good lower bounds for the moments of $\zeta(\sigma+it)$ beyond that range of $k$.
\begin{proof} We begin by proving the first assertion. Notice that $\sum_{n\leq x^k}d_k(n;x)^2n^{-2\sigma}\leq \sum_{n=1}^{\infty}d_k(n)^2n^{-2\sigma}$. To prove the lower bound our idea consists of bounding the $2k$-th moment of the tail $\sum_{n>x}X(n)/n^{\sigma}$. We have
$$ \ex\left(\left|\sum_{n>x}\frac{X(n)}{n^{\sigma}}\right|^{2k}\right)= \sum_{n>x^{k}}\frac{f_k(n;x)^2}{n^{2\sigma}},$$
where $f_k(n;x)$ is the number of ways of writing $n$ as $a_1\cdots a_k$ with $a_i$ being positive integers such that $a_i>x$. Clearly $f_k(n;x)\leq d_k(n)$. Let $\alpha= 1/\log_2 T$. Then by Lemma 2.1 there exists a constant $C_0>0$ such that
\begin{equation}
\begin{aligned}
\ex&\left(\left|\zeta(\sigma,X)-\sum_{n\leq x}\frac{X(n)}{n^{\sigma}}\right|^{2k}\right)       =\ex\left(\left|\sum_{n>x}\frac{X(n)}{n^{\sigma}}\right|^{2k}\right)\leq \sum_{n>T^{1/6}}\frac{d_k(n)^2}{n^{2\sigma}}\\
&\leq T^{-\alpha/3}\sum_{n=1}^{\infty}\frac{d_k(n)^2}{n^{2\sigma-2\alpha}}\leq T^{-\alpha/3}\exp\left(C_0\frac{k^{1/(\sigma-\alpha)}}{(2\sigma-1)\log k}\right)\\
&\leq \exp\left(-\frac{\log T}{3\log_2 T}+ e^4C_0\frac{k^{1/\sigma}}{(2\sigma-1)\log k}\right)
\leq \exp\left(-\frac{\log T}{4\log_2 T}\right),
\end{aligned}
\end{equation}
by our hypothesis on $\sigma$ and $k$, if $c$ is suitably small. Furthermore by Minkowski's inequality we have that
$$ \ex\left(\left|\zeta(\sigma,X)\right|^{2k}\right)^{\frac{1}{2k}}\leq \ex\left(\left|\sum_{n\leq x}\frac{X(n)}{n^{\sigma}}\right|^{2k}\right)^{\frac{1}{2k}}+ \ex\left(\left|\zeta(\sigma,X)-\sum_{n\leq x}\frac{X(n)}{n^{\sigma}}\right|^{2k}\right)^{\frac{1}{2k}},$$
which in view of (7.11) and (7.12) implies that
$$
\ex\left(\left|\sum_{n\leq x}\frac{X(n)}{n^{\sigma}}\right|^{2k}\right)^{\frac{1}{2k}}\geq\left(\sum_{n\geq 1}\frac{d_k(n)^2}{n^{2\sigma}}\right)^{\frac{1}{2k}}-\exp\left(-\frac{\log T}{8k\log_2 T}\right).
$$
Since $\sum_{n\geq 1}d_k(n)^2n^{-2\sigma}\geq 1$ it follows that
$$
\sum_{n\leq x^k}\frac{d_k(n;x)^2}{n^{2\sigma}}\geq  \sum_{n=1}^{\infty}\frac{d_k(n)^2}{n^{2\sigma}}\left(1+O\left(\exp\left(-\frac{\log T}{10k\log_2 T}\right)\right)\right),
$$
which proves the first assertion of the proposition. Now observe that
$$ \sum_{n\leq x^k}\frac{d_k(n;x)^2}{n^{2\sigma}}=\ex\left(\left|\sum_{n\leq x} \frac{X(n)}{n^{\sigma}}\right|^{2k}\right)\leq \left(\sum_{n\leq x}\frac{1}{n^{\sigma}}\right)^{2k}\ll T^{1/2}.$$
Moreover, from (7.11) we can deduce that there exists a constant $C>0$ such that $\sum_{n=1}^{\infty}d_k(n)^2n^{-2\sigma}>T$, for all positive integers $k\geq C(\log T\log_2 T)^{\sigma}$, proving the second assertion of the proposition.
\end{proof}

\subsection*{Acknowledgments}

I am grateful to Andrew Granville and K. Soundararajan for suggesting the problem and for many valuable discussions.

\end{document}